\theoremstyle{plain}
\newtheorem{thm}{Theorem}[section]
\newtheorem{lemma}[thm]{Lemma}
\newtheorem{prop}[thm]{Proposition}
\newtheorem{definition}[thm]{Definition}
\newtheorem{remark}[thm]{Remark}
\def\A{\operatorname{A}}
\def\B{\operatorname{B}}
\def\C{\operatorname{C}}
\def\D{\operatorname{D}}
\def\E{\operatorname{E}}
\def\F{\operatorname{F}}
\def\G{\operatorname{G}}
\def\I{\operatorname{I}}
\def\J{\operatorname{J}}
\def\M{\operatorname{M}}
\def\O{\operatorname{O}}
\def\U{\operatorname{U}}
\def\Z{\operatorname{Z}}
\def\PU{\operatorname{PU}}
\def\SO{\operatorname{SO}}
\def\Sp{\operatorname{Sp}}
\def\SU{\operatorname{SU}}
\def\PSU{\operatorname{PSU}}
\def\PSO{\operatorname{PSO}}
\def\Ad{\operatorname{Ad}}
\def\Aut{\operatorname{Aut}}
\def\defe{\operatorname{defe}}
\def\diag{\operatorname{diag}}
\def\GL{\operatorname{GL}}
\def\Graph{\operatorname{Graph}}
\def\Hom{\operatorname{Hom}}
\def\id{\operatorname{id}}
\def\Int{\operatorname{Int}}
\def\Lie{\operatorname{Lie}}
\def\Pin{\operatorname{Pin}}
\def\rank{\operatorname{rank}}
\def\Res{\operatorname{Res}}
\def\span{\operatorname{span}}
\def\Spin{\operatorname{Spin}}
\def\Sym{\operatorname{Sym}}
\def\tr{\operatorname{tr}}
\newcommand{\fre}{\mathfrak{e}}
\newcommand{\frf}{\mathfrak{f}}
\newcommand{\frg}{\mathfrak{g}}
\newcommand{\frh}{\mathfrak{h}}
\newcommand{\frk}{\mathfrak{k}}
\newcommand{\frl}{\mathfrak{l}}
\newcommand{\frp}{\mathfrak{p}}
\newcommand{\fru}{\mathfrak{u}}
\newcommand{\bbC}{\mathbb{C}}
\newcommand{\bbF}{\mathbb{F}}
\newcommand{\bbR}{\mathbb{R}}
\newcommand{\bbZ}{\mathbb{Z}}
\newcommand{\bbH}{\mathbb{H}}
\begin{document}

\title[Elementary abelian 2-subgroups]{Elementary abelian 2-subgroups of compact Lie groups}


\author{Jun Yu}
\address{Department of Mathematics, ETH Z\"urich, 8092 Z\"urich, Switzerland}

\email{jun.yu@math.ethz.ch}

\abstract{We classify elementary abelian 2-subgroups of compact simple Lie groups of adjoint 
type. This finishes the classification of elementary abelian $p$-subgroups of compact simple 
Lie groups (equivalently, complex linear algebraic simple groups) of adjoint type 
started in \cite{Griess} and continued in \cite{Andersen-Grodal-Moller-Viruel}.}
\endabstract

\subjclass[2010]{20E07, 20E45, 22C05} 


\keywords{Elementary abelian 2-group, Automizer group, involution type, 
symplectic metric space, translation subgroup.}

\maketitle

\section{Introduction}

An elementary abelian $p$-group of rank $n$ is a finite group isomorphic to $(C_{p})^{n}$. 
In this paper we study {\it elementary abelian $p$-subgroups} of compact simple Lie groups 
of adjoint type. A Lie group $G$ is simple means it Lie algebra $\frg_0=\Lie G$ is simple; 
it is of adjoint type means the adjoint homomorphism 
$\pi: G\longrightarrow\Aut(\frg_0)$ is injective. A subgroup of Lie group $G$ is 
called {\it toral} if it is contained in a maximal torus of $G$. 
The structure of non-toral elementary abelian $p$-subgroups of a compact group $G$ is 
related to the topology of $G$ and its classifying space  
(cf, \cite{Borel}, \cite{Serre}, \cite{Andersen-Grodal-Moller-Viruel}).
In 1950s, Borel made an observation that, for a compact connected Lie group $G$, 
the cohomology ring $H^{*}(G,\bbZ)$ has non-trivial $p$-torsion if and only if $G$ has 
a non-toral elementary abelian $p$-subgroup (cf, \cite{Borel} and \cite{Serre}).  
We call a prime $p$ a torsion prime of a compact (not necessary connected) Lie group $G$ if $G$ has 
a non-toral elementary abelian $p$-subgroup. For $G=\Aut(\fru_0)$ with $\fru_0$ a compact 
simple Lie algebra, the torsion primes are as follows.  

\begin{table}[ht]
\caption{Torsion primes}\centering
\begin{tabular}{|c |c |c |c | c| c| c| c| c| c| c|}
\hline $\A_{n-1},n\geq 2$ & $\B_{n},n\geq 2$ & $\C_{n},n\geq 3$ & 
$\D_{n},n\geq 5$ & $\D_{4}$ & $\E_6$ & $\E_7$  & $\E_8$ & $\F_4$ &  $\G_2$ 
\\ [0.5ex] \hline 

$p|2n$ & $2$ & $2$ & $2$ & $2,3$ & $2,3$ & $2,3$ & $2,3,5$ & $2,3$ & 
$2$\\ \hline

\end{tabular}
\end{table}

In \cite{Griess}, R. Griess classified maximal non-toral elementary abelian $p$-subgroups 
of any compact simple Lie group of adjoint type and gave criteria to distinguish which 
elementary abelian $p$-subgroups are non-toral (\cite{Griess} is about linear algebraic groups 
over an algebraic closed field of characteristic 0, which is equivalent to the consideration of 
compact Lie groups). For primes $p\geq 5$, Griess has classified non-toral elementary abelin 
$p$-subgroups, they are some $p$-subgroups of $\PU(n)$ ($p|n$) and a subgroup $(\bbZ/5\bbZ)^{3}$ 
of $\E_8$. For $p=3$, the non-toral subgroups of compact simple Lie groups of adjoint type up 
to conjugation was classified in \cite{Andersen-Grodal-Moller-Viruel}, they are $3$-subgroups 
of $\PU(n)$ ($3|n$) and some 3-subgroups of compact simple Lie groups of type 
$\E_6,\E_7,\E_8,\F_4$. This work was based on Griess' study of maximal elementary abelian 
$p$-subgroups. The authors used it as a step to their classification of $p$-compact groups 
(some objects in homotopy theory) for $p$ odd. In this paper, we classify elementary abelian 
2-subgroups of any compact simple Lie group of adjoint type. It suffices to classify elementary 
abelian 2-subgroups of the group $G=\Aut(\fru_0)$ for each compact simple Lie algebra $\fru_0$.


The paper is organized as follows. In Section 2, we do the classification for classical 
simple Lie algebras, which amounts to classify elementary abelian 2-subgroups of the groups 
$\PU(n)$ (Type A, inner case), $\O(n)/\langle -I\rangle$, $\Sp(n)/\langle-I\rangle$ and 
$\PU(n)\rtimes\langle\tau_0\rangle$ (Type A, full case, $\tau_0=\textrm{complex conjugation}$). 

For an elementary abelian 2-subgroup $F$ of $\PU(n)$, we define a map 
\[m: F\times F\longrightarrow\{\pm{1}\},\] show that it is a bilinear form when $F$ is viewed as 
a vector space over $\mathbb{F}_2=\bbZ/2\bbZ$ and $\{\pm{1}\}$ is identified with $\mathbb{F}_2$. 
We show that $\ker m$ is diagonalizable and the conjugacy class of $F$ is determined by 
the conjugacy class of $\ker m$ and the number $\rank(F/\ker m)$. 
For an elementary abelian 2-subgroup $F$ of $\O(n)/\langle -I\rangle$ or 
$\Sp(n)/\langle-I\rangle$, we define a bilinear map $m: F\times F\longrightarrow\{\pm{1}\}$ 
and a function \[\mu: F\longrightarrow\{\pm{1}\},\] which satisfy a compatibility relation 
($m(x,y)=\mu(x)\mu(y)\mu(xy)$). They give $F$ a structure we called {\it symplectic metric 
space} and we get invariants $r,s,\epsilon,\delta$ from this structure. We show that the conjugacy 
class of $F$ is determined by the conjugacy class of $A_{F}=\ker(\mu|_{\ker m})$ and the numbers 
$s,\epsilon,\delta$. The classification in the full type A case can be reduced to the above
three cases. 

In Subsection \ref{Subsection:a subclass}, we discuss a special class of subgroups of 
$\O(n)/\langle -I\rangle$ and $\Sp(n)/\langle-I\rangle$, introduce the notions of ``symplectic 
vector space'' and ``symplectic metric space'' and study their automorphism groups. The study 
of some elementary abelian 2-subgroups of $\Aut(\fru_0)$ for compact exceptional simple Lie 
algebras $\fru_0$ is reduced to this class of subgroups of $\Sp(n)/\langle-I\rangle$. Moreover, 
their Automizer groups (Definition \ref{Automizer}) are described in terms of the automorphism 
groups of symplectic vector spaces or symplectic metric spaces. 

In Section \ref{Section:exceptional}, we 
present our understanding of involutions in $\Aut(\fru_0)$ for compact exceptional simple Lie 
algebras (cf, \cite{Knapp}), the symmetric subgroups $\Aut(\fru_0)^{\theta}$ and Klein four 
subgroups of $\Aut(\fru_0)$ (cf, \cite{Huang-Yu}). We also discuss our method to the 
classification of elementary abelian 2-subgroups in $\Aut(\fru_0)$ for compact exceptional 
simple Lie algebras. 

In Sections 4-8, we give the classification in type $\G_2,\F_4,\E_6,\E_7,\E_8$ respectively. 
We show that an elementary abelian 2-subgroup of $\Aut(\frg_2)=\G_2$ is determined up to 
conjugacy by its rank, which is at most 3. For an elementary abelian 2-subgroup $F$ of 
$\Aut(\frf_4)=\F_4$, the subset $A_{F}$ of elements conjugate to $\sigma_2$  
(cf, Subsection \ref{Subsection:involution}) and the identity element is a subgroup. We show that 
the conjugacy class of $F$ is determined by the numbers $\rank A_{F}$ and $\rank F/A_{F}$, which 
satisfy $\rank A_{F}\leq 2$ and $\rank F/A_{F}\leq 3$. 
$\E_6$ has $\F_4$ and $\Sp(4)/\langle-I\rangle$ as its symmetric subgroups, we construct 
elementary abelian 2-subgroups of $\Aut(\fre_6)$ from subgroups of $\F_4$ and certain special 
subgroups (those considered in Subsection \ref{Subsection:a subclass}) of 
$\Sp(4)/\langle-I\rangle$, show they represent all conjugacy classes and are not conjugate to 
each other. 

Use an ascending chain $\E_6\subset\E_7\subset\E_8$, we construct some elementary abelian 
2-subgroups of $\Aut(\fre_7)$ and $\Aut(\fre_8)$ from subgroups of $\Aut(\fre_6)$. The 
remaining elementary abelian 2-subgroups of $\Aut(\fre_7)$ are related to very special subgroups 
of $\PSU(8)$ and $\PSO(8)$. Some of the remaining elementary abelian 2-subgroups $F$ of 
$\Aut(\fre_8)$ possess two canonical subgroups $A_{F},H_{F}$ with $\rank H_{F}/A_{F}=1$ and the 
conjugacy class of $F$ is determined by the numbers $\rank A_{F},\rank F/H_{F}$. Any other 
elementary abelian 2-subgroup of $\Aut(\fre_8)$ is determined up to conjugacy by its rank, which 
is at most 5.

\begin{thm}
For $\fru_0=\fre_6,\fre_7,\fre_8,\frf_4,\frg_2$, there are $51,78,66,12,4$ 
conjugacy classes of elementary abelian $2$-subgroups in $\Aut(\fru_0)$ 
respectively.
\end{thm}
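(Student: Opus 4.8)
The plan is to prove the theorem by a case-by-case computation, one exceptional type at a time, producing for each $\fru_0$ a complete and irredundant list of conjugacy classes of elementary abelian $2$-subgroups and then simply counting them. This is not a single unified argument but rather the culmination of the structural work set up in the earlier sections, so the proof proper is essentially a bookkeeping synthesis of the five classifications carried out in Sections 4--8. For each type I would state the classifying invariants established in those sections, enumerate the admissible values of those invariants, verify that each admissible tuple is realized by an actual subgroup and that distinct tuples give non-conjugate subgroups, and then tally the total.

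Concretely, I would organize the count by the invariants already introduced. For $\G_2$ the invariant is the rank alone, ranging over $0,1,2,3$, giving $4$ classes immediately. For $\F_4$ the conjugacy class is determined by the pair $(\rank A_F, \rank F/A_F)$ subject to $\rank A_F\le 2$ and $\rank F/A_F\le 3$; I would enumerate the admissible pairs $(a,b)$ with $a\in\{0,1,2\}$, $b\in\{0,1,2,3\}$, discard any tuples that fail to be realized (not every pair in the box need occur), and check that the surviving pairs number $12$. For $\E_6$ the subgroups are built from subgroups of $\F_4$ together with the special subgroups of $\Sp(4)/\langle-I\rangle$ from Subsection~\ref{Subsection:a subclass}; here I would count the contributions from each construction, using the symplectic metric space invariants $s,\epsilon,\delta$ together with the conjugacy class of $A_F$, and confirm the total is $51$. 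The $\E_7$ and $\E_8$ counts are the most involved: using the chain $\E_6\subset\E_7\subset\E_8$ I would separate the subgroups into those induced from $\Aut(\fre_6)$ and the genuinely new families (for $\E_7$, the subgroups related to $\PSU(8)$ and $\PSO(8)$; for $\E_8$, the families governed by the pair $(\rank A_F,\rank F/H_F)$ with $\rank H_F/A_F=1$, plus the rank-determined subgroups of rank at most $5$), counting each family and summing to $78$ and $66$ respectively.

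The genuine mathematical content lives in the section-by-section classifications; what remains for this theorem is to ensure completeness and non-redundancy of the master lists. The main obstacle I expect is precisely the avoidance of double-counting and omission across the overlapping constructions: in the exceptional cases the same conjugacy class can arise from more than one construction (for instance a subgroup of $\Aut(\fre_7)$ may be visible both as induced from $\Aut(\fre_6)$ and as a subgroup related to $\PSU(8)$), so I must identify exactly which classes are shared and subtract those overlaps. Establishing non-conjugacy between subgroups produced by different constructions requires the invariants to be fine enough to separate them, and verifying that the listed invariants are complete (that no further invariant is needed to distinguish two subgroups with identical data) is the delicate point. I would therefore cross-check each count against the explicit conjugacy-class lists compiled in Sections 4--8, treating the enumeration of admissible invariant tuples as the primary method and the realizability-plus-non-conjugacy verification as the step most likely to hide errors.
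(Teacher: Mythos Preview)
Your proposal is correct and matches the paper's approach: the theorem is indeed a bookkeeping summary of the classifications in Sections~4--8, and the proof consists of enumerating the families listed there and adding. Two small corrections to your organization are worth noting. First, your anticipated ``main obstacle'' of overlapping constructions is largely a non-issue in the paper's setup: the case division in each exceptional type is a \emph{partition} of all elementary abelian $2$-subgroups according to which involution types they contain (for $\E_7$: contains $\sigma_2$; contains $\sigma_3$ but not $\sigma_2$; pure $\sigma_1$), so the families are disjoint by construction rather than requiring deduplication after the fact. Your proposed example of a $\PSU(8)$-type subgroup of $\Aut(\fre_7)$ also arising from $\Aut(\fre_6)$ cannot occur, since the former by definition contains no element conjugate to $\sigma_2$ while the latter always does. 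Second, there is exactly one genuine redundancy you must catch, in $\E_8$: the family $F'_{r,s}$ inherited from $\Aut(\fre_6)$ satisfies $F'_{r,3}\sim F_{r,2}$, so for $\E_8$ one restricts to $s\le 2$ in that family (giving $9$ classes instead of $12$); the paper notes this just before Proposition~\ref{E8 rank and defect}. With that adjustment the six $\E_8$ families contribute $12+9+18+9+12+6=66$, and the $\E_7$ count is $12+12+18+9+10+4+10+3=78$ with no redundancies. For $\F_4$, all $12$ pairs $(r,s)$ with $r\le 2$, $s\le 3$ are realized, so there is nothing to discard.
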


For an elementary abelian 2-subgroup $F$ of a compact Lie group $G$, 
we call $W(F)=N_{G}(F)/C_{G}(F)$ the {\it Automizer group} of $F$ (in $G$), the name of 
Automizer is suggested by Professor R. Griess. We determine the Automizer group $W(F)$ 
for each elementary abelian 2-subgroup $F$ of $\Aut(\fru_0)$ with $\fru_0$ a compact 
exceptional simple Lie algebra. 


{\it Notations.} 
Let $\Z_{n}=\{\lambda \I_{n}: |\lambda|=1\}$, $\I_{p,q}=\diag\{-\I_{p},\I_{q}\}$, 
\[\J_{n}=\left(\begin{array}{cc}0&\I_{n}\\-\I_{n}&0\\\end{array}\right),   
\J'_{n}=\left(\begin{array}{cc}0&\I_{n}\\\I_{n}&0\\\end{array}\right),
K_{n}=\left(\begin{array}{cccc}0&\I_{n}&0&0\\-\I_{n}&0&0&0\\0&0&0&-\I_{n}\\
0&0&\I_{n}&0\\\end{array}\right).\]

Let $\fre_6$ denote the compact simple Lie algebra of type $\bf E_6$, $\E_6$  
denote the connected and simply connected Lie group with Lie algebra $\fre_6$, and  
$\fre_6(\bbC), \E_6(\bbC)$ their complexifications. Similar
notations will be used for other types.

Let $Z(G)$ denote the center of a group $G$ and $G_0$ denote the connected component of $G$ 
containing identity element. For $H\subset G$ ($\frh\subset\frg$),
let $C_{G}(H)$ ($C_{\frg}(\frh)$) denote the centralizer of $H$ in $G$ ($\frh$ in $\frg$), 
let $N_{G}(H)$ denote the normalizer of $H$ in $G$.

In the case $G=\E_6,\E_7$, let $c$ denotes a non-trivial element in $Z(G)$.

Let $\Pin(n)$ ($\Spin(n)$) be the Pin (Spin) group in degree $n$.
Write $c=e_1e_2\cdots e_{n}\in \Pin(n)$. If $n$ is odd, then $\Spin(n)$ has a Spinor module $M$ of
dimension $2^{\frac{n-1}{2}}$.  If $n$ is even, then $\Spin(n)$ has two Spinor
modules $M_{+},M_{-}$  of dimension $2^{\frac{n-2}{2}}$. We distinguish
$M_{+},M_{-}$ by requiring that  $c$ acts on $M_{+}$  and $M_{-}$ by scalar 1 or -1
respectively when $4|n$; and by $-i$ or $i$ respectively when $4|n-2$.

For a quotient group $G=H/N$, let $[x]=xN$ ($x\in H$) denote a coset.  

\noindent{\it Acknowledgement.} The author would like to thank Professors Griess and Grodal for 
some helpful communications. He would like to thank Professor Jong-Song Huang for discussions 
on symmetric pairs and thank Professor Doran for a lot of suggestions on the mathematical 
writing. The author 's research is supported by a grant from Swiss National Science 
Foundation (Schweizerischer Nationalfonds).

\section{Matrix groups}\label{Section:M}

For $F=\bbR,\bbC,\bbH$, let $\M_{n}(F)$ be the set of $n\times n$ matrices with 
entries in $F$. Let \begin{eqnarray*} && \O(n)=\{X\in \M_{n}(\bbR)|XX^{t}=\I\}, 
\SO(n)=\{X\in \O(n)|\det X=1\},\\&& \U(n)=\{X\in \M_{n}(\bbC)|XX^{\ast}=\I\}, 
\SU(n)=\{X\in \U(n)|\det X=1\}, \\&& \Sp(n)=\{X\in \M_{n}(\bbH)|XX^{\ast}=\I\},
\end{eqnarray*} then $\O(n)\subset\U(n)\subset\Sp(n) $. Let \begin{eqnarray*}
&&\mathfrak{so}(n)=\{X\in M_{n}(\bbR)|X+X^{t}=0\},\\&&
\mathfrak{su}(n)=\{X\in M_{n}(\bbC)|X+X^{\ast}=0, \tr X=0\},\\
&&\mathfrak{sp}(n)=\{X\in M_{n}(\bbH)|X+X^{\ast}=0\},\end{eqnarray*} then 
$\mathfrak{so}(n), \mathfrak{su}(n),\mathfrak{sp}(n)$ are Lie algebras of 
$\SO(n),\SU(n),\Sp(n)$ respectively.

\subsection{Projective unitary groups}\label{Subsection:A}

For $G=\U(n)/\Z_{n}$, any involution in $G$ has a representative $A\in\U(n)$ 
with $A^{2}=I$. Then $A\sim I_{p,n-p}= \left(\begin{array}{cc}-I_{p}&0\\ 0& I_{n-p}\\ 
\end{array} \right)$ for some $p$, $1\leq p \leq n-1$. One has 
\begin{eqnarray*}&&(\U(n)/\Z_{n})^{[I_{p,n-p}]}=(\U(p)\times\U(n-p))/\Z_{n}\textrm{ when } 
p\neq\frac{n}{2};\\&&(\U(n)/\Z_{n})^{[I_{\frac{n}{2},\frac{n}{2}}]}=
((\U(n/2)\times\U(n/2))/\Z_{n})\rtimes\langle[J'_{n}]\rangle.\end{eqnarray*}

Let $F\subset\U(n)/\Z_{n}$ be an elementary abelian 2-subgroup. For any $x,y\in F$, 
choose $A,B\in\U(n)$ with $A^{2}=B^{2}=I$ representing $x,y$. Then 
$1=[x,y]=[A,B]\Z_{n}/\Z_{n}$ implies $[A,B]=\lambda_{A,B} I$ for some 
$\lambda_{A,B}\in\bbC$, which is obviously independent with the choice of $A,B$.  
Also, $x^2=y^{2}=1$ implies $\lambda_{A,B}=\pm{1}$.
For any $x,y\in F$, let $m(x,y)=m_{F}(x,y)=\lambda_{A,B}$.

\begin{lemma}\label{A bilinear}
For any $x, y, z \in F$, $m(x,x)=1$ and $m(xy,z)=m(x,z)m(y,z)$.
\end{lemma}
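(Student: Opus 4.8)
The plan is to verify the two claimed identities directly from the definition of $m$, exploiting the fact that the commutator $[A,B]$ defining $m(x,y)$ is a well-defined scalar independent of the chosen representatives. For the symmetry-type identity $m(x,x)=1$, I would observe that if $A\in\U(n)$ represents $x$ with $A^2=I$, then $[A,A]=A^{-1}A^{-1}AA=I$, so $\lambda_{A,A}=1$ and hence $m(x,x)=1$ immediately. The substance of the lemma is therefore the bilinearity (additivity in the first slot) relation $m(xy,z)=m(x,z)m(y,z)$, which I would prove by a commutator computation.

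For the main identity, fix $x,y,z\in F$ and choose representatives $A,B,C\in\U(n)$ with $A^2=B^2=C^2=I$ representing $x,y,z$ respectively. The first step is to produce a representative of $xy$; the natural candidate is $AB$, but I must check it squares to $I$. Since $x,y$ commute in the quotient, $[A,B]=\lambda_{A,B}I=\pm I$, so $BA=\pm AB$, giving $(AB)^2=A(BA)B=\pm A^2B^2=\pm I$. If the sign is $+1$ then $AB$ is an honest involution and I can use it directly; if the sign is $-1$ then $(AB)^2=-I$ and $AB$ fails to be an involution. To handle this uniformly I would instead multiply by a scalar: choose $\mu\in\bbC$ with $\mu^2=\lambda_{A,B}^{-1}$ (so $\mu=1$ or $\mu=\pm i$) and set $D=\mu AB$. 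Then $D^2=\mu^2(AB)^2=\lambda_{A,B}^{-1}\cdot\lambda_{A,B}I=I$, so $D$ is a legitimate representative of $xy$ with $D^2=I$. Since $m$ is independent of the choice of representative, I may compute $m(xy,z)$ using $D$.

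The key computation is then the multiplicativity of the commutator $[\,\cdot\,,C]$. Because $D=\mu AB$ and scalars are central, $[D,C]=D^{-1}C^{-1}DC=(\mu AB)^{-1}C^{-1}(\mu AB)C=B^{-1}A^{-1}C^{-1}ABC$, the scalar $\mu$ cancelling. I would then insert $CC^{-1}=I$ to split this as
\[
[D,C]=B^{-1}\bigl(A^{-1}C^{-1}AC\bigr)C^{-1}\bigl(B^{-1}\bigr)^{-1}\!\cdot(\text{correction}),
\]
but more cleanly I would use the standard identity $[A_1A_2,C]=A_2^{-1}[A_1,C]A_2\cdot[A_2,C]$ for commutators. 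Applied with $A_1=A$, $A_2=B$, this reads $[AB,C]=B^{-1}[A,C]B\cdot[B,C]$. Since $[A,C]=\lambda_{A,C}I$ and $[B,C]=\lambda_{B,C}I$ are central scalars, conjugation by $B$ fixes $[A,C]$, so $[AB,C]=\lambda_{A,C}\lambda_{B,C}I$, and hence $[D,C]=[AB,C]=\lambda_{A,C}\lambda_{B,C}I$. Reading off the scalar gives $m(xy,z)=\lambda_{A,C}\lambda_{B,C}=m(x,z)m(y,z)$, as required.

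The only genuine obstacle is the bookkeeping around representatives: ensuring that the element chosen to represent $xy$ actually squares to the identity (handled by the scalar $\mu$ above) and confirming that the scalar extracted from $[D,C]$ agrees with $m(xy,z)$ regardless of which such representative is used. This last point is exactly the well-definedness already noted in the text preceding the lemma, so once the commutator identity $[AB,C]=B^{-1}[A,C]B\cdot[B,C]$ is invoked and the centrality of the scalar commutators is used to discard the conjugation, the result follows. I expect no analytic or structural difficulty beyond this elementary group-theoretic manipulation; the content is entirely that the pairing $m$ records the central-scalar part of commutators, and such a pairing is automatically bilinear.
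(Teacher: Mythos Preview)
Your proof is correct and follows essentially the same route as the paper: both arguments reduce to the commutator identity $[AB,C]=(\textrm{conjugate of }[A,C])\cdot[B,C]$ together with the observation that the scalar commutators $[A,C]=\lambda_{A,C}I$ and $[B,C]=\lambda_{B,C}I$ are central, so conjugation is harmless. The only difference is that the paper simply computes $[AB,C]=A[B,C]A^{-1}[A,C]=\lambda_{A,C}\lambda_{B,C}I$ and reads off $m(xy,z)$ directly, whereas you take the extra precaution of passing to $D=\mu AB$ with $D^2=I$ to conform strictly to the definition of $m$; since $\mu$ is central this changes nothing in the commutator, and the paper implicitly relies on the same fact. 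Your version is a slightly more explicit rendering of the same argument, not a different one.
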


\begin{proof}
$m(x,x)=1$ is clear. Choose $A,B,C\in\U(n)$ with $A^{2}=B^{2}=C^{2}=I$ representing $x,y,z$. 
Let $[A,C]=\lambda_1 I$, $[B, C]=\lambda_2 I$ for some numbers $\lambda_1,\lambda_2=\pm{1}$ 
. We have
\[[AB,C]=A[B,C]A^{-1}[A,C]=A(\lambda_2 I)A^{-1}(\lambda_1 I)=(\lambda_1 \lambda_2)I.\] 
So $m(xy,z)=m(x,z)m(y,z)$.
\end{proof}

If we regard $F$ as a vector space on $\mathbb{F}_2=\bbZ/2\bbZ$ and identify $\{\pm{1}\}$ 
with $\mathbb{F}_2=\bbZ/2\bbZ$, Lemma \ref{A bilinear} just said $m$ is an {\it 
anti-symmetric bilinear $2$-form} on $F$. Let 
\[\ker m=\{x\in F| m(x,y)=1,\forall y \in F\},\] which is a subgroup of $F$.

For an even $n$, let $\Gamma_0=\langle[I_{\frac{n}{2},\frac{n}{2}}],[J'_{\frac{n}{2}}]\rangle$,  
then any non-identity element of $\Gamma_0$ is conjugate to $I_{\frac{n}{2},\frac{n}{2}}$ and 
$(\U(n)/\Z_{n})^{\Gamma_0}\cong(\U(n/2)/\Z_{\frac{n}{2}})\times\Gamma_0$.

\begin{lemma}\label{Type A: Klein four}
For a Klein four group $F\subset G=\U(n)/\Z_{n}$, if $m_{F}$ is non-trivial, then $F$ 
is conjugate to $\Gamma_0$.
\end{lemma}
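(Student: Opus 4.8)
The plan is to turn the non-triviality of $m_F$ into a concrete anticommutation relation between lifts and then normalize those lifts by conjugation. Write $F=\{1,x,y,xy\}$. Since $F$ has $\mathbb{F}_2$-dimension $2$ and $m(z,z)=1$ for all $z$ by Lemma \ref{A bilinear}, the form $m_F$ is alternating, and on a two-dimensional space an alternating form is either zero or non-degenerate (its Gram matrix is $\left(\begin{array}{cc}0&a\\a&0\end{array}\right)$, whose determinant is $a$ over $\mathbb{F}_2$). Thus $m_F$ non-trivial forces $m(x,y)=-1$ on the generating pair. Choosing representatives $A,B\in\U(n)$ with $A^2=B^2=I$, this reads $[A,B]=-I$, equivalently $BAB^{-1}=-A$.

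Next I would pin down $A$. As $A^2=I$, its eigenvalues are $\pm1$ and $A\sim I_{p,n-p}$. The relation $BAB^{-1}=-A$ shows $A$ and $-A$ are conjugate in $\U(n)$; but conjugation interchanges the $(+1)$- and $(-1)$-eigenspaces of $A$, so these eigenspaces have equal dimension. Hence $n$ is even and $A\sim I_{\frac{n}{2},\frac{n}{2}}$. Conjugating $F$ by a suitable element of $\U(n)$, I may assume $A=I_{\frac{n}{2},\frac{n}{2}}$.

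I would then read off the block shape of $B$. Writing $B$ in $\frac{n}{2}+\frac{n}{2}$ block form, the equation $BA=-AB$ forces both diagonal blocks to vanish, so $B=\left(\begin{array}{cc}0&B_{12}\\B_{21}&0\end{array}\right)$. The conditions $B^2=I$ and $B\in\U(n)$ then give $B_{12}\in\U(\tfrac{n}{2})$ and $B_{21}=B_{12}^{-1}=B_{12}^{\ast}$. Finally I normalize $B$ while keeping $A$ fixed: the matrices commuting with $A=I_{\frac{n}{2},\frac{n}{2}}$ are exactly the block-diagonal ones, so conjugating by $P=\diag(I,B_{12})\in\U(n)$ leaves $A$ unchanged and, using $B_{12}B_{21}=I$, sends $B$ to $\left(\begin{array}{cc}0&I\\I&0\end{array}\right)=J'_{\frac{n}{2}}$. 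Passing to $\U(n)/\Z_n$, the group $F$ is carried to $\langle[I_{\frac{n}{2},\frac{n}{2}}],[J'_{\frac{n}{2}}]\rangle=\Gamma_0$.

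The only genuine subtlety is the eigenvalue–dimension count in the second step, which is precisely what forces $n$ to be even and $A$ to be balanced; once $A$ is put in the form $I_{\frac{n}{2},\frac{n}{2}}$, the remaining steps are a direct block computation, and the key point is that the conjugating matrix $P$ can be chosen block-diagonal (hence centralizing $A$) and unitary, so the normalization of $B$ costs nothing for $A$.
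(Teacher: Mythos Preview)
Your proof is correct and follows essentially the same route as the paper: lift to anticommuting involutions $A,B\in\U(n)$, diagonalize $A$, read off the off-diagonal block form of $B$, and conjugate by a block-diagonal unitary to normalize $B$ to $J'_{n/2}$. The only cosmetic difference is that the paper writes the anticommutation as $ABA^{-1}=-B$ and deduces $p=\tfrac{n}{2}$ from invertibility of the off-diagonal blocks of $B$, whereas you write it as $BAB^{-1}=-A$ and deduce the same from the eigenspace count for $A$; these are two sides of the same computation.
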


\begin{proof}
Choose $A,B\in\U(n)$ with $A^{2}=B^{2}=I$ and $F=\langle[A],[B]\rangle$. Since $m_{F}$ 
is non-trivial, we have $[A,B]=-I$. Since $A^{2}=I$, we may assume that $A=I_{p,n-p}$ 
for some $1\leq p\leq\frac{n}{2}$. From $[A,B]=-I$, we get $ABA^{-1}=-B$. Then $B$ is 
of the form $B=\left(\begin{array}{cc}&B_1\\B_2^{t}&\end{array}\right)$ 
for some $B_1,B_2\in M_{p,n-p}$. Since $B$ is invertible, we get $p=\frac{n}{2}$. 
Since $B^{2}=I$, we have $B_1B_2^{t}=I$. Let $S=\diag\{I_{n/2},B_1\}$. Then 
$(SAS^{-1},SBS^{-1})=(\left( \begin{array}{cc}-I_{\frac{n}{2}}&0\\
0&I_{\frac{n}{2}}\\\end{array}\right), \left( \begin{array}{cc} 0&I_{\frac{n}{2}}
\\I_{\frac{n}{2}}&0\\\end{array} \right))$.                                                   
\end{proof}

\begin{lemma}\label{Diagonal map}
For any $m,k\geq 1$ , let $\Delta(A)=\diag\{A,A,\dots,A\}$ ($A\in\U(m)$) be the diagonal 
map $\U(m)\longrightarrow\U(km)$. Then for any two subgroups $S_1,S_2\subset\U(m)$, 
\[\Delta(S_1)\sim\Delta(S_2)\Leftrightarrow S_1\sim S_2.\]
\end{lemma}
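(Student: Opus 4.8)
The plan is to interpret everything representation-theoretically. Read the inclusion $\iota_i\colon S_i\hookrightarrow\U(m)$ as the defining unitary representation of $S_i$ on $\bbC^m$, and note that the diagonal embedding $\Delta$ turns $\Delta(S_i)\subset\U(km)$ into the $k$-fold multiple $k\,\iota_i$ acting on $\bbC^{km}\cong\bbC^m\otimes\bbC^k$, since $\Delta(A)=\diag\{A,\dots,A\}$ acts as $A$ on each of the $k$ copies of $\bbC^m$. With this dictionary the backward implication is immediate: if $S_2=gS_1g^{-1}$ with $g\in\U(m)$, then $\Delta(S_2)=\Delta(g)\,\Delta(S_1)\,\Delta(g)^{-1}$ and $\Delta(g)\in\U(km)$, so $\Delta(S_1)\sim\Delta(S_2)$. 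All the content is in the forward implication.

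For the forward direction I would start from $g\,\Delta(S_1)\,g^{-1}=\Delta(S_2)$ for some $g\in\U(km)$ and first extract an abstract isomorphism. Because $\Delta$ is an injective homomorphism, conjugation by $g$ sends each $\Delta(s)$ to a \emph{unique} element $\Delta(\psi(s))$ with $\psi(s)\in S_2$, and $s\mapsto\psi(s)$ is a group isomorphism $\psi\colon S_1\to S_2$. The relation $g\,\Delta(s)\,g^{-1}=\Delta(\psi(s))$ then says exactly that $k\,\iota_1\cong k\,(\iota_2\circ\psi)$ as $km$-dimensional unitary representations of $S_1$. Next I would cancel the factor $k$: both sides are finite-dimensional and unitary, hence completely reducible, and for every irreducible $V$ one has $\dim\Hom_{S_1}(V,k\rho)=k\,\dim\Hom_{S_1}(V,\rho)$, so equality of the $k$-fold multiples forces equal multiplicity of each irreducible constituent, giving $\iota_1\cong\iota_2\circ\psi$ on $\bbC^m$. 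Finally I would upgrade this to a \emph{unitary} equivalence: an intertwiner $T$ may be replaced by $h=T(T^{*}T)^{-1/2}$, which is unitary and still intertwines because the positive operator $T^{*}T$ commutes with $\iota_1(S_1)$. Then $h\,\iota_1(s)\,h^{-1}=\iota_2(\psi(s))$ for all $s$, i.e. $hS_1h^{-1}=\psi(S_1)=S_2$, so $S_1\sim S_2$ in $\U(m)$.

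The argument is essentially routine representation theory, valid for arbitrary (possibly infinite) subgroups since only finite-dimensional unitarity is used. The two points demanding care are bookkeeping ones: identifying the abstract isomorphism $\psi$ induced by $g$ and verifying that the $k$-fold multiplicity genuinely cancels at the level of isomorphism classes, and then converting an arbitrary invertible intertwiner into a unitary one so that the final conjugating element lands in $\U(m)$ rather than merely in $\GL_m(\bbC)$. I expect this last normalization to be the subtlest step if one insists, as the statement does, on conjugacy inside the unitary groups themselves.
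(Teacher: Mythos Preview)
Your proof is correct and follows essentially the same representation-theoretic idea as the paper: the paper's one-line argument asserts that both conjugacy conditions are equivalent to the existence of an isomorphism $\phi:S_1\to S_2$ with $\tr(\phi(x))=\tr(x)$ for all $x\in S_1$, i.e.\ equality of characters, and your argument is simply a careful unpacking of this claim via multiplicities of irreducibles together with the polar-decomposition step to land the intertwiner in $\U(m)$. The only difference is level of detail, not method.
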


\begin{proof}
Both similarity conditions are equivalent to,  
\[\exists\phi: S_1\rightarrow S_2 \textrm{ with }\tr(\phi(x))=\tr(x), 
\forall x\in S_1. \] 
\end{proof}

\begin{prop}\label{Type A: classification}
Let $F$ be an elementary abelian 2-subgroup of $\U(n)/\Z_{n}$, 
\begin{itemize}
\item[(1)]{when $\ker m=1$, the conjugacy class of $F$ is determined by $\rank F$;}
\item[(2)]{in general, $\ker m$ is diagonalizable and the conjugacy class of $F$ is 
determined by the conjugacy class of $\ker m$ and the number $\rank F$.}
\end{itemize}
\end{prop}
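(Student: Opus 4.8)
The plan is to establish part (1) first, then deduce that $\ker m$ is diagonalizable, and finally reduce part (2) to part (1). Throughout I use the alternating form $m$ of Lemma \ref{A bilinear}, whose radical is exactly $\ker m$.

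For part (1), nondegeneracy of $m$ forces $\rank F = 2k$ to be even, and $F$ carries a symplectic basis $e_1,f_1,\dots,e_k,f_k$ with $m(e_i,f_i)=-1$ and all other pairings trivial. I would induct on $k$. The case $k=0$ is trivial, and for $k=1$ the group $F$ is a Klein four group with $m$ nontrivial, so Lemma \ref{Type A: Klein four} gives $F\sim\Gamma_0$ and in particular $2\mid n$. For the inductive step, set $F_1=\langle e_1,f_1\rangle$; this is again a nontrivial-$m$ Klein four group, so I may conjugate it to $\Gamma_0$. The symplectic complement $F'=\langle e_2,f_2,\dots\rangle$ pairs trivially with $F_1$ and therefore lies in $(\U(n)/\Z_n)^{\Gamma_0}\cong(\U(n/2)/\Z_{n/2})\times\Gamma_0$. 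The one subtle point is that $F'$ need only be the \emph{graph} of a homomorphism into $\Gamma_0$; but since $\Gamma_0=F_1\subset F$, each generator of $F'$ may be multiplied by an element of $F_1$ without changing $F$, so I can arrange $F=\Gamma_0\times\hat F$ with $\hat F\subset\U(n/2)/\Z_{n/2}$ of rank $2(k-1)$. The commutator form of $\hat F$ is $m|_{F'}$, which is nondegenerate, so the inductive hypothesis gives $2^{k-1}\mid (n/2)$ and uniqueness of $\hat F$ up to conjugacy; conjugating inside the centralizer fixes $\Gamma_0$, yielding $2^k\mid n$ and uniqueness of $F$. Thus the class of $F$ depends only on $\rank F=2k$.

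Diagonalizability of $\ker m$ is immediate: for $x,y\in\ker m$ we have $m(x,y)=1$, so representatives $A,B$ with $A^2=B^2=I$ commute. Lifts of a basis of $\ker m$ are therefore pairwise commuting involutions in $\U(n)$, hence simultaneously diagonalizable, and their images generate $\ker m$ inside a maximal torus of $\U(n)/\Z_n$.

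For part (2), write $D=\ker m$ and choose a symplectic complement $C$, so $F=D\oplus C$ with $m|_C$ nondegenerate of rank $2k$. I peel off hyperbolic planes of $C$ exactly as in part (1): after conjugating a plane to $\Gamma_0$, everything commuting with it — all of $D$ and the remaining planes of $C$, since $D\subset\ker m$ pairs trivially with $C$ — descends to $\U(n/2)/\Z_{n/2}$, where $\ker m$ maps isomorphically onto the radical of the projected form and the ambient dimension halves. After all $k$ planes are removed one reaches the case $m\equiv 0$, where $F=D$ is diagonalizable and its conjugacy class is its own datum. Re-assembling, $F$ is conjugate to the standard model consisting of the standard symplectic part (unique by part (1), realized via Lemma \ref{Diagonal map}) together with the diagonalizable subgroup encoding $D$, so the class of $F$ is determined by the class of $\ker m$ together with $\rank F$.

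The main obstacle is the bookkeeping in this reduction, namely verifying that the conjugacy class of the diagonalizable part $D$ is faithfully and bijectively recorded under the repeated halving $n\mapsto n/2$. Concretely, since $D$ commutes with $\Gamma_0$ and $\mathbb{C}^n\cong\mathbb{C}^2\otimes\mathbb{C}^{n/2}$ as a $\Gamma_0$-module, every character multiplicity of $D$ is even and halves at each step, so over all $k$ steps the multiplicities scale by $2^k$; this gives the bijection between the class of $D$ in $\U(n)/\Z_n$ and the class of its image in $\U(n/2^k)/\Z_{n/2^k}$. The representation-theoretic content underlying this — and the conceptual reason the argument closes — is that the symplectic part acts on each eigenspace of $\wt D$ as a multiple of the unique faithful irreducible of an extraspecial $2$-group, forcing the clean tensor decomposition $\mathbb{C}^n\cong W\otimes\mathbb{C}^{n/2^k}$ with the symplectic part acting on $W$ and $\wt D$ on the second factor.
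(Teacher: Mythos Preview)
Your proof is correct and follows essentially the same route as the paper: induct by peeling off a Klein four subgroup conjugate to $\Gamma_0$ via Lemma~\ref{Type A: Klein four}, use the centralizer $(\U(n)/\Z_n)^{\Gamma_0}\cong(\U(n/2)/\Z_{n/2})\times\Gamma_0$ to descend, and for part~(2) split $F=\ker m\times C$ and track $\ker m$ through the halving (your character-multiplicity argument is exactly the content of Lemma~\ref{Diagonal map}). The only differences are cosmetic---you induct on $k=\tfrac{1}{2}\rank F$ rather than on $n$, and your closing remarks on extraspecial $2$-groups add a pleasant conceptual gloss that the paper omits.
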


\begin{proof}
For $(1)$, prove by induction on $n$. Since $\ker m=1$, $\rank F$ is of even. When 
$\rank F\geq 2$, choose any $x_1,x_2\in F$ with $m(x_1,x_2)=-1$. By Lemma 
\ref{Type A: Klein four}, $\langle x_1,x_2\rangle\sim\Gamma_0$. We may assume that 
$\langle x_1,x_2\rangle=\Gamma_0$, then $F\subset(\U(n)/\Z_{n})^{\Gamma_0}=
\Delta(\U(n/2)/\Z_{\frac{n}{2}})\times\Gamma_0$. And so $F=\Delta(F')\times\Gamma_0$ 
for some $F'\subset\U(n/2)/\Z_{\frac{n}{2}}$. We also have $\ker m_{F'}=1$. By induction, 
the conjugacy class of $F'$ is determined by $\rank F'$, so the conjugcay class of $F$ 
is determined by $\rank F$.  

For $(2)$, since $\pi^{-1}(\ker m)$ is abelian by the definition of $m$ and $\ker m$, where 
$\pi$ is the natural projection from $\U(n)$ to $\U(n)/\Z_{n}$. So it is diagonalizable. Then 
$\ker m$ is diagonalizable. 
We may write $F$ as $F=\ker m \times F'$ with $m(\ker m,F')=1$ and $m|_{F'}$ 
non-degenerate. By $(1)$, the conjugcay class of $F'$ is determined by 
$\rank F'=\rank F-\rank\ker m$. Moreover, it is clear that 
$(\U(n)/\Z_{n})^{F'}=\Delta(\U(n')/\Z_{n'})\times F'$, where $n'=n/2^{\frac{\rank F'}{2}}$. 
So $\ker m=\Delta(F'')$ for some $F''\subset\U(n')/\Z_{n'}$. Fix $F'$, by Lemma 
\ref{Diagonal map}, the conjugacy class of $\ker m$ in $\U(n)/\Z_{n}$ and that of 
$F''$ in $\U(n')/\Z_{n'}$ determine each other. Since the conjugacy of $F$ is 
determined by $F'$ and the class of $\ker m$ in $(\U(n)/\Z_{n})^{F'}$, we get the 
last statement of $(2)$.
\end{proof}


\subsection{Orthogonal and symplectic groups}\label{Subsection:BDC}

Let $F$ be an elementary abelian 2-subgroup of $G=\O(n)/\langle-I\rangle,n\geq 2$. 
For any $x\in F$, choose $A\in\O(n)$ representing $x$, then $A^{2}=\lambda_{A} I$ 
for some $\lambda_{A}=\pm{1}$. For any $x,y\in F$, choose $A,B\in\O(n)$ representing 
$x,y$, then $[A,B]=\lambda_{A,B} I$ for some $\lambda_{A,B}=\pm{1}$. The values of 
$\lambda_{A},\lambda_{A,B}$ are independent with the choice of $A,B$.
For any $x\in F$, let $\mu(x)=\lambda_{A}$; for any $x,y\in F$, let 
$m(x,y)=\lambda_{A,B}$.

\begin{lemma}\label{D bilinear}
For any $x, y, z \in F$, $m(x,x)=\mu(x)=1$, $m(xy,z)=m(x,z)m(y,z)$ and 
\[m(x,y)=\mu(x)\mu(y)\mu(xy).\] 
\end{lemma}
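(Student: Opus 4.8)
The plan is to verify each of the three assertions by working with chosen representatives $A, B, C \in \O(n)$ of $x, y, z$ and tracking the scalars $\lambda_A, \lambda_{A,B}$ that arise. First I would dispense with the easy pieces. The equality $m(x,x)=1$ is immediate since any $A$ commutes with itself, so $[A,A]=I$. The equality $\mu(x)=1$ requires an argument: a representative $A \in \O(n)$ satisfies $A^2 = \lambda_A I$, but since $A$ is a real orthogonal matrix, $A^2$ is also orthogonal with real entries, and $\lambda_A I$ real forces $\lambda_A = \pm 1$; the point is to rule out $\lambda_A = -1$, i.e.\ to show $A^2 = I$ can always be arranged. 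The cleanest route is to observe that $\det(A^2) = (\det A)^2 = 1$, whereas $\det(-I) = (-1)^n$, so if $n$ is odd we immediately get $\lambda_A = 1$; for general $n$ one notes that if $A^2 = -I$ then $A$ has no real eigenvalues, but $A \in \O(n)$ is a real matrix whose eigenvalues on the unit circle come in conjugate pairs $\pm i$, which is consistent, so this naive determinant argument is not by itself conclusive. Hence I expect the genuine content of $\mu(x)=1$ to be subtler than it first appears, and I flag it below as the main obstacle.

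The bilinearity relation $m(xy,z)=m(x,z)m(y,z)$ is the direct analogue of Lemma \ref{A bilinear}, and I would reproduce that computation verbatim: writing $[A,C]=\lambda_1 I$ and $[B,C]=\lambda_2 I$, the commutator identity gives
\[
[AB,C] = A[B,C]A^{-1}[A,C] = A(\lambda_2 I)A^{-1}(\lambda_1 I) = (\lambda_1\lambda_2)I,
\]
because the scalar $\lambda_2 I$ is central and passes through the conjugation by $A$. This yields $m(xy,z)=m(x,z)m(y,z)$ with no reference to the $\mu$ function, exactly as before.

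The compatibility relation $m(x,y)=\mu(x)\mu(y)\mu(xy)$ is the heart of the lemma. Here I would compute $(AB)^2$ two ways. On one hand $(AB)^2 = \mu(xy)\,I$ by definition, using that $AB$ represents $xy$. On the other hand, expand $ABAB = A(BA)B = A(\lambda_{A,B}^{-1}AB)B = \lambda_{A,B}^{-1} A^2 B^2$, where I have used $BA = \lambda_{A,B}^{-1} AB$ coming from $[A,B]=ABA^{-1}B^{-1}=\lambda_{A,B}I$, hence $AB = \lambda_{A,B} BA$. Since $\lambda_{A,B}=\pm 1$ we have $\lambda_{A,B}^{-1}=\lambda_{A,B}$, so $(AB)^2 = \lambda_{A,B}\, A^2 B^2 = \lambda_{A,B}\,\mu(x)\mu(y)\,I$. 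Comparing the two expressions gives $\mu(xy) = \lambda_{A,B}\mu(x)\mu(y) = m(x,y)\mu(x)\mu(y)$, and since every factor is $\pm 1$ this rearranges to $m(x,y)=\mu(x)\mu(y)\mu(xy)$, as desired.

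The step I expect to be the real obstacle is establishing $\mu(x)=1$, i.e.\ that every element of $F$ admits an orthogonal representative squaring to $+I$ rather than $-I$. I would handle it by noting that the group $G = \O(n)/\langle -I\rangle$ and an involution $x \in F$ lift to $A \in \O(n)$ with $A^2 \in \{I, -I\}$; the two possible lifts of $x$ are $A$ and $-A$, and $(-A)^2 = A^2$, so $\mu(x)$ is well defined but its value is exactly what must be pinned down. The clean argument is that $A^2 = -I$ would make $A$ a complex structure on $\bbR^n$, forcing $n$ even and $A$ conjugate (within $\O(n)$) to $\J_{n/2}$; but then $\det A = 1$ and more importantly one checks that $-I$ and such $A$ are \emph{not} conjugate, so this case can genuinely occur in $\O(n)$ itself. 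The resolution must therefore use that we work in $G = \O(n)/\langle -I\rangle$: I would argue that whenever $A^2=-I$, the element $x=[A]$ is not an involution unless we have already quotiented, and more carefully that the definition of $\mu$ is set up so that $\mu(x)=1$ reflects the image being an honest involution in $G$. Making this precise — deciding whether $\mu$ is automatically trivial or whether the lemma's claim $\mu(x)=1$ is implicitly asserting that we choose representatives with $A^2=I$ — is where I would concentrate the effort, and I would resolve it by showing that the coset $\{A,-A\}$ always contains (or can be replaced after scaling by a nonzero real, hence a unit scalar, by) an element squaring to $I$, equivalently that $A^2=-I$ together with $A \in \O(n)$ never obstructs the passage to $\pm A$ giving square $I$; if it does obstruct, then the correct reading is that $\mu$ takes values in $\{\pm 1\}$ and the assertion $\mu(x)=1$ holds precisely because a representative in $\O(n)$ of an involution of $G$ satisfies $A^2=I$, which I would verify directly from $A^4 = (\mu(x)I)^2 = I$ forcing the eigenvalues of $A$ into $\{\pm 1, \pm i\}$ and then using reality of $A$.
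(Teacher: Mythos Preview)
Your arguments for $m(x,x)=1$, for bilinearity $m(xy,z)=m(x,z)m(y,z)$, and for the compatibility $m(x,y)=\mu(x)\mu(y)\mu(xy)$ are all correct, and in fact considerably more explicit than what the paper offers (the paper's proof is simply ``similar as that for Lemma~\ref{A bilinear}'', which only covers the first two claims anyway). Your computation of $(AB)^2$ in two ways is exactly the right way to get the compatibility relation.

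Where you are stuck is the right place to be stuck, but for the wrong reason: the assertion $\mu(x)=1$ for all $x\in F$ is \emph{false}, and what you are looking at is a typo in the statement. The very next lemma in the paper (Lemma~\ref{Type BD: Klein four}) says explicitly that $\mu(x)=-1$ if and only if $x\sim[J_{n/2}]$, and the entire purpose of introducing $\mu$ in the orthogonal/symplectic setting---as opposed to the unitary case of Lemma~\ref{A bilinear}---is precisely that $\mu$ is a nontrivial $\{\pm 1\}$-valued function recording whether a lift squares to $+I$ or $-I$. Your own analysis already shows this: when $A^2=-I$ (so $A\sim J_{n/2}$), the other lift $-A$ also satisfies $(-A)^2=-I$, so $\mu(x)=-1$ is an honest invariant of the coset and cannot be fixed by choice of representative. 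The intended content of that clause is just $m(x,x)=1$ (trivially, since $[A,A]=I$) and perhaps $\mu(1)=1$ (equally trivial). So you should simply drop the attempt to prove $\mu(x)=1$ and move on; everything else you wrote is fine.
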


\begin{proof}
The proof is similar as that for \ref{A bilinear}. 
\end{proof}

\begin{lemma}\label{L:outer A} 
$\mathfrak{su}(n)$ has two (or one) conjugacy classes of outer involutive automorphisms 
with representative $\tau_0=\textrm{complex conjugation}$ and $\tau'_0=\tau_0\Ad(J_{n/2})$. 
\end{lemma}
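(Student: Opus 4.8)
The plan is to work inside $\Aut(\mathfrak{su}(n))$, using that for $n\geq 3$ the inner automorphisms form $\Int(\mathfrak{su}(n))=\PU(n)=\U(n)/\Z_n$ with $\Aut/\Int\cong\bbZ/2$ generated by the class of $\tau_0$, so that every outer automorphism has the form $\theta=\tau_0\Ad(A)$ for some $A\in\U(n)$, well defined modulo $\Z_n$ (for $n=2$ there are no outer automorphisms). First I would record the conjugation rule $\tau_0\Ad(A)\tau_0=\Ad(\bar A)$, which follows from $\tau_0(X)=\bar X$ on $\mathfrak{su}(n)$; from it a direct computation gives $\theta^2=\Ad(\bar AA)$. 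Hence $\theta$ is an involution precisely when $\bar AA=\lambda I\in\Z_n$, and conjugating this relation shows $\lambda=\bar\lambda$, so $\lambda=\pm1$. Taking determinants in $\bar AA=\lambda I$ and using $|\det A|=1$ gives $\lambda^n=1$, so $\lambda=-1$ forces $n$ even; for $n$ odd only $\lambda=1$ can occur.

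Next I would translate the classification up to conjugacy into a congruence problem. Conjugating $\theta$ by an inner automorphism $\Ad(g)$, $g\in\U(n)$, yields $\Ad(g)\theta\Ad(g)^{-1}=\tau_0\Ad(\bar gAg^{-1})$, so $A$ is replaced by $\bar gAg^{-1}$; since $g^{-1}=\bar g^{\,t}$ for $g\in\U(n)$, this is $A\mapsto\bar gA\bar g^{\,t}$, i.e. unitary congruence of $A$ by $\bar g$. The relation $\bar AA=I$ is equivalent to $A=A^{t}$ and $\bar AA=-I$ to $A=-A^{t}$, so the two cases correspond to $A$ being a symmetric, resp. skew-symmetric, unitary matrix, and these classes are preserved by the congruence action. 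By the Autonne--Takagi factorization every symmetric unitary matrix equals $UU^{t}$ for some $U\in\U(n)$, hence is unitarily congruent to $I$; choosing $g=U^{t}$ reduces $\theta$ to $\tau_0$. By the corresponding normal form for skew-symmetric unitary matrices (available only for $n$ even), every such $A$ is unitarily congruent to $\J_{n/2}$, reducing $\theta$ to $\tau_0\Ad(\J_{n/2})=\tau'_0$. Thus there is exactly one conjugacy class of outer involutions for each admissible value of $\lambda$.

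Finally I would show the two classes are genuinely distinct when $n$ is even by computing fixed-point subalgebras: $(\mathfrak{su}(n))^{\tau_0}=\mathfrak{so}(n)$ and $(\mathfrak{su}(n))^{\tau'_0}=\mathfrak{sp}(n/2)$, whose dimensions $\tfrac{n(n-1)}{2}$ and $\tfrac{n(n+1)}{2}$ differ; since conjugate automorphisms have isomorphic fixed subalgebras, $\tau_0\not\sim\tau'_0$. Combining with the parity count gives two classes for $n$ even and one for $n$ odd. The main obstacle is the reduction step: one must correctly identify the inner-conjugation action on the parameter $A$ with unitary congruence and then invoke the right canonical forms (Takagi in the symmetric case, the skew-unitary normal form in the antisymmetric case) to establish transitivity. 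Verifying that these congruences can be realised by some $g\in\U(n)$ rather than a general element of $\GL_n(\bbC)$, and tracking the harmless scalar ambiguity modulo $\Z_n$ throughout, is where the care is needed.
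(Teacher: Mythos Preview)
Your argument is correct and self-contained, but it takes a different route from the paper's proof, which is simply a one-line appeal to Cartan's classification of compact Riemannian symmetric pairs (equivalently, of real forms of $\mathfrak{sl}(n,\bbC)$): the outer involutive automorphisms of $\mathfrak{su}(n)$ correspond to the symmetric pairs of type \textbf{AI} and \textbf{AII}, with fixed subalgebras $\mathfrak{so}(n)$ and $\mathfrak{sp}(n/2)$, the latter existing only for $n$ even. Your approach instead works directly inside $\Aut(\mathfrak{su}(n))$, reduces the problem to unitary congruence of symmetric versus skew-symmetric unitary matrices, and invokes the Autonne--Takagi factorization (and its skew analogue) to obtain the normal forms; the distinction of the two classes is then confirmed by the dimensions of the fixed subalgebras. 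What this buys you is an elementary, explicit proof independent of the full Cartan classification, at the cost of needing those linear-algebra normal forms; the paper's approach is shorter but relies on deeper structure theory. Both are valid, and in the context of this paper --- which already assumes familiarity with symmetric pairs --- the citation is adequate, but your hands-on computation is a useful illustration of what the classification says in this concrete case.
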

\begin{proof}
This follows from Cartan 's classification of compact Riemannian symmetric pairs.   
\end{proof}

\begin{lemma}\label{Type BD: Klein four}
For $x\in F$, $\mu(x)=-1$ if and only if $x\sim [J_{\frac{n}{2}}]$.

For $x,y\in F$ with $m(x,y)=-1$, 
\begin{itemize}
\item[(1)]{when $\mu(x)=\mu(y)=-1$, $(x,y)\sim([J_{\frac{n}{2}}],[K_{\frac{n}{4}}])$;} 
\item[(2)]{when $\mu(x)=\mu(y)=1$, $(x,y)\sim ([I_{\frac{n}{2},\frac{n}{2}}],
[J'_{\frac{n}{2}}])$.} 
\end{itemize}
\end{lemma}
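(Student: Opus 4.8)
The plan is to work in $\O(n)$ with the representatives $A, B$ chosen so that $A^2 = \mu(x)I$, $B^2 = \mu(y)I$ and $[A,B] = m(x,y)I = -I$, and to normalize $A$ to a canonical form first, then exploit the anticommutation relation $BA = -AB$ to pin down $B$. For the opening claim ($\mu(x) = -1 \iff x \sim [\J_{n/2}]$), I would argue that if $A \in \O(n)$ satisfies $A^2 = -I$, then $A$ defines a complex structure on $\bbR^n$ compatible with the standard inner product; the orthogonal group acts transitively on such structures (equivalently, $A$ is conjugate in $\O(n)$ to the standard $\J_{n/2}$, which forces $n$ even), giving $x \sim [\J_{n/2}]$. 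Conversely $\J_{n/2}^2 = -I$ so $\mu([\J_{n/2}]) = -1$. This handles the first assertion and records that $\mu(x) = -1$ already forces $4 \mid n$ once we also have a second anticommuting square-$(-1)$ element, which feeds into case (1).

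For case (1), with $\mu(x) = \mu(y) = -1$ I may assume $A = \J_{n/2}$ by the first part. The relation $BA = -AB$ together with $B^2 = -I$ and $B \in \O(n)$ says $B$ anticommutes with the complex structure $\J_{n/2}$, i.e. $B$ is conjugate-linear for that structure; combined with $B^2 = -I$ this exhibits a quaternionic structure on $\bbR^n$ (with $\J_{n/2}$ and $B$ playing the roles of $i$ and $j$), so $4 \mid n$ and the pair $(\J_{n/2}, B)$ is determined up to simultaneous $\O(n)$-conjugacy by the uniqueness of the compatible quaternionic structure. The target representative $K_{n/4}$ is exactly the standard such $j$, so $(x,y) \sim ([\J_{n/2}], [K_{n/4}])$. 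The cleanest way to package the uniqueness is to centralize $A = \J_{n/2}$: the centralizer is $\U(n/2)$ acting on $\bbR^n \cong \bbC^{n/2}$, and I would show any two anticommuting orthogonal square-$(-1)$ complements are conjugate under this $\U(n/2)$.

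For case (2), with $\mu(x) = \mu(y) = 1$ I may take $A = \I_{p,n-p}$ since $A^2 = I$, and then the argument runs exactly as in the proof of Lemma \ref{Type A: Klein four}: from $BA = -AB$ the matrix $B$ is off-diagonal in block form relative to the $\pm 1$ eigenspaces of $A$, invertibility forces $p = n/2$, and $B^2 = I$ lets me conjugate by an $\O(n)$-element of the form $\diag\{I, B_1\}$ (here I must check the conjugating matrix can be taken real orthogonal, not merely unitary) to bring $(A,B)$ to $([\I_{n/2,n/2}], [\J'_{n/2}])$.

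The main obstacle I anticipate is the passage from abstract ``compatible structure'' statements to genuine conjugacy inside $\O(n)$ rather than $\U(n)$ or $\GL$: reality of the conjugating elements must be tracked carefully, and in case (1) I must verify that the quaternionic-structure uniqueness really descends to a single $\O(n)$-orbit (and in particular forces $4 \mid n$) rather than splitting into several, which is where Lemma \ref{L:outer A} and the earlier complex-structure normalization do the real work. A secondary subtlety is that everything takes place in the quotient $\O(n)/\langle -I\rangle$, so I should confirm that replacing a representative $A$ by $-A$ does not affect any of the normal forms claimed — which it does not, since $-\J_{n/2} \sim \J_{n/2}$ and similarly for the other representatives.
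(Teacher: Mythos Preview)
Your proposal is correct and follows essentially the same approach as the paper: the first claim and case (2) are handled identically (the latter by the argument of Lemma~\ref{Type A: Klein four}, with your extra care about real orthogonality of the conjugator being exactly the small check needed), and for case (1) the paper also normalizes $x=[J_{n/2}]$, notes that the centralizer has Lie algebra $\fru(n/2)$, and then invokes Lemma~\ref{L:outer A} to reduce $y$ to one of two outer-involution types on $\fru(n/2)$, the condition $\mu(y)=-1$ selecting the symplectic one and forcing $y=[K_{n/4}]$. Your quaternionic-structure phrasing is just the geometric restatement of that same dichotomy between the real form $\mathfrak{so}(n/2)$ and the quaternionic form $\mathfrak{sp}(n/4)$, so the two arguments coincide in substance.
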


\begin{proof}
If $\mu(x)=-1$, then $x=[A]$ for some $A\in\O(n)$ with $A^{2}=-I$. The 
$A\sim J_{\frac{n}{2}}$  and so $x\sim [J_{\frac{n}{2}}]$. The proof for $(2)$ is 
the same as that for Lemma \ref{Type A: Klein four}. For $(1)$, we may assume that 
$x=[J_{\frac{n}{2}}]$, then $\mathfrak{so}(n)^{x}=\mathfrak{u}(n/2)$. By Lemma 
\ref{L:outer A}, after replace $y$ by some $gyg^{-1}$ with $g\in G^{x}$, we may assume 
that $(\mathfrak{so}(n)^{x})^{y}=(\mathfrak{u}(n/2))^{y}=$ $\mathfrak{so}(n/2)$ or 
$\mathfrak{sp}(n/4)$, then a little more argument shows $y=[K_{\frac{n}{4}}]$. 
\end{proof}

\begin{definition}
For an elementary abelian 2-group $F\subset O(n)/\langle-I\rangle$, $n\geq 2$, 
define $A_{F}=\ker(\mu|_{\ker m})$ and the defect index 
\[\defe F=|\{x\in F: \mu(x)=1\}|-|\{x\in F:\mu(x)=-1\}|.\] 
Define $(\epsilon_{F},\delta_{F})$ as follows, 
\begin{itemize} 
\item when $\mu|_{\ker m}\neq 1$, define $(\epsilon_{F},\delta_{F})=(1,0)$;
\item when $\mu|_{\ker m}=1$ and $\defe F<0$, define $(\epsilon_{F},\delta_{F})=(0,1)$;
\item when $\mu|_{\ker m}=1$ and $\defe F>0$, define $(\epsilon_{F},\delta_{F})=(0,0)$. 
\end{itemize}
  
Define $r_{F}=\rank A_{F}$, $s_{F}=\frac{1}{2}\rank (F/\ker m)-\delta_{F}$.
\end{definition}

We will see in the proof of Proposition \ref{Type BD: classification} that $\defe F=0$ 
if and only if $\mu|_{\ker m}\neq 1$. It is clear that $\epsilon_{F},\delta_{F},r_{F},s_{F}$ and 
the conjugcay class of $A_{F}$ are determined by the conjugacy class of $F$.

Let $\Gamma_1=\langle [I_{\frac{n}{2},\frac{n}{2}}],[J'_{\frac{n}{2}}]\rangle$ and 
$\Gamma_2=\langle[J_{\frac{n}{2}}],[K_{\frac{n}{4}}\rangle]$. Then $\defe\Gamma_1=2$, 
$\defe\Gamma_2=-2$, \[(\O(n)/\langle-I\rangle)^{\Gamma_1}=
\Delta(\O(\frac{n}{2})/\langle-I\rangle)\times\Gamma_1\] and 
\[(\O(n)/\langle-I\rangle)^{\Gamma_2}=\Delta(\Sp(\frac{n}{4})/\langle-I\rangle)\times\Gamma_2.\]

\begin{lemma}\label{Sub F1}
Let $F$ be a non-trivial elementary abelian 2-subgroup of $\O(n)/\langle-I\rangle$, if 
$\rank(F/\ker m)>2$, there exists a Klein four subgroup $F'\subset F$ with 
$F'\sim \Gamma_1$. 
\end{lemma}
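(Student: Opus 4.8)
The plan is to produce, inside $F$, two elements $x,y$ with $\mu(x)=\mu(y)=1$ and $m(x,y)=-1$; Lemma \ref{Type BD: Klein four}(2) then immediately gives that $F'=\langle x,y\rangle$ is a Klein four group conjugate to $\Gamma_1=\langle[I_{\frac{n}{2},\frac{n}{2}}],[J'_{\frac{n}{2}}]\rangle$. (Such $x,y$ are automatically distinct and non-trivial, since $m(x,y)=-1$ forces $x\neq y$ and $x,y\neq 1$, so $\langle x,y\rangle$ really is a Klein four group.) Thus the whole problem reduces to a statement about the pair $(m,\mu)$: by Lemma \ref{D bilinear}, $m$ is an alternating bilinear form and $\mu$ refines it via $m(x,y)=\mu(x)\mu(y)\mu(xy)$, and I must exhibit a pair that is isotropic for $\mu$ and non-orthogonal for $m$.

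First I would pass to $V=F/\ker m$, on which $m$ induces a non-degenerate alternating form; hence $\dim_{\mathbb{F}_2}V$ is even, and the hypothesis $\rank(F/\ker m)>2$ forces $\dim_{\mathbb{F}_2}V\geq 4$. Choosing a symplectic basis of $V$ and lifting it to elements $e_1,f_1,e_2,f_2\in F$, I obtain $m(e_i,f_i)=-1$ while all other pairings among these four elements equal $1$; note these $m$-values depend only on the images in $V$, since $\ker m$ is exactly the radical of $m$.

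The crux is the behaviour of $\mu$ on a single hyperbolic plane $\langle e,f\rangle$ (meaning $m(e,f)=-1$). Its three non-trivial elements $e,f,ef$ are pairwise non-orthogonal, since $m(e,ef)=m(e,e)m(e,f)=-1$ and likewise $m(f,ef)=-1$; and the relation $m(e,f)=\mu(e)\mu(f)\mu(ef)=-1$ forces an odd number of $\mu(e),\mu(f),\mu(ef)$ to equal $-1$. So either exactly one of them equals $-1$ — in which case the remaining two non-trivial elements form a non-orthogonal pair with $\mu=1$ and we are done — or all three equal $-1$. Applying this to $H_1=\langle e_1,f_1\rangle$ and to $H_2=\langle e_2,f_2\rangle$ disposes of every case except the one where both planes are entirely of the all-$\mu=-1$ type.

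That last case is the only genuine obstacle, and it is precisely where the second hyperbolic plane, hence the hypothesis $\rank(F/\ker m)>2$, is indispensable: a single all-$\mu=-1$ plane contains no non-orthogonal pair of isotropic vectors. Here I would cross the two planes. Since $e_1,e_2$ are $m$-orthogonal with $\mu(e_1)=\mu(e_2)=-1$, the relation gives $\mu(e_1e_2)=m(e_1,e_2)\mu(e_1)\mu(e_2)=1$, and similarly $\mu(f_1e_2)=1$; a short bilinearity computation yields $m(e_1e_2,f_1e_2)=m(e_1,f_1)=-1$. Hence $x=e_1e_2$ and $y=f_1e_2$ are the required pair, completing all cases and the proof.
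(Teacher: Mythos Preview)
Your proof is correct. Both your argument and the paper's reduce the problem to finding two elements $x,y\in F$ with $\mu(x)=\mu(y)=1$ and $m(x,y)=-1$, then invoke Lemma~\ref{Type BD: Klein four}(2). The routes to this pair differ: the paper first passes to a complement of $\ker m$ (so that $m$ becomes non-degenerate on $F$ itself), then observes that if \emph{every} non-trivial element had $\mu=-1$ one would get $m(x,y)=-1$ for all distinct non-trivial $x,y$, contradicting bilinearity in rank $\geq 4$; once a $\mu=1$ element $x$ is in hand, any $z$ with $m(x,z)=-1$ yields the desired Klein four group $\langle x,z\rangle$. Your argument instead lifts a symplectic basis from $F/\ker m$ and performs a case analysis on the two hyperbolic planes, crossing them in the residual ``all $\mu=-1$'' case. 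The paper's contradiction argument is a bit slicker and avoids the case split; your version is more explicit and exhibits the pair constructively. Either way the essential content is the same quadratic-form computation.
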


\begin{proof}
Choose a subgroup $F''\subset F$ such that $F=\ker m \times F''$, then $\ker(m_{F''})=1$ 
and $\rank F''>2$. Replace $F$ by $F''$, we may assume that $\ker m=1$ and $\rank F>2$. 

We first show that, there exists $1\neq x\in F$ with $\mu(x)=1$. From $\rank F>2$, we get  
$\rank F\geq 4$ since it is even ($m_{F}$ is non-degenerate). Suppose any $1\neq x\in F$ has 
$\mu(x)=-1$, then for any distinct non-trivial elements $x,y\in F$, $m(x,y)=\mu(x)\mu(y)\mu(xy)=-1$ 
by Lemma \ref{D bilinear}, which contradicts to $m$ is bi-linear on $F$.

Upon we get $1\neq x\in F$ with $\mu(x)=1$, choose any $z\in F$ with $m(x,z)=-1$. Then 
$\mu(xz)\mu(z)=m(x,z)\mu(x)=-1$. So exactly one of $\mu(z),\mu(xz)$ is equal to -1. By Lemma 
\ref{Type BD: Klein four}, we have $\langle x,z\rangle\sim\Gamma_1$.
\end{proof}

\begin{lemma}\label{Sub F1-2}
Let $F$ be a non-trivial elementary abelian 2-subgroup of $\O(n)/\langle-I\rangle$, if 
$\rank(\ker m/A_{F})=1$ and $\rank(F/A_{F})>1$, there exists a Klein four subgroup 
$F'\subset F$ with $F'\sim\Gamma_1$. 
\end{lemma}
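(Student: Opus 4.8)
The plan is to produce the desired subgroup explicitly: first locate any pair $x,y\in F$ with $m(x,y)=-1$, then adjust the two generators, using an element of $\ker m$ on which $\mu$ is non-trivial, so that both acquire $\mu$-value $1$. Once both generators have $\mu=1$ and $m(x,y)=-1$, Lemma \ref{Type BD: Klein four}(2) immediately identifies $\langle x,y\rangle$ with $\Gamma_1$. This is the exact analogue of Lemma \ref{Sub F1}, except that here the hypothesis on $\ker m$ furnishes a cheap ``sign toggle'' that replaces the bilinearity argument used there.

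First I would do the rank bookkeeping. Since $A_{F}\subset\ker m\subset F$ are $\mathbb{F}_{2}$-vector spaces, $\rank(F/\ker m)=\rank(F/A_{F})-\rank(\ker m/A_{F})=\rank(F/A_{F})-1\geq 1$, so $\ker m\subsetneq F$. By the very definition of $\ker m$, any element of $F\setminus\ker m$ pairs non-trivially with something, so I can fix $x,y\in F$ with $m(x,y)=-1$.

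Next comes the key mechanism, and this is where the hypothesis $\rank(\ker m/A_{F})=1$ is used: it says $\mu|_{\ker m}\neq 1$, so I may choose $w\in\ker m$ with $\mu(w)=-1$. For any $z,t\in F$, bilinearity (Lemma \ref{D bilinear}) and $w\in\ker m$ give $m(zw,t)=m(z,t)m(w,t)=m(z,t)$, so right-multiplying a generator by $w$ disturbs no value of $m$; and from $1=m(z,w)=\mu(z)\mu(w)\mu(zw)$ one gets $\mu(zw)=\mu(z)\mu(w)=-\mu(z)$. Thus multiplication by $w$ flips the $\mu$-value of a generator while leaving all $m$-values intact. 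Starting from the pair above, I replace $x$ by $xw$ if $\mu(x)=-1$ and, independently, $y$ by $yw$ if $\mu(y)=-1$; after these (at most two) substitutions $\mu(x)=\mu(y)=1$ and still $m(x,y)=-1$. Since $m(x,y)=-1\neq 1=m(x,x)$, the elements $x,y$ are distinct and non-trivial, so $F'=\langle x,y\rangle$ is a genuine Klein four subgroup, and Lemma \ref{Type BD: Klein four}(2) yields $(x,y)\sim([I_{\frac{n}{2},\frac{n}{2}}],[J'_{\frac{n}{2}}])$, i.e. $F'\sim\Gamma_1$.

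I do not expect a serious obstacle once the toggle is noticed; the entire argument rests on the observation that an element of the radical $\ker m$ with $\mu=-1$ acts as a free, independent sign change on the $\mu$-values of generators without perturbing $m$. The only points needing care are the rank count that guarantees a pair with $m=-1$ and the verification that applying the toggle to both generators still leaves $m(x,y)$ unchanged, both of which are immediate because $w$ lies in $\ker m$.
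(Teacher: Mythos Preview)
Your proof is correct. Both you and the paper ultimately exploit the same device: an element $w\in\ker m$ with $\mu(w)=-1$ multiplies $\mu$-values by $-1$ while leaving all $m$-values fixed, so it can be used to turn any pair with $m(x,y)=-1$ into one with $\mu(x)=\mu(y)=1$.

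The paper, however, takes a longer route: it first passes to a complement of $A_F$ to reduce to $A_F=1$, then writes $F=\ker m\times F''$ with $m|_{F''}$ non-degenerate, and splits into cases. When $\rank F''>2$ or $F''\sim\Gamma_1$ it invokes Lemma~\ref{Sub F1}; only in the remaining case $F''\sim\Gamma_2$ does it apply the toggle (multiplying the two generators of $F''$ by the non-trivial element of $\ker m$). Your argument bypasses both the reduction and the case analysis by applying the toggle uniformly to \emph{any} pair with $m(x,y)=-1$, which makes the dependence on Lemma~\ref{Sub F1} disappear entirely. The gain is a shorter, self-contained proof; the paper's version has the minor expository advantage of making the structure $F=\ker m\times F''$ explicit, which is reused in the proof of Proposition~\ref{Type BD: classification}.
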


\begin{proof}
Choose a subgroup $F''\subset F$ such that $F=A_{F} \times F''$, then $\rank\ker(m_{F''})=1$, 
$A_{F''}=1$ and $\rank F''>1$. Replace $F$ by $F''$, we may assume that $A_{F}=1$, 
$\rank\ker m=1$ and $\rank F>1$. 
 
$F$ is of the form $F=\ker m\times F''$ with $m(\ker m, F'')=1$, $\rank F''\geq 2$, and 
$m_{F''}$ non-degenerate. When $\rank F''>2$ or $F''\sim\Gamma_1$, there exists 
$F'\subset F'$ with $F'\sim\Gamma_1$ by Lemma \ref{Sub F1}. Otherwise $F''\sim\Gamma_2$. 
Choose $x,y\in F''$ generating $F''$ and $1\neq z\in \ker m$, then 
$F'=\langle xz,yz\rangle\sim\Gamma_1$ since $(\mu(xz),\mu(yz),\mu(xy))=(1,1,-1)$.
\end{proof}

\begin{lemma}\label{L:O-Sp-U}
For any $n\geq 1$ , let $T:\O(n)\hookrightarrow\U(n)$, $T':\Sp(n/2)\hookrightarrow\U(n)$) 
be the natural inclusions. Then for any two subgroups $S_1,S_2\subset\O(n)$ or  
$S'_1,S'_2\subset\Sp(n/2)$
\[T(S_1)\sim T(S_2)\Leftrightarrow S_1\sim S_2,\] 
\[T'(S'_1)\sim T'(S'_2)\Leftrightarrow S'_1\sim S'_2.\]
\end{lemma}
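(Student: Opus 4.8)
The plan is to imitate the trace (character) criterion already used in the proof of Lemma \ref{Diagonal map}. First I would dispose of the trivial implications: if $S_1\sim S_2$ in $\O(n)$ (resp. $S'_1\sim S'_2$ in $\Sp(n/2)$), the conjugating element already lies in $\U(n)$, so $T(S_1)\sim T(S_2)$ (resp. $T'(S'_1)\sim T'(S'_2)$). Thus only the reverse implications carry content, and I would reduce both to a statement about characters.

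For the orthogonal case I would establish, exactly as in Lemma \ref{Diagonal map}, the equivalence
\[
S_1\sim S_2 \text{ in } \O(n)\ \Longleftrightarrow\ \exists\ \text{an isomorphism } \phi\colon S_1\to S_2 \text{ with } \tr\phi(x)=\tr x \ \text{for all } x\in S_1,
\]
using that two real orthogonal representations of a compact group are carried into one another by an element of $\O(n)$ precisely when they have equal characters; here one uses that real representations are determined up to isomorphism by their characters, and that any isomorphism of orthogonal modules can be made orthogonal (polar decomposition / the unitarian trick). The identical statement over $\bbC$ gives that $T(S_1)\sim T(S_2)$ in $\U(n)$ iff some isomorphism $\phi$ preserves the complex trace. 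The crux is that for a real matrix $A\in\O(n)$ the real trace of $A$ equals the complex trace of $T(A)\in\U(n)$; hence the two trace conditions are literally the same function on $S_1$, and the two conjugacy statements coincide. I would then treat the symplectic case in parallel, replacing ``real orthogonal module'' by ``quaternionic (symplectic) module'' on $\bbH^{n/2}$: two subgroups of $\Sp(n/2)$ are conjugate iff there is an isomorphism between them under which the tautological quaternionic modules correspond, such an isomorphism being realizable by an element of $\Sp(n/2)$, and the character of this module again equals the complex trace of the corresponding element of $\U(n)$.

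The main obstacle is the backward implication in the symplectic case. Equal complex traces yield an isomorphism of the underlying complex $\U(n)$-modules, and I must upgrade it to an isomorphism of \emph{quaternionic} modules, i.e. one commuting with the quaternionic structure $J$. This is the assertion that the invariant quaternionic structure on a complex representation is unique up to the action of the commutant, equivalently that complexification is injective on isomorphism classes of quaternionic representations. I would obtain it either from the standard Frobenius--Schur decomposition of representations into real, complex and quaternionic types, or algebraically: writing the two structures as $J_2=cJ_1$ with $c$ in the commutant $D=\End_{S'_1}(\bbC^{n})$, one checks the cocycle relation $c\,\tau(c)=1$ for the conjugate-linear involution $\tau(a)=J_1 a J_1^{-1}$ of $D$, and the desired intertwiner $A$ with $A\tau(A)^{-1}=c$ exists because $H^{1}$ of $\GL$ over $\bbR$ vanishes (Hilbert 90) on each simple factor of $D$. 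The orthogonal case needs only the classical and more elementary fact that real representations are determined by their characters, so no genuine difficulty arises there.
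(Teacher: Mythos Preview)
Your proposal is correct. The paper itself does not give a proof of this lemma: it simply records it as a classical fact and refers to \cite{Yu}. Your argument via characters is the standard one and is exactly in the spirit of the trace criterion the paper already used for Lemma~\ref{Diagonal map}; the only substantive point is the one you isolate, namely that a complex isomorphism between two quaternionic (or real orthogonal) modules can be upgraded to a quaternionic (or real) isomorphism, which is the Frobenius--Schur statement that complexification is injective on isomorphism classes of real and of quaternionic representations of a compact group. Your cohomological sketch for the quaternionic case is fine, though invoking Frobenius--Schur directly is cleaner and is presumably what the cited reference does.
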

\begin{proof}
This is a classical fact, one may refer \cite{Yu} for a proof.    
\end{proof}

\begin{prop}\label{Type BD: classification}
Let $F$ be an elementary abelian 2-subgroup of $\O(n)/\langle-I\rangle$,
\begin{itemize}
\item[(1)]{when $\ker m=1$, the conjugacy class of $F$ is determined by 
$\delta_{F}$ and $s_{F}$;}
\item[(2)]{in general, $\ker m$ is diagonalizable and the conjugacy class 
of $F$ is determined by the conjugacy class of $A_{F}$ and the invariants 
$(\epsilon_{F},\delta_{F},s_{F})$.}
\item[(3)]{we have $\defe(F)=(1-\epsilon_{F})(-1)^{\delta_{F}}2^{r_{F}+s_{F}+\delta_{F}}$.}
\end{itemize}
\end{prop}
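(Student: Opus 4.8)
The plan is to treat $(F,\mu,m)$ as a quadratic space over $\mathbb{F}_2$ and to imitate the strategy of Proposition~\ref{Type A: classification}. The relation $m(x,y)=\mu(x)\mu(y)\mu(xy)$ of Lemma~\ref{D bilinear} says exactly that $\mu$ is a quadratic form with polarization $m$, so $\ker m$ is the radical and $\mu|_{\ker m}$ is a linear character (because $m(x,y)=1$ for $x,y\in\ker m$) with kernel $A_F$. Two elementary facts will be used throughout. First, $\mu$ descends to $F/A_F$, since $\mu(xa)=\mu(x)$ for $a\in A_F$; the radical of $F/A_F$ is $\ker m/A_F$, of rank $\epsilon_F\in\{0,1\}$, with $\mu=-1$ on it exactly when $\epsilon_F=1$. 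Second, because $\mu(xy)=\mu(x)\mu(y)$ whenever $m(x,y)=1$, the defect $\defe=\sum_x\mu(x)$ is multiplicative over $m$-orthogonal direct products. I will prove (1) by induction, peeling off copies of $\Gamma_1$; deduce (2) by splitting off the diagonalizable radical and peeling again; and obtain (3) by evaluating the defect on the resulting orthogonal decomposition.

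For (1), assume $\ker m=1$, so $m$ is nondegenerate and $\rank F=2(s_F+\delta_F)$ is even; I induct on $\rank F$. If $\rank F=2$, then $F$ is Klein four with $m$ nondegenerate, and after possibly replacing a generator $y$ by $xy$ so that two generators share a common value of $\mu$, Lemma~\ref{Type BD: Klein four} gives $F\sim\Gamma_1$ (common value $1$, hence $\defe=2$, $(\delta_F,s_F)=(0,1)$) or $F\sim\Gamma_2$ (common value $-1$, $\defe=-2$, $(\delta_F,s_F)=(1,0)$), and $(\delta_F,s_F)$ separates these. If $\rank F\ge 4$, Lemma~\ref{Sub F1} yields a subgroup $\sim\Gamma_1$; conjugating it to the standard $\Gamma_1$ and using $(\O(n)/\langle-I\rangle)^{\Gamma_1}=\Delta(\O(n/2)/\langle-I\rangle)\times\Gamma_1$, I write $F=\Delta(F^{\flat})\times\Gamma_1$ with $\ker m_{F^{\flat}}=1$. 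Since $\Delta$ preserves squares, hence $\mu$, one gets $\defe F=2\defe F^{\flat}$, so $\delta_{F^{\flat}}=\delta_F$ and $s_{F^{\flat}}=s_F-1$; by induction $F^{\flat}$ is conjugate to a standard model determined by $(\delta_F,s_F-1)$, and this conjugacy lifts to $F$ by combining Lemmas~\ref{Diagonal map} and~\ref{L:O-Sp-U} (which show $\Delta$ reflects conjugacy of orthogonal subgroups). Thus $F\sim\Gamma_1^{s_F}$ when $\delta_F=0$ and $F\sim\Gamma_2\perp\Gamma_1^{s_F}$ when $\delta_F=1$; since $\delta_F,s_F$ are conjugacy invariants (as noted after the Definition), this proves (1).

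For (2), I first note $\ker m$ is diagonalizable exactly as in Proposition~\ref{Type A: classification}(2): any representatives of $x,y\in\ker m$ satisfy $[A,B]=m(x,y)I=I$, so $\pi^{-1}(\ker m)$ is abelian, hence simultaneously diagonalizable. I then peel off copies of $\Gamma_1$ from $F$, using Lemma~\ref{Sub F1} when $\epsilon_F=0$ and Lemma~\ref{Sub F1-2} when $\epsilon_F=1$, together with the centralizer formulas for $\Gamma_1$ and $\Gamma_2$ and the diagonal-map lemmas, until the residual group is its own radical. When $\epsilon_F=0$ (so $A_F=\ker m$) the residual core is trivial if $\delta_F=0$ and a single $\Gamma_2$ if $\delta_F=1$, giving $F\sim A_F\perp\Gamma_1^{s_F}$ or $F\sim A_F\perp\Gamma_2\perp\Gamma_1^{s_F}$. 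When $\epsilon_F=1$ (so $\delta_F=0$) Lemma~\ref{Sub F1-2} lets me peel until $\rank(F/A_F)=1$, leaving one radical generator $k$ with $\mu(k)=-1$ (any representative squares to $-I$); thus $\ker m=A_F\times\langle k\rangle$ and $F\sim\ker m\perp\Gamma_1^{s_F}$. In every case the class of $F$ is determined by the class of the diagonalizable group $\ker m$ and by $(\epsilon_F,\delta_F,s_F)$. It remains to replace $\ker m$ by $A_F$: for $\epsilon_F=0$ this is the identity $\ker m=A_F$, and for $\epsilon_F=1$ the generator $k$ is a complex structure commuting with $A_F$, unique up to $C_G(A_F)$-conjugacy, so the class of $\ker m$ is determined by that of $A_F$. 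Hence the class of $F$ is determined by the class of $A_F$ and $(\epsilon_F,\delta_F,s_F)$, proving (2).

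Finally, (3) follows by evaluating the multiplicative defect on the orthogonal decomposition found above: $\defe(A_F)=2^{r_F}$, $\defe(\Gamma_1^{s_F})=2^{s_F}$, and the remaining core factor contributes $1$ when $(\epsilon_F,\delta_F)=(0,0)$, $\defe(\Gamma_2)=-2$ when $(\epsilon_F,\delta_F)=(0,1)$, and $\defe(\langle k\rangle)=1+(-1)=0$ when $\epsilon_F=1$. Multiplying gives $\defe F=2^{r_F+s_F}$, $-2^{r_F+s_F+1}$, $0$ respectively, i.e. $\defe F=(1-\epsilon_F)(-1)^{\delta_F}2^{r_F+s_F+\delta_F}$; in particular $\defe F=0$ iff $\epsilon_F=1$, that is iff $\mu|_{\ker m}\neq 1$. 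I expect the main obstacle to be the last reduction in (2): showing that the conjugacy class of the diagonalizable radical $\ker m$ is controlled by the class of $A_F$ and the single bit $\epsilon_F$, i.e. that the $\mu=-1$ radical generator $k$ arising when $\epsilon_F=1$ is unique up to $C_G(A_F)$-conjugacy (where the uniqueness of a compatible complex structure, and hence Lemma~\ref{L:outer A}, enters), and in verifying that the $\Gamma_1$-peeling is compatible with the recorded class of $A_F$ through the diagonal-embedding lemmas.
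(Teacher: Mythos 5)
Your proposal is correct and follows essentially the same route as the paper's own proof: part (1) by induction, peeling off a standard $\Gamma_1$ via Lemma \ref{Sub F1} and the centralizer formula $(\O(n)/\langle-I\rangle)^{\Gamma_1}=\Delta(\O(\frac{n}{2})/\langle-I\rangle)\times\Gamma_1$; part (2) by splitting off the diagonalizable radical, invoking Lemma \ref{Sub F1-2} in the $\epsilon_{F}=1$ case and transferring the class of $\ker m$ through Lemmas \ref{Diagonal map} and \ref{L:O-Sp-U}; and part (3) from the resulting normal form. You merely make explicit two points the paper leaves terse --- the multiplicativity of $\defe$ over $m$-orthogonal products and the reduction from $\ker m$ to $A_{F}$ via uniqueness up to conjugacy of the commuting complex structure $k$ (for which the relevant fact is $K^{2}=-I\Rightarrow K\sim J_{n/2}$, i.e.\ the first part of Lemma \ref{Type BD: Klein four}, rather than Lemma \ref{L:outer A}).
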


\begin{proof}
For $(1)$, since $\ker m=1$, $\rank F$ is even. When $\rank F=2$, 
$F\sim\Gamma_1\textrm{ or }\Gamma_2$ by Lemma \ref{Type A: Klein four}. When 
$\rank F\geq 2$, there exists a Klein four subgroup $F'\subset F$ with 
$F'\sim\Gamma_1$ by Lemma \ref{Sub F1}. We may assume that, $\Gamma_1\subset F$, 
then $F\subset(\O(n)/\langle-I\rangle)^{\Gamma_1}=
\Delta(\O(\frac{n}{2})/\langle-I\rangle)\times\Gamma_1$. So $F=\Delta(F')\times\Gamma_1$ 
for some $F'\subset\O(\frac{n}{2})/\langle-I\rangle$. By induction, we 
can show $\defe F\neq 0$ and the conjugacy class of $F$ is determined 
by $\delta_{F}$ and $s_{F}$. 

For $(2)$, $\ker m$ is diagonalizable since $\pi^{-1}(\ker m)$ is abelian by the definition
of $m$, where $\pi$ is the natural projection $\pi:\O(n)\longrightarrow\O(n)/
\langle-I\rangle$. $F$ is of the form $F=\ker m\times F'$ with $m|_{F'}$ 
non-degenerate.  When $\epsilon_{F}=1$, by Lemma \ref{Sub F1-2}, $F$ is of the form 
$F=\ker m\times F'$ with $m_{F'}$ non-degenerate and $\defe F'>0$. 
By $(1)$, the class of $F'$ is determined by $s_{F}=\frac{\rank F'}{2}$. 
We have $(\O(n)/\langle-I\rangle)^{F'}=\Delta(\O(n')/\langle-I\rangle)\times F'$, where 
$n'=\frac{n}{2^{\frac{\rank F'}{2}}}$. Fix $F'$, by Lemmas \ref{Diagonal map} and 
\ref{L:O-Sp-U}, the class of $\ker m$ in $\O(n)/\langle-I\rangle$ determines the class of 
it in $(\O(n)/\langle-I\rangle)^{F'}$. Moreover, as $\epsilon_{F}=1$ is given, the class of 
$\ker m$ is determined by the class of $A_{F}=\ker\mu|_{\ker m}$. So the conjugacy class of 
$F$ is determined by that of $A_{F}$ and the invariants $(\delta_{F},s_{F})$ (and 
$\epsilon_{F}=1$ in this case). The proof for $\epsilon_{F}=0$ case is similar as that 
for $\epsilon_{F}=1$ case. 

(3) follows from Lemma \ref{Type BD: Klein four} and (2). 
\end{proof}

The classification of elementary abelian 2-subgroup of 
$G=\Sp(n)/\langle-I\rangle$ is similar as that for $\O(n)/\langle-I\rangle$. 
We give the definitions and results below but omit the proofs. 

Let $F$ be an elementary abelian 2 subgroup of 
$G=\Sp(n)/\langle-I\rangle,n\geq 2$. For any $x\in F$, choose 
$A\in\Sp(n)$ representing $x$, then $A^{2}=\lambda_{A} I$ for some 
$\lambda_{A}=\pm{1}$. For any $x,y\in F$, choose $A,B\in\Sp(n)$ 
representing $x,y$, then $[A,B]=\lambda_{A,B} I$ for some 
$\lambda_{A,B}=\pm{1}$. The values of $\lambda_{A},\lambda_{A,B}$ 
are obviously independent with the choice of $A,B$. For any $x\in F$, 
let $\mu(x)=\lambda_{A}$; for any $x,y\in F$, let $m(x,y)=\lambda_{A,B}$.

\begin{lemma}\label{C bilinear}
For any $x,y,z\in F$, $m(x,x)=\mu(x)=1$, $m(xy,z)=m(x,z)m(y,z)$, and   
\[m(x,y)=\mu(x)\mu(y)\mu(xy).\] 
\end{lemma}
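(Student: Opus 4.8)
The plan is to follow verbatim the arguments of Lemmas~\ref{A bilinear} and~\ref{D bilinear}, the only structural input being that $-I$ is a central element of order two in $\Sp(n)$ and that every lift $A\in\Sp(n)$ of an involution $x\in F$ satisfies $A^{2}\in\langle-I\rangle$; since this is exactly the situation of the orthogonal case with $\O(n)$ replaced by $\Sp(n)$, no new phenomenon occurs. First I would recall that $\mu$ and $m$ are well defined: a lift $A$ of $x$ is unique up to sign and $(-A)^{2}=A^{2}$, while $[-A,B]=[A,-B]=[A,B]$, so neither $\lambda_{A}$ nor $\lambda_{A,B}$ depends on the chosen lifts. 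Taking $A=I$ as the lift of the identity gives $\mu(1)=1$, and for any $x$ with lift $A$ one has $[A,A]=I$, hence $m(x,x)=1$.

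For bilinearity I would reproduce the commutator identity of Lemma~\ref{A bilinear}. Choose lifts $A,B,C\in\Sp(n)$ of $x,y,z$ and write $[A,C]=\lambda_{1}I$, $[B,C]=\lambda_{2}I$ with $\lambda_{1},\lambda_{2}=\pm1$. Since scalar matrices are central,
\[[AB,C]=A[B,C]A^{-1}[A,C]=A(\lambda_{2}I)A^{-1}(\lambda_{1}I)=\lambda_{1}\lambda_{2}I,\]
and as $AB$ is a lift of $xy$ this yields $m(xy,z)=m(x,z)m(y,z)$; the analogous identity in the second variable follows the same way (or from the compatibility relation below together with antisymmetry).

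The one genuinely new ingredient, relative to the $\U(n)$ case where lifts can be normalised to square to $I$, is the compatibility relation, and I expect this to be the only step requiring care. Choose lifts $A,B$ of $x,y$, so that $A^{2}=\mu(x)I$, $B^{2}=\mu(y)I$, and $(AB)^{2}=\mu(xy)I$ because $AB$ lifts $xy$. From $[A,B]=m(x,y)I$ and the centrality of scalars we get $BA=m(x,y)AB$, whence
\[(AB)^{2}=A(BA)B=m(x,y)A^{2}B^{2}=m(x,y)\mu(x)\mu(y)I.\]
Comparing with $(AB)^{2}=\mu(xy)I$ gives $\mu(xy)=m(x,y)\mu(x)\mu(y)$, and multiplying through by $\mu(x)\mu(y)$ (all values being $\pm1$) yields $m(x,y)=\mu(x)\mu(y)\mu(xy)$. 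The main obstacle is purely bookkeeping: one must keep track of the scalar $\mu$ throughout, since---unlike in $\PU(n)$---the class $[J_{n/2}]$ has $\mu=-1$ and cannot be represented by an element squaring to $I$; but because every quantity in sight is a central scalar $\pm I$, the computation closes exactly as in the orthogonal case.
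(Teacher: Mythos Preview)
Your proof is correct and follows exactly the approach the paper intends: the paper omits the proof entirely, stating that the symplectic case is handled identically to the orthogonal one (Lemma~\ref{D bilinear}), whose proof in turn is declared ``similar as that for~\ref{A bilinear}''. You have simply written out in full the commutator and squaring computations that the paper leaves implicit, and no step diverges from that intended argument.
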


\begin{lemma}\label{Type C: Klein four}
For $x\in F$, $\mu(x)=-1$ if and only if 
$x\sim [J_{\frac{n}{2}}]$.

For $x,y\in F$ with $m(x,y)=-1$, 
\begin{itemize}
\item[(1)]{when $\mu(x)=\mu(y)=-1$, $(x,y)\sim
([\textbf{i}I],[\textbf{j}I])$;} 
\item[(2)]{when $\mu(x)=\mu(y)=1$, $(x,y)\sim
([I_{\frac{n}{2},\frac{n}{2}}],
[J'_{\frac{n}{2}}])$.} 
\end{itemize}
\end{lemma}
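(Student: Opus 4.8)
The plan is to mirror the proof of Lemma \ref{Type BD: Klein four}, replacing the orthogonal complex structure $J_{\frac{n}{2}}$ by the quaternionic scalars $\textbf{i}I,\textbf{j}I$ wherever the quaternionic structure is available. For the first assertion, the direction $x\sim[J_{\frac{n}{2}}]\Rightarrow\mu(x)=-1$ is immediate, since $J_{\frac{n}{2}}^{2}=-I$. Conversely, if $\mu(x)=-1$ I would choose $A\in\Sp(n)$ with $A^{2}=-I$ representing $x$; such an $A$ is a complex structure compatible with the quaternionic Hermitian form, and the set $\{A\in\Sp(n):A^{2}=-I\}$ is a single $\Sp(n)$-conjugacy class (it is the connected compact symmetric space $\Sp(n)/\U(n)$). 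Hence $A\sim J_{\frac{n}{2}}$, so $x\sim[J_{\frac{n}{2}}]$; in particular $\textbf{i}I$, which also squares to $-I$, is a representative of this class.

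For the pair with $m(x,y)=-1$, case $(2)$ where $\mu(x)=\mu(y)=1$ is handled exactly as in Lemma \ref{Type A: Klein four}. Representing $x$ by $A$ with $A^{2}=I$ one may take $A=I_{p,n-p}$; the relation $[A,B]=-I$ forces $ABA^{-1}=-B$, hence a block anti-diagonal form for $B$ and $p=\frac{n}{2}$, and a diagonal conjugation normalizes $B$ to $J'_{\frac{n}{2}}$. This argument uses only the real block structure of the representatives and is therefore valid verbatim inside $\Sp(n)$, giving $(x,y)\sim([I_{\frac{n}{2},\frac{n}{2}}],[J'_{\frac{n}{2}}])$.

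For case $(1)$, where $\mu(x)=\mu(y)=-1$, I would first use the first assertion to assume $x=[\textbf{i}I]$. Then $C_{\Sp(n)}(\textbf{i}I)=\U(n)$ (the complex unitary matrices, i.e.\ those commuting with $\textbf{i}$) and $\mathfrak{sp}(n)^{x}=\mathfrak{u}(n)$. Writing $y=[B]$, the condition $m(x,y)=-1$ says $B$ anticommutes with $\textbf{i}I$, so $B=C\textbf{j}$ with $C\in\U(n)$; a short computation gives $B^{2}=-C\bar C$, whence $\mu(y)=-1$ is equivalent to $C\bar C=I$, that is, $C$ symmetric. The induced involution $\Ad(B)|_{\mathfrak{u}(n)}\colon X\mapsto C\bar X C^{-1}$ restricts to an outer involution of $\mathfrak{su}(n)$, and the symmetry of $C$ (as opposed to antisymmetry) pins it to the $\tau_0$-class of Lemma \ref{L:outer A}, whose fixed subalgebra is $\mathfrak{so}(n)$ rather than $\mathfrak{sp}(n/2)$. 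By Lemma \ref{L:outer A}, after replacing $y$ by $gyg^{-1}$ for a suitable $g\in G^{x}$ we may assume $\Ad(B)$ is standard complex conjugation, i.e.\ $C=I$ and $y=[\textbf{j}I]$; concretely this is the transitivity of the $\U(n)$-action $C\mapsto gC\bar g^{-1}$ on symmetric unitary matrices (the symmetric space $\U(n)/\O(n)$), which solves $C=g^{-1}\bar g$. Thus $(x,y)\sim([\textbf{i}I],[\textbf{j}I])$.

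I expect the main obstacle to be the quaternionic bookkeeping in case $(1)$: correctly identifying $C_{\Sp(n)}(\textbf{i}I)$ with $\U(n)$ and $\mathfrak{sp}(n)^{x}$ with $\mathfrak{u}(n)$, tracking the left/right quaternionic conjugations to see that $\mu(y)=-1$ is precisely the symmetry of $C$, and then matching this symmetry with the correct ($\tau_0$ versus $\tau'_0$) conjugacy class of outer involutions so that the fixed algebra comes out as $\mathfrak{so}(n)$. Once the type is isolated via Lemma \ref{L:outer A}, the remaining normalization $y\to[\textbf{j}I]$ reduces to the transitivity statement above, which is routine.
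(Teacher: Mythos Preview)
Your proposal is correct and follows exactly the route the paper intends: the paper does not give a separate proof of this lemma, stating only that the $\Sp(n)/\langle-I\rangle$ case is ``similar as that for $\O(n)/\langle-I\rangle$'' and that the proofs are omitted. Your adaptation of Lemma~\ref{Type BD: Klein four}---normalizing to $x=[\textbf{i}I]$, identifying $\mathfrak{sp}(n)^{x}\cong\mathfrak{u}(n)$, writing $y=[C\textbf{j}]$ with $C\in\U(n)$, reading off the symmetry of $C$ from $\mu(y)=-1$, and invoking Lemma~\ref{L:outer A} (equivalently the transitivity of $\U(n)$ on symmetric unitary matrices) to reach $y=[\textbf{j}I]$---is precisely the quaternionic translation the paper has in mind, and the bookkeeping you flag as the main obstacle is carried out correctly.
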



\begin{definition}
For an elementary abelian 2-group $F\subset G=\Sp(n)/\langle-I\rangle$, 
define $A_{F}=\ker(\mu|_{\ker m})$ and the defect index 
\[\defe F=|\{x\in F: \mu(x)=1\}|-|\{x\in F:\mu(x)=-1\}|.\]  
Define $(\epsilon_{F},\delta_{F})$ as follows,  
\begin{itemize} 
\item when $\mu|_{\ker m}\neq 1$, define $(\epsilon_{F},\delta_{F})=(1,0)$;
\item when $\mu|_{\ker m}=1$ and $\defe F<0$, define $(\epsilon_{F},\delta_{F})=(0,1)$;
\item when $\mu|_{\ker m}=1$ and $\defe F>0$, define $(\epsilon_{F},\delta_{F})=(0,0)$.
\end{itemize} 

Define $r_{F}=\rank A_{F}$, $s_{F}=\frac{1}{2}\rank(F/\ker m)-\delta_{F}$.
\end{definition}

\begin{prop}\label{Type C: classification}
Let $F$ be an elementary abelian 2 subgroup of $\Sp(n)/\langle-I\rangle$,
\begin{itemize}
\item[(1)]{when $\ker m=1$, the conjugacy class of $F$ is determined by 
$\delta_{F}$ and $s_{F}$;}
\item[(2)]{in general, $\ker m$ is diagonalizable and the 
conjugacy class of $F$ is determined by the conjugacy class of $A_{F}$ and 
the invariants $(\epsilon_{F},\delta_{F},s_{F})$.}
\item[(3)]{we have $\defe(F)=(1-\epsilon_{F})(-1)^{\delta_{F}}2^{r_{F}+s_{F}+\delta_{F}}$.}
\end{itemize}
\end{prop}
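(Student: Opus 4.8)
The plan is to follow the structure of the proof of Proposition \ref{Type BD: classification} essentially verbatim, since the author explicitly states that the symplectic case is "similar" and only omits the routine adaptations. The whole argument rests on three ingredients that have direct symplectic analogues already recorded in the excerpt: Lemma \ref{C bilinear} (giving the bilinear form $m$ and the compatibility $m(x,y)=\mu(x)\mu(y)\mu(xy)$), Lemma \ref{Type C: Klein four} (the normal forms for Klein four subgroups, with the difference that the $\mu(x)=\mu(y)=-1$ case yields $([\mathbf{i}I],[\mathbf{j}I])$ rather than $([J_{n/2}],[K_{n/4}])$), and Lemma \ref{L:O-Sp-U} (relating conjugacy in $\Sp(n/2)$ to conjugacy in $\U(n)$). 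I would first establish the symplectic analogues of Lemmas \ref{Sub F1} and \ref{Sub F1-2}: namely, that inside a subgroup $F\subset\Sp(n)/\langle-I\rangle$ with large enough $\rank(F/\ker m)$ one can always locate a Klein four subgroup conjugate to $\Gamma_1=\langle[I_{n/2,n/2}],[J'_{n/2}]\rangle$. The existence-of-an-element-with-$\mu=1$ argument is identical: if every nonidentity element had $\mu=-1$, then $m$ would fail to be bilinear, contradicting Lemma \ref{C bilinear}.

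For part (1), I would induct on $n$. When $\rank F=2$ the classification is immediate from Lemma \ref{Type C: Klein four}, which says $F\sim\Gamma_1$ or $F\sim\Gamma_2$ according to whether $\defe F>0$ or $\defe F<0$. When $\rank F>2$ I would extract a copy of $\Gamma_1$ via the symplectic analogue of Lemma \ref{Sub F1}, pass to the fixed-point subgroup $(\Sp(n)/\langle-I\rangle)^{\Gamma_1}=\Delta(\Sp(n/2)/\langle-I\rangle)\times\Gamma_1$, and write $F=\Delta(F')\times\Gamma_1$. Here I would need the symplectic versions of the centralizer computations analogous to the displayed formulas for $\Gamma_1,\Gamma_2$ in the orthogonal case; these follow from the normal forms together with direct computation of fixed-point subalgebras. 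The inductive hypothesis then pins down the class of $F'$ by $\delta_{F'}$ and $s_{F'}$, and one checks that $(\delta_F,s_F)$ transforms correctly and that $\defe F\neq 0$, which simultaneously proves the remark preceding the proposition that $\defe F=0$ iff $\mu|_{\ker m}\neq 1$.

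For part (2), since $\pi^{-1}(\ker m)$ is abelian by the definitions of $m$ and $\ker m$ (where $\pi:\Sp(n)\to\Sp(n)/\langle-I\rangle$), it is diagonalizable, hence so is $\ker m$. I would decompose $F=\ker m\times F'$ with $m|_{F'}$ nondegenerate, and when $\epsilon_F=1$ use the symplectic analogue of Lemma \ref{Sub F1-2} to arrange $\defe F'>0$. Part (1) fixes the class of $F'$ via $s_F=\tfrac12\rank F'$, and the centralizer $(\Sp(n)/\langle-I\rangle)^{F'}$ again splits as $\Delta(\Sp(n')/\langle-I\rangle)\times F'$ with $n'=n/2^{\rank F'/2}$. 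Applying Lemmas \ref{Diagonal map} and \ref{L:O-Sp-U}, the class of $\ker m$ inside this centralizer is determined by the class of $\ker m$ in the ambient group, which under $\epsilon_F=1$ reduces to the class of $A_F=\ker(\mu|_{\ker m})$. The $\epsilon_F=0$ case runs identically. Finally, part (3) follows by counting: the formula $\defe(F)=(1-\epsilon_F)(-1)^{\delta_F}2^{r_F+s_F+\delta_F}$ is read off from Lemma \ref{Type C: Klein four} together with the multiplicative behavior of $\defe$ under the product decompositions.

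The main obstacle I anticipate is not logical but computational: verifying the two displayed centralizer identities in the symplectic setting, that is, that $(\Sp(n)/\langle-I\rangle)^{\Gamma_1}$ and $(\Sp(n)/\langle-I\rangle)^{\Gamma_2}$ split as claimed diagonal products, and correctly identifying the normal forms $[\mathbf{i}I],[\mathbf{j}I]$ in the $\mu=-1$ Klein four case. These require care because the quaternionic scalars $\mathbf{i},\mathbf{j}$ replace the matrices $J_{n/2},K_{n/4}$ from the orthogonal case, so the defect and centralizer bookkeeping, while structurally parallel, must be rechecked rather than copied. Everything else transfers mechanically from the orthogonal proof.
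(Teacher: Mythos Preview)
Your proposal is correct and follows exactly the approach the paper intends: the paper explicitly omits the proof of Proposition~\ref{Type C: classification}, stating that it ``is similar as that for $\O(n)/\langle-I\rangle$'', and your plan to transcribe the proof of Proposition~\ref{Type BD: classification} with the symplectic substitutions (Lemmas~\ref{C bilinear} and~\ref{Type C: Klein four} in place of their orthogonal counterparts) is precisely what is required. The only point worth noting is that in the symplectic case the centralizer of $\Gamma_2=\langle[\mathbf{i}I],[\mathbf{j}I]\rangle$ is $\Delta(\O(n)/\langle-I\rangle)\times\Gamma_2$ rather than a symplectic factor, mirroring how the orthogonal $\Gamma_2$ centralizer produced a symplectic factor; this swap is exactly what makes the induction close.
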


\subsection{Twisted projective unitary groups}\label{Subsection:twisted A}

For $n\geq 3$, $G=\Aut(\mathfrak{su}(n))$ has two connected components and 
$G_0=\Int(\mathfrak{su}(n))=\U(n)/\Z_{n}$. There are two conjugacy classes of 
outer involutions in $G$ with representatives $\tau_0$, $\tau_0\Ad(J_{n/2})$  
(the latter exists only when $n$ is even), where $\tau_0$ is 
the complex conjugation. One has \begin{eqnarray*}
&&\Int(\mathfrak{su}(n))^{\tau_0}=\O(n)/\langle-I\rangle,\\&&
\Int(\mathfrak{su}(n))^{\tau_0\Ad(J_{n/2})}=\Sp(n/2)/\langle-I\rangle.
\end{eqnarray*}

Let $F$ be an elementary abelian 2-subgroup of $\Aut(\mathfrak{su}(n))$.
From the inclusion $F\cap\Int(\mathfrak{su}(n))\subset\Int(\mathfrak{su}(n))=
\U(n)/\Z_{n}$, we have a bilinear form \[m: F\cap\Int(\mathfrak{su}(n))\times 
F\cap\Int(\mathfrak{su}(n))\longrightarrow\{\pm{1}\}.\] Moreover, we define a 
function \[\mu: F-F\cap\Int(\mathfrak{su}(n))\longrightarrow\{\pm{1}\}\] by 
$\mu(z)=1$ if $z\sim\tau_0$, and $\mu(z)=-1$ if $z\sim\tau_0\Ad(J_{n/2})$. 
On the other hand, for any $z\in F-\Int(\mathfrak{su})(n)$, 
define $\mu_{z}: F\cap\Int(\mathfrak{su}(n))\longrightarrow \{\pm{1}\}$ and 
\[m_{z}:(F\cap\Int(\mathfrak{su}(n)))\times (F\cap\Int(\mathfrak{su}(n)))
\longrightarrow\{\pm{1}\}\] from the inclusion 
\[F\cap\Int(\mathfrak{su}(n))\subset \Int(\mathfrak{su}(n))^{z}\cong 
\O(n)/\langle-I\rangle\textrm{ or }\Sp(n/2)/\langle-I\rangle.\]

\begin{definition}
For an elementary abelian 2-group $F\subset\Aut(\mathfrak{su}(n))$,
define \[A_{F}=\{x\in F\cap\Int(\mathfrak{su}(n))|z\sim zx, \forall z\in F-
F\cap\Int(\mathfrak{su}(n))\}\] and 
\[\defe F=|\{x\in F: x\sim\tau_0\}|-|\{x\in F: x\sim\tau_0\Ad(J_{n/2})\}|.\] 

Define $(\epsilon_{F},\delta_{F})$ as follows, \begin{itemize}
\item when $\defe F=0$, define $(\epsilon_{F},\delta_{F})=(1,0)$;
\item when $\defe F>0$, define $(\epsilon_{F},\delta_{F})=(0,0)$;
\item when $\defe F<0$, define $(\epsilon_{F},\delta_{F})=(0,1)$.
\end{itemize}

Define $r_{F}=\rank A_{F}$ and $s_{F}=\frac{1}{2}\rank(F/\ker m)-\delta_{F}$. 
\end{definition}

\begin{lemma}\label{L:TA-m coincidence}
For any $z\in F-F\cap\Int(\mathfrak{su}(n))$, $m_{z}=m$ on $F\cap\Int(\mathfrak{su}(n))$. 
\end{lemma}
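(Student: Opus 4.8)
The plan is to show that $m$ and $m_z$ are two descriptions of one and the same commutator pairing inside $\U(n)$, so that their equality becomes a matter of lift-independence rather than computation. Recall that for $x,y\in F\cap\Int(\mathfrak{su}(n))\subset\U(n)/\Z_{n}$ the value $m(x,y)$ is obtained by picking lifts $A,B\in\U(n)$ and reading off the scalar $[A,B]=\lambda_{A,B}I$; as noted in Subsection \ref{Subsection:A}, this scalar does not depend on the chosen lifts, since replacing $A$ or $B$ by a scalar multiple leaves the commutator unchanged. The whole lemma will come down to checking that $m_z$ is computed by the very same commutator, merely using lifts that happen to lie in a distinguished subgroup of $\U(n)$.

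First I would reduce to a standard representative of $z$. Being an outer involution, $z$ is conjugate either to $\tau_0$ or to $\tau_0\Ad(J_{n/2})$, and conjugating $F$ by some $g\in G$ transports $F\cap\Int(\mathfrak{su}(n))$, the form $m$, the element $z$, its centralizer $\Int(\mathfrak{su}(n))^{z}$ and the form $m_z$ all compatibly: commutators are preserved under conjugation, so $m$ is conjugation-invariant, and $\Int(\mathfrak{su}(n))^{z}$ is carried to $\Int(\mathfrak{su}(n))^{gzg^{-1}}$ with its $\O$- or $\Sp$-structure. Hence it suffices to treat $z=\tau_0$ and $z=\tau_0\Ad(J_{n/2})$, where by the identifications recorded at the beginning of this subsection one has $\Int(\mathfrak{su}(n))^{\tau_0}=\O(n)/\langle-I\rangle$ and $\Int(\mathfrak{su}(n))^{\tau_0\Ad(J_{n/2})}=\Sp(n/2)/\langle-I\rangle$, with $\O(n)\subset\U(n)$ and $\Sp(n/2)\subset\U(n)$ the standard inclusions. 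In the orthogonal case, then, $m_z(x,y)$ is computed by picking lifts $A,B\in\O(n)$ and reading off $[A,B]$; but $A,B\in\O(n)\subset\U(n)$ are themselves lifts of $x,y$ in $\U(n)$, and the matrix $[A,B]\in\O(n)\subset\U(n)$ is literally the same element whether formed in $\O(n)$ or in $\U(n)$. By the lift-independence of the commutator scalar, this equals $\lambda_{A,B}$, so $m_z(x,y)=m(x,y)$. The symplectic case is identical with $\Sp(n/2)\subset\U(n)$ in place of $\O(n)$.

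The one step where I expect to have to argue rather than merely cite is the compatibility of the embeddings: that after conjugating $z$ to standard form the inclusion $\Int(\mathfrak{su}(n))^{z}\hookrightarrow\Int(\mathfrak{su}(n))=\U(n)/\Z_{n}$ is exactly the one induced by $\O(n)\hookrightarrow\U(n)$ (resp. $\Sp(n/2)\hookrightarrow\U(n)$), so that a lift in $\O(n)$ (resp. $\Sp(n/2)$) is genuinely a lift in $\U(n)$. This is precisely what the identifications $\Int(\mathfrak{su}(n))^{\tau_0}=\O(n)/\langle-I\rangle$ and $\Int(\mathfrak{su}(n))^{\tau_0\Ad(J_{n/2})}=\Sp(n/2)/\langle-I\rangle$ assert, and it can be made explicit from the fact that a unitary matrix fixed (up to a scalar) by complex conjugation is, up to a scalar, real orthogonal. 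Once this embedding compatibility is in hand, the lemma follows immediately, with no residual computation; indeed the content is only that restricting the class of admissible lifts to a subgroup containing $x$ and $y$ cannot alter the value of the commutator pairing.
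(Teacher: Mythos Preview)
Your proof is correct but takes a different route from the paper. The paper argues via the Klein four classification: by Lemma \ref{Type A: Klein four}, $m(x,y)=-1$ holds in $\U(n)/\Z_n$ precisely when $\langle x,y\rangle\sim\langle[I_{\frac{n}{2},\frac{n}{2}}],[J'_{\frac{n}{2}}]\rangle$, and by Lemmas \ref{Type BD: Klein four} and \ref{Type C: Klein four} the same characterization holds for $m_z$ in $\O(n)/\langle-I\rangle$ or $\Sp(n/2)/\langle-I\rangle$; hence the two pairings coincide. Your argument instead observes that, after standardizing $z$, a lift in $\O(n)$ (resp.\ $\Sp(n/2)$) is in particular a lift in $\U(n)$, and the commutator scalar is independent of which lift one uses, so $m_z$ and $m$ are literally the same commutator read in two places. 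Your route is more elementary and conceptual, bypassing the Klein four classification entirely; the paper's route has the advantage of reusing machinery it has already built, and makes the geometric meaning of $m(x,y)=-1$ explicit.
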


\begin{proof}
For $z\in F-\Int(\mathfrak{su}(n))$ and $x,y\in F\cap\Int(\mathfrak{su}(n))$, 
\[m(x,y)=-1\Leftrightarrow\langle x,y\rangle\sim
\langle[I_{\frac{n}{2},\frac{n}{2}}],[J'_{\frac{n}{2}}]\rangle
\Leftrightarrow m_{z}(x,y)=-1.\] So $m_{z}(x,y)=m(x,y)$.
\end{proof}

\begin{lemma}\label{L:TA-mu compare}
For any $z\in F-F\cap\Int(\mathfrak{su}(n))$ and $x\in F\cap\Int(\mathfrak{su}(n))$, 
$\mu_{z}(x)=\mu(z)\mu(zx)$. 
\end{lemma}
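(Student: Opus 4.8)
The plan is to compute all three quantities $\mu_z(x)$, $\mu(z)$, $\mu(zx)$ inside a single group where squaring is meaningful, namely the semidirect product $\wh G=\U(n)\rtimes\langle\tau_0\rangle$ with relations $\tau_0 R\tau_0^{-1}=\bar R$ and $\tau_0^2=1$, so that $\Aut(\mathfrak{su}(n))=\wh G/\Z_{n}$ and $\Int(\mathfrak{su}(n))=\U(n)/\Z_{n}$. Odd elements multiply by $(R\tau_0)(S\tau_0)=R\bar S$, hence $(R\tau_0)^2=R\bar R$. First I would lift $z$ to $\wt z=P\tau_0$ and $x$ to $Q\in\U(n)$; since $\Ad(Q)$ is an involution I normalise $Q^2=I$. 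The involution condition on $z$ forces $\wt z^2=P\bar P=\lambda I$ with $\lambda$ real (conjugating $P\bar P=\lambda I$ gives $\bar\lambda=\lambda$), so $\lambda=\pm1$; and $\lambda=1$ exactly when $z\sim\tau_0$ while $\lambda=-1$ when $z\sim\tau_0\Ad(J_{n/2})$, so that $\wt z^2=\mu(z)I$.

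Next I would isolate the single scalar governing everything. Because $z$ and $x$ commute in $G$, the lifts satisfy $\wt z Q\wt z^{-1}=\gamma Q$ for a scalar $\gamma$, and $\gamma=\pm1$ since the left-hand side is a conjugate of the involution $Q$. Using $\wt z^{-1}=\mu(z)\wt z$ and $\tau_0 Q=\bar Q\tau_0$ one computes $\wt z Q\wt z^{-1}=\mu(z)\,P\bar Q\bar P$; inserting $\bar P=\mu(z)P^{-1}$ this rearranges to $P\bar Q P^{-1}=\gamma Q$. This $\gamma$ will turn out to be all three invariants at once.

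Then I would identify the symmetric-subgroup representative. The fixed-point subgroup $K=\{R\in\U(n):\wt z R\wt z^{-1}=R\}=\{R:P\bar R=RP\}$ is precisely the form of $\O(n)$ (when $\mu(z)=1$) or of $\Sp(n/2)$ (when $\mu(z)=-1$) realising $\Int(\mathfrak{su}(n))^{z}\cong\O(n)/\langle-I\rangle$ or $\Sp(n/2)/\langle-I\rangle$, and by definition $\mu_z(x)$ is the square of a lift of $x$ into $K$. Writing that lift as $A=dQ$, membership $A\in K$ forces $d^2=\gamma$, whence $\mu_z(x)=A^2=d^2Q^2=\gamma I$. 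On the other hand $\wt{zx}=\wt z Q=(P\bar Q)\tau_0$, so $\mu(zx)I=(\wt{zx})^2=P\bar Q\bar P Q=\mu(z)\gamma I$. Combining, $\mu(z)\mu(zx)=\mu(z)\cdot\mu(z)\gamma=\gamma=\mu_z(x)$.

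The main obstacle is conceptual rather than computational: $\mu_z$ is defined \emph{inside} the fixed-point subgroup (a form of $\O(n)$ or $\Sp(n/2)$) by squaring, whereas $\mu$ on the outer coset is defined by conjugacy type, and the real content is to see both as shadows of squaring in $\wh G$ controlled by the one scalar $\gamma$. The only points needing care are verifying that the lift $A=dQ$ genuinely lies in $K$ and represents $x$ there (so that $A^2$ computes the intrinsic $\mu_z(x)$) and that the normalisation $Q^2=I$ is harmless. Once the identification $\wt z^2=\mu(z)I$ between the square of the lift and the conjugacy type of $z$ (Lemma \ref{L:outer A} together with the descriptions of $\Int(\mathfrak{su}(n))^{\tau_0}$ and $\Int(\mathfrak{su}(n))^{\tau_0\Ad(J_{n/2})}$) is in hand, the rest is the short algebra above.
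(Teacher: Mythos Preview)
Your argument is correct and takes a genuinely different route from the paper. The paper normalises $z$ to one of the two standard representatives $\tau_0$ or $\tau_0\Ad(J_{n/2})$, then further normalises $x$ inside $\Int(\mathfrak{su}(n))^{z}$ to a standard element according to the value of $\mu_{z}(x)$ (e.g.\ $[I_{p,n-p}]$ or $[J_{n/2}]$ when $z=\tau_0$), and finally verifies the identity by an explicit conjugation or identification of $zx$ in each of the resulting cases. Your approach instead works uniformly in the extension $\wh G=\U(n)\rtimes\langle\tau_0\rangle$: you observe that both $\mu(z)$ and $\mu(zx)$ are read off from squares of lifts to $\wh G$, that $\mu_{z}(x)$ is the square of a lift of $x$ into the fixed subgroup $K\subset\U(n)$, and that all three are governed by the single commutator scalar $\gamma$. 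This avoids any case analysis and any choice of standard representatives. What the paper's approach buys is concreteness (one sees directly which element conjugates $z$ to $zx$); what yours buys is economy and a clear conceptual picture of the three invariants as avatars of squaring in $\wh G$. The one place your write-up should be slightly more explicit is the sentence identifying $K/\langle -I\rangle$ with $\Int(\mathfrak{su}(n))^{z}$ so that lifting--and--squaring in $K$ really computes the paper's $\mu_{z}$; you note this yourself, and it follows because $K\cap\Z_{n}=\{\pm I\}$ and every class in $(\U(n)/\Z_{n})^{z}$ has a representative in $K$ (solve $d^{2}=c$ for the defect scalar).
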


\begin{proof}
We may assume that $z=\tau_0$ or $\tau_0\Ad(J_{n/2})$. 

In the case of $z=\tau_0$ and $\mu_{z}(x)=1$, we may assume that 
$x=[I_{p,n-p}]\in\O(n)/\langle-I\rangle=\Int(\mathfrak{su}(n))^{z}$ for some $0\leq p\leq n$. 
Let $u=[\diag\{iI_{p},I_{n-p}\}]$, then 
\begin{eqnarray*}&&uzu^{-1}=z(z^{-1}uz)u^{-1}=z(\overline{u})u^{-1}\\&=&z[\diag\{-iI_{p},I_{n-p}\}]
[\diag\{-iI_{p},I_{n-p}\}]\\&=&z[I_{p,n-p}]=zx.\end{eqnarray*} 
So $zx\sim z$ and $1=\mu_{z}(x)=\mu(z)\mu(zx)$.

In the case of $z=\tau_0$ and $\mu_{z}(x)=-1$, we may assume that 
$x=[J_{n/2}]\in\O(n)/\langle-I\rangle=\Int(\mathfrak{su}(n))^{z}$. Then 
$zx=\tau_0\Ad(J_{n/2})=\tau'_0$. So $-1=\mu_{z}(x)=\mu(z)\mu(zx)$. 

The proof in the case of $z=\tau'_0=\tau_0\Ad(J_{n/2})$ is similar. 
\end{proof}

\begin{lemma}\label{L:TA-translation} 
We have $A_{F}\subset\ker m$ and $A_{F}=\ker(\mu_{z}|_{\ker m})$ for any 
$z\in F-F\cap\Int(\mathfrak{su}(n))$.
\end{lemma}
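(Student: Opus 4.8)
The plan is to reduce both assertions to a single multiplicative identity tying the forms $\mu_z$ for different outer elements $z$ together through $m$; throughout I assume, as the statement requires, that $F$ contains outer elements (otherwise there is no $z$ to speak of). The first step is to rephrase membership in $A_F$. For $x\in F\cap\Int(\mathfrak{su}(n))$ and outer $z$, both $z$ and $zx$ are outer involutions, and by Lemma \ref{L:outer A} outer involutions fall into (at most) two conjugacy classes, distinguished precisely by the value of $\mu$; hence $z\sim zx$ is equivalent to $\mu(z)=\mu(zx)$, i.e.\ to $\mu(z)\mu(zx)=1$, which by Lemma \ref{L:TA-mu compare} is exactly $\mu_z(x)=1$. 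Therefore
\[A_{F}=\{x\in F\cap\Int(\mathfrak{su}(n)): \mu_z(x)=1 \text{ for all outer } z\}.\]

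The heart of the argument is the identity: for outer $z,w$ and $x\in F\cap\Int(\mathfrak{su}(n))$, setting $t=zw\in F\cap\Int(\mathfrak{su}(n))$,
\[\mu_z(x)\mu_w(x)=m(t,x).\]
I would prove this by expanding with Lemma \ref{L:TA-mu compare} written as $\mu(za)=\mu(z)\mu_z(a)$. Since $w=zt$, we have $\mu_z(x)\mu_w(x)=\mu(z)\mu(zx)\mu(zt)\mu(ztx)$; replacing each factor $\mu(z\,\cdot\,)$ by $\mu(z)\mu_z(\,\cdot\,)$ produces four copies of $\mu(z)$ which cancel, leaving $\mu_z(x)\mu_z(t)\mu_z(tx)$. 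By $m_z=m$ (Lemma \ref{L:TA-m coincidence}) and the compatibility relation of Lemma \ref{D bilinear} (resp.\ Lemma \ref{C bilinear}) inside $\Int(\mathfrak{su}(n))^{z}$, this last product equals $m_z(t,x)=m(t,x)$. This is the step I expect to be the main obstacle, being the only place where the three preceding lemmas must be combined exactly right; everything afterward is formal.

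Granting the identity, the two conclusions drop out. For $A_F\subset\ker m$, fix one outer $z_0$; given $x\in A_F$ and an arbitrary $t\in F\cap\Int(\mathfrak{su}(n))$, the element $w=z_0 t$ is again outer, so $\mu_{z_0}(x)=\mu_w(x)=1$ and the identity yields $m(t,x)=1$; as $t$ ranges over $F\cap\Int(\mathfrak{su}(n))$ this gives $x\in\ker m$. The identity also shows $\mu_z|_{\ker m}$ is independent of $z$: for $x\in\ker m$ and any outer $z,w$ we get $\mu_z(x)\mu_w(x)=m(zw,x)=1$, so $\mu_z(x)=\mu_w(x)$. Combining the reformulation of $A_F$ with the inclusion $A_F\subset\ker m$ and this independence, for every fixed outer $z$ we conclude $A_{F}=\{x\in\ker m:\mu_z(x)=1\}=\ker(\mu_z|_{\ker m})$.
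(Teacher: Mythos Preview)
Your proof is correct and uses the same ingredients as the paper (Lemmas \ref{L:TA-m coincidence} and \ref{L:TA-mu compare} together with the compatibility relation $m_z(x,y)=\mu_z(x)\mu_z(y)\mu_z(xy)$), just organized slightly differently: you first isolate the identity $\mu_z(x)\mu_w(x)=m(zw,x)$ and deduce the independence of $\mu_z|_{\ker m}$ from the choice of $z$, whereas the paper applies the same relations directly to compute $m(x,y)$ for $x\in A_F$ and then $\mu(zy)\mu(zyx)=\mu_z(x)$ for $x\in\ker m$. The two arguments unwind to the same computation.
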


\begin{proof}
Choose an element $z\in F-F\cap\Int(\mathfrak{su}(n))$. For any $z\in A_{F}$, by the 
definition, for any $y\in F\cap\Int(\mathfrak{su}(n))$, we have $\mu(zy)=\mu(zyx)$, in 
particular $\mu(z)=\mu(zx)$. Then 
\[m(x,y)=m_{z}(x,y)=\mu_{z}(x)\mu_{z}(y)\mu_{z}(xy)=\mu(z)\mu(zx)\mu(zy)\mu(zxy)=1.\] 
So $A_{F}\subset\ker m$. 

On the other hand, for any $x\in\ker m=\ker m_{z}$, $x\in A_{F}$ if and only if 
$\forall y\in F\cap\Int(\mathfrak{su}(n)), \mu(zy)=\mu(zyx)$.  
Since \[\mu(zy)\mu(zyx)=\mu_{z}(y)\mu_{z}(xy)=m_{z}(x,y)\mu_{z}(x)=\mu_{z}(x)\]  
(we use $x\in\ker m=\ker m_{z}$ in the last equality), we get $A_{F}=\ker(\mu_{z}|_{\ker m})$.    
\end{proof}

\begin{prop}\label{P:TA-classification}
For an elementary abelian 2 group $F\subset\Aut(\mathfrak{su}(n))$ but 
not contained in $\Int(\mathfrak{su}(n))$, $\ker m$ is diagonalizable 
and the conjugacy class of $F$ is determined by the conjugacy class of $A_{F}$ and 
the invariants $(\epsilon_{F},\delta_{F},\rank F)$.
\end{prop}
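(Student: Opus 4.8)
The plan is to mirror the structure of the proof of Proposition \ref{Type BD: classification}(2), using the outer element $z$ to reduce the twisted type A case to the already-established classification in types $\O(n)/\langle-I\rangle$ and $\Sp(n/2)/\langle-I\rangle$. The point is that, having fixed an outer element $z\in F-F\cap\Int(\mathfrak{su}(n))$, conjugation by $z$ is an involution whose centralizer $\Int(\mathfrak{su}(n))^{z}$ is one of these two groups, and $F\cap\Int(\mathfrak{su}(n))$ sits inside it. By Lemmas \ref{L:TA-m coincidence}, \ref{L:TA-mu compare} and \ref{L:TA-translation}, the restricted data $m_z$ and $\mu_z$ recover the intrinsic invariants: $m_z=m$, the subgroup $A_F$ equals $\ker(\mu_z|_{\ker m})$, and $\mu_z$ is determined by $\mu$ on cosets $z, zx$. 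So the invariants $(\epsilon_F,\delta_F,s_F,r_F)$ and the class of $A_F$ attached to $F$ agree with the corresponding invariants of the pair $(z,F\cap\Int(\mathfrak{su}(n)))$ viewed inside $\Int(\mathfrak{su}(n))^z$.

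First I would establish that $\ker m$ is diagonalizable: exactly as before, $\pi^{-1}(\ker m)\subset\U(n)$ is abelian by the definition of $m$, hence diagonalizable, so $\ker m$ is. Next I would carry out the reduction. Fix $z$; since $\Int(\mathfrak{su}(n))^z\cong\O(n)/\langle-I\rangle$ or $\Sp(n/2)/\langle-I\rangle$ according to whether $\mu(z)=1$ or $-1$, the subgroup $F_0:=F\cap\Int(\mathfrak{su}(n))$ is an elementary abelian $2$-subgroup of one of the classified groups, and Proposition \ref{Type BD: classification}(2) (resp. \ref{Type C: classification}(2)) tells us its conjugacy class \emph{inside} $\Int(\mathfrak{su}(n))^z$ is determined by the class of $A_{F_0}=\ker(\mu_z|_{\ker m})=A_F$ together with $(\epsilon,\delta,s)$. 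The task is to upgrade this to conjugacy of the whole $F$ in $\Aut(\mathfrak{su}(n))$, and to check the relevant invariants are the same. I would argue that two subgroups $F,F'$ with the same listed invariants can be conjugated so that their chosen outer elements $z,z'$ agree (both representing the class $\tau_0$ or $\tau_0\Ad(J_{n/2})$ dictated by $\defe$, equivalently by $(\epsilon_F,\delta_F)$), after which the remaining freedom is exactly conjugation by $C_{\Aut}(z)=\Int(\mathfrak{su}(n))^z\cdot\langle z\rangle$, reducing the problem to conjugacy of $F_0,F_0'$ inside $\Int(\mathfrak{su}(n))^z$, which is settled.

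I expect two points to be the main obstacles. The first is a \emph{well-definedness/independence} issue: the group $\Int(\mathfrak{su}(n))^z$ and the structure $(m_z,\mu_z)$ depend a priori on the choice of $z\in F-F_0$, so I must check that the resulting invariants — and the class of $A_F$ — do not depend on that choice. Lemmas \ref{L:TA-m coincidence} and \ref{L:TA-translation} already show $m_z=m$ and $A_F=\ker(\mu_z|_{\ker m})$ is $z$-independent; I would supplement this by noting that $\defe F$, hence $(\epsilon_F,\delta_F)$, is defined intrinsically in terms of conjugacy classes of elements of $F$, and that $s_F=\tfrac12\rank(F/\ker m)-\delta_F$ and $r_F=\rank A_F$ are manifestly intrinsic. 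The second, and genuinely harder, obstacle is the normalization step that makes the two chosen outer elements coincide after conjugation: I must verify that any outer element of the correct type can be moved to the standard $\tau_0$ or $\tau_0\Ad(J_{n/2})$ by an element that can be arranged to also carry $F_0$ appropriately, so that the counting encoded in $\defe F$ genuinely pins down both the component-behaviour of $F$ and the interaction between $A_F$ and the outer coset. This is where I would spend most of the effort, using Lemma \ref{L:TA-mu compare} to track how $\mu$ on the two outer cosets $z F_0$ determines $\defe F$ and thereby forces the sign data $(\epsilon_F,\delta_F)$ to match before the final conjugacy inside $\Int(\mathfrak{su}(n))^z$ is invoked.
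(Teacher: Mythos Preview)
Your approach is correct and essentially the same as the paper's: fix an outer $z$, reduce $F$ to $F\cap\Int(\mathfrak{su}(n))$ inside $\Int(\mathfrak{su}(n))^z\cong\O(n)/\langle-I\rangle$ or $\Sp(n/2)/\langle-I\rangle$, and invoke Propositions \ref{Type BD: classification} and \ref{Type C: classification} together with Lemma \ref{L:O-Sp-U}. The paper dissolves your ``second obstacle'' by case-splitting on whether $F$ contains an element $\sim\tau_0$ (if so, always choose such a $z$ and land in $\O(n)/\langle-I\rangle$; if not, every outer element is $\sim\tau_0\Ad(J_{n/2})$, whence $\mu_z\equiv 1$, $A_F=\ker m=F\cap\Int(\mathfrak{su}(n))$ and $(\epsilon_F,\delta_F)=(0,1)$, which separates this case from the first by the invariants alone), so no independence-of-$z$ argument is needed.
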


\begin{proof}
When there exists $z\in F$ with $z\sim\tau_0$, 
\[F\subset\Aut(\mathfrak{su}(n))^{z}=(\O(n)/\langle-I\rangle)\times
\langle z\rangle.\] 
By Lemma \ref{L:TA-m coincidence}, $m_{z}=m$. Then $(\epsilon_{F},\delta_{F})$ 
coincides with $(\epsilon_{F'},\epsilon_{F'})$ when $F'=F\cap\Int(\mathfrak{su}(n))$ 
is considered as a subgroup of $\O(n)/\langle-I\rangle$. Then the conclusion 
follows from Proposition \ref{Type BD: classification} and Lemma \ref{L:O-Sp-U}. 

Otherwise, for any $z\in F-F\cap \Int(\mathfrak{su}(n))$, $z\sim\tau_0\Ad(J_{n/2})$, so 
\[F\subset\Aut(\mathfrak{su}(n))^{z}=(\Sp(n/2)/\langle-I\rangle)\times
\langle z\rangle\] and $\mu_{z}\equiv 1$. In this case, 
$A_{F}=\ker m=F\cap\Int(\mathfrak{su}(n))$, $(\epsilon_{F},\delta_{F})=(0,1)$, so the 
invariants $(\epsilon_{F},\delta_{F};r_{F},s_{F})$ must be different with 
that for a subgroup considered in the first case. The conclusion follows from 
\ref{L:O-Sp-U}. 
\end{proof}

\subsection{A class of elementary abelian 2-subgroups and symplectic metric 
spaces}\label{Subsection:a subclass} 

The elementary abelian 2-subgroups $F$ of $\O(n)/\langle-I\rangle$ (or 
$\Sp(n)/\langle-I\rangle$) with non-identity elements all conjugate to 
$[I_{\frac{n}{2},\frac{n}{2}}], [J_{\frac{n}{2}}]$ (or 
$[I_{\frac{n}{2},\frac{n}{2}}],[\textbf{i}I]$) have a particular nice shape.

\begin{prop}\label{Sub:definition}
For an elementary abelian 2 subgroup $F$ of $\O(n)/\langle-I\rangle$ (or 
$\Sp(n)/\langle-I\rangle$), any non-identity element of $F$ is conjugate to 
$[I_{\frac{n}{2},\frac{n}{2}}], [J_{\frac{n}{2}}]$ (or 
$[I_{\frac{n}{2},\frac{n}{2}}],[\textbf{i}I]$) if and only if any 
non-identity element of $A_{F}$ is conjugate to $[I_{\frac{n}{2},\frac{n}{2}}]$. 
\end{prop}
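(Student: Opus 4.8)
The plan is to prove the two implications separately, the forward one being purely formal and the backward one carrying the content. Throughout I work with $G=\O(n)/\langle-I\rangle$; the $\Sp(n)/\langle-I\rangle$ case is identical after replacing $[J_{n/2}]$ by its conjugate representative $[\mathbf{i}I]$ as the distinguished $\mu=-1$ class, using Lemma \ref{Type C: Klein four} in place of Lemma \ref{Type BD: Klein four}.

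For the forward implication I would simply observe that $A_F\subseteq F$ and that every non-identity element $x$ of $A_F=\ker(\mu|_{\ker m})$ satisfies $\mu(x)=1$. Since $\mu$ is a conjugacy invariant (it is read off from $A^2=\mu(x)I$ for any lift $A\in\O(n)$), and since $\mu([J_{n/2}])=-1$ while $\mu([I_{n/2,n/2}])=1$, no element of $A_F$ can be conjugate to $[J_{n/2}]$. Hence if every non-identity element of $F$ lies in one of the two prescribed classes, every non-identity element of $A_F$ is forced into the class of $[I_{n/2,n/2}]$.

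For the backward implication I would take an arbitrary $x\in F\setminus\{1\}$ and split into three cases according to $\mu(x)$ and membership in $\ker m$. If $\mu(x)=-1$, then $x\sim[J_{n/2}]$ automatically by Lemma \ref{Type BD: Klein four}, with nothing to prove. If $\mu(x)=1$ and $x\in\ker m$, then by definition $x\in A_F$, and the hypothesis gives $x\sim[I_{n/2,n/2}]$ at once. The remaining case is $\mu(x)=1$ with $x\notin\ker m$: here I would choose $y\in F$ with $m(x,y)=-1$ and lift $x,y$ to $A,B\in\O(n)$ with $A^2=I$, so that $[A,B]=-I$, i.e. $BAB^{-1}=-A$. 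Then $A$ is conjugate to $-A$, forcing $\tr A=-\tr A=0$; as $A\sim I_{p,n-p}$ has $\tr A=(n-p)-p=n-2p$, this yields $p=n/2$ and hence $x\sim[I_{n/2,n/2}]$.

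The point worth highlighting is that the third case uses no hypothesis at all: the mere existence of an anticommuting partner $B$ automatically balances the eigenvalues of $A$, so the ``outer'' elements with $\mu=1$ take care of themselves. Thus the entire content of the proposition is concentrated in the elements of $\ker m$, equivalently of $A_F$, which is precisely where the two sides of the equivalence are being compared. The only mild subtlety, and the step I would be most careful about, is the bookkeeping of representatives: confirming via Lemma \ref{Type BD: Klein four} (resp. Lemma \ref{Type C: Klein four}) that $\mu=-1$ singles out the single class $[J_{n/2}]$ (resp. $[\mathbf{i}I]$), and that for $\mu=1$ the trace of a lift — the ordinary trace in the orthogonal case, any faithful trace in the quaternionic case — detects exactly the class $[I_{n/2,n/2}]$. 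With these identifications the three cases exhaust $F\setminus\{1\}$ and the backward implication follows.
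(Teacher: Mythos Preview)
Your proof is correct and follows the same underlying idea as the paper's, but supplies considerably more detail. The paper's argument is a single sentence asserting that every element of $F\setminus A_F$ is automatically conjugate to $[I_{n/2,n/2}]$ or $[J_{n/2}]$ (resp.\ $[\mathbf{i}I]$), leaving the reader to extract this from Lemma~\ref{Type BD: Klein four} (resp.\ Lemma~\ref{Type C: Klein four}); your three-case split, and in particular the direct trace argument $BAB^{-1}=-A\Rightarrow\tr A=0$ for the case $\mu(x)=1$, $x\notin\ker m$, is exactly what justifies that assertion.
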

\begin{proof} 
Since elements in $F-A_{F}$ are all conjugate to $[I_{\frac{n}{2},\frac{n}{2}}], [J_{\frac{n}{2}}]$ 
(or $[I_{\frac{n}{2},\frac{n}{2}}],[\textbf{i}I]$) and any element of $A_{F}$ is not conjugate 
to $[J_{\frac{n}{2}}]$ (or $[\textbf{i}I]$), the conclusion follows.  
\end{proof}

Regard $A_{F}$ as a subgroup of $G'=\O(n')/\langle-I\rangle$,  
$\U(n')/\langle-I\rangle$ or $\Sp(n')/\langle-I\rangle$, where 
$n'=\frac{n}{2^{s+k}}$ ($k=2,1,0$), then the condition of any non-identity 
element of $A_{F}$ is conjugate to $[I_{\frac{n}{2},\frac{n}{2}}]$ in $G$ 
is equivalent to any non-identity element of $A_{F}$ is conjugate to 
$[I_{\frac{n'}{2},\frac{n'}{2}}]$ in $G'$. 

Let $F^{\ast}=\Hom(F,\mathbb{F}_2)$ be the dual group of an elementary abelian 2-group. 

\begin{lemma}\label{Lemma: subclass}
For $n=2^{m}s$, $s$ odd, let $K=\{\pm{1}\}^{n}/\langle\underbrace{(-1,...,-1)}
\rangle$ and $F\subset K$ be a rank $r$ subgroup with any non-identity element 
has $\frac{n}{2}$ components $-1$ and other $\frac{n}{2}$ components 1. 
Then one can divide $J=\{1,2,...,n\}$ into a disjoint union of $2^{r}$ subsets 
$\{J_{\alpha}: \alpha\in F^{\ast}\}$ with each $J_{\alpha}$ of cardinality 
$\frac{n}{2^{r}}=2^{m-r}s$ such that any element $x\in F$ is of the form 
\[x=\underbrace{[t_1,t_2,...,t_{n}]}, \forall i\in J_{\alpha}, t_{i}=\alpha(x).\]
\end{lemma}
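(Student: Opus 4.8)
The plan is to pass from the quotient $K$ to the ambient group $\{\pm 1\}^{n}$, where the coordinate functions become honest characters, and then to read off the desired partition by a Fourier computation on the lifted group. Set $c=(-1,\dots,-1)$ and let $q\colon\{\pm 1\}^{n}\to K$ be the quotient map, so that $\ker q=\{1,c\}$. Put $\wt F=q^{-1}(F)$; since $c\in\wt F$ and $q|_{\wt F}$ has kernel $\{1,c\}$, the group $\wt F$ is elementary abelian of rank $r+1$ with $F=\wt F/\langle c\rangle$. For each index $i\in J$ the $i$-th projection restricts to a character $\chi_{i}\in\wt F^{\ast}=\Hom(\wt F,\{\pm 1\})$, and because every coordinate of $c$ is $-1$ we have $\chi_{i}(c)=-1$ for all $i$. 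Thus the map $\Phi\colon J\to\wt F^{\ast}$, $i\mapsto\chi_{i}$, takes values in the coset $C=\{\psi\in\wt F^{\ast}:\psi(c)=-1\}$, which has exactly $|\wt F^{\ast}|/2=2^{r}$ elements; its fibers will be the blocks $J_{\alpha}$.

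Next I would compute the fiber sizes. Consider $g\colon\wt F\to\bbZ$, $g(\wt x)=\sum_{i\in J}\chi_{i}(\wt x)=\#\{i:\chi_{i}(\wt x)=1\}-\#\{i:\chi_{i}(\wt x)=-1\}$. One has $g(1)=n$ and $g(c)=-n$, while for a lift $\wt x$ of a non-identity element of $F$ the hypothesis that each non-identity element of $F$ has exactly $n/2$ coordinates equal to $-1$ (a condition insensitive to the sign ambiguity of the representative, since $n-n/2=n/2$) gives $g(\wt x)=n/2-n/2=0$. Writing $b_{\psi}=|\Phi^{-1}(\psi)|=\#\{i:\chi_{i}=\psi\}$ for $\psi\in\wt F^{\ast}$ (so $b_{\psi}=0$ off $C$), we have $g=\sum_{\psi}b_{\psi}\,\psi$ as functions on $\wt F$, and Fourier inversion on the finite group $\wt F$ yields
\[
b_{\psi}=\frac{1}{|\wt F|}\sum_{\wt x\in\wt F}g(\wt x)\psi(\wt x)
        =\frac{1}{2^{r+1}}\bigl(n-n\,\psi(c)\bigr).
\]
Hence $b_{\psi}=n/2^{r}=2^{m-r}s$ for every $\psi\in C$ and $b_{\psi}=0$ otherwise; in particular $\Phi$ is onto $C$, so its $2^{r}$ fibers all have cardinality $2^{m-r}s$ (and incidentally $2^{r}\mid n$, forcing $r\leq m$).

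Finally I would handle the identification with $F^{\ast}$ and the choice of representative. Fix the base character $\psi_{0}=\chi_{1}\in C$; since characters have order $2$, the map $\psi\mapsto\psi\psi_{0}$ is a bijection $C\to F^{\ast}$, because $(\psi\psi_{0})(c)=1$ means $\psi\psi_{0}$ is trivial on $\langle c\rangle$ and so descends to $F$. Define $J_{\alpha}=\Phi^{-1}(\psi_{0}\alpha)$ for $\alpha\in F^{\ast}$. Given $x\in F$, choose the unique lift $\wt x\in\wt F$ with $\chi_{1}(\wt x)=1$, i.e. the representative $t=(t_{1},\dots,t_{n})$ of $x$ normalized by $t_{1}=1$. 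Then for $i\in J_{\alpha}$ we get $t_{i}=\chi_{i}(\wt x)=(\psi_{0}\alpha)(\wt x)=\psi_{0}(\wt x)\,\alpha(x)=\alpha(x)$, which is exactly the asserted form.

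The main obstacle is the sign ambiguity inherent in the quotient $K$: the coordinate ``values'' $t_{i}$ are defined only up to a simultaneous flip, so they are not well-defined functions on $F$. The device of lifting to $\wt F$ and keeping track of the value of each character on $c$ is precisely what converts the statement into an honest character computation; once this is set up, balancing the blocks is just Fourier inversion, and the only remaining care is to index the coset $C$ by $F^{\ast}$ through a chosen base coordinate and to select, for each $x$, the compatible (namely $t_{1}=1$) representative.
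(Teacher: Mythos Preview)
Your proof is correct and takes a genuinely different route from the paper's. The paper chooses a rank-$r$ section $F\hookrightarrow\{\pm1\}^{n}$ (which exists since short exact sequences of $\bbF_2$-vector spaces split), defines $J_{\alpha}=\{i:\{x:t_{x,i}=1\}=\ker\alpha\}$ directly, and then shows $|J_{\alpha}|=|J_{\alpha_0}|$ for each nonzero $\alpha$ by an elementary double count of pairs $(x,i)$ with $\alpha(x)=-1$ and $t_{x,i}=-1$. You instead lift to the full preimage $\wt F$ of rank $r+1$, recast the block sizes as Fourier coefficients of the function $g(\wt x)=\sum_i\chi_i(\wt x)$, and read them off by a single inversion. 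Your approach is cleaner and yields all $b_\psi$ simultaneously (and the constraint $r\le m$ for free), at the cost of invoking the character-orthogonality machinery; the paper's argument is more hands-on but entirely elementary. The two also handle the sign ambiguity differently: the paper fixes it once by choosing a group-theoretic splitting, whereas you fix it elementwise by normalizing $t_1=1$, which is why you need the extra step identifying the coset $C\subset\wt F^{\ast}$ with $F^{\ast}$ via multiplication by $\chi_1$.
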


\begin{proof}
Choose a representative of $F$ in $\{\pm{1}\}^{n}$, we may assume that 
$F\subset\{\pm{1}\}^{n}$ with any non-identity element has $\frac{n}{2}$ components 
$-1$ and other $\frac{n}{2}$ components 1. For any $x\in F$, let 
$x=\underbrace{(t_{x,1},t_{x,2},...,t_{x,n})}$, $t_{x,i}=\pm{1}$. For any $\alpha\in F^{\ast}$, 
define \[J_{\alpha}=\{1\leq i\leq n| \{x\in F: t_{x,i}=1\}=\ker\alpha\}.\]
Then $J=\{1,2,...,n\}$ is a disjoint union of $2^{r}$ subsets 
$\{J_{\alpha}: \alpha\in F^{\ast}\}$ and any element $x\in F$ is of the form 
$x=\underbrace{(t_1,t_2,...,t_{n})}$, $\forall i\in J_{\alpha}$, $t_{i}=\alpha(x)$. 
We only need to show that the cardinarity of each $J_{\alpha}$ is 
$\frac{n}{2^{r}}=2^{m-r}s$. 

Let $\alpha_0=0\in F^{\ast}$ be the zero element. 
For any $\alpha\neq\alpha_0$, count the number of pairs $(x,i)$ with 
$\alpha(x)=-1$ and $t_{x,i}=-1$. For a fixed $x\in F$, when $x\not\in\ker\alpha$, 
there are $\frac{n}{2}$ such $(x,i)$; when $x\in\ker\alpha$, there are
 no such $(x,i)$. For a fixed $i, 1\leq i\leq n$, when 
$x\not\in J_{\alpha_0}\cup J_{\alpha}$, there are $2^{r-2}$ such $(x,i)$; 
when $i\in J_{\alpha}$, there are $2^{r-1}$ such $(x,i)$; when 
$i\in J_{\alpha_0}$, there are no such $(x,i)$. So we have an equality 
\[2^{r-1}\frac{n}{2}=(n-|J_{\alpha}|-|J_{\alpha_0}|)2^{r-2}+|J_{\alpha}|2^{r-1},\] 
by which we get $|J_{\alpha}|=|J_{\alpha_0}|$. Then the cardinarity of each 
$J_{\alpha}$ is $\frac{n}{2^{r}}=2^{m-r}s$.  
\end{proof}

\begin{prop}\label{Subclass: classification}
For an elementary abelian 2-group $F\subset\O(n)/\langle-I\rangle$ (or 
$F\subset\Sp(n)/\langle-I\rangle$) with non-identity elements all conjugate to 
$[I_{\frac{n}{2},\frac{n}{2}}]$, $[J_{\frac{n}{2}}]$ (or 
$[I_{\frac{n}{2},\frac{n}{2}}]$, $[\textbf{i}I]$), the conjugacy class of $F$ 
is determined by the tuple $(\epsilon_{F},\delta_{F},r_{F},s_{F})$.
\end{prop}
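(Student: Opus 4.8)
The plan is to reduce the statement to a fact about $A_F$ alone, and then to read off the conjugacy class of $A_F$ from the combinatorial partition supplied by Lemma \ref{Lemma: subclass}. By Proposition \ref{Type BD: classification}(2) (in the symplectic case, Proposition \ref{Type C: classification}(2)), the conjugacy class of $F$ is already determined by the conjugacy class of $A_F$ together with the invariants $(\epsilon_F,\delta_F,s_F)$. Hence it suffices to prove that, under the present hypothesis, the conjugacy class of $A_F$ is determined by the single number $r_F=\rank A_F$; the tuple $(\epsilon_F,\delta_F,r_F,s_F)$ then carries all the information needed.

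Next I would translate the hypothesis into a concrete normal form for $A_F$. By Proposition \ref{Sub:definition}, the condition that every non-identity element of $F$ is conjugate to $[I_{\frac{n}{2},\frac{n}{2}}]$ or $[J_{\frac{n}{2}}]$ (resp. $[\textbf{i}I]$) is equivalent to the condition that every non-identity element of $A_F$ is conjugate to $[I_{\frac{n}{2},\frac{n}{2}}]$. Since $A_F\subset\ker m$ and $\mu|_{A_F}=1$, the preimage of $A_F$ in $\O(n)$ (resp. $\Sp(n)$) is an abelian group of involutions, hence simultaneously diagonalizable; thus, after conjugation, $A_F$ lies in the diagonal subgroup $K=\{\pm1\}^{n}/\langle(-1,\dots,-1)\rangle$. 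In these coordinates the requirement ``conjugate to $[I_{\frac{n}{2},\frac{n}{2}}]$'' becomes precisely the requirement that every non-identity element of $A_F$ have exactly $\frac{n}{2}$ entries equal to $-1$.

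I would then invoke Lemma \ref{Lemma: subclass} with $r=r_F$: it produces a partition $J=\{1,\dots,n\}=\bigsqcup_{\alpha\in A_F^{\ast}}J_{\alpha}$ into $2^{r_F}$ blocks, each of the same cardinality $\frac{n}{2^{r_F}}$, such that every $x\in A_F$ acts on the block $J_{\alpha}$ by the scalar $\alpha(x)$. Because all blocks have equal size, a suitable permutation of coordinates, realized by conjugation by a permutation matrix (an element of both $\O(n)$ and $\Sp(n)$), carries $A_F$ onto the standard diagonal subgroup of rank $r_F$ attached to this equal-block partition. That standard subgroup depends only on $r_F$, so the conjugacy class of $A_F$ depends only on $r_F$. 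Combined with the reduction of the first paragraph this completes the argument, Lemma \ref{L:O-Sp-U} allowing one to pass freely between the realizations of $A_F$ inside $\O(n')$, $\U(n')$ and $\Sp(n')$; the orthogonal and symplectic cases run identically.

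The main obstacle is the equal-cardinality claim for the blocks $J_{\alpha}$, namely that the partition in Lemma \ref{Lemma: subclass} is genuinely uniform: this is exactly what forces the normal form of $A_F$ to be unique up to a coordinate permutation and hence makes $r_F$ a complete invariant. That uniformity is delivered by the counting identity inside Lemma \ref{Lemma: subclass}, so once that lemma is in hand the remaining steps are bookkeeping, namely checking that the conjugating elements lie in the relevant matrix group and that the normalization leaves the invariants $(\epsilon_F,\delta_F,s_F)$ of Proposition \ref{Type BD: classification} untouched.
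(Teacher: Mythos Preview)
Your proposal is correct and follows essentially the same route as the paper: the paper's own proof simply cites Propositions \ref{Type BD: classification}, \ref{Type C: classification}, \ref{Sub:definition} and Lemma \ref{Lemma: subclass}, and your write-up is exactly an unpacking of how those four ingredients combine (reduce to the conjugacy class of $A_F$ via \ref{Type BD: classification}/\ref{Type C: classification}, translate the hypothesis to $A_F$ via \ref{Sub:definition}, diagonalize, then use the equal-block partition of \ref{Lemma: subclass} and a coordinate permutation). The appeal to Lemma \ref{L:O-Sp-U} at the end is harmless but not really needed here, since the normalization of $A_F$ takes place entirely inside $\O(n)$ (resp.\ $\Sp(n)$) by permutation matrices.
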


\begin{proof}
This follows from propositions \ref{Type BD: classification}, 
\ref{Type C: classification}, \ref{Sub:definition} and Lemma \ref{Lemma: subclass}. 
\end{proof}

Let $F_{r,s,\epsilon,\delta}$ be an elementary abelian 2-subgroup of 
$\O(n)/\langle-I\rangle$ (or $\Sp(n)/\langle-I\rangle$) satisfying the properties in 
Proposition \ref{Subclass: classification} and with invariants $(\epsilon,\delta,r,s)$, 
which is unique up to conjugation.

\begin{definition}\label{D:symplectic metric space}
A finite-dimensional vector space $V$ over the field $\mathbb{F}_2=
\mathbb{Z}/2\mathbb{Z}$ is called a symplectic vector space if it is 
associated with a map $m: V\times V\longrightarrow \mathbb{F}_2$ such that 
$m(x,x)=0$, $m(x,y)=m(y,x)$ and $m(x+y,z)=m(x,z)m(y,z)$ for any $x,y,x\in F$. 

Moreover, it is called a symplectic metric space if there is 
another map $\mu: V\longrightarrow \mathbb{F}_2$ such that $\mu(0)=0$ and 
$m(x,y)=\mu(x)+\mu(y)+\mu(x+y)$ for any $x,y\in V$.

Two symplectic metric spaces $(V,m,\mu)$ and $(V',m',\mu')$ are called isomorphic 
if there exists a linear space isomorphism $f: V\longrightarrow V'$ transferring 
$(m,\mu)$ to $(m'\mu')$. 
\end{definition}

\begin{definition}\label{D:SMS invariants}
For a symplectic metric space $V$, define $A_{V}=\ker \mu|_{\ker m}$ and the 
defect index $\defe V=|\{x\in V: \mu(x)=1\}|-|\{x\in V:\mu(x)=-1\}|$. 
\begin{itemize} 
\item When $\mu|_{\ker m}\neq 1$, define $(\epsilon_{V},\delta_{V})=(1,0)$;
\item when $\mu|_{\ker m}=1$ and $\defe V<0$, define $(\epsilon_{V},\delta_{V})=(0,1)$;
\item when $\mu|_{\ker m}=1$ and $\defe V>0$, define $(\epsilon_{V},\delta_{V})=(0,0)$.
\end{itemize}
Define $r_{V}=\rank A_{V}$, $s_{V}=\frac{1}{2}\rank V/\ker m-\delta_{F}$.
\end{definition}

\begin{remark}\label{Arf}
When $m$ is non-degenerate, $\mu$ is a non-degenerate quadratic form, in this case 
$\delta_{V}$ is the Arf invariant of $\mu$. We would like to thank Professor Madsen 
for reminding this.   
\end{remark}

The following proposition is an analogue of \ref{Type BD: classification}.
\begin{prop}\label{Symplectic metric space: classification}
The isomorphism class of a symplectic metric space is determined by 
the invariants $(r_{V},s_{V},\epsilon_{V},\delta_{V})$.

We have $\defe V=(1-\epsilon)(-1)^{\delta}2^{r+s+\delta}$.


\end{prop}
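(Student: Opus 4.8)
The statement to prove is Proposition~\ref{Symplectic metric space: classification}: that the isomorphism class of a symplectic metric space $(V,m,\mu)$ is determined by $(r_{V},s_{V},\epsilon_{V},\delta_{V})$, together with the defect-index formula $\defe V=(1-\epsilon)(-1)^{\delta}2^{r+s+\delta}$. Since Definitions~\ref{D:symplectic metric space} and \ref{D:SMS invariants} are purely algebraic translations of the matrix-group data appearing in Propositions~\ref{Type BD: classification} and \ref{Type C: classification}, my overall strategy is to give a self-contained algebraic classification that parallels the proof of \ref{Type BD: classification} but without reference to any ambient Lie group. The plan is to exhibit, for every admissible tuple $(r,s,\epsilon,\delta)$, a normal form (a standard model of a symplectic metric space with those invariants) and then to show any $(V,m,\mu)$ is isomorphic to the model with the matching invariants.

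First I would set up the structure theory of the bilinear form $m$. Since $m$ is a symmetric bilinear form with $m(x,x)=0$ over $\mathbb{F}_2$, it is alternating, and $\ker m$ is its radical; on the quotient $V/\ker m$ the induced form is non-degenerate and symplectic, so $\rank(V/\ker m)$ is even, giving a symplectic basis and justifying that $s_{V}$ is a (half-)integer. Next I would analyze $\mu$ restricted to $\ker m$: because $m$ vanishes on $\ker m$, the compatibility relation $m(x,y)=\mu(x)+\mu(y)+\mu(x+y)$ forces $\mu|_{\ker m}$ to be a \emph{linear} functional on $\ker m$, whose kernel is exactly $A_{V}$. This splits the analysis into the two cases recorded in Definition~\ref{D:SMS invariants}: either $\mu|_{\ker m}\neq 0$ (so $\epsilon_{V}=1$, $A_{V}$ has corank one in $\ker m$) or $\mu|_{\ker m}=0$ (so $\epsilon_{V}=0$, $A_{V}=\ker m$), and I would record that $A_{V}$, being contained in the radical with $\mu$ vanishing on it, is an orthogonal direct summand carrying the trivial structure.

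I would then decompose $V$ orthogonally. Choosing a complement to $\ker m$ on which $m$ is non-degenerate, and a complement of $A_{V}$ inside $\ker m$, one writes $V\cong A_{V}\perp V_0$ where $V_0$ carries a non-degenerate symplectic form and $A_{V}$ is the trivial-structure radical summand. On the non-degenerate part, $\mu$ becomes a (possibly non-defective) quadratic form whose associated bilinear form is $m$; here I invoke Remark~\ref{Arf}: over $\mathbb{F}_2$ a non-degenerate quadratic form is classified up to isometry by its dimension and its Arf invariant, which is precisely $\delta_{V}$. The possible one-dimensional defect coming from $\epsilon_{V}=1$ (the extra hyperbolic-like summand on which $\mu$ is non-linear on $\ker m$) is handled by splitting off one further standard plane. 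Assembling, the isomorphism type is pinned down by $r_{V}=\rank A_{V}$ (size of the trivial radical summand), $s_{V}$ (which encodes the dimension of the non-degenerate part via $\frac12\rank(V/\ker m)-\delta_{V}$), $\epsilon_{V}$, and $\delta_{V}=$~Arf invariant. Conversely, two spaces with equal invariants are built from isometric standard pieces, hence are isomorphic.

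Finally, the defect-index formula is a direct count. Writing $\defe V=\sum_{x\in V}(-1)^{\mu(x)}$, the sum factors over the orthogonal decomposition as a product of the defect indices of the summands. On the trivial radical summand $A_{V}$ each term is $+1$, contributing $2^{r_{V}}$; on the non-degenerate part the signed count of a non-degenerate $\mathbb{F}_2$-quadratic form in $2k$ variables is the standard $(-1)^{\delta}2^{k}$ (positive for Arf invariant $0$, negative for Arf invariant $1$), where $k=s_{V}+\delta_{V}$; and the $\epsilon_{V}=1$ case kills the sum, matching the factor $(1-\epsilon)$. Combining exponents yields $(1-\epsilon)(-1)^{\delta}2^{r+s+\delta}$. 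The main obstacle is bookkeeping the three-way interaction of $A_{V}$, the non-degenerate part, and the single defective direction in the $\epsilon_{V}=1$ case so that the orthogonal splitting is clean and the invariants add up correctly; once the splitting is isolated, both the classification and the character-sum formula reduce to the standard theory of $\mathbb{F}_2$-quadratic forms.
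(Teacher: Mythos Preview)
Your proposal is correct and matches the paper's implicit argument: the paper gives no explicit proof here, simply remarking that the statement is ``an analogue of Proposition~\ref{Type BD: classification}'', whose proof (split off the radical, then inductively peel off copies of $\Gamma_1$) is exactly the concrete matrix-group version of your Arf-invariant classification of $\mathbb{F}_2$-quadratic forms.

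One point to tighten when you write it out: in the $\epsilon_V=1$ case your complement $V_0$ of $A_V$ still contains the radical vector $z\in\ker m$ with $\mu(z)=1$, so $m|_{V_0}$ is \emph{not} non-degenerate as you state. The correct decomposition is $V=A_V\perp\langle z\rangle\perp W$ with $m|_W$ non-degenerate; then, since replacing $w\in W$ by $w+z$ flips $\mu(w)$, the Arf invariant of $\mu|_W$ depends on the choice of $W$ and is therefore not an invariant of $V$ --- this is exactly why $\delta_V$ is set to $0$ by convention in that case (and why, as the paper notes just before Proposition~\ref{Compare symplectic 1}, $V_{s;0,0}\oplus V_{0;1,0}\cong V_{s-1;0,1}\oplus V_{0;1,0}$). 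Your defect-index computation already handles this correctly via the factor $(1-\epsilon)$ coming from $\sum_{x\in\langle z\rangle}(-1)^{\mu(x)}=0$.
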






\begin{prop}\label{rank three symplectic space}
For a vector space $V$ over $\bbF_2$ of rank 3 with a map $\mu: V\longrightarrow \bbF_2$ 
($\mu(0)=0$), let $m(x,y)=\mu(x)+\mu(y)+\mu(x+y)$. Then $(V,m, \mu)$ is a symplectic metric 
space if and only if $m$ is bilinear, if and only if there are even number of elements in $V$ with 
non-trivial values of the function $\mu$.
\end{prop}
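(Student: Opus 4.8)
The plan is to treat the two asserted equivalences separately, since the first is essentially definitional while the second carries the real content. For ``$(V,m,\mu)$ is a symplectic metric space $\iff$ $m$ is bilinear'', I would first observe that the remaining clauses of Definition \ref{D:symplectic metric space} hold automatically under the stated hypotheses. From $m(x,y)=\mu(x)+\mu(y)+\mu(x+y)$ together with $\mu(0)=0$ one gets $m(x,x)=2\mu(x)+\mu(0)=0$ (using $2x=0$ in characteristic $2$), the symmetry $m(x,y)=m(y,x)$ is visible from the symmetric right-hand side, and the compatibility $m(x,y)=\mu(x)+\mu(y)+\mu(x+y)$ is exactly the defining formula. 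Hence the only nontrivial requirement for $(V,m,\mu)$ to be a symplectic metric space is the bilinearity of $m$, which gives the first equivalence at once.

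For the second equivalence the plan is to translate bilinearity into a single symmetric identity and then reduce that identity to a parity count. Expanding $m(x+y,z)=m(x,z)+m(y,z)$ with the formula for $m$ and simplifying in characteristic $2$ (the two $\mu(z)$ contributions on the right collapse to $0$), bilinearity becomes the requirement that
\[
\mu(x)+\mu(y)+\mu(z)+\mu(x+y)+\mu(x+z)+\mu(y+z)+\mu(x+y+z)=0
\]
for all $x,y,z\in V$; call this $(*)$. The point to record is that $(*)$ is symmetric in $x,y,z$ and that its seven $\mu$-terms are indexed precisely by the nonempty subsets of $\{x,y,z\}$, the argument in each term being the sum of that subset.

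Next I would verify that $(*)$ is automatic whenever $x,y,z$ are linearly dependent, splitting into the cases: one of them is $0$; two of them coincide; and $z=x+y$ with $x,y,x+y$ distinct and nonzero. In each case a short substitution shows that every $\mu$-value occurs an even number of times (or equals $\mu(0)=0$), so the left side of $(*)$ vanishes; this is the routine bookkeeping I would not spell out in full. The decisive case is when $x,y,z$ are linearly independent, hence a basis of the rank-$3$ space $V$: then the seven subset-sums $x,y,z,x+y,x+z,y+z,x+y+z$ run exactly once through all seven nonzero vectors of $V$, so $(*)$ collapses to $\sum_{v\neq 0}\mu(v)=0$, i.e. the number of $v$ with $\mu(v)=1$ is even.

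Combining these observations, $m$ is bilinear iff $(*)$ holds for every triple, iff it holds for one linearly independent triple (the dependent triples imposing no condition), iff the parity count is even; together with the first equivalence this completes the proof. The main obstacle, such as it is, is organizational rather than conceptual: one must correctly enumerate the linear-dependence patterns of a triple in a rank-$3$ space over $\bbF_2$ and confirm that each pattern forces cancellation, so that the entire content of bilinearity is concentrated in the single basis case.
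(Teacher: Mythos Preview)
Your proof is correct and follows essentially the same approach as the paper's. The paper's own proof is a terse two-line sketch (``$m$ is bilinear if and only if the sum of the values of $\mu$ over all elements of $V$ is $0$, if and only if there are even elements in $V$ with $\mu$ value $1$; other parts are clear''), and your argument is precisely the detailed unpacking of that sketch: you make explicit the cocycle identity $(*)$, verify it collapses for dependent triples, and note that for an independent triple in rank $3$ the seven subset-sums enumerate $V\setminus\{0\}$.
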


\begin{proof}
$m$ is bilinear if and only if the sum of the values of $\mu$ over all elements of $V$ is 0,
if and only if there are even elements in $V$ with $\mu$ value $1$.

Other parts of this proposition are clear. 
\end{proof}

Let $V_{r,s;\epsilon,\delta}$ be a symplectic metric space with 
the prescribed invariants, which is unique up to isomorphism . Let 
$\Sp(r,s;\epsilon,\delta)$ be the group of automorphisms of $V_{r,s,\epsilon,\delta}$ 
preserving $m$ and $\mu$. Let $V_{s;\epsilon,\delta}=V_{0,s;\epsilon,\delta}$ and 
$\Sp(s;\epsilon,\delta)=Sp(0,s;\epsilon,\delta)$. It is clear that, 
$\Sp(r,s;\epsilon,\delta)=\Hom(V_{s;\epsilon,\delta},\mathbb{F}_2^{r})\rtimes
(\Sp(s;\epsilon,\delta)\times\GL(\mathbb{F}_2^{r}))$. Let $\Sp(s)=\Sp(s,\mathbb{F}_2)$ 
be the degree-$s$ symplectic group over the field $\mathbb{F}_2$. 

\begin{lemma}\label{Order of symplectic groups}
We have the following formulas for the orders of 
$\Sp(s;\epsilon,\delta)$, \begin{eqnarray*}
&&|\Sp(s;0,0)|=(\prod_{1\leq i\leq s-1}(2^{i+1}-1)(2^{i}+1))\cdot2^{s^{2}-s+1},
\\&&|\Sp(s-1;0,1)|=3\cdot(\prod_{1\leq i\leq s-1}(2^{i}-1)(2^{i+1}+1))\cdot2^{s^{2}-s+1},\\
&&|\Sp(s;1,0)|=|\Sp(s)|=(\prod_{1\leq i\leq s}(2^{i}-1)(2^{i}+1))2^{s^{2}}.\\
\end{eqnarray*}
\end{lemma}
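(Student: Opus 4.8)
The plan is to compute the three orders by counting, for each symplectic metric space $V_{s;\epsilon,\delta}$, the size of its automorphism group preserving both the symplectic form $m$ and the quadratic refinement $\mu$. Since we have set $r=0$, the form $m$ is nondegenerate on $V$ (because $A_V = \ker\mu|_{\ker m} = 0$ forces $\ker m$ to be at most one-dimensional, and the three cases $(\epsilon,\delta)=(1,0),(0,1),(0,0)$ correspond precisely to the standard trichotomy for quadratic forms over $\mathbb{F}_2$). The cleanest route is to recognize each $\Sp(s;\epsilon,\delta)$ as a classical orthogonal group over $\mathbb{F}_2$ and quote the known order formulas, but since the paper works with the $\pm 1$-valued $\mu$ directly, I would instead count orbits by hand via the orbit-stabilizer theorem applied to a well-chosen vector.

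First I would dispose of the nondegenerate cases $(\epsilon,\delta)=(1,0)$ and $(0,1)$, where $\ker m = 0$ and $\dim V = 2s$. Here $\mu$ is a genuine nondegenerate quadratic form with Arf invariant $\delta$ (by Remark \ref{Arf}), so $\Sp(s;1,0)$ and $\Sp(s-1;0,1)$ are the orthogonal groups $O^+_{2s}(\mathbb{F}_2)$ and $O^-_{2s}(\mathbb{F}_2)$ of plus and minus type respectively. The plan is to prove the order formulas by induction on $s$: pick a vector $v$ with $\mu(v)=0$ (a ``singular'' vector), count the number of such vectors via the fact that a nondegenerate form of plus/minus type has $2^{2s-1}\pm 2^{s-1}$ singular vectors, and show the stabilizer of $v$ is an extension of the orthogonal group in two fewer dimensions by a unipotent radical of order $2^{2s-2}$. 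The orbit-stabilizer count then yields a recursion matching the displayed products; I would verify the base case $s=1$ by direct enumeration of the four points.

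The remaining case $(\epsilon,\delta)=(0,0)$ is the degenerate one: here $\mu|_{\ker m}=1$ means $\ker m \neq 0$ yet $A_V=\ker(\mu|_{\ker m})=0$, so $\ker m$ is exactly one-dimensional, spanned by a vector $z$ with $\mu(z)=1$, and $\dim V = 2s+1$ is odd. The plan is to split $V = \langle z\rangle \oplus W$ where $W$ is a complement on which $m$ is nondegenerate, so $(W, m|_W)$ is a symplectic space of dimension $2s$ and an automorphism of $V$ is determined by its action on $W$ together with the freedom to translate $z$. Concretely, an automorphism must fix $z$ (as it spans the radical), act on $V/\langle z\rangle \cong W$ preserving $m$, and preserve $\mu$; I would check that the induced map on $W$ can be \emph{any} element of the full symplectic group $\Sp(2s,\mathbb{F}_2)$ of order $\prod_{1\le i\le s}(2^{2i}-1)2^{2i-1} \cdot 2^{-s^2+\text{(bookkeeping)}}$, rather than an orthogonal subgroup, because in odd characteristic-free language the quadratic form on an odd-dimensional space with a radical imposes no extra constraint beyond preserving $m$. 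This gives $|\Sp(s;0,0)| = |\Sp(2s,\mathbb{F}_2)|\cdot 2^{2s}$, the factor $2^{2s}$ coming from the affine translations $v \mapsto v + \phi(v)z$ with $\phi \in \Hom(W,\mathbb{F}_2)$; I would then massage $|\Sp(2s,\mathbb{F}_2)|\cdot 2^{2s}$ algebraically into the stated product $(\prod(2^{i+1}-1)(2^i+1))2^{s^2-s+1}$.

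\textbf{The main obstacle} will be the degenerate case $(0,0)$, specifically pinning down exactly which linear maps preserving $m$ also preserve $\mu$ once the radical $\langle z\rangle$ is present. The subtlety is that $\mu$ is only a \emph{function}, not a linear form, so preserving $m$ does not automatically preserve $\mu$; I must verify that the image of $\mu$ on a coset $v + \langle z\rangle$ behaves consistently and that the translations $v\mapsto v+\phi(v)z$ genuinely preserve $\mu$, which uses the identity $m(x,z)=0$ for all $x$ to kill the cross terms. Matching the three resulting recursions or closed forms against the exact products displayed in the statement — particularly tracking the powers of two and the alternating $(2^i-1)$ versus $(2^{i+1}+1)$ factors that distinguish the plus, minus, and degenerate types — is purely mechanical but error-prone, so I would double-check each formula against the small values $s=1,2$ by brute-force enumeration before declaring the induction complete.
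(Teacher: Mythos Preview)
Your overall strategy (orbit--stabilizer, identification with classical orthogonal/symplectic groups over $\mathbb{F}_2$) is the right one and is essentially what the paper does. But you have the cases reversed, and this is not just a labeling slip --- it leads to a wrong computation.

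From Definition~\ref{D:SMS invariants}, the case $(\epsilon,\delta)=(1,0)$ is defined by $\mu|_{\ker m}\neq 1$, which \emph{forces} $\ker m\neq 0$; with $r=0$ this gives $\rank\ker m=1$, so $V_{s;1,0}$ has dimension $2s+1$ and is the degenerate case. Conversely $(\epsilon,\delta)=(0,0)$ has $\mu|_{\ker m}=1$, and with $r=0$ this means $A_V=\ker m=0$, so $V_{s;0,0}$ is a genuine nondegenerate quadratic space of dimension $2s$ with Arf invariant~$0$. Thus the correct identifications are $\Sp(s;0,0)\cong O^{+}_{2s}(\mathbb{F}_2)$, $\Sp(s-1;0,1)\cong O^{-}_{2s}(\mathbb{F}_2)$, and $\Sp(s;1,0)\cong\Sp_{2s}(\mathbb{F}_2)$ --- exactly the content of Proposition~\ref{Compare symplectic 1}. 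Your assignment would give $|\Sp(2;1,0)|=|O^{+}_4(2)|=72$, whereas the statement (and $|\Sp_4(\mathbb{F}_2)|$) gives $720$.

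The second error is in your handling of the degenerate case. You plan to exhibit a factor of $2^{2s}$ coming from translations $v\mapsto v+\phi(v)z$ with $\phi\in\Hom(W,\mathbb{F}_2)$. These do preserve $m$ (since $z\in\ker m$), but they do \emph{not} preserve $\mu$: one computes $\mu(v+\phi(v)z)=\mu(v)+\phi(v)\mu(z)+m(v,\phi(v)z)=\mu(v)+\phi(v)$, because $\mu(z)=1$ in additive notation. So the only such translation preserving $\mu$ is $\phi=0$. This is precisely why $\Sp(s;1,0)\to\Sp(s)$ is injective (see the proof of Proposition~\ref{Compare symplectic 1}), and why there is no extra $2^{2s}$ factor in $|\Sp(s;1,0)|$. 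Once you swap the cases back and drop the spurious translation factor, your orbit--stabilizer induction goes through and matches the paper's argument.
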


\begin{proof}
When $s=1$ or 0, these are clear. So we just need to calculate 
\[|\Sp(s;\epsilon,\delta)|/|\Sp(s-1;\epsilon,\delta)|.\] We calculate it 
for the case $\epsilon=\delta=0$, other cases are similar.

$\Sp(s;0,0)$ permutes the non-identity elements $x\in V_{s;0,0}$ 
with $\mu{x}=1$, there are $\frac{2^{2s}+2^{s}}{2}-1=
(2^{s}-1)(2^{s-1}+1)$ such elements. Fix two distinct non-identity 
elements $x_1,x_2\in V_{s,0,0}$ with $\mu(x_1)=\mu(x_2)=1$ and 
$m(x_1,x_2)$. For any other $x$ with $\mu(x)=1$ and $(x_1,x)=-1$, 
$(x_1,x)$ is transformed to $(x_1,x_2)$ under some transformation 
in $\Sp(s:0,0)$. There are $2^{2s-2}$ such elements $x$. So 
$|\Sp(s;\epsilon,\delta)|/|\Sp(s-1;\epsilon,\delta)|=
(2^{s}-1)(2^{s-1}+1)2^{2s-2}$.
\end{proof}

Since \[V_{s;0,0}\oplus V_{0;1,0}\cong V_{s-1;0,1}\oplus V_{0;1,0}
\cong V_{s;1,0},\] $\Sp(s;0,0)$ and $\Sp(s-1;0,1)$ are subgroups 
of $\Sp(s;1,0)$. 

\begin{prop}\label{Compare symplectic 1}
$\Sp(s;1,0)\cong\Sp(s)$. 
\end{prop}

\begin{proof}
Since $V_{s;1,0}/\ker m=\mathbb{F}_2^{2s}$ is a symplectic space 
of dimension $2s$, by restriction we get a natural homomorphism 
$\pi:\Sp(s;1,0)\longrightarrow\Sp(s)$. 

Suppose $\pi(f)=1$ for some $f\in Sp(s;1,0)$, then for any 
$x\in V_{s;1,0}$, $f(x)=x$ or $f(x)=xz$, where $1\neq z\in\ker m$ 
(which is unique). Since $\mu(xz)=\mu(z)\mu(z)m(x,z)=-\mu(x)\neq\mu(x)$, 
so $f(x)=x$ for any $x\in V_{s;1,0}$. Thus $\pi$ is injective. 

Surjectivity of $\pi$ follows from $|\Sp(s;1,0)|=|\Sp(s)|$.
\end{proof}

Since an element in $\Sp(s;0,0)$ or $\Sp(s-1;0,1)$ 
preserves the symplectic form $m$ on $V=\bbF_2^{2s}$, so we have inclusions 
$\Sp(s;0,0)\subset\Sp(s)$ and $\Sp(s-1;0,1)\subset\Sp(s)$.

\begin{prop}\label{Compare symplectic 2}
The indices \[[\Sp(s):\Sp(s;0,0)]=2^{s-1}(2^{s}+1)\] and 
\[[\Sp(s):\Sp(s-1;0,1)]=2^{s-1}(2^{s}-1).\] 
\end{prop}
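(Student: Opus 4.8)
The plan is to read off both indices directly from the order formulas, with no extra structural input. The remark immediately preceding the proposition already records the inclusions $\Sp(s;0,0)\subset\Sp(s)$ and $\Sp(s-1;0,1)\subset\Sp(s)$, so by Lagrange's theorem each index is just a quotient of group orders:
\[
[\Sp(s):\Sp(s;0,0)]=\frac{|\Sp(s)|}{|\Sp(s;0,0)|},\qquad
[\Sp(s):\Sp(s-1;0,1)]=\frac{|\Sp(s)|}{|\Sp(s-1;0,1)|}.
\]
Thus the whole proof reduces to substituting the three formulas of Lemma \ref{Order of symplectic groups} and simplifying the resulting telescoping products.

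For the first index I would separate the $2$-power from the odd factors. The $2$-power contributes $2^{s^{2}-(s^{2}-s+1)}=2^{s-1}$. For the odd part, the numerator factor $\prod_{i=1}^{s}(2^{i}-1)$ cancels against $\prod_{i=1}^{s-1}(2^{i+1}-1)=\prod_{k=2}^{s}(2^{k}-1)$ up to the trivial term $2^{1}-1=1$, while $\prod_{i=1}^{s}(2^{i}+1)$ over $\prod_{i=1}^{s-1}(2^{i}+1)$ leaves exactly $2^{s}+1$. Multiplying the two contributions gives $2^{s-1}(2^{s}+1)$, as claimed.

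The second index is handled the same way: the $2$-power again yields $2^{s-1}$, the quotient $\prod_{i=1}^{s}(2^{i}-1)/\prod_{i=1}^{s-1}(2^{i}-1)$ telescopes to $2^{s}-1$, and $\prod_{i=1}^{s}(2^{i}+1)$ against $\prod_{i=1}^{s-1}(2^{i+1}+1)=\prod_{k=2}^{s}(2^{k}+1)$ leaves the single factor $2^{1}+1=3$, which cancels the explicit factor $3$ in $|\Sp(s-1;0,1)|$. The surviving product is $2^{s}-1$, so the index is $2^{s-1}(2^{s}-1)$.

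There is essentially no obstacle here: once the subgroup inclusions are granted (they are, just above), the argument is a bounded telescoping computation, and the only bookkeeping point to watch is the cancellation of the extra factor $3$ in the second order formula against the $i=1$ term of the $(2^{i}+1)$-product. For context I would note the conceptual reading: since $r_{V}=0$ forces $\ker m=0$, both $V_{s;0,0}$ and $V_{s-1;0,1}$ are $\bbF_2^{2s}$ with a non-degenerate symplectic $m$ and a quadratic refinement $\mu$ whose Arf invariant is $\delta=0$ or $1$ (Remark \ref{Arf}); hence $\Sp(s;0,0)$ and $\Sp(s-1;0,1)$ are the stabilizers in $\Sp(s)\cong\Sp(2s,\bbF_2)$ of these two forms, and the indices count the quadratic forms of each Arf type, whose totals $2^{s-1}(2^{s}+1)$ and $2^{s-1}(2^{s}-1)$ sum to $2^{2s}$. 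However, the order computation above is shorter and fully self-contained given Lemma \ref{Order of symplectic groups}, so that is the route I would write up.
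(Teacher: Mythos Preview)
Your proposal is correct and takes exactly the same approach as the paper: the paper's proof is the single line ``This follows from Proposition \ref{Order of symplectic groups} directly,'' and you have simply written out that direct computation (with the telescoping made explicit and an additional conceptual remark on Arf invariants that the paper omits).
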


\begin{proof}
This follows from Proposition \ref{Order of symplectic groups} directly. 
\end{proof}

We also define groups $\Sp(s;t)$ ($s,t\geq 0$) as the automorphism 
group a symplecic vector space $(V,m)$ over $\mathbb{F}_2$ with $\rank V=2s+t$ and 
$\rank \ker m=t$. It is clear that $\Sp(s;0)=\Sp(s)$ and 
\[\Sp(s;t)=\Hom(\mathbb{F}_2^{2s},\mathbb{F}_2^{t})\rtimes(\GL(t,\mathbb{F}_2)\times\Sp(s)).\]

\section{Exceptional compact simple Lie groups (algebras)}\label{Section:exceptional}


For a complex simple Lie algebra $\frg$ with a specified Cartan subalgebra $\frh$, 
let $\Delta=\Delta(\frg,\frh)$ be the root system of $\frg$ and $B$ be its Killing form. 
For any $\lambda \in \frh^{\ast}$, let $H_{\lambda}\in \frh$ be determined by
\[B(H_{\lambda}, H)=\lambda(H), \forall H \in \frh.\] For any root $\alpha\in\Delta$, let
\[H'_{\alpha}=\frac{2}{\alpha(H_{\alpha})}H_{\alpha}.\] 
Then $\{H'_{\alpha}\}$ correspond to the co-roots
$\{\alpha^\vee=\frac{2\alpha}{\langle\alpha,\alpha\rangle}|\alpha\in\Delta\}$.

For any $\alpha\in\Delta$, let $X_{\alpha}$ be a root vector of the root $\alpha$, one can normalize the 
root vectors $\{X_{\alpha}, X_{-\alpha}\}$ so that 
$B(X_{\alpha}, X_{-\alpha})=2/\alpha(H_{\alpha})$ . Then
$[X_{\alpha}, X_{-\alpha}]=H'_\alpha$. Moreover, one can normalize
$\{X_{\alpha}\}$ appropriately, such that
\[\fru_0=\span\{X_{\alpha}-X_{-\alpha}, i(X_{\alpha}+X_{-\alpha}), i H_{\alpha}:
\alpha \in \Delta\}\] is a compact real form of $\frg$ (\cite{Knapp}
pages $348-354$). From now on, when we talk on a compact real simple Lie algebra, we just mean this 
specific real form of the complex simple Lie algebra $\frg$. 

We order the simple roots in Bourbaki style, and write $H'_{\alpha_{i}}$ as $H'_{i}$ for brevity 
for any simple root $\alpha_{i}$. 

\subsection{Involutions}\label{Subsection:involution}
Let $\fru_0$ be a compact simple Lie algebra and $G=\Aut(\fru_0)$ be its automorphism group. 
The conjugacy classes of involutions in $G$ are in one-one correspondence with the isomorphism 
classes of real forms of the complexified Lie algebra $\frg=\fru_0\otimes_{\bbR}\bbC$, and also 
in one-one correspondence with compact irreducible Riemannian symmetric pairs $(\fru_0,\frh_0)$. 
These objects were classified by \'Elie Cartan in 1920s. 
We give representatives of conjugacy classes of involutions the automorphism group 
$G=\Aut(\fru_0)$ for each compact simple exceptional Lie algebra $\fru_0$. 






i) Type $\bf E_6$. For $\fru_0=\fre_6$, let $\tau$ be a specific diagram involution
defined by
\begin{eqnarray*} &&\tau(H_{\alpha_1})=H_{\alpha_6},
\tau(H_{\alpha_6})=H_{\alpha_1}, \tau(H_{\alpha_3})=H_{\alpha_5},
\tau(H_{\alpha_5})=H_{\alpha_3},
\\&& \tau(H_{\alpha_2})=H_{\alpha_2},
\tau(H_{\alpha_4})=H_{\alpha_4},
\tau(X_{\pm{\alpha_1}})=X_{\pm{\alpha_6}},
\tau(X_{\pm{\alpha_6}})=X_{\pm{\alpha_1}},\\&&
\tau(X_{\pm{\alpha_3}})=X_{\pm{\alpha_5}},
\tau(X_{\pm{\alpha_5}})=X_{\pm{\alpha_3}},
\tau(X_{\pm{\alpha_2}})=X_{\pm{\alpha_2}},
\tau(X_{\pm{\alpha_4}})=X_{\pm{\alpha_4}}.
\end{eqnarray*} Let \[\sigma_1=\exp(\pi i H'_2), \sigma_2=\exp(\pi i
(H'_1+H'_6)), \sigma_3=\tau, \sigma_4=\tau \exp(\pi i H'_2).\] Then
$\sigma_1,\sigma_2,\sigma_3,\sigma_4$ represent all conjugacy
classes of involutions in $\Aut(\fru_0)$, which correspond to
Riemannian symmetric spaces of type {\bf EII, EIII, EIV, EI} and
the corresponding real forms are
$\fre_{6,-2},\fre_{6,14},\fre_{6,26},\fre_{6,-6}$.
$\sigma_1,\sigma_2$ are inner automorphisms, $\sigma_3,\sigma_4$
are outer automorphisms.

ii) Type $\bf E_7$. For $\fru_0=\fre_7$, let \[\sigma_1=\exp(\pi i H'_2),
\sigma_2=\exp(\pi i \frac{H'_2+H'_5+H'_7}{2}), \sigma_3=\exp(\pi i
\frac{H'_2+H'_5+H'_7+2H'_1}{2}).\] Then $\sigma_1,\sigma_2,\sigma_3$
represent all conjugacy classes of involutions in $\Aut(\fru_0)$,
which correspond to Riemannian symmetric spaces of type {\bf EVI, EVII, EV}
and the corresponding real forms are
$\fre_{7,3},\fre_{7,25},\fre_{7,-7}$.

iii) Type $\bf E_8$. For $\fru_0=\fre_8$, let \[\sigma_1=\exp(\pi i H'_2),
\sigma_2=\exp(\pi i (H'_2+H'_1)).\] Then $\sigma_1,\sigma_2$
represent all conjugacy classes of involutions in $\Aut(\fru_0)$,
which correspond to Riemannian symmetric spaces of type {\bf EIX, EVIII}
and the corresponding real forms are $\fre_{8,24},\fre_{8,-8}$.

iv) Type $\bf F_4$. For $\fru_0=\frf_4$, let \[\sigma_1=\exp(\pi i H'_1),
\sigma_2=\exp(\pi i H'_4).\] Then $\sigma_1,\sigma_2$ represent all
conjugacy classes of involutions in $\Aut(\fru_0)$, which correspond
to Riemannian symmetric spaces of type {\bf FI, FII} and the
corresponding real forms are $\frf_{4,-4},\frf_{4,20}$.

v) Type $\bf G_2$. For $\fru_0=\frg_2$, let $\sigma=\exp(\pi H'_1)$, 
which represents the unique conjugacy class of involutions in $\Aut(\fru_0)$,
corresponds to Riemannian symmetric space of type {\bf G} and the
corresponding real form is $\frg_{2,-2}$.

\smallskip

We remark that, in types $\E_8,\F_4,\G_2$, the automorphism groups of the simple 
Lie algebras, $\Aut(\fre_8),\Aut(\frf_4),\Aut(\frg_2)$, are connected and simply connected. 
$\Aut(\fre_6)$ is not connected and $\Int(\fre_6)$ is not simply connected, 
the image of the natural homomorphism $\pi: \E_6\longrightarrow\Aut(\fre_6)$ 
is $\Int(\fre_6)$ and the kernel of $\pi$ ($=Z(\E_6)$) is of odd 3. 
So $\E_6$ has two conjugacy classes of involutions with representatives 
$\sigma'_1=\exp(\pi i H'_2)$, $\sigma'_2=\exp(\pi i(H'_1+H'_6))$, where 
$\exp: \fre_6\longrightarrow\E_6$ is the exponential map of $\E_6$. 
$\Aut(\fre_7)$ is connected but not simply connected, the natural homomorphism 
$\pi: \E_7\longrightarrow\Aut(\fre_7)$ is surjective and the kernel of $\pi$ 
($=Z(\E_7)$) is of order 2. The preimages of $\sigma_2,\sigma_3$ are elements of order 4, 
and preimages of $\sigma_1$ are two non-conjugate involutions. So $\E_7$ has 
two conjugacy classes of involutions with representatives $\sigma'_1=\exp(\pi i H'_2)$, 
$\sigma'_2=\exp(\pi i(H'_1+H'_6))$, where $\exp: \fre_7\longrightarrow\E_7$ is 
the exponential map of $\E_7$.

There is an ascending sequence \[\F_4\subset\E_6\subset\E_7\subset\E_8,\] we observe that 
under these inclusions, the involutions $\sigma_2$ in $\F_4$, $\sigma_2'$ in $\E_6$, 
$\sigma'_2$ in $\E_7$ are conjugate to the involutions $\sigma'_2$ in $\E_6$, $\sigma_2'$ 
in $\E_7$, $\sigma_2$ in $\E_8$ when they are viewed as elements in the larger groups. 
The conjugacy class containing $\sigma_2$ (or $\sigma'_2$) is particularly important when 
we define the translation subgroup $A_{F}$ for an elementary abelian 2 subgroup $F$.     

The following Table 2 (cf, \cite{Huang-Yu} Tables 1$\&$2) describes the symmetric subgroup 
$\Aut(\fru_0)^{\theta}$ and its isotropic module $\frp=\frg^{-\theta}$ for each compact 
symmetric pair $(\fru_0,\frk_0)$ with $\fru_0$ an exceptional compact simple Lie algebra. 

\begin{table}[ht]\label{Ta:symmetric subgroups} 
\caption{Symmetric subgroups and isotropic modules}
\centering
\begin{tabular}{|c |c |c |c|}
\hline & $\theta$  & $\Aut(\fru_0)^{\theta}$ & $\frp$  \\ [0.3ex] \hline

{\bf EI} &$\sigma_4=\tau\exp(\pi i H'_{2})$ & $(Sp(4)/\langle-1\rangle)\times
\langle\theta\rangle$ & $V_{\omega_4}$    \\
\hline

{\bf EII}&$\sigma_1=\exp(\pi i H'_2)$&
$(\!SU\!(6)\!\!\times\!\!Sp(1)\!/\!\langle(e^{\frac{2\pi
i}{3}}\!I\!,\!1\!),\!(\!-\!I\!,\!-\!1\!)\rangle\!)
\!\rtimes\!\langle\tau\rangle$ & $\wedge^{3}\bbC^{6}\otimes\bbC^{2} $ \\
&&$\tau^{2}=1$, $\frk_0^{\tau}=\mathfrak{sp}(3)\oplus\mathfrak{sp}(1)$ &  
\\ \hline

{\bf EIII}&$\sigma_2=\exp(\pi i(H'_1+H'_6))$& 
$(\Spin(10)\times U(1)/\langle(c,i)\rangle)\rtimes\langle\tau\rangle$ & 
$(M_{+}\otimes 1)\oplus(M_{-}\otimes\overline{1})$ \\
&&$\tau^{2}=1$, $\frk_0^{\tau}=\mathfrak{so}(9)$ & \\ \hline

{\bf EIV}&$\sigma_3=\tau$& $F_4\times\langle\theta\rangle$  & $V_{\omega_4}$  \\
\hline

{\bf EV}&$\sigma_3=\exp(\pi i(H'_1+H'_0))$& 
$(SU(8)/\langle iI\rangle)\rtimes\langle\omega\rangle$  & 
$\wedge^{4}\bbC^{8}$ \\ &&$\omega^{2}=1$, $\frk_0^{\omega}=\mathfrak{sp}(4)$ &\\
\hline

{\bf EVI}&$\sigma_1=\exp(\pi i H'_2)$& 
$(\Spin(12)\!\times \!Sp(1))/\langle(c,1),(-1,-1)\rangle$ & 
$M_{+}\otimes\bbC^{2}$ \\
\hline

{\bf EVII}&$\sigma_2=\exp(\pi i H'_0)$& $((E_{6}\times
U(1))/\langle (c,e^{\frac{2\pi i}{3}})\rangle)
\rtimes\langle\omega\rangle$  &
$(V_{\omega_1}\otimes 1)\oplus(V_{\omega_6}\otimes\overline{1}) $ \\
&&$\omega^{2}=1$, $\frk_0^{\omega}=\frf_4$ & \\ 
\hline

{\bf EVIII}&$\sigma_2=\exp(\pi i(H'_1+H'_2))$& $\Spin(16)/\langle c\rangle$
   & $M_{+}$ \\
\hline

{\bf EIX}&$\sigma_1=\exp(\pi i H'_1)$&  $E_{7}\times Sp(1)/\langle(c,-1)\rangle$
  & $V_{\omega_7}\otimes\bbC^{2}$\\
\hline

{\bf FI}&$\sigma_1=\exp(\pi i H'_1)$& $(Sp(3)\times Sp(1))/\langle(-I,-1)\rangle$
  & $V_{\omega_3}\otimes\bbC^{2}$ \\
\hline

{\bf FII}&$\sigma_2=\exp(\pi i H'_4)$& $\Spin(9)$   & $M$ \\
\hline

{\bf G} &$\sigma=\exp(\pi i H'_1)$& $(Sp(1)\times Sp(1))/\langle(-1,-1)\rangle$ 
& $\Sym^{3}\bbC^{2}\otimes\bbC^{2} $ \\
\hline
\end{tabular}
\end{table}

\begin{remark}
We apologize that the notation $\sigma_{i}$ is used to represent conjugacy classes of 
involutions in all types of simple Lie groups (algebras). To avoid the ambiguity, we always 
specify in which group we are talking about conjugacy classes.  
\end{remark}

\subsection{Klein four subgroups}\label{Subsection:Klein}

In \cite{Huang-Yu} Section 4, we classified conjugacy classes of involutions in each 
symmetric subgroup $\Aut(\fru_0)^{\theta}$ and identify their conjugacy classes in 
$\Aut(\fru_0)$ (some details are also given in the following sections), 
constructed some Klein four subgroups of $\Aut(\fru_0)$ and described the conjugacy 
classes of involutions in them, it is showed that they represent all conjugacy classes. 

The groups $\Aut(\fre_6)$, $\Aut(\fre_7)$, $\Aut(\fre_8)$, $\Aut(\frf_4)$, 
$\Aut(\frg_2)$ have 8,8,4,3,1 conjugacy classes of Klein four subgroups in them, most of them 
are distinguished by their {\it involution types} (Definition \ref{Involution type}) except that 
the Klein four subgroups $\Gamma_1,\Gamma_2$ of $\Aut(\fre_7)$ have the same involution type 
(both are $(\sigma_1,\sigma,\sigma_1)$). $\Gamma_1,\Gamma_2\subset\Aut(\fre_7)$ can be 
characterized in this way, a Klein four subgroup $F\subset\E_7$ with $\pi(F)=\Gamma_1$ 
(respectively, $\pi(F)=\Gamma_2$) have an odd number of elements (respectively, an even number 
of elements) conjugate to $\sigma'_2$, where $\pi: \E_7\longrightarrow\Aut(\fre_7)$ is a double 
covering. That is equivalent to say, we can choose a Klein four subgroup $F\subset\E_7$ with 
$\pi(F)=\Gamma_1$ (respectively, $\pi(F)=\Gamma_2$) such that all of its involutions are 
conjugate to $\sigma'_1$ (respectively, $\sigma'_2$). 

Given a Klein four subgroup $F\subset G$, we have six different pairs $(\theta,\sigma)$ 
generating $F$, but some of them may be conjugate. 

\begin{thm}[\cite{Huang-Yu} Theorem 5.2]\label{T:regular}
Let $(\theta,\sigma)$, $(\theta',\sigma')$ be two pairs of commuting involutions in 
$G=\Aut(\fru_0)$ for $\fru_0$ a compact exceptional simple Lie algebra, then they are 
conjugate if and only if \[\theta\sim\theta', \sigma\sim\sigma',
\theta\sigma\sim\theta'\sigma'\] and the Klein four subgroups 
$\langle\theta,\sigma\rangle, \langle\theta',\sigma'\rangle$ are conjugate.
\end{thm}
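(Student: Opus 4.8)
The \emph{only if} direction is immediate: a single $g\in G$ conjugating $(\theta,\sigma)$ to $(\theta',\sigma')$ simultaneously conjugates $\theta\sigma$ to $\theta'\sigma'$ and the Klein four group $\langle\theta,\sigma\rangle$ to $\langle\theta',\sigma'\rangle$, so all four conditions hold. The plan is therefore to establish the \emph{if} direction. First I would use the hypothesis that $F=\langle\theta,\sigma\rangle$ and $F'=\langle\theta',\sigma'\rangle$ are conjugate to reduce to the case $F=F'$: replacing $(\theta',\sigma')$ by its image under a conjugating element preserves the three element-wise conjugacy hypotheses and does not affect whether the two pairs are conjugate. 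Once $F=F'$, both $(\theta,\sigma)$ and $(\theta',\sigma')$ are ordered generating pairs of the same Klein four group, so there is a unique $\phi\in\Aut(F)\cong\GL(2,\bbF_2)$ with $\phi(\theta)=\theta'$ and $\phi(\sigma)=\sigma'$, and then automatically $\phi(\theta\sigma)=\theta'\sigma'$.

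The central reduction I would make is the following: with $F=F'$ fixed, the pairs $(\theta,\sigma)$ and $(\theta',\sigma')$ are $G$-conjugate if and only if $\phi$ lies in the Automizer group $W(F)=N_{G}(F)/C_{G}(F)$ acting on $F$. Indeed, any $g$ with $g\theta g^{-1}=\theta'$ and $g\sigma g^{-1}=\sigma'$ sends the generators of $F$ into $F$, hence normalizes $F$ and induces $\phi$; conversely any representative in $N_{G}(F)$ of $\phi$ carries one pair to the other. On the other hand, the three conjugacy hypotheses $\theta\sim\theta'$, $\sigma\sim\sigma'$, $\theta\sigma\sim\theta'\sigma'$ say exactly that $\phi$ preserves the partition of the three involutions of $F$ into $G$-conjugacy classes. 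Writing $P(F)\le\Aut(F)$ for the subgroup of all such conjugacy-class-preserving automorphisms, we have $\phi\in P(F)$, while $W(F)\subseteq P(F)$ holds trivially since conjugation never changes $G$-conjugacy classes. Thus the \emph{if} direction is equivalent to the assertion $W(F)=P(F)$ for every Klein four subgroup $F\subset G$.

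Since $\Aut(F)$ has order $6$, it suffices to treat transpositions (a $3$-cycle $\phi\in P(F)$ forces the three involutions to be mutually $G$-conjugate, whence both transpositions of $\Aut(F)$ also lie in $P(F)$; once these are shown to lie in $W(F)$ they generate $\Aut(F)\ni\phi$). For a transposition interchanging involutions $x,y\in F$ and fixing $z=xy$, any element of $N_{G}(F)$ inducing it must centralize $z$; conversely any $g\in C_{G}(z)$ with $gxg^{-1}=y$ satisfies $gzg^{-1}=z$ and $gyg^{-1}=x$, hence normalizes $F=\langle x,z\rangle$ and induces precisely this transposition. So this transposition lies in $W(F)$ if and only if $x$ and $y$ are conjugate \emph{inside} $C_{G}(z)=\Aut(\fru_0)^{z}$, the symmetric subgroup recorded (up to conjugacy) in Table 2. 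The whole theorem therefore comes down to the descent statement: for commuting involutions $x,y$ with $z=xy$, if $x\sim_{G}y$ then $x\sim_{C_{G}(z)}y$.

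The main obstacle is exactly this descent statement, and it is where the classification does the real work: it must be checked for each of the finitely many conjugacy classes of Klein four subgroups of $\Aut(\fre_6),\Aut(\fre_7),\Aut(\fre_8),\Aut(\frf_4),\Aut(\frg_2)$, using the explicit structure of the symmetric subgroups $\Aut(\fru_0)^{z}$ and the classification of their involutions, together with the identification of the resulting $\Aut(\fru_0)$-classes. The delicacy is that $C_{G}(z)$ is typically disconnected, so $G$-conjugacy cannot be forced to descend by dimension alone; the subtlest instance is the pair $\Gamma_1,\Gamma_2\subset\Aut(\fre_7)$, which share the common involution type $(\sigma_1,\sigma,\sigma_1)$ yet are not conjugate. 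This last example also illustrates why the element-wise conditions $\theta\sim\theta'$, $\sigma\sim\sigma'$, $\theta\sigma\sim\theta'\sigma'$ are by themselves insufficient, so that the Klein four conjugacy hypothesis is genuinely needed in the statement.
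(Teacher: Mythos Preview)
Your reduction is correct and cleanly executed: the \emph{if} direction is equivalent to $W(F)=P(F)$ for every Klein four subgroup $F$, and since $P(F)\le S_3$ can never equal $A_3$ (a $3$-cycle in $P(F)$ forces all three involutions to be $G$-conjugate, whence $P(F)=S_3$), it suffices to realize each transposition in $P(F)$ by an element of $N_G(F)$, which is exactly the descent statement $x\sim_G y\Rightarrow x\sim_{C_G(z)} y$ for $z=xy$.

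Note, however, that the paper does \emph{not} supply its own proof of this theorem: it is quoted verbatim as Theorem~5.2 of \cite{Huang-Yu}, and the surrounding discussion in Subsection~\ref{Subsection:Klein} only summarizes the relevant input (the classification of conjugacy classes of involutions in each symmetric subgroup $\Aut(\fru_0)^{\theta}$ and their identification inside $\Aut(\fru_0)$, assembled in Tables~2 and~3). So there is nothing in this paper to compare your argument against. What you have written is the correct skeleton of the proof carried out in \cite{Huang-Yu}; the residual case-by-case descent verification is precisely the content of their Section~4, and the data recorded here in Table~2 and in the type-by-type discussions of $G^{\sigma_i}$ are exactly what one needs to complete it. Your closing remark about $\Gamma_1,\Gamma_2\subset\Aut(\fre_7)$ (involution type $(\sigma_1,\sigma_1,\sigma_1)$, not $(\sigma_1,\sigma,\sigma_1)$) correctly pinpoints why the Klein-four conjugacy hypothesis cannot be dropped.
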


We remark that, $\Aut(\fre_7)$ has two non-conjugate Klein four subgroups with 
involutions all conjugate to $\sigma_1$, so the condition of ``the Klein four subgroups 
$\langle\theta,\sigma\rangle$, $\langle\theta',\sigma'\rangle$ are conjugate'' can't be 
omitted in Theorem \ref{T:regular}. By Theorem \ref{T:regular}, Table 3  
also classifies conjugacy classes of ordered pairs of commuting involutions in $\Aut(\fru_0)$. 
Which is another approach to Berger 's classification of semisimple symmetric pairs.

\begin{table}[ht]\label{Ta:Klein four}
\caption{Klein four subgroups in $\Aut(\fru_0)$ for exceptional case}
\centering
\begin{tabular}{|c |c |c |c |c}
\hline $\fru_0$ & $\Gamma_{i}$ & $\frl_0=\fru_0^{\Gamma_{i}}$  & Type  \\ [0.5ex]
\hline
$\fre_6$& $\Gamma_{1}=\langle\exp(\pi i H'_2), \exp(\pi i H'_4)\rangle$ &
$(\mathfrak{su}(3))^{2}\oplus (i\bbR)^{2}$ & $(\sigma_1,\sigma_1,\sigma_1)$\\
\hline

 $\fre_6$& $\Gamma_{2}=\langle\exp(\pi i H'_4), \exp(\pi i (H'_3+H'_4+H'_5))\rangle$
 & $\mathfrak{su}(4) \oplus(\mathfrak{sp}(1))^{2}\oplus i \bbR$&  $(\sigma_1,\sigma_1,\sigma_2)$\\
\hline

$\fre_6$& $\Gamma_{3}=\langle\exp(\pi i (H'_2+H'_1)), \exp(\pi i
(H'_4+H'_1))\rangle$ & $\mathfrak{su}(5) \oplus (i\bbR)^{2}$ &
$(\sigma_1,\sigma_2,\sigma_2)$ \\
\hline

$\fre_6$& $\Gamma_{4}=\langle\exp(\pi i (H'_1+H'_6)), \exp(\pi i
(H'_3+H'_5))\rangle$ &  $\mathfrak{so}(8)\oplus (i\bbR)^{2}$  &
$(\sigma_2,\sigma_2,\sigma_2)$ \\
\hline

$\fre_6$& $\Gamma_{5}=\langle\exp(\pi i H'_2), \tau \rangle$ &
$\mathfrak{sp}(3) \oplus \mathfrak{sp}(1)$  &
$(\sigma_1,\sigma_3,\sigma_4)$ \\
\hline

$\fre_6$& $\Gamma_{6}=\langle\exp(\pi i H'_2), \tau \exp(\pi i
H'_4)\rangle$ & $\mathfrak{so}(6)\oplus i\bbR$  &
$(\sigma_1,\sigma_4,\sigma_4)$ \\
\hline

$\fre_6$& $\Gamma_{7}=\langle\exp(\pi i(H'_1+H'_6))), \tau\rangle$  &
$\mathfrak{so}(9)$ & $(\sigma_2,\sigma_3,\sigma_3)$ \\
\hline

$\fre_6$& $\Gamma_{8}=\langle\exp(\pi i(H'_1+H'_6)), \tau \exp(\pi i
H'_2)\rangle$ &  $\mathfrak{so}(5)\oplus \mathfrak{so}(5)$
& $(\sigma_2,\sigma_4,\sigma_4)$ \\
\hline

$\fre_7$& $\Gamma_{1}=\langle\exp(\pi i H'_2), \exp(\pi i H'_4)\rangle$ &
$\mathfrak{su}(6)\oplus (i\bbR)^{2}$&$(\sigma_1,\sigma_1,\sigma_1)$ \\
\hline

$\fre_7$& $\Gamma_{2}=\langle\exp(\pi i H'_2), \exp(\pi i H'_3)\rangle$ &
$\mathfrak{so}(8)\oplus (\mathfrak{sp}(1))^{3}$  &
$(\sigma_1,\sigma_1,\sigma_1)$ \\
\hline

$\fre_7$& $\Gamma_{3}=\langle\exp(\pi i H'_2), \tau \rangle$ &
$\mathfrak{so}(10) \oplus (i\bbR)^{2}$   &
$(\sigma_1,\sigma_2,\sigma_2)$ \\
\hline

$\fre_7$& $\Gamma_{4}=\langle\exp(\pi i H'_1), \tau \rangle$  &
$\mathfrak{su}(6)\oplus \mathfrak{sp}(1) \oplus i\bbR$   &
$(\sigma_1,\sigma_2,\sigma_3)$  \\
\hline

$\fre_7$& $\Gamma_{5}=\langle\exp(\pi i H'_2), \tau \exp(\pi i
H'_1)\rangle$ & $\mathfrak{su}(4)\oplus \mathfrak{su}(4) \oplus
i\bbR$ & $(\sigma_1,\sigma_3,\sigma_3)$ \\
\hline

$\fre_7$& $\Gamma_{6}=\langle\tau, \omega\rangle$ & $\frf_4$ &
$(\sigma_2,\sigma_2,\sigma_2)$ \\
\hline

$\fre_7$& $\Gamma_{7}=\langle\tau, \omega \exp(\pi i H'_1)\rangle$ &
$\mathfrak{sp}(4)$ &  $(\sigma_2,\sigma_3,\sigma_3)$ \\
\hline

$\fre_7$& $\Gamma_{8}=\langle\tau \exp(\pi i H'_1), \omega \exp(\pi i
H'_3)\rangle$ & $\mathfrak{so}(8)$ &
$(\sigma_3,\sigma_3,\sigma_3)$ \\
\hline

$\fre_8$& $\Gamma_{1}=\langle\exp(\pi i H'_2), \exp(\pi i H'_4)\rangle$ &
$\fre_6\oplus (i\bbR)^{2}$  &  $(\sigma_1,\sigma_1,\sigma_1)$ \\
\hline

$\fre_8$& $\Gamma_{2}=\langle\exp(\pi i H'_2), \exp(\pi i H'_1)\rangle$ &
$\mathfrak{so}(12)\oplus(\mathfrak{sp}(1))^{2}$ &$(\sigma_1,\sigma_1,\sigma_2)$ \\
\hline

$\fre_8$& $\Gamma_{3}=\langle\exp(\pi i H'_2), \exp(\pi i
(H'_1+H'_4))\rangle$
& $\mathfrak{su}(8)\oplus i\bbR$ & $(\sigma_1,\sigma_2,\sigma_2)$ \\
\hline

$\fre_8$& $\Gamma_{4}=\langle\exp(\pi i (H'_2+H'_1)),\exp(\pi i
(H'_5+H'_1))\rangle$ & $\mathfrak{so}(8)\oplus \mathfrak{so}(8)$ &
$(\sigma_2,\sigma_2,\sigma_2)$ \\
\hline

$\frf_4$& $\Gamma_{1}=\langle\exp(\pi i H'_2),\exp(\pi i H'_1)\rangle$ &
$\mathfrak{su}(3)\oplus (i\bbR)^{2}$ &  $(\sigma_1,\sigma_1,\sigma_1)$ \\
\hline

$\frf_4$& $\Gamma_{2}=\langle\exp(\pi i H'_3),\exp(\pi i H'_2)\rangle$ &
$\mathfrak{so}(5)\oplus(\mathfrak{sp}(1))^{2}$ &
$(\sigma_1,\sigma_1,\sigma_2)$ \\
\hline

$\frf_4$& $\Gamma_{3}=\langle\exp(\pi i H'_4),\exp(\pi i H'_3)\rangle$ &
$\mathfrak{so}(8)$& $(\sigma_2,\sigma_2,\sigma_2)$ \\
\hline

$\frg_2$& $\Gamma=\langle\exp(\pi i H'_1), \exp(\pi i H'_2)\rangle$ &
$(i\bbR)^{2}$ &  $(\sigma,\sigma,\sigma)$ \\
\hline

\end{tabular}
\end{table}

\subsection{An outline of the method of the classification}\label{Subsection:method}

When $\fru_0$ is of type $\G_2$, it turns out the maximal rank is 3 and there is a unique 
conjugacy class of elementary abelian 2-subgroups for each rank $\leq 3$. 

When $\fru_0$ is of type $\F_4$, as $\Aut(\frf_4)=\F_4$ doesn't possess a Klein four subgroup with 
involution type $(\sigma_1,\sigma_2,\sigma_2)$, the union of the subset of elements in an elementary 
abelian 2-subgroup $F$ of $\F_4$ which are conjugate to $\sigma_2$ and the identity element is a 
subgroup of $F$. Let $A_{F}$ be this subgroup, then $r=\rank A_{F}$ and $s=\rank F/A_{F}$ are 
obviously invariants (up to conjugation). Our work is to show the conjugacy class of $F$ is 
determined by $r,s$ and $r\leq 2$, $s\leq 3$. 

When $\fru_0$ is of type $\E_6$, we divide the elementary abelian 2-subgroups $F$ of 
$\Aut(\fre_6)$ into four (disjoint and exhausting) classes: 
\begin{itemize}
\item[1),]{$F$ contains an involution conjugate to $\sigma_3$;} 
\item[2),]{$F$ doesn't contain any element conjugate to $\sigma_3$, 
but contains one conjugate to $\sigma_4$;}
\item[3),]{$F\subset\Int(\fre_6)$ and it contains no Klein four subgroups conjugate to $\Gamma_3$;} 
\item[4),]{$F\subset\Int(\fre_6)$ and it contains a Klein four subgroup conjugate to $\Gamma_3$.} 
\end{itemize}

As $\Int(\fre_6)^{\sigma_3}\cong \F_4$ and 
$\Int(\fre_6)^{\sigma_4}\cong\Sp(4)/\langle(-I,-1)\rangle$, the classification in Class 1 reduces 
to $\F_4$ case; the classification in Class 2 reduces to $\Sp(4)/\langle(-I,-1)\rangle$ case, but only 
the subgroups with any involution conjugate to $\textbf{i}I$ or $\diag\{-I_2,I_2\}$ are concerned (see the 
detailed discussion in Section \ref{Section:E6} for the reason). Two amusing observations are, 
any subgroup in Class 3 is of the form $F\cap\Int(\fre_6)$ for a subgroup $F$ in Class 1, 
and any subgroup in Class 4 is of the form $F\cap\Int(\fre_6)$ for a subgroup $F$ in Class 2 
(with some additional condition).  

When $\fru_0$ is of type $\E_7$, we divide the elementary abelian 2-subgroups $F$ of 
$\Aut(\fre_7)$ into three disjoint classes: 
\begin{itemize}
\item[1),]{$F$ contains an involution conjugate to $\sigma_2$;} 
\item[2),]{$F$ doesn't contain any element conjugate to $\sigma_2$, 
but contains one conjugate to $\sigma_3$;}
\item[3),]{any involution in $F$ is conjugate to $\sigma_1$.} 
\end{itemize}

Since \[\Aut(\fre_7)^{\sigma_2}\cong((\E_6\times\U(1))/\langle(c,e^{\frac{2\pi i}{3}})\rangle
\rtimes\langle\omega\rangle,\] where $1\neq c\in Z_{\E_6}$, $\omega^{2}=1$, 
$(\fre_6\oplus i\bbR)^{\omega}=\frf_4\oplus 0$, $\sigma_2=(1,-1)$. Modulo 
$\U(1)$, we have a homomorphism $\pi: G^{\sigma_2}\longrightarrow\Aut(\fre_6)$. 
It turns out there is a bijection between conjugacy classes of elementary abelian 
2-subgroups of $\Aut(\fre_7)$ in Class 1 and elementary abelian 2-subgroups of 
$\Aut(\fre_6)$. 

Since $\Aut(\fre_7)^{\sigma_3}\cong(\SU(8)/\langle iI\rangle)\rtimes\langle\omega_0\rangle$, 
where $\omega_0^{2}=1$, $\omega_0 X\omega_0^{-1}=\overline{X}$ for any $X\in\SU(8)$, 
$\sigma_3=\frac{1+i}{\sqrt{2}}I$. So we have a homomorphism 
\[\pi: G^{\sigma_3}\longrightarrow\Aut(\mathfrak{su}(8))
=(\U(8)/\Z_{8})\rtimes\langle\omega_0\rangle.\] There is a bijection between 
conjugacy classes of elementary abelian 2-subgroups of $\Aut(\fre_7)$ in Class 2 and 
elementary abelian 2 subgroups of 
$\Aut(\mathfrak{su}(8))$ whose inner involutions are all conjugate to 
$I_{4,4}=\diag\{I_4,-I_4\}$ and outer involutions all conjugate to $\omega_0$. 

For an elementary abelian 2-subgroup $F$ of $\Aut(\fre_7)$ in Class 3, we show either $F$ is 
toral or it contains a rank 3 subgroup whose Klein four subgroups are all conjugate to $F_1$. 
In the first case, we can find an involution $\theta\in\Aut(\fre_7)^{F}$ such that elements 
in $\theta F$ are all conjugate to $\sigma_3$. In the second case, we can find a 
Klein four subgroup $F'\subset\Aut(\fre_7)^{F}$ conjugate to $\Gamma_6$. Then $F$ is 
a (canonical) subgroup of some (well-chosen) subgroup in Class 2 or Class 1.  

In type $\E_8$, $\Aut(\fre_8)=\E_8$ is connected, simply connected and of adjoint type. $\E_8$ 
has two conjugacy classes of involutions (with representatives $\sigma_1$,$\sigma_2$). A nice 
observation is, for an elementary abelian 2-subgroup $F$ of $\Aut(\fre_8)$ and an element $x$
of $F$ conjugate to $\sigma_1$, the subset \[H_{x}=\{y\in F|xy\not\sim y\}\] is a subgroup. 
We define $H_{F}$ as the subgroup generated by elements of $F$ conjugate to $\sigma_1$, and 
\[A_{F}=\{1\}\cup\{x\in F|x\sim\sigma_2, \textrm{ and }\forall y\in F-\{1,x\}, xy\sim y\}.\] 
$A_{F}\subset H_{F}$ if $H_{F}\neq 1$. 

Since \[\Aut(\fre_8)^{\Gamma_1}\cong((\E_6\times\U(1)\times\U(1))/\langle(c,e^{\frac{2\pi i}{3}},1)
\rangle\rtimes\langle\omega\rangle,\] where $1\neq c\in Z_{\E_6}$, $\omega^{2}=1$, 
$(\fre_6\oplus i\bbR\oplus i\bbR)^{\omega}=\frf_4\oplus 0\oplus 0$, 
$\Gamma_1=\langle(1,1,-1),(1,-1,1)\rangle$. Modulo 
$\U(1)\times \U(1)$, we have a homomorphism $\pi: G^{\sigma_2}\longrightarrow\Aut(\fre_6)$. 
$\pi$ doesn't give a bijection between conjugacy classes of elementary abelian 2-subgroups 
of $\Aut(\fre_8)$ containing a Klein four subgroup conjugate to $\Gamma_1$ and elementary 
abelian 2-subgroups of $\Aut(\fre_6)$. But we can distinguish this ambiguity explicitly. 
In this way, we get a classification of elementary abelian 2-subgroups of $\Aut(\fre_8)$ 
containing a Klein four subgroup conjugate to $\Gamma_1$.   

When $F$ doesn't contain any Klein four subgroup conjugate to $\Gamma_1$ and $H_{F}\neq 1$,
we can show $\rank H_{F}/A_{F}=1$, $\rank A_{F}\leq 3$ and $\rank F/H_{F}\leq 2$. Moreover, 
the conjugacy class of $F$ is determined by $\rank A_{F}$ and $\rank F/H_{F}$. 

When $H_{F}=1$, $\rank F\leq 5$ and the conjugacy class of $F$ is determined by $\rank F$.

\subsection{Some notions} 

\begin{definition}[Involution type]\label{Involution type}
For an elementary abelian 2 subgroup $F$ of a compact Lie group $G$, we call the distribution 
of conjugacy classes of involutions in $F$ the {\it involution type} of $F$.  
\end{definition}

\begin{definition}[Automizer group]\label{Automizer}
For an elementary abelian 2 subgroup $F$ of a compact Lie group $G$, 
let $C_{G}(F)=\{g\in G|gxg^{-1}=x,\forall x\in F\}$ and $N_{G}(F)=\{g\in G|gxg^{-1}\in F,\forall x\in F\}$.
we call $W(F)=N_{G}(F)/C_{G}(F)$ the Automizer group of $F$.  
\end{definition}

Conjugation action gives a natural inclusion $W(F)\subset\Aut(F)=\GL(\rank F,\mathbb{F}_2)$. 
Te get $W(F)$, we need to know which automorphisms of $F$ can the realized as $\Ad(g)$ 
for some $g\in G$. $W(F)$ is called Weyl group in \cite{Andersen-Grodal-Moller-Viruel}. 


We will also introduce other notions like {\it translation subgroup}, {\it defect index}, 
{\it residual rank} in the following sections. As the definitions of these notions depend on 
the types of the Lie algebras (or Lie groups), we give the precise definitions 
in each section below.  These notions help us to show the subgroups we constructed in 
different classes or in the same class but with different parameters are non-conjugate.


\section{$\G_2$}\label{Section:G2}
For $G=\G_2$, it is of adjoint type and simply connected, there exists 
a unique conjugacy class of involutions in $G$. For any involution 
$x\in G$, $G^{x}\cong\Sp(1)\times\Sp(1)/\langle(-1,-1)\rangle$. 
There is a unique conjugacy class of ordered pairs $(x,y)$ generating 
a Klein four subgroup. For any of such a pair $(x,y)$, 
$G^{x,y}\cong(\U(1)\times\U(1))\rtimes\langle t \rangle$, where $t^{2}=1$ 
and \[\forall z_1,z_2 \in\U(1),\Ad(t)(z_1,z_2)=(z_1^{-1},z_2^{-1}).\] 
There is a unique conjugacy class of ordered triples $(x,y,z)$ generating a 
rank 3 elementary abelian 2-subgroup. For any such a triple
$(x,y,z)$, $G^{x,y,z}=\langle x,y,z \rangle$. 
There are no elementary abelian 2-subgroups of $G$ of rank more than $3$.

Let $F_{r}$ be an elementary abelian 2-subgroup of $G=\G_2$ of rank $r$, then 
\[W(F_{r})=N_{G}(F_{r})/C_{G}(F_{r})\cong\GL(r,\mathbb{F}_2).\]

\section{$\F_4$}\label{Section:F4}

Let $G=\F_4$, which is of adjoint type and simply connected. There are 
two conjugacy classes of involutions in $G$ with $\sigma_1,\sigma_2$ as  
their representatives, we have 
\[G^{\sigma_1}\cong\Sp(3)\times\Sp(1)/\langle (-I,-1)\rangle,\] and  
\[G^{\sigma_2} \cong\Spin(9).\] 

There are three conjugacy classes of Klein four subgroups in $G$ 
with $\Gamma_1,\Gamma_2,\Gamma_3$ as their representatives, whose involution types are 
\[\Gamma_1:(\sigma_1,\sigma_1,\sigma_1);\ \Gamma_2:(\sigma_1,\sigma_1,\sigma_2);\  
\Gamma_3:(\sigma_2,\sigma_2,\sigma_2).\]

For $\sigma_1$, $G^{\sigma_1}=\Sp(3)\times\Sp(1)/\langle (-I,-1)\rangle$, and we have 
$\frp\cong V_{\omega_4}\otimes\bbC^{2}$ as an $\Sp(3)\times\Sp(1)$ module.
$G^{\sigma_1} $has three conjugacy 
classes of involutions except $\sigma_1=(-I,1)=(I,-1)$, their representatives 
are $(\textbf{i}I,\textbf{i})$, $\big(\left(\begin{array}{ccc}-1&&\\&1&\\&&1\\
\end{array}\right),1 \big)$, $\big(\left(\begin{array}
{ccc}-1&&\\&-1&\\&&1\\\end{array}\right), 1\big)$. And in $G$, we have conjugacy  
relations $\big(\textbf{i}I,\textbf{i}\big)\sim\sigma_1$, 
\begin{eqnarray*}&&\big(\left(\begin{array}{ccc}-1&&\\&1&\\&&1\\
\end{array}\right),1 \big)\sim\sigma_1,\ \big(\left(\begin{array}
{ccc}-1&&\\&-1&\\&&1\\\end{array}\right), 1\big)\sim\sigma_2.\end{eqnarray*}
We explain why $(\diag\{-1,1,1\},1)\sim\sigma_1$. Since the weight space of 
$V_{\omega_4}$ (an $\Sp(3)$ module) is $\{\pm{e_1}\pm{e_2}\pm{e_3},\pm{e_1},\pm{e_2},\pm{e_3}\}$ 
(each of multiplicity one), the subspace fixed by $\diag\{-1,I_2\}$ are the sum of weights spaces 
of weights $\{\pm{e_2}\pm{e_3},\pm{e_2},\pm{e_3}\}$, so 
\[\dim\fru_0^{(\diag\{-1,1,1\},1)}=\dim(\mathfrak{sp}(1)+\mathfrak{sp}(2)+\mathfrak{sp}(1))+8=24,\] 
so $(\diag\{-1,1,1\},1)\sim\sigma_1$. Similar calculations lead to other conjugacy relations. 

For $\sigma_2$, $G^{\sigma_2}=\Spin(9)$, and $\frp\cong M$, the Spinor module of $\Spin(9)$. 
$G^{\sigma_2}$  has two conjugacy classes of involutions except 
$\sigma_2=-1$, their representatives are $e_1e_2e_3e_4$, $e_1e_2e_3e_4e_5e_6e_7e_8$. 
And in $G$, we have conjugacy relations \[e_1e_2e_3e_4\sim\sigma_1, e_1e_2...e_8\sim\sigma_2.\]
The consideration on weights calculates $\dim\fru_0^{\sigma}$, from which we get 
$\dim\fru_0^{\sigma}$ for all relevant $\sigma$ and the conjugacy relations.

\smallskip

In $G^{\sigma_1}=\Sp(3)\times\Sp(1)/\langle(-I,-1)\rangle$, let 
$x_1=(I,-1)$, $x_2=(\textbf{i}I,\textbf{i})$, $x_3=(\textbf{j}I,\textbf{j})$, 
\begin{eqnarray*}&& x_4=\left(\begin{array}{ccc} -1&&\\&-1&\\&&1\\\end{array}\right),
x_5=\left(\begin{array}{ccc} -1&&\\&1&\\&&-1\\\end{array}\right).\end{eqnarray*} 
For $0\leq r\leq 2, 0\leq s\leq 3$, define \begin{eqnarray*}&&F_{r,s}=
\langle x_{1},...,x_{s},x_{4},...,x_{3+r}\rangle,\ 
A_{r}=\langle  x_{4},...,x_{3+r}\rangle.\end{eqnarray*}

\begin{definition}
For an elementary abelian 2-subgroup $F\subset G=\F_4$, define  
\[A_{F}=\{x\in F: x\sim\sigma_2\}\cup\{1\}.\]
\end{definition}

\begin{lemma} \label{F 4 r,s}
For an elementary abelian $2$ subgroup $F\subset G$, $A_{F}$ is a subgroup 
with $\rank{A_{F}}\leq 2$ and $\rank(F/A_{F})\leq 3$.

For each $r,s$ with $0\leq r\leq 2$, $0\leq s\leq 3$, there exists a unique 
conjugacy class of elementary abelian 2 groups $F\subset F_4$ with 
$\rank{A_{F}}=r$ and $\rank(F/A_{F})=s$.
\end{lemma}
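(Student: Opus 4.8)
The plan is to read off two conjugacy invariants $r=\rank A_F$ and $s=\rank(F/A_F)$, establish the bounds $r\le 2$ and $s\le 3$, and then match an arbitrary $F$ against the explicit models $F_{r,s}$. First I would check that $A_F$ is a subgroup: if $x,y\in A_F$ are distinct and nontrivial, then both are conjugate to $\sigma_2$ and commute, so $\langle x,y\rangle$ is a Klein four subgroup two of whose involutions have class $\sigma_2$. Since $\F_4$ has no Klein four subgroup of involution type $(\sigma_1,\sigma_2,\sigma_2)$ (the only types being those of $\Gamma_1,\Gamma_2,\Gamma_3$), the remaining involution $xy$ is again conjugate to $\sigma_2$, so $xy\in A_F$; closure under inverses is automatic. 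To bound $r$, I assume $r\ge 1$, pick $x\in A_F$ with $x\sim\sigma_2$, and work inside $G^{x}\cong\Spin(9)$, in which $x=-1$ is central and $F\subseteq G^{x}$. Projecting along $\Spin(9)\to\SO(9)$ gives $\bar F=F/\langle x\rangle$, an elementary abelian group of diagonalizable involutions. Every involution of $\Spin(9)$ maps to a matrix with $0$, $4$, or $8$ eigenvalues equal to $-1$, and using the relations $e_1e_2e_3e_4\sim\sigma_1$, $e_1\cdots e_8\sim\sigma_2$ together with the Klein four constraint (applied to $\pm e_1e_2e_3e_4$ and $\pm e_1\cdots e_8$) the image weight is $4$ exactly on the $\sigma_1$-class and $8$ on the nontrivial $\sigma_2$-class. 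Thus $\bar F$ is a binary code of length $9$ with all nonzero weights in $\{4,8\}$. Two distinct weight-$8$ words, each the complement of a single coordinate, would sum to a weight-$2$ word, so there is at most one weight-$8$ word; as these words are exactly the $2^{r-1}-1$ images of the pairs $\{y,xy\}$ with $y\in A_F\setminus\langle x\rangle$, this gives $2^{r-1}-1\le 1$, i.e. $r\le 2$.

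For $s$ I would use a trace count on the $52$-dimensional adjoint module $\fru_0$. From the centralizers, $\dim\fru_0^{\sigma_1}=\dim(\mathfrak{sp}(3)\oplus\mathfrak{sp}(1))=24$ and $\dim\fru_0^{\sigma_2}=\dim\mathfrak{so}(9)=36$, whence $\tr_{\fru_0}(\sigma_1)=-4$ and $\tr_{\fru_0}(\sigma_2)=20$. Decomposing $\fru_0=\bigoplus_{\chi\in F^{\ast}}W_{\chi}$ and using character orthogonality, $\sum_{x\in F}\tr_{\fru_0}(x)=2^{r+s}\dim\fru_0^{F}$; since $F$ has exactly $2^{r}-1$ involutions of class $\sigma_2$ and $2^{r+s}-2^{r}$ of class $\sigma_1$, this simplifies to $2^{r+s}(\dim\fru_0^{F}+4)=8(4+3\cdot 2^{r})$. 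As $\dim\fru_0^{F}\ge 0$ this forces $2^{r+s}\le 2(4+3\cdot 2^{r})$, which for each $r\in\{0,1,2\}$ yields $s\le 3$.

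Existence is then a direct verification: for the displayed $F_{r,s}=\langle x_1,\dots,x_s,x_4,\dots,x_{3+r}\rangle$ the chosen generators commute and are independent, and computing $\dim\fru_0^{x}$ for each $x$ from the decomposition $\fru_0=\mathfrak{sp}(3)\oplus\mathfrak{sp}(1)\oplus(V_{\omega_3}\otimes\bbC^{2})$, exactly as in the sample computations preceding the lemma, shows that precisely the nontrivial elements of $A_r$ lie in class $\sigma_2$; hence $A_{F_{r,s}}=A_r$ and the invariants equal $(r,s)$. For uniqueness, given $F$ with invariants $(r,s)$ I would first move $A_F$ onto the standard $A_r$, using that pure-$\sigma_2$ subgroups of a fixed rank $\le 2$ form a single conjugacy class (for $r=1$ this is one class of involutions; for $r=2$ one fixes $-1\in\Spin(9)$ and invokes that $\diag\{-I_8,I_1\}$ is a single $\SO(9)$-class). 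Once $A_F=A_r$, the group $F$ sits in $C_G(A_r)$ with $F\setminus A_r$ of class $\sigma_1$, and its $N_G(A_r)$-class should be recovered from the code $\bar F$, whose classification reduces to the uniqueness up to permutation equivalence of a constant-weight-$4$ code (the $[7,3,4]$ simplex code, for $r\le 1$) or of a self-complementary doubly-even code (the $[8,4,4]$ extended Hamming code, for $r=2$).

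The hard part will be this final identification. I must show that the code equivalences preserving $\bar F$ are realized by honest conjugations in $G$ and not merely in $\O(9)$, and --- since the reduction through $\Spin(9)$ is unavailable when $r=0$ --- I must treat the pure-$\sigma_1$ case directly inside $G^{\sigma_1}=(\Sp(3)\times\Sp(1))/\langle(-I,-1)\rangle$, where the analogous sign-pattern analysis classifies $F$ of rank $s\le 3$. Controlling these lifts, equivalently computing the relevant Automizer actions, is where the real work lies.
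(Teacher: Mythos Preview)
Your argument is correct, but it diverges from the paper's in two places worth noting.

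For the bound $s\le 3$, the paper argues directly inside $G^{\sigma_1}\cong(\Sp(3)\times\Sp(1))/\langle(-I,-1)\rangle$: a complement of $A_F$ is pure $\sigma_1$, and any element of such a complement (other than $\sigma_1$ itself) must be conjugate in $G^{\sigma_1}$ to $(\mathbf{i}I,\mathbf{i})$, which caps the rank at $3$. Your character/trace identity $2^{r+s}(\dim\fru_0^F+4)=8(4+3\cdot 2^r)$ is a genuinely different and rather clean route; it gives the bound uniformly without peering into the structure of $G^{\sigma_1}$, at the cost of needing $r\le 2$ as input.

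For uniqueness, the paper does \emph{not} pass to $\Spin(9)$ and binary codes. Instead, for $s\ge 1$ it fixes a $\sigma_1$-element and classifies $F$ entirely inside $G^{\sigma_1}$; the $s=0$ case is then the pure-$\sigma_2$ classification already obtained while proving $r\le 2$. This is more uniform --- one ambient group handles all $(r,s)$ with $s\ge 1$ --- and sidesteps exactly the lifting issues you flag as ``the hard part''. Your code approach is perfectly viable (the codes that arise are, as you say, subcodes of the simplex $[7,3,4]$ and the extended Hamming $[8,4,4]$, and the needed $S_9$-equivalences do lift through $\SO(9)\hookrightarrow$ signed permutations $\to\Spin(9)$ after a determinant fix), but it forces you to run a separate argument in $G^{\sigma_1}$ for $r=0$ anyway. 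So you end up doing the paper's computation \emph{and} the code computation. If you want to economize, you could drop the code detour and carry out the $G^{\sigma_1}$ analysis for all $s\ge 1$ at once; your trace argument for the bound would still stand as a pleasant alternative to the paper's.
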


\begin{proof}
Let $F\subset G$ be an elementary abelian $2$-group. Since there are no 
Klein four subgroups of $G$ with involutions type 
$(\sigma_1,\sigma_2,\sigma_2)$, for any distinct non-identity elements 
$x,y\in F$ with $x\sim y\sim \sigma_2$, we have $xy\sim \sigma_2$. 
So $A_{F}$ is a subgroup.

In $G^{\sigma_2}=\Spin(9)$, besides $\sigma_2=-1$, the elements conjugate 
to $\sigma_2$ in $G$ are all conjugate to $e_1e_2\cdots e_8$ in $\Spin(8)$. 
There doesn't exist $x,y\in\Spin(9)$ with $x,y,xy$ all conjugate to 
$e_1e_2\cdots e_8$, so $\rank A_{F}\leq 2$.

In $G^{\sigma_1}\cong\Sp(3)\times\Sp(1)/\langle (-I,-1)\rangle$, the elements 
$x$ with $x,\sigma_1 x=(-I,1)x$ both conjugate to $\sigma_1$ in $G$ are 
all conjugate to $(\textbf{i} I,\textbf{i})$. By this, it is obvious that 
any elementary abelian 2-subgroup of $G$ whose non-identity elements 
all conjugate to $\sigma_1$ has rank at most 3 
($\langle\sigma_1,(\textbf{i} I,\textbf{i}),(\textbf{j} I,\textbf{j})\rangle$ 
is a rank 3 example). Since non-identity elements of a complement of $A_{F}$ 
in $F$ are all conjugate to $\sigma_1$, so $\rank F/A_{F}\leq 3$.

In $F=F_{r,s}$, we have $A_{F}=A_{r}$ is of rank $r$ and 
$F/A_{F}=F_{r,s}/A_{r}$ is of rank $s$, so $F=F_{r,s}$ satisfies 
$\rank A_{F}=r$ and $\rank F/A_{F}=s$.

When $s=0$, the uniquness is showed in the proof for $r\leq 2$ as above. When 
$s\geq 1$, one argues in $G^{\sigma_1}\cong\Sp(3)\times\Sp(1)/\langle(-I,-1)\rangle$ 
to get the uniqueness.
\end{proof}

\begin{prop}\label{F4:uniqueness} 
For two elementary abelian 2-subgroups $F,F'\subset G=\F_4$, if $f: F\longrightarrow F'$ 
is an isomorphism with $f(x)\sim x, \forall x\in F$, then there exists $g\in G$ such 
that $f=\Ad(g)$. \end{prop}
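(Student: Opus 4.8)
The plan is to deduce the statement from Lemma \ref{F 4 r,s} together with an identification of the Automizer group $W(F)=N_G(F)/C_G(F)$. First I would reduce to the case $F=F'$. Because $f(x)\sim x$ for every $x$, the isomorphism $f$ carries the set of elements conjugate to $\sigma_2$ in $F$ onto the corresponding set in $F'$, hence $f(A_F)=A_{F'}$ and $f$ descends to an isomorphism $F/A_F\to F'/A_{F'}$. In particular $\rank A_F=\rank A_{F'}$ and $\rank(F/A_F)=\rank(F'/A_{F'})$, so Lemma \ref{F 4 r,s} supplies $g_0\in G$ with $g_0Fg_0^{-1}=F'$. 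Replacing $f$ by $\Ad(g_0^{-1})\circ f$, which is again conjugacy-preserving since $\Ad(g_0^{-1})$ is inner, I reduce to realizing an arbitrary conjugacy-preserving automorphism of a single $F$ by an element of $N_G(F)$; equivalently, to proving that $W(F)$ is the full stabilizer of $A_F$ in $\Aut(F)=\GL(\rank F,\mathbb{F}_2)$.

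The structural point, implicit in Lemma \ref{F 4 r,s}, is that the conjugacy class of $x\in F$ is determined solely by whether $x\in A_F$: the non-identity elements of $A_F$ are exactly those conjugate to $\sigma_2$, and every element of $F\setminus A_F$ is conjugate to $\sigma_1$. Hence an automorphism of $F$ preserves conjugacy classes precisely when it preserves the subspace $A_F$, so the group of conjugacy-preserving automorphisms equals $\mathrm{Stab}_{\GL(F)}(A_F)\cong\Hom(F/A_F,A_F)\rtimes(\GL(A_F)\times\GL(F/A_F))$. As the inclusion $W(F)\subset\mathrm{Stab}_{\GL(F)}(A_F)$ always holds, it remains to realize generators of the three factors. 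For this I would use the standard model $F=F_{r,s}$, with $A_F=A_r=\langle x_4,\dots,x_{3+r}\rangle$ and $\langle x_1,\dots,x_s\rangle$ mapping isomorphically onto $F/A_F$.

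The factor $\GL(A_F)$ (nontrivial only when $r=2$, where it is $S_3$) I would realize by coordinate permutations of the $\Sp(3)$-factor of $G^{\sigma_1}=\Sp(3)\times\Sp(1)/\langle(-I,-1)\rangle$: these permute $x_4=\diag\{-1,-1,1\}$, $x_5=\diag\{-1,1,-1\}$ and $x_4x_5$ transitively while commuting with the scalar elements $x_1,x_2,x_3$, hence fix $F/A_F$ pointwise. For part of $\GL(F/A_F)$ I would conjugate by elements $(qI,q)$ with $q\in\Sp(1)$: such an element fixes $x_1=(I,-1)$ and sends $(\textbf{i}I,\textbf{i})$ to $((q\textbf{i}q^{-1})I,\,q\textbf{i}q^{-1})$, so the octahedral rotations permuting the axes $\{\textbf{i},\textbf{j},\textbf{k}\}$ yield all of $S_3$ acting on $\{x_2,x_3,x_2x_3\}$ while fixing $x_1$ and $A_F$. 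The remaining generators of $\GL(F/A_F)$ — transvections mixing $x_1$ with the quaternionic generators $x_2,x_3$ — together with the unipotent translations $\Hom(F/A_F,A_F)$ would be produced by exhibiting elements of $G$ (in general outside $G^{\sigma_1}$) that conjugate one $\sigma_1$-involution to another while normalizing $F$, exploiting that all non-identity elements of $\langle x_1,\dots,x_s\rangle$ are mutually conjugate, in analogy with the $\G_2$ computation giving $W\cong\GL(s,\mathbb{F}_2)$.

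The main obstacle is exactly this last step. Realizing the full $\GL(F/A_F)$, in particular the transvections that must interchange the structurally different generators $x_1=(I,-1)$ and $x_2=(\textbf{i}I,\textbf{i})$, is invisible inside $G^{\sigma_1}$ and requires genuine $\F_4$-geometry rather than the quaternionic bookkeeping above; likewise the translations demand elements centralizing $A_F$ and the other $\sigma_1$-generators while shifting a single generator by an $A_F$-element. I expect to resolve these by passing to centralizers of suitable subgroups and invoking transitivity of $G$ on configurations of mutually conjugate commuting $\sigma_1$-involutions, so that the crucial sub-case $A_F=1$, $s\leq 3$ reduces to showing $W(F_{0,s})=\GL(s,\mathbb{F}_2)$. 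Once all three families of generators are realized we get $W(F)=\mathrm{Stab}_{\GL(F)}(A_F)$, which contains $f$, and the proposition follows.
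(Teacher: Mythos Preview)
Your reduction to $F=F'$ and your identification of the conjugacy-preserving automorphisms with $\mathrm{Stab}_{\GL(F)}(A_F)$ are correct. The difficulty is that you are attempting to prove Proposition~\ref{F4:uniqueness} by first establishing $W(F)=\mathrm{Stab}_{\GL(F)}(A_F)$, which is exactly the content of Theorem~\ref{F4}; in the paper the logical order is the reverse, and Theorem~\ref{F4} is \emph{deduced} from Proposition~\ref{F4:uniqueness}. Your proposed resolution of the hard step --- ``invoking transitivity of $G$ on configurations of mutually conjugate commuting $\sigma_1$-involutions'' --- is essentially a restatement of the proposition you are proving, so as written the argument is circular. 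The explicit generators you do produce (coordinate permutations of $\Sp(3)$ for $\GL(A_F)$, and $(qI,q)$ for the $S_3$ on $\{x_2,x_3,x_2x_3\}$) are fine, but they stay inside $G^{\sigma_1}$ and cannot, as you note, realize the transvections mixing $x_1$ with $x_2,x_3$ or the translations $\Hom(F/A_F,A_F)$; you give no concrete mechanism for these.

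The paper's route avoids this altogether. Rather than computing $W(F)$ first, the proof of Lemma~\ref{F 4 r,s} establishes directly that any two \emph{ordered} tuples of commuting involutions with the same conjugacy-class pattern are conjugate. One fixes an element $\sim\sigma_1$, conjugates it to $\sigma_1$, and then works inside the classical group $G^{\sigma_1}\cong(\Sp(3)\times\Sp(1))/\langle(-I,-1)\rangle$: there the elements $x$ with $x\sim\sigma_1$ and $\sigma_1 x\sim\sigma_1$ are exactly those conjugate to $(\mathbf{i}I,\mathbf{i})$, and the elements $\sim\sigma_2$ are conjugate to $(\mathrm{diag}\{-1,-1,1\},1)$, so the remaining classification of ordered tuples becomes a routine problem about commuting involutions in a projective quaternionic group. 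This yields Proposition~\ref{F4:uniqueness} immediately, and only then is $W(F_{r,s})\cong P(r,s,\mathbb{F}_2)$ read off as a corollary. If you want to salvage your approach, the cleanest fix is to prove $W(F_{0,s})=\GL(s,\mathbb{F}_2)$ independently (for $s\le 3$ this can be done, e.g.\ by showing $G$ acts transitively on ordered $s$-tuples of commuting $\sigma_1$-involutions spanning a pure-$\sigma_1$ subgroup, arguing inductively through centralizers), and then handle $A_F$ and the translations inside $G^{A_F}$; but at that point you are essentially reproducing the paper's inductive transitivity argument in a different guise.
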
 
\begin{proof}
This is proved in the proof of Lemma \ref{F 4 r,s}
\end{proof}

\begin{thm} \label{F4}
For any $r\leq 2$, $s\leq 3$, $W(F_{r,s})\cong P(r,s,F_2)$, where $P(r,s,F_2)$ is 
the group of $(r,s)$ blockwise upper triangular matrices in $GL(r+s,F_2)$.
\end{thm}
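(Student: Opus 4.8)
The plan is to realize $W(F_{r,s})$ as a subgroup of $\Aut(F_{r,s})=\GL(r+s,\mathbb{F}_2)$ and then to pin it down using Proposition \ref{F4:uniqueness}. First I would note that an element of $W(F_{r,s})$ is by definition induced by conjugation by some $g\in N_{G}(F_{r,s})$, and since $\Ad(g)(x)=gxg^{-1}\sim x$ in $G$ for every $x$, every class in $W(F_{r,s})$ preserves the $G$-conjugacy class of each element of $F_{r,s}$. Conversely, Proposition \ref{F4:uniqueness} applied with $F=F'=F_{r,s}$ says that any automorphism $f$ of $F_{r,s}$ with $f(x)\sim x$ for all $x$ has the form $\Ad(g)$, and such a $g$ automatically normalizes $F_{r,s}$, so its class lies in $W(F_{r,s})$. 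This identifies $W(F_{r,s})$ exactly with the group of conjugacy-class-preserving automorphisms of $F_{r,s}$.

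Next I would translate the condition ``conjugacy-class-preserving'' into a statement about the canonical subgroup $A_{F_{r,s}}$. Since $\F_4$ has only the two involution classes $\sigma_1,\sigma_2$, every non-identity element of $F_{r,s}$ is conjugate to exactly one of them; by the definition $A_{F}=\{x:x\sim\sigma_2\}\cup\{1\}$, the elements conjugate to $\sigma_2$ are precisely the non-identity elements of $A_{F_{r,s}}$, and the elements conjugate to $\sigma_1$ are precisely those of $F_{r,s}\setminus A_{F_{r,s}}$. Consequently an automorphism $f$ satisfies $f(x)\sim x$ for all $x$ if and only if $f(A_{F_{r,s}})=A_{F_{r,s}}$: if $f$ preserves $A_{F_{r,s}}$ it sends $\sigma_2$-elements to $\sigma_2$-elements and, being bijective, $\sigma_1$-elements to $\sigma_1$-elements; conversely preserving the $\sigma_2$-class forces $f(A_{F_{r,s}})\subseteq A_{F_{r,s}}$, hence equality. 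Thus $W(F_{r,s})$ is exactly the stabilizer of the subgroup $A_{F_{r,s}}$ in $\Aut(F_{r,s})$.

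Finally I would match this stabilizer with $P(r,s,\mathbb{F}_2)$. By construction $A_{F_{r,s}}=A_{r}=\langle x_4,\dots,x_{3+r}\rangle$ has rank $r$ and $F_{r,s}/A_{r}$ has rank $s$, so ordering the basis as $x_4,\dots,x_{3+r},x_1,\dots,x_s$ identifies $A_{F_{r,s}}$ with the span of the first $r$ coordinates. The automorphisms preserving this subspace are exactly the matrices with vanishing lower-left $s\times r$ block, that is, the $(r,s)$ blockwise upper triangular matrices $P(r,s,\mathbb{F}_2)$, of order $|\GL(r,\mathbb{F}_2)|\,|\GL(s,\mathbb{F}_2)|\,2^{rs}$. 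I do not expect a genuine obstacle here: the whole argument rests on Proposition \ref{F4:uniqueness}, which does the real work of realizing abstract class-preserving automorphisms by inner ones. The only points to check with care are that $A_{F_{r,s}}$ is genuinely canonical, independent of the choice of complement, which is immediate from its class-theoretic definition, and that $F_{r,s}\setminus A_{F_{r,s}}$ consists solely of $\sigma_1$-type elements, which uses that $\F_4$ has no further involution classes.
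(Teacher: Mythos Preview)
Your proof is correct and follows essentially the same approach as the paper's: both use Proposition~\ref{F4:uniqueness} to identify $W(F_{r,s})$ with the conjugacy-class-preserving automorphisms of $F_{r,s}$, then observe this is exactly the stabilizer of $A_{F_{r,s}}=A_r$, which is $P(r,s,\mathbb{F}_2)$. The paper compresses this into two sentences, while you have spelled out each step more carefully.
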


\begin{proof}
For $F=F_{r,s}$, $A_{F}=A_{r}$ and any $g\in N_{G}(F)$ satisfies 
$g A_{r} g^{-1}=A_{r}$. By Proposition \ref{F4:uniqueness}, we have 
$W(F)=N_{G}(F)/C_{G}(F)\cong P(r,s,F_2)$.
\end{proof}

\section{$\E_6$}\label{Section:E6}

Let $G=\Aut(\fre_6)$, then $G_0=\Int(\fre_6)$ and $G/G_0\cong\bbZ/2\bbZ$. There are 
four conjugacy classes of involutions in $G$, two of them consist of inner automorphisms 
(with representatives $\sigma_1,\sigma_2$) and the other two consist of outer automorphisms 
(with representatives $\sigma_3,\sigma_4$), we have 
\[G_0^{\sigma_1}\cong\SU(6)\times\Sp(1)/\langle(e^{\frac{2\pi i}{3}}I,1),(-I,-1)\rangle,\] 
\[G_0^{\sigma_2} \cong\Spin(10)\times\U(1)/\langle(c,i)\rangle, c=e_1e_2...e_{10},\]  
\[G_0^{\sigma_3}\cong F_4, \quad G_0^{\sigma_4}\cong Sp(4)/
\langle -I\rangle.\]

There are $8$ conjugacy classes of Klein four subgroups in $G$ with representatives 
$\Gamma_1,\Gamma_2,\Gamma_3,\Gamma_4,\Gamma_5,\Gamma_6,\Gamma_7,\Gamma_8$, whose involution types are 
\begin{eqnarray*} &&\Gamma_1:(\sigma_1,\sigma_1,\sigma_1),\quad 
\Gamma_2:(\sigma_1,\sigma_1,\sigma_2),\quad\Gamma_3:(\sigma_1,\sigma_2,\sigma_2),\quad 
\Gamma_4:(\sigma_2,\sigma_2,\sigma_2),\\&&\Gamma_5:(\sigma_1,\sigma_3,\sigma_4),\quad 
\Gamma_6:(\sigma_1,\sigma_4,\sigma_4),\quad\Gamma_7:(\sigma_2,\sigma_3,\sigma_3),\quad 
\Gamma_8:(\sigma_2,\sigma_4,\sigma_4).\end{eqnarray*}

\smallskip

For $\sigma_1$, $G_0^{\sigma_1}\cong\SU(6)\times\Sp(1)/\langle(e^{\frac{2\pi i}{3}}I,1),
(-I,-1)\rangle$ and $\frp=\wedge^{3}(\bbC^{6})\otimes\bbC^2$. Representatives of conjugacy classes 
of involutions in $G_0^{\sigma_1}$ except $\sigma_1=(I,-1)$ and their conjugacy classes in $G$ are 
as follows, 
\begin{eqnarray*}&& \big(\left(\begin{array}
{cc}-I_{4}&\\&I_{2}\\\end{array}\right),1\big)\sim \sigma_2,\big(\left(
\begin{array}{cc}-I_{2}&\\&I_{4}\\\end{array}\right),1\big)\sim \sigma_1,
\\&&\big(\left(\begin{array}{cc}iI_{5}&\\&-i\\\end{array}\right),
i\big)\sim\sigma_2,\big(\left(\begin{array}{cc}iI_{3}&\\&-iI_{3}\\
\end{array}\right), i\big)\sim \sigma_1.\end{eqnarray*}
The identification of conjugacy classes is obtained by calculating all relevant 
$\dim\fru_0^{\sigma}$.

For $\sigma_2$, $G_0^{\sigma_2}\cong \Spin(10)\times\U(1)/\langle(c,i)\rangle$ and 
$\frp=(M_{+}\otimes 1)\oplus(M_{-}\otimes\overline{1})$, where $1$ is the natural module 
(not the trivial module) of $\U(1)$, i.e., $z\mapsto z,\forall z\in\U(1)$, and $\overline{1}$ is 
its contragredient module.  The representatives of conjugacy classes 
of involutions in $G_0^{\sigma_2}$ except $\sigma_2=(-1,1)=(1,-1)$ and their conjugacy classes 
in $G$ are as follows, \[\big(e_1e_2e_3e_4, 1 \big)\sim \sigma_1,\   
\big(e_1e_2...e_8,1\big)\sim \sigma_2,\] 
\[\big(\Pi,\frac{1+i}{\sqrt{2}}\big)\sim\sigma_2,\ 
\big(-\Pi,\frac{1+i}{\sqrt{2}}\big)\sim \sigma_1,\] 
where \[\Pi=\frac{1+e_1e_2}{\sqrt{2}}\frac{1+e_3e_4}{\sqrt{2}}\cdots
\frac{1+e_9e_{10}}{\sqrt{2}}.\] The identification of conjugacy classes is obtained by 
calculating $\fru_0^{\sigma}$ from the weights of $M_{+},M_{-}$ as $\Spin(10)$ modules. 

For an elementary abelian $2$-group $F\subset G$, define 
$\mu: F\cap G_0\longrightarrow\{\pm{1}\}$ by $\mu(y)=-1$ if $y\sim\sigma_1$, 
and $\mu(y)=1$ if $y\sim\sigma_2$. Define \[m: (F\cap G_0)\times(F\cap G_0)
\longrightarrow\{\pm{1}\}\] by $m(y_1,y_2)=\mu(y_1y_2)\mu(y_1)\mu(y_2)$. 
Here $m$ is not always a bilinear form. 

\begin{definition}
Define 
\[A_{F}=\{x\in H\cap G_0| \mu(x)=1\textrm{ and } m(x,y)=1,\forall y\in F\cap G_0\}\] and 
$\defe(F)=|\{y\in F\cap G_0: \mu(y)=1\}|-|\{y\in F\cap G_0: \mu(y)=-1\}|$. 
\end{definition}

We call $A_{F}$ the {\it translation subgroup} of $F$, which has an equivalent 
definition \[A_{F}=\{1\}\cup\{x\in F|x\sim\sigma_2, \textrm{ and } 
y\sim xy \textrm{ for any } y\in F-\langle x\rangle, \}.\]

\subsection{Subgroups from $\F_4$}

In $G_0^{\sigma_3}\cong\F_4$, let $\tau_1,\tau_2$ be involutions with 
$\frf_4^{\tau_1}\cong \mathfrak{sp}(3)\oplus\mathfrak{sp}(1)$, 
$\frf_4^{\tau_2}\cong \mathfrak{so}(9)$, then $\tau_1,\tau_2,\sigma_3\tau_1,\sigma_3\tau_2$ 
represent all conjugacy classes of involutions in $G^{\sigma_3}$ except $\sigma_3$. And in $G$ we have 
conjugacy relations $\tau_1\sim\sigma_1$, $\tau_2\sim \sigma_2$, 
\[\sigma_3\tau_1\sim\sigma_4,\ \sigma_3\tau_2\sim\sigma_3.\]
Since we have $\sigma_3=\tau$ and we can choose $\tau_1=\exp(\pi iH'_2)$ and 
$\tau_2=\exp(\pi i (H'_1+H'_6))$. Then the above conjugacy relations are clear. 

We have $F_4^{\tau_1}\cong\Sp(3)\times\Sp(1)/\langle (-I,-1)\rangle$. 
Let $x_0=\sigma_3$, $x_1=\tau_2=\big(I,-1\big)$, \[x_2=\big(\textbf{i}I,\textbf{i}\big), 
x_3=\big(\textbf{j}I,\textbf{j}\big),\]  
\[x_4=\big(\left(\begin {array}{ccc}-1&&\\&-1&\\&&1\\ \end{array}\right),1\big),   
x_5=\big(\left(\begin{array}{ccc} -1&&\\&1&\\&&-1\\\end{array}\right),1\big).\]
Define \[F_{r,s}=\langle x_0,x_1,...x_{s},x_{4},...,x_{3+r}\rangle, \] and 
\[F'_{r,s}=\langle x_1,...x_{s},x_{4}, ...,x_{3+r} \rangle.\]

\begin{prop}\label{E6 to F4}
For an elementary abelian $2$-group $F\subset G$,if $F$ contains  
an element conjugate to $\sigma_3$, then $F\sim F_{r,s}$ for some 
$(r,s)$ with $r\leq 2, s\leq 3$; if $F\subset G_0$ and contains no  
Klein four subgroups conjugate to $\Gamma_3$, then $F\sim F'_{r,s}$ 
for some$(r,s)$ with $r\leq 2, s\leq 3$.
\end{prop}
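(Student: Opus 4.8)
The plan is to prove both assertions by reducing them to the classification of elementary abelian $2$-subgroups of $\F_4$ already established in Lemma \ref{F 4 r,s}, the bridge being the isomorphism $\Int(\fre_6)^{\sigma_3}\cong\F_4$ and the fact (Table 2, type {\bf EIV}) that $\sigma_3$ is an outer involution centralizing its whole fixed subgroup.

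For the first assertion I would first conjugate so that the distinguished element of $F$ conjugate to $\sigma_3$ equals $\sigma_3$. Then $G^{\sigma_3}=\F_4\times\langle\sigma_3\rangle$, so $F\subset\F_4\times\langle\sigma_3\rangle$. Since $\sigma_3\in F$ projects onto the second factor while $\sigma_3\notin G_0$, the group splits as $F=\langle\sigma_3\rangle\times F_1$ with $F_1=F\cap\F_4$. Now Lemma \ref{F 4 r,s} yields $g\in\F_4$ with $gF_1g^{-1}=F'_{r,s}$ for unique $r\leq 2$, $s\leq 3$; as $g$ centralizes $\sigma_3$ we get $gFg^{-1}=\langle\sigma_3\rangle\times F'_{r,s}=F_{r,s}$, which proves the first claim.

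For the second assertion the key reduction is to show that an $F\subset G_0$ with no Klein four subgroup conjugate to $\Gamma_3$ is conjugate into $\F_4=\Int(\fre_6)^{\sigma_3}$; granting this, $F$ is itself a subgroup of $\F_4$ and Lemma \ref{F 4 r,s} gives $F\sim F'_{r,s}$ with $r\leq 2$, $s\leq 3$ (consistently with the first part, since $F'_{r,s}\subset G_0$, $F_{r,s}\cap G_0=F'_{r,s}$, and $F=\tilde F\cap G_0$ for the Class~1 group $\tilde F=\langle\sigma_3\rangle\times F$). As preparation I would record that, with $\mu$ and $m$ as defined above, the hypothesis ``no $\Gamma_3$'' says precisely that a product of two elements conjugate to $\sigma_2$ is again conjugate to $\sigma_2$ (so the pair spans a $\Gamma_4$, never a $\Gamma_3$), and that $x\sim\sigma_1$, $y\sim\sigma_2$ force $xy\sim\sigma_1$. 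A short check then shows $A_F=\{x\in F:x\sim\sigma_2\}\cup\{1\}$ is a subgroup lying in $\ker m$, and that every nonidentity coset of $F/A_F$ consists of elements conjugate to $\sigma_1$.

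The heart of the argument, and the step I expect to be the main obstacle, is to produce a single outer involution $\theta\sim\sigma_3$ centralizing all of $F$; the embedding $F\subset\Int(\fre_6)^{\theta}\cong\F_4$ and the first part then finish the proof. I would argue by induction on $\rank F$. The cases $\rank F\leq 1$ are immediate from the Klein four subgroups $\Gamma_5$ of type $(\sigma_1,\sigma_3,\sigma_4)$ and $\Gamma_7$ of type $(\sigma_2,\sigma_3,\sigma_3)$ in Table 3, which exhibit a $\sigma_3$ commuting with a chosen $\sigma_1$- or $\sigma_2$-involution. For the inductive step I would peel off one generator, pass to its centralizer ($\Int(\fre_6)^{\sigma_1}$ of type {\bf EII} or $\Int(\fre_6)^{\sigma_2}$ of type {\bf EIII} in Table 2), and locate there a commuting $\sigma_3$ for the remaining subgroup; the no-$\Gamma_3$ hypothesis is exactly the obstruction whose vanishing allows the outer involution to be taken of type {\bf EIV} ($\sim\sigma_3$) rather than type {\bf EI} ($\sim\sigma_4$). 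The delicate point is that one must centralize the \emph{entire} subgroup at once rather than one involution at a time, so the induction has to carry along enough of the centralizer structure, and use Theorem \ref{T:regular} to align the conjugacy classes of the relevant commuting pairs, to keep the chosen $\theta$ common to all generators of $F$.
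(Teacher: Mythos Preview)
Your proof of the first assertion is correct and essentially identical to the paper's.

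For the second assertion, the paper takes a genuinely different route. It does \emph{not} attempt to find an outer involution $\theta\sim\sigma_3$ centralizing $F$ and thereby reduce to the first part. Instead, it argues directly inside $G_0$: from the absence of a $\Gamma_3$ subgroup it deduces $A_{F}=\{1\}\cup\{x\in F:x\sim\sigma_2\}$ is a subgroup, and then, working in the explicit centralizers $G_0^{\sigma_1}\cong(\SU(6)\times\Sp(1))/\langle(e^{2\pi i/3}I,1),(-I,-1)\rangle$ and $G_0^{\sigma_2}\cong(\Spin(10)\times\U(1))/\langle(c,i)\rangle$, it repeats the $\F_4$ argument (Lemma~\ref{F 4 r,s}) verbatim to show $\rank A_{F}\leq 2$, $\rank(F/A_{F})\leq 3$, and that these two numbers determine $F$ up to conjugacy. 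Since $F'_{r,s}$ realizes each pair $(r,s)$, the conclusion follows. The existence of a commuting $\sigma_3$ is then a \emph{corollary} of $F\sim F'_{r,s}\subset\F_4$, not an input.

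Your route is valid in principle---the target statement is true---but your inductive sketch has a real gap. When you peel off a generator $x$ and pass to $G^{x}$, the remaining subgroup $F'$ lives in a different ambient group (of type {\bf EII} or {\bf EIII}), and the inductive hypothesis as stated (about subgroups of $G_0=\Int(\fre_6)$) no longer applies. You would need to set up the induction so that it simultaneously controls subgroups of all the relevant centralizers, or instead give a direct construction of $\theta$; neither is indicated. The paper's approach sidesteps this entirely by never leaving $G_0$, at the cost of re-running the $\F_4$ computation rather than invoking it.
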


\begin{proof}
For the first statement, we may assume that $\sigma_3\in F$, then 
$F\subset G^{\sigma_3}=F_4\times\langle\sigma_3\rangle$. So 
$F\sim F_{r,s}$ ($r\leq 2, s\leq 3$) by Theorem \ref{F4}.

For the latter statement, since $F$ doesn't contain any Klein four subgroup 
of involutions type $(\sigma_1,\sigma_2,\sigma_2)$, so 
$A_{F}=\{1\}\cup\{x\in F|x\sim\sigma_2\}$. Consideration in the groups $G_0^{\sigma_1}\cong
\SU(6)\times\Sp(1)/\langle(e^{\frac{2\pi i}{3}}I,1),(-I,-1)\rangle$ and 
$G_0^{\sigma_2}\cong \Spin(10)\times\U(1)/\langle(c,i)\rangle$ (with the 
comparison of conjugacy classes of involutions in them and $G$) enables 
we to show that $\rank A_{F}\leq 2$, $\rank(F/A_{F})\leq 3$ 
and the conjugacy class of $F$ is uniquely determined by $\rank A_{F}$ and 
$\rank(F/A_{F})$. The proof is similar as the proof for Lemma \ref{F 4 r,s}. 
Then $F\sim F'_{r,s}$ ($r\leq 2, s\leq 3$) since $F'_{r,s}$ has the same invariants as $F$.
\end{proof}

\begin{lemma}\label{E6 to F4:m}
For an subgroup $F$ in Proposition \ref{E6 to F4}, 
\[\forall x,y\in F\cap G_0,\ m(x,y)=-1\Leftrightarrow 
x,y\in F\cap G_0-A_{F}.\] 
\end{lemma}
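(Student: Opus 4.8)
The plan is to translate the numerical condition $m(x,y)=-1$ into a statement about the involution type of the Klein four group $\langle x,y\rangle$, and then to exploit the absence of $\Gamma_3$-subgroups to identify exactly which pairs satisfy it. First I would record the elementary reformulation coming straight from the definition $m(x,y)=\mu(x)\mu(y)\mu(xy)$, together with the rule that $\mu(z)=-1$ exactly when $z\sim\sigma_1$ and $\mu(z)=+1$ when $z\sim\sigma_2$ (and $\mu(1)=1$). Thus, for distinct non-identity $x,y$ generating a Klein four group, $m(x,y)=-1$ holds precisely when an odd number of the three involutions $x,y,xy$ are conjugate to $\sigma_1$. Since the eight conjugacy classes of Klein four subgroups of $\Aut(\fre_6)$ are pairwise distinguished by their involution types, this says $m(x,y)=-1$ if and only if $\langle x,y\rangle$ is conjugate to $\Gamma_1$ (type $(\sigma_1,\sigma_1,\sigma_1)$) or to $\Gamma_3$ (type $(\sigma_1,\sigma_2,\sigma_2)$); the degenerate cases $x=1$, $y=1$, or $x=y$ all give $m=1$ and are harmless.

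The crucial step is then to eliminate the $\Gamma_3$ alternative for the subgroups considered in Proposition \ref{E6 to F4}. In the case $F\subset G_0$ with no Klein four subgroup conjugate to $\Gamma_3$, this is the standing hypothesis. In the case where $F$ contains an element conjugate to $\sigma_3$, I would instead use the inclusion $F\subset G^{\sigma_3}=\F_4\times\langle\sigma_3\rangle$, so that $F\cap G_0\subset G_0^{\sigma_3}\cong\F_4$; transporting the relations $\tau_1\sim\sigma_1$ and $\tau_2\sim\sigma_2$ to $\Aut(\fre_6)$, one reads off from Table 3 that every Klein four subgroup of this $\F_4$ has $\Aut(\fre_6)$-involution type among $(\sigma_1,\sigma_1,\sigma_1)$, $(\sigma_1,\sigma_1,\sigma_2)$, $(\sigma_2,\sigma_2,\sigma_2)$, none of which is $\Gamma_3$. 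In both cases therefore $F\cap G_0$ contains no $\Gamma_3$, so the condition $m(x,y)=-1$ forces $\langle x,y\rangle\sim\Gamma_1$, that is, all of $x,y,xy$ are conjugate to $\sigma_1$.

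Finally I would match this with the translation subgroup. Recall from the proof of Proposition \ref{E6 to F4} that, once $\Gamma_3$ is absent, $A_F=\{1\}\cup\{x\in F\cap G_0: x\sim\sigma_2\}$, so that $(F\cap G_0)-A_F$ is exactly the set of elements conjugate to $\sigma_1$. Combining this with the previous paragraph, $m(x,y)=-1$ holds if and only if every non-identity element of $\langle x,y\rangle$ lies in $(F\cap G_0)-A_F$, which is the content of the stated equivalence (the forward implication $m(x,y)=-1\Rightarrow x,y\in(F\cap G_0)-A_F$ being immediate, and the reverse refining to the type-$\Gamma_1$ condition). I expect the main obstacle to be the $\Gamma_3$-freeness in the $\sigma_3$-case: it depends on correctly carrying the $\F_4$-conjugacy data into $\Aut(\fre_6)$ and verifying, via the involution-type list, that no mixed $(\sigma_1,\sigma_2,\sigma_2)$ pattern can be realized inside $\F_4=G_0^{\sigma_3}$.
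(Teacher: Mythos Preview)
Your approach matches the paper's: both rest on the identification $A_F=\{1\}\cup\{x\in F\cap G_0:x\sim\sigma_2\}$, which follows from the absence of $\Gamma_3$-type Klein four subgroups. The paper's entire proof is the single sentence ``It follows from $\{1\}\cup\{x\in F\mid x\sim\sigma_2\}=A_F$ and it is a subgroup.''

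There is a gap in your final step, and it is not repairable, because the reverse implication of the lemma as literally stated is false. What you actually establish is that $m(x,y)=-1$ if and only if $x,y,xy$ all lie in $(F\cap G_0)-A_F$ (equivalently, $\langle x,y\rangle\sim\Gamma_1$). The lemma as written only requires $x,y\notin A_F$, saying nothing about $xy$, and these conditions differ. Concretely, take $F=F'_{1,1}$ (so $\rank A_F=1$ and $\rank(F/A_F)=1$); choose any $x\sim\sigma_1$ and a non-identity $a\in A_F$, and set $y=xa$. Then $x,y\in(F\cap G_0)-A_F$ but $xy=a\in A_F$, whence $m(x,y)=\mu(x)\mu(y)\mu(a)=(-1)(-1)(1)=1$. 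Your phrase ``the reverse refining to the type-$\Gamma_1$ condition'' glosses over rather than resolves this discrepancy. The paper's one-line proof shares the same defect; the correct and useful statement (and the one sufficient for the application in Proposition~\ref{E6 rank and defect}) is the one you actually proved.
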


\begin{proof}
It follows from $\{1\}\cup\{x\in F|x\sim\sigma_2\}=A_{F}$ and it is a subgroup. 
\end{proof}

\subsection{Subgroups from $\Sp(4)/\langle-I\rangle$}

For $\sigma_4$, $G_0^{\sigma_4}\cong\Sp(4)/\langle -I\rangle$ and $\frp\cong V_{\omega_4}$. Let 
$\tau_1=\textbf{i}I$, $\tau_2=\left(\begin{array}{cc}-I_{2}&\\&I_{2}\\
\end{array}\right)$, $\tau_3=\left(\begin{array}{cc}-1&\\&I_{3}\\ \end{array}
\right)$, then $\tau_1,\tau_2,\tau_3, \sigma_4\tau_1,\sigma_4\tau_2,\sigma_4\tau_3$  
represent all conjugacy classes of involutions in $G^{\sigma_4}$ except $\sigma_4$. 
And in $G$ we have conjugacy relations $\tau_1\sim\sigma_1$, $\tau_2\sim\sigma_2$, 
$\tau_3\sim\sigma_1$, \[\sigma_4\tau_1\sim\sigma_4,\ \sigma_4\tau_2\sim\sigma_4,\ 
\sigma_4\tau_3\sim\sigma_3.\] The identification is obtained by calculating $\dim\fru_0^{\sigma}$ 
with considering the weights of $V_{\omega_4}$ as an $\Sp(4)$ module. 

Let $x_0=\sigma_4$, $x_1=\textbf{i}I$, $x_2=\textbf{j}I$, 
\[x_3=\left(\begin{array}{cc}-I_{2}&\\&I_{2}\\ \end{array}\right), 
x_4=\left(\begin{array}{cc}0&I_{2}\\I_{2}&0\\ \end{array}\right),\]  
\[x_5=\left(\begin{array}{cccc}1&0&&\\0&-1&&\\&&1&0\\&&0&-1\\ \end{array}\right),
x_6=\left(\begin{array}{cccc}0&1&&\\1&0&&\\&&0&1\\&&1&0\\ \end{array}\right).\]  
For any $(\epsilon,\delta,r,s)$ with 
$\epsilon+\delta\leq 1, r+s\leq 2$, define \[F_{\epsilon,\delta,r,s}=
\langle x_0,x_1,...,x_{\epsilon+2\delta},x_3,...,x_{r+2s}\rangle\] 
and \[F'_{\epsilon,\delta,r,s}=\langle x_1,...,x_{\epsilon+2\delta},
x_3,...,x_{r+2s}\rangle.\]

\begin{prop}\label{E6 to C4}
For an elementary abelian $2$-group $F\subset G$, if $F\not\subset G_0$
and it contains no elements conjugate to $\sigma_3$, then 
$F\sim F_{\epsilon,\delta,r,s}$ for some $(\epsilon,\delta,r,s)$ with 
$\epsilon+\delta\leq 1$ and $r+s\leq 2$; if $F\subset G_0=\Int(\fre_6)$ 
and contains a Klein four subgroup conjugate to $\Gamma_3$, then 
$F\sim F'_{\epsilon,\delta,r,s}$ for some $(\epsilon,\delta,r,s)$ with 
$\epsilon+\delta\leq 1, r+s\leq 2, s\geq 1$.
\end{prop}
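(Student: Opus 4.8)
The plan is to reduce both assertions to the classification of the special subgroups of $\Sp(4)/\langle-I\rangle$ obtained in Subsection~\ref{Subsection:a subclass}. For the first assertion I would begin by choosing an outer involution $z\in F$. Since the only outer involution classes are $\sigma_3$ and $\sigma_4$ and $F$ contains no element conjugate to $\sigma_3$, we have $z\sim\sigma_4$; after conjugation $z=\sigma_4\in F$, so $F\subset G^{\sigma_4}=(\Sp(4)/\langle-I\rangle)\times\langle\sigma_4\rangle$ and, $\sigma_4$ being central there, $F=\langle\sigma_4\rangle\times F'$ with $F'=F\cap G_0$. The crucial step is to translate the hypothesis: every involution of $\Sp(4)/\langle-I\rangle$ is conjugate to exactly one of $[\textbf{i}I]$, $[I_{1,3}]$ (the class of $\tau_3$), $[I_{2,2}]$ (the class of $\tau_2$), and the relations $\sigma_4\tau_1\sim\sigma_4$, $\sigma_4\tau_2\sim\sigma_4$, $\sigma_4\tau_3\sim\sigma_3$ show that the outer element $\sigma_4 v$ is conjugate to $\sigma_3$ exactly when $v\sim[I_{1,3}]$. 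Therefore the absence of $\sigma_3$ in $F$ is equivalent to $F'$ having no element conjugate to $[I_{1,3}]$, i.e.\ every non-identity element of $F'$ is conjugate to $[\textbf{i}I]$ or $[I_{2,2}]$; thus $F'$ is a special subgroup in the sense of Subsection~\ref{Subsection:a subclass}.

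Proposition~\ref{Subclass: classification} then determines the conjugacy class of $F'$ in $\Sp(4)/\langle-I\rangle$ by the tuple $(\epsilon,\delta,r,s)$, where $\epsilon+\delta\le1$ holds by definition and $r+s\le2$ is forced because $2^{r+s}$ divides $n=4$. To finish the first assertion I would lift this conjugacy: any $g\in\Sp(4)/\langle-I\rangle=G_0^{\sigma_4}$ carrying $F'$ to $F'_{\epsilon,\delta,r,s}$ centralizes $\sigma_4$, hence carries $F=\langle\sigma_4\rangle\times F'$ to $\langle\sigma_4\rangle\times F'_{\epsilon,\delta,r,s}=F_{\epsilon,\delta,r,s}$, giving $F\sim F_{\epsilon,\delta,r,s}$ with $\epsilon+\delta\le1$ and $r+s\le2$.

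For the second assertion I would use the observation that a Class~4 subgroup is the inner part of a Class~2 subgroup. The target is to produce an outer involution $z\sim\sigma_4$ centralizing $F$ and such that, inside $G_0^{z}\cong\Sp(4)/\langle-I\rangle$, the group $F$ avoids the class $[I_{1,3}]$; then $\tilde F=\langle z\rangle\times F$ falls under the first assertion, whence $\tilde F\sim F_{\epsilon,\delta,r,s}$ and $F=\tilde F\cap G_0\sim F'_{\epsilon,\delta,r,s}$. The constraint $s\ge1$ reflects the presence of $\Gamma_3$: the three involutions of a $\Gamma_3$ realize the types $[I_{2,2}],[I_{2,2}],[\textbf{i}I]$, a single hyperbolic pair with invariants $(\epsilon,\delta,r,s)=(0,0,0,1)$, which contributes one unit to $s$; conversely a special subgroup contains a conjugate of $\Gamma_3$ precisely when $s\ge1$. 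I would also check that the functions $\mu,m$ defined before the proposition from the $\sigma_1/\sigma_2$ dichotomy coincide with the symplectic-metric-space data of $F'$ (since $[\textbf{i}I]\sim\sigma_1$ has $\mu=-1$ and $[I_{2,2}]\sim\sigma_2$ has $\mu=1$), so that $m$ is bilinear on $F$ and $A_F=\ker(\mu|_{\ker m})$.

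The main obstacle is the very first step of the second assertion: showing that an intrinsically defined Class~4 subgroup---one merely assumed to contain a $\Gamma_3$---can be conjugated into some $G_0^{\sigma_4}$ as a \emph{special} subgroup. This requires analysing $C_G(F)\subseteq C_G(\Gamma_3)$ (whose identity component has Lie algebra $\mathfrak{su}(5)\oplus(i\bbR)^2$) and exhibiting within it an outer involution conjugate to $\sigma_4$, and then arranging $z$ so that every $\sigma_1$-element of $F$ has type $[\textbf{i}I]$ rather than $[I_{1,3}]$ relative to $z$. It is exactly the hyperbolic $\Gamma_3$-direction that makes such a compatible $z$ available; this is the structural reason the Class~3/Class~4 dichotomy (absence versus presence of $\Gamma_3$) matches the dichotomy between the $\F_4$-reduction of Proposition~\ref{E6 to F4} and the $\Sp(4)/\langle-I\rangle$-reduction proved here.
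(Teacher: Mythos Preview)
Your argument for the first assertion is correct and is exactly the paper's proof: reduce to $G_{0}^{\sigma_4}\cong\Sp(4)/\langle-I\rangle$, observe that the absence of $\sigma_3$ forces every non-identity element of $F\cap G_0$ to lie in the two classes $[\textbf{i}I]$, $[I_{2,2}]$, and invoke Proposition~\ref{Subclass: classification}.

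For the second assertion you have correctly identified the strategy (produce an outer involution $z\sim\sigma_4$ centralizing $F$ so that $\langle z,F\rangle$ falls under the first assertion) but, as you yourself flag, you have not supplied the key step. The paper resolves this cleanly and in a way you did not anticipate. First it computes the centralizer of $\Gamma_3$ explicitly:
\[
G_0^{\Gamma_3}\cong(\U(5)\times\U(1))/\langle(-I,-1),(e^{2\pi i/3}I,1)\rangle,
\]
in which every abelian subgroup is contained in a maximal torus. Hence $F$ is \emph{toral}, and after conjugation $F\subset\exp(\frh_0)$ for a Cartan subalgebra $\frh_0$. The paper then takes $z=\theta$ to be the Chevalley involution with respect to $\frh_0$. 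This choice does two things at once: it commutes with all of $F$ (since $\theta$ inverts the torus and every element of $F$ has order~$2$), and for \emph{every} $t\in\exp(\frh_0)$ with $t^2=1$ one has $\theta t\sim\sigma_4$, because $\theta t$ still acts as $-1$ on $\frh_0$ and fixes a one-dimensional subspace of each root pair $\langle X_\alpha,X_{-\alpha}\rangle$, so $\dim\fru_0^{\theta t}=36=\dim\mathfrak{sp}(4)$ independently of $t$. Thus $\langle F,\theta\rangle$ automatically avoids $\sigma_3$, and the first assertion applies. This simultaneously handles both obstacles you listed (existence of the centralizing $\sigma_4$, and the ``special subgroup'' condition), without any case analysis inside $C_G(\Gamma_3)$.

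Your justification of $s\geq 1$ via the involution type of $\Gamma_3$ is correct.
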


\begin{proof}
For the first statement, we may assume that $\sigma_4\in F$, then 
$F\cap G_0\subset G_0^{\sigma_4}\cong\Sp(4)/\langle-I\rangle$. Any involution 
in $\Sp(4)/\langle-I\rangle$ is conjugate to one of 
\[\tau_1=[\textbf{i}I], \tau_2=[\diag\{I_2,-I_2\}], \tau_3=[\diag\{1,-I_{3}\}].\]
Since $\sigma_4\tau_3\sim\sigma_3$ in $G$ and we assume $F$ contains no elements 
conjugate to $\sigma_3$, so any non-identity element of $F\cap G_0$ is conjugate 
to $\tau_1$ or $\tau_2$ in $\Sp(4)/\langle-I\rangle$. Then 
$F\cap G_0\subset\Sp(4)/\langle-I\rangle$ is in the subclass discussed in Subsection 
\ref{Subsection:a subclass}. Then $F\sim F_{\epsilon,\delta,r,s}$ for some $(\epsilon,\delta,r,s)$ 
with $\epsilon+\delta\leq 1$ and $r+s\leq 2$ by Proposition \ref{Subclass: classification}.

For the second statement, we may assume that $\Gamma_3\subset F$, then 
\[F\subset G_0^{\Gamma_3}\cong(\U(5)\times\U(1))/\langle(-I,-1),(e^{\frac{2\pi i}{3}},1)
\rangle\cong(\U(5)/\langle e^{\frac{2\pi i}{3}}\rangle)\times\U(1).\] 
($(A,\lambda)\longmapsto(\lambda A,\lambda^{2})$ gives an isomorphism 
$(\U(5)\times\U(1))/\langle(-I,-1) \rangle\cong\U(5)\times\U(1)$.) 
Since any abelian subgroup of $\U(5)\times\U(1)$ is total, so $F\subset G_0$ is total. 
We may assume that $F\subset \exp(\frh_0)$ for a maximal torus $\frh_0$ of 
$\fru_0=\fre_6$. Choose a Chevelley involution $\theta$ of $\fru_0$ with respect to 
$\frh_0$. Then $\langle F,\theta\rangle$ is an elementary abelian $2$-group without 
elements conjugate to $\sigma_3$. By the first statement,  
$\langle F,\theta\rangle\sim F_{\epsilon,\delta,r,s}$ for some $(r,s)$. Then 
$F\sim F'_{\epsilon,\delta,r,s}$. Since we assume $F$ contains a Klein four subgroup conjugate to 
$\Gamma_3$, so $s\geq 1$.
\end{proof}

For an elementary abelian $2$-group $F\subset G$ without elements conjugate to $\sigma_3$,  
but contains an element conjugate to $\sigma_4$, for any $x\in F$ with $x\sim\sigma_4$, we have 
$F\cap G_0\subset G_0^{\sigma_4}\cong\Sp(4)/\langle-I\rangle$, with this inclusion we have a function 
$\mu_{x}:F\cap G_0\longrightarrow\{\pm{1}\}$ and a map 
$m_{x}:(F\cap G_0)\times(F\cap G_0)\longrightarrow\{\pm{1}\}$ (Subsection \ref{Subsection:BDC}).  

\begin{lemma}\label{E6 to C4:m} 
We have $\mu_{x}=\mu$ and $m_{x}=m$. 
\end{lemma}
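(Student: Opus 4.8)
The plan is to reduce the entire statement to the single equality $\mu_x = \mu$, from which $m_x = m$ follows formally. Indeed, applying Lemma \ref{C bilinear} to the subgroup $F\cap G_0\subset G_0^{x}\cong\Sp(4)/\langle-I\rangle$, the map $m_x$ satisfies $m_x(y_1,y_2)=\mu_x(y_1)\mu_x(y_2)\mu_x(y_1y_2)$ for all $y_1,y_2\in F\cap G_0$, whereas the global $m$ was defined by $m(y_1,y_2)=\mu(y_1y_2)\mu(y_1)\mu(y_2)$. Since $\{\pm1\}$ is commutative, once $\mu_x=\mu$ the two products agree term by term and $m_x=m$ is immediate. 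So I would first dispatch this observation and then concentrate entirely on $\mu_x$.

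To compute $\mu_x$, I would use the structural input from (the proof of) Proposition \ref{E6 to C4}: because $F$ contains no element conjugate to $\sigma_3$ and $\sigma_4\tau_3\sim\sigma_3$, every non-identity $y\in F\cap G_0$ is conjugate, inside $G_0^{x}\cong\Sp(4)/\langle-I\rangle$, to $\tau_1=[\textbf{i}I]$ or to $\tau_2=[I_{2,2}]$; the class of $\tau_3$ is excluded. (For this one checks that $y\sim\tau_3$ inside $G_0^{x}$ would force $xy\in F$ to be conjugate to $\sigma_4\tau_3\sim\sigma_3$, contradicting the hypothesis.) On representatives the value of $\mu_x$ is then read off at once: $(\textbf{i}I)^2=-I$ gives $\mu_x=-1$ on the $\tau_1$-class, and $(\diag\{-I_2,I_2\})^2=I$ gives $\mu_x=+1$ on the $\tau_2$-class.

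It remains to match these values with the global $\mu$. Here I would invoke the conjugacy relations $\tau_1\sim\sigma_1$ and $\tau_2\sim\sigma_2$ in $G$ established just above via the dimension count $\dim\fru_0^{\sigma}$ using the weights of $V_{\omega_4}$. By definition $\mu(y)=-1$ when $y\sim\sigma_1$ and $\mu(y)=+1$ when $y\sim\sigma_2$, so $\mu$ also takes the value $-1$ on the $\tau_1$-class and $+1$ on the $\tau_2$-class; together with $\mu(1)=\mu_x(1)=1$ this yields $\mu_x=\mu$ on all of $F\cap G_0$. The same argument shows the value is independent of the chosen $x$: since $\tau_1$ and $\tau_2$ lie in distinct $G$-conjugacy classes, the $G$-class of $y$ already determines which of $\tau_1,\tau_2$ it meets inside any $G_0^{x}$, hence determines $\mu_x(y)$.

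I expect the only delicate point to be the exclusion of the $\tau_3$-class for every admissible $x$, not merely for $x=\sigma_4$: one must make sure that conjugating the pair $(x,y)$ into $(\sigma_4,\tau_3)$ really identifies the product $xy$ with an element of $G$-class $\sigma_3$. This is exactly the mechanism already used in Proposition \ref{E6 to C4}, so it can be quoted rather than redone; apart from it, the lemma is a direct bookkeeping of $\mu$-values against the conjugacy table.
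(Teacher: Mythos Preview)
Your proposal is correct and follows essentially the same approach as the paper: reduce to $\mu_x=\mu$ by the formal relation $m_x(y_1,y_2)=\mu_x(y_1)\mu_x(y_2)\mu_x(y_1y_2)$, then use that the absence of $\sigma_3$-elements forces every nonidentity $y\in F\cap G_0$ to be conjugate in $G_0^{x}\cong\Sp(4)/\langle-I\rangle$ to $\tau_1$ or $\tau_2$, and finally match $\mu_x$ with $\mu$ via the global conjugacy relations $\tau_1\sim\sigma_1$, $\tau_2\sim\sigma_2$. The paper's proof is terser (it simply assumes $x=\sigma_4$ rather than discussing independence of the choice of $x$), but the substance is identical.
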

\begin{proof}
We may assume that $x=\sigma_4$, then $F\cap G_0\subset G_0^{\sigma_4}\cong\Sp(4)/\langle-I\rangle$. 
As $F$ doesn't have any element conjugate to $\sigma_3$, any element of $F\cap G_0$ is conjugate to 
$\tau_1=\textbf{i}I$ or $\tau_2=\left(\begin{array}{cc}-I_{2}&\\&I_{2}\\
\end{array}\right)$ in $G_0^{\sigma_4}\cong\Sp(4)/\langle-I\rangle$. Since 
$\tau_1\sim_{G}\sigma_1$ and $\tau_2\sim_{G}\sigma_2$, so $\mu_{x}=\mu$. Then $m_{x}=m$ as well.
\end{proof}

\subsection{Automizer groups}

\begin{prop}\label{E6 rank and defect}
We have the following formuas for $\rank A_{F}$ and $\defe F$, 
\begin{itemize}
\item[(1)]{for $F=F_{r,s}$, $r\leq 2$, $s\leq 3$, $\rank A_{F}=r$, $\defe F=2^{r}(2-2^{s})$;}
\item[(2)]{for $F=F'_{r,s}$, $r\leq 2$, $s\leq 3$, $\rank A_{F}=r$, $\defe F=2^{r}(2-2^{s})$;}
\item[(3)]{for $F=F_{\epsilon,\delta,r,s}$, $\epsilon+\delta\leq 1$, $r+s\leq 2$, 
$\rank A_{F}=r$, $\defe F=(1-\epsilon)(-1)^{\delta}2^{r+s+\delta}$;}
\item[(4)]{for $F=F'_{\epsilon,\delta,r,s}$, $\epsilon+\delta\leq 1$, $r+s\leq 2$, $s\geq 1$, 
$\rank A_{F}=r$, $\defe F= (1-\epsilon)(-1)^{\delta} 2^{r+s+\delta}$.}
\end{itemize}
\end{prop}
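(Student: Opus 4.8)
The plan is to exploit that both $\rank A_{F}$ and $\defe F$ are defined purely in terms of the inner part $F\cap G_0$ and the $G$-conjugacy classes of its elements. Since $\sigma_3$ and $\sigma_4$ are outer, the extra generator $x_0$ of each unprimed family lies outside $G_0$, so that $F_{r,s}\cap G_0 = F'_{r,s}$ and $F_{\epsilon,\delta,r,s}\cap G_0 = F'_{\epsilon,\delta,r,s}$. Because the primed families already lie in $G_0$, we have $F'\cap G_0 = F'$, and the sets and conditions defining $A_F$ and $\defe F$ for a primed family coincide verbatim with those for the corresponding unprimed one. Hence (2) follows from (1) and (4) follows from (3), and it suffices to treat the two unprimed families.

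For (1) I would pass to $F_{r,s}\cap G_0 = \langle x_1,\dots,x_s,x_4,\dots,x_{3+r}\rangle \subset G_0^{\sigma_3}\cong\F_4$. By the $\F_4$ analysis (Lemma \ref{F 4 r,s}) together with the conjugacy relations $\tau_1\sim\sigma_1$, $\tau_2\sim\sigma_2$ recorded in this section, the subgroup $A_r=\langle x_4,\dots,x_{3+r}\rangle$ is exactly the set of elements conjugate to $\sigma_2$ (together with $1$), so it has rank $r$, while every non-identity element of $F\cap G_0$ lying outside $A_r$ is conjugate to $\sigma_1$. Combined with Lemma \ref{E6 to F4:m} this gives $A_F = A_r$ and hence $\rank A_F = r$. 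For the defect I would write $F\cap G_0 = A_r\times B$ with $B=\langle x_1,\dots,x_s\rangle$ of rank $s$; then $\mu=1$ holds precisely on $A_r$ (the $2^r$ elements with trivial $B$-component) and $\mu=-1$ on the remaining $(2^s-1)2^r$ elements, so
\[
\defe F = 2^r-(2^s-1)2^r = 2^r(2-2^s).
\]

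For (3) I would use the outer element $x_0=\sigma_4\in F_{\epsilon,\delta,r,s}$: by Lemma \ref{E6 to C4:m} the function $\mu$ and the map $m$ attached to $F\cap G_0\subset G_0^{\sigma_4}\cong\Sp(4)/\langle-I\rangle$ coincide with the symplectic–metric-space data $\mu_{x_0},m_{x_0}$ computed inside $\Sp(4)/\langle-I\rangle$. Therefore the translation subgroup $A_F=\ker(\mu|_{\ker m})$ coincides with the $A_{F\cap G_0}$ of Subsection \ref{Subsection:BDC}, giving $\rank A_F = r_F = r$, and the $E_6$ defect equals the defect of $F\cap G_0$ as a subgroup of $\Sp(4)/\langle-I\rangle$. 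Since $F\cap G_0 = F'_{\epsilon,\delta,r,s}$ is the subclass subgroup with invariants $(\epsilon,\delta,r,s)$, Proposition \ref{Type C: classification}(3) yields
\[
\defe F = (1-\epsilon)(-1)^{\delta}2^{r+s+\delta}.
\]

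The only genuine obstacle is reconciling the two incarnations of $\mu$. In the $\sigma_4$ branch this is precisely Lemma \ref{E6 to C4:m}, so Proposition \ref{Type C: classification}(3) can be imported wholesale. In the $\sigma_3$ branch, however, the map $m$ is not bilinear (as observed after its definition), so no symplectic–metric-space formula is available and the defect must be obtained from the explicit element count above; that count rests entirely on the $\F_4$ closure property that the elements of $F\cap G_0$ conjugate to $\sigma_2$ form exactly the subgroup $A_r$ (equivalently, that $\langle x_1,\dots,x_s\rangle$ contains no element conjugate to $\sigma_2$), which is exactly what Lemmas \ref{F 4 r,s} and \ref{E6 to F4:m} provide.
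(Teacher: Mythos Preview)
Your proposal is correct and follows essentially the same route as the paper, which simply cites Lemmas \ref{E6 to F4:m} and \ref{E6 to C4:m}; you have just unpacked what those citations mean, including the reduction of the unprimed families to the primed ones via $F\cap G_0$, the explicit $\mu$-count in the $\F_4$ branch, and the direct appeal to Proposition \ref{Type C: classification}(3) in the $\Sp(4)/\langle-I\rangle$ branch. The additional remark that $m$ is not bilinear in the $\sigma_3$ branch, forcing a hands-on element count rather than a symplectic-metric-space formula, is a nice clarification.
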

\begin{proof}
They follow from Lemmas \ref{E6 to F4:m} and \ref{E6 to C4:m}. 
\end{proof}

From the formulas of $\rank A_{F}$ and $\defe F$, we know that the subgroups in the same 
family with different parameters are non-conjugate, the subgroups in different families 
are clearly non-conjugate, so these subgroups are non-conjugate to each other. In total, 
we have $3\times 4+3\times 4+3\times 6+3\times 3=51$ conjugacy classes.

\begin{prop}\label{uniqueness:E6}
For two elementary abelian 2-subgroups $F,F'\subset G$, if an automorphism 
$f: F\longrightarrow F'$ has the property $f(x)\sim x$ for any $x\in F$, then $f=\Ad(g)$
for some $g\in G$.
\end{prop}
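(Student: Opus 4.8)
The plan is to prove the statement by the same four-class division of the elementary abelian $2$-subgroups of $\Aut(\fre_6)$ used in Propositions \ref{E6 to F4} and \ref{E6 to C4}, reducing each class to a smaller group in which an analogous rigidity result is already available. The first observation, valid throughout, is that since $f(x)\sim x$ for every $x$, the map $f$ preserves the inner/outer dichotomy: it carries $F\cap G_0$ isomorphically onto $F'\cap G_0$, and on $F\cap G_0$ it preserves the function $\mu$ (which merely records whether an element is conjugate to $\sigma_1$ or $\sigma_2$) and hence the form $m(y_1,y_2)=\mu(y_1y_2)\mu(y_1)\mu(y_2)$. In particular $f$ sends a subgroup of one of the four classes to a subgroup of the same class, so it suffices to treat the four cases separately.

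For Case $1$ (some $z\in F$ is conjugate to $\sigma_3$), note $f(z)\sim\sigma_3$; after conjugating $F$ and $F'$ separately and replacing $f$ by $\Ad(g')\circ f\circ\Ad(g^{-1})$ (which does not disturb the hypothesis $f(x)\sim x$), I may assume $z=f(z)=\sigma_3$, so that $F,F'\subset G^{\sigma_3}=\F_4\times\langle\sigma_3\rangle$. Every element of $\F_4=G_0^{\sigma_3}$ is inner, and the two $G$-classes $\sigma_1,\sigma_2$ meet $\F_4$ in its two involution classes, so $f$ restricts to a conjugacy-class-preserving isomorphism $F\cap\F_4\to F'\cap\F_4$ inside $\F_4$; by Proposition \ref{F4:uniqueness} it equals $\Ad(h)$ for some $h\in\F_4$. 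Since $\sigma_3\in F$ is central in $G^{\sigma_3}$ and is fixed by both $f$ and $\Ad(h)$, the coset decomposition $F=(F\cap\F_4)\sqcup\sigma_3(F\cap\F_4)$ forces $\Ad(h)=f$ on all of $F$. Case $2$ is identical with $\sigma_3$ replaced by $\sigma_4$ and $G^{\sigma_4}=(\Sp(4)/\langle-I\rangle)\times\langle\sigma_4\rangle$: Lemma \ref{E6 to C4:m} shows $F\cap G_0$ lies in the special subclass of Subsection \ref{Subsection:a subclass}, so $f|_{F\cap G_0}$ is an isomorphism of symplectic metric spaces; once this is realized by a conjugation $\Ad(h)$ with $h\in\Sp(4)/\langle-I\rangle$, the same coset argument finishes.

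For Cases $3$ and $4$ (where $F\subset G_0$) I would not argue directly but reduce to Cases $1$ and $2$ by enlarging $F$. A Class-$3$ subgroup is conjugate to some $F'_{r,s}\subset\F_4=G_0^{\sigma_3}$, and $\sigma_3$ centralizes $\F_4$, so $C_G(F)$ contains an outer involution $\theta\sim\sigma_3$; choosing likewise $\theta'\sim\sigma_3$ in $C_G(F')$, set $\tilde F=F\times\langle\theta\rangle$ and $\tilde F'=F'\times\langle\theta'\rangle$, both of Class $1$, and extend $f$ to $\tilde f$ by $\tilde f(\theta)=\theta'$. The extension still satisfies $\tilde f(x)\sim x$: on the inner part this is the hypothesis, and for an outer element $\theta w$ with $w\in F$ the $G$-class of $\theta w$ is determined by the single datum $\mu(w)$ — if $w,w'$ are $G_0^\theta$-conjugate then $\theta w,\theta w'$ are $G$-conjugate, and $\F_4$ has only two involution classes, matched with $\sigma_1,\sigma_2$ — so $\theta w$ and $\theta'f(w)$ share the same class. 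Case $1$ then gives $\tilde f=\Ad(g)$, and restriction gives $f=\Ad(g)|_F$. Case $4$ is the same, using an outer $\theta\sim\sigma_4$ centralizing $F$ (which exists because a Class-$4$ subgroup sits in $\Sp(4)/\langle-I\rangle=G_0^{\sigma_4}$) and reducing to Case $2$.

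The only step that is not a direct appeal to an earlier result is the realization used in Case $2$: that a $\mu$- and $m$-preserving isomorphism between two subgroups of $\Sp(4)/\langle-I\rangle$ in the special subclass is induced by an actual conjugation. This is the $\Sp(n)/\langle-I\rangle$-analogue of Proposition \ref{F4:uniqueness}, and I expect it to be the main obstacle, since Proposition \ref{Subclass: classification} only asserts that the conjugacy class of such a subgroup is determined by $(\epsilon,\delta,r,s)$, not that \emph{every} symplectic-metric-space automorphism is realized. I would obtain it from the explicit coordinate description of these subgroups, namely the block partition $\{J_\alpha\}$ of Lemma \ref{Lemma: subclass}: a $\mu$- and $m$-preserving isomorphism permutes the blocks $J_\alpha$ and rescales them in a way realized by a monomial element of $\Sp(4)$, so that the Automizer group coincides with the full automorphism group $\Sp(r,s;\epsilon,\delta)$ of the symplectic metric space. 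Checking this realization carefully is where I would spend the most effort.
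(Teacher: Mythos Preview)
Your overall strategy is correct and for Cases~1 and~2 it coincides with the paper's: fix an element conjugate to $\sigma_3$ (resp.\ $\sigma_4$), descend to $\F_4$ (resp.\ $\Sp(4)/\langle-I\rangle$), and invoke the realizability statement there.  For Cases~3 and~4 you take a genuinely different route.  The paper stays inside $G_0$: for $F'_{r,s}$ it repeats the hands-on argument of Lemma~\ref{F 4 r,s} directly in $\Int(\fre_6)$, and for $F'_{\epsilon,\delta,r,s}$ it simply notes that this subgroup already lies in $G_0^{\sigma_4}=\Sp(4)/\langle-I\rangle$ and appeals to the $\Sp(4)$ realizability.  Your extension trick---adjoining an outer involution $\theta$ to push a Class-3 or Class-4 subgroup into Class~1 or Class~2---is more uniform and avoids redoing the $\F_4$-style case analysis in $\Int(\fre_6)$.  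One small point to make explicit in Case~4: to guarantee the enlarged group $\tilde F$ really lands in Class~2 (no $\sigma_3$-elements), you should choose $\theta$ by first conjugating $F$ onto the standard $F'_{\epsilon,\delta,r,s}$ and taking $\theta=\sigma_4$; with an arbitrary $\theta\sim\sigma_4$ centralizing $F$ this is not automatic, since $\tau_1,\tau_3\in\Sp(4)/\langle-I\rangle$ both have $G$-class $\sigma_1$ but $\sigma_4\tau_3\sim\sigma_3$.

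You are right to single out the realizability step in $\Sp(4)/\langle-I\rangle$ as the crux.  The paper's own proof cites ``Proposition~\ref{Sub F1-2}'' here, which is clearly a misreference (that lemma is about locating a $\Gamma_1$-type Klein four subgroup); what is actually needed is exactly your statement that every $(\mu,m)$-preserving isomorphism between two subgroups in the special subclass of Subsection~\ref{Subsection:a subclass} is induced by conjugation in $\Sp(n)/\langle-I\rangle$.  Your proposed proof via the block partition of Lemma~\ref{Lemma: subclass} and monomial matrices is the right one, and once carried out it shows $W(F_{r,s,\epsilon,\delta})=\Sp(r,s;\epsilon,\delta)$ in $\Sp(n)/\langle-I\rangle$, which is precisely what both your argument and the paper's need.
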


\begin{proof}
We may assume that $F=F'$ and they are one of $F_{r,s},F'_{r,s},F_{\epsilon,\delta,r,s},
F_{\epsilon,\delta,r,s}$.

When $F=F'=F_{r,s}$, we may assume $f(\sigma_3)=\sigma_3$, then 
$F\cap G_0=F'\cap G_0\subset G_0^{\sigma_3}=F_4$. By the proof of Lemma \ref{F 4 r,s}, 
we get there exists $g\in G_0^{\sigma_3}$ such that $f=\Ad(g)$.

When $F=F'=F_{r,s}$, similar as the proof of Lemma \ref{F 4 r,s}, 
we can show there exists $g\in G_0$ such that $f=\Ad(g)$.

When $F=F'=F_{\epsilon,\delta,r,s}$, we may assume $f(\sigma_4)=\sigma_4$, then 
$F\cap G_0=F'\cap G_0\subset G_0^{\sigma_4}=Sp(4)/\langle-I\rangle$ and non-identity 
elements of $F\cap G_0=F'\cap G_0$ all conjugate to $\textbf{i}I,[I_{2,2}]$ 
in $Sp(4)/\langle-I\rangle$. Then $f=\Ad(g)$ for some $g\in G_0^{\sigma_4}$ by 
Proposition \ref{Sub F1-2}.

When $F=F'=F'_{\epsilon,\delta,r,s}$, since 
$F'_{\epsilon,\delta,r,s}\subset G_0^{\sigma_4}=Sp(4)/\langle-I\rangle$, then 
$f=\Ad(g)$ for some $g\in G_0^{\sigma_4}$ by Proposition \ref{Sub F1-2}. 
\end{proof}

\begin{prop}
We have the following description for the Automizer groups, 
\begin{itemize}
\item[1.]{$r\leq 2$, $s\leq 3$, $W(F_{r,s})\cong (\bbF_2)^{r}\rtimes P(r,s,\bbF_2)$;}

\item[2.]{$r\leq 2$, $s\leq 3$, $W(F'_{r,s})\cong P(r,s,\bbF_2)$;}

\item[3.]{$\epsilon+\delta\leq 1$, $r+s\leq 2$, \[W(\bbF_{\epsilon,\delta,r,s})\cong 
F_2^{r+2s+\epsilon+2\delta}\rtimes\big(\Hom(\bbF_2^{\epsilon+2\delta+2s},\bbF_2^{r})
\rtimes(\GL(r,\bbF_2)\times \Sp(s;\epsilon,\delta))\big);\]}

\item[4.]{$\epsilon+\delta\leq 1$, $r+s\leq 2$, $s\geq 1$, \[W(F'_{\epsilon,\delta,r,s})
\cong \Hom(\bbF_2^{\epsilon+2\delta+2s},\bbF_2^{r})
\rtimes(\GL(r,\bbF_2)\times \Sp(s;\epsilon,\delta)).\]}
\end{itemize}
\end{prop}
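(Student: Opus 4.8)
The plan is to reduce everything to Proposition \ref{uniqueness:E6}. Since $\Ad(g)$ always preserves $G$-conjugacy classes, that proposition gives the clean identification
\[W(F)=\{f\in\Aut(F):f(x)\sim_{G}x\ \text{for all}\ x\in F\}.\]
Hence for each of the four families it suffices to (i) read off the partition of $F$ into $G$-conjugacy classes, (ii) identify the subgroup of $\Aut(F)=\GL(\rank F,\bbF_2)$ preserving this partition, and (iii) recognize that subgroup as the asserted product. Realizability then comes for free, so no element of $G$ needs to be produced explicitly.

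For the two inner families, $F\subset G_0$, so every non-identity element is conjugate to $\sigma_1$ or $\sigma_2$ and the partition is recorded by $\mu$ (namely $\mu(y)=-1$ for $y\sim\sigma_1$ and $\mu(y)=1$ for $y\sim\sigma_2$). For $F=F'_{r,s}$ the subset $A_{F}$ of elements with $\mu=1$ is a subgroup of rank $r$, its complement being the $\sigma_1$-elements (Proposition \ref{E6 to F4} and Lemma \ref{E6 to F4:m}); a linear automorphism preserves the partition exactly when it preserves the subspace $A_F$, and these are precisely the block upper triangular maps $P(r,s,\bbF_2)$. For $F=F'_{\epsilon,\delta,r,s}$ the inclusion into $\Sp(4)/\langle-I\rangle$ endows $F$ with the symplectic metric space structure $(m,\mu)$ of Subsection \ref{Subsection:a subclass}, independent of auxiliary choices by Lemma \ref{E6 to C4:m}. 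A partition-preserving automorphism preserves $\mu$, hence preserves $m(x,y)=\mu(x)\mu(y)\mu(xy)$, hence is an isometry of $(F,m,\mu)$; conversely any such isometry preserves $\mu$ and so the partition. Thus $W(F'_{\epsilon,\delta,r,s})=\Sp(r,s;\epsilon,\delta)$, which by Subsection \ref{Subsection:a subclass} equals $\Hom(\bbF_2^{\epsilon+2\delta+2s},\bbF_2^{r})\rtimes(\GL(r,\bbF_2)\times\Sp(s;\epsilon,\delta))$.

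For the two outer families write $F=\langle z\rangle\oplus F_0$ with $F_0=F\cap G_0$ and $z=\sigma_3$ (resp. $z=\sigma_4$). A partition-preserving $f$ maps $F_0$ to itself and restricts there to an automorphism of the kind just analyzed, so the only new datum is the image of $z$, which must lie in the coset $zF_0$ and be $G$-conjugate to $z$. The crucial step is therefore to split $zF_0$ into its $\sigma_3$- and $\sigma_4$-parts, and I would do this through the Klein four classification (Theorem \ref{T:regular} and Table 3). The only involution types of $\E_6$ containing $\sigma_3$ are $(\sigma_1,\sigma_3,\sigma_4)$ and $(\sigma_2,\sigma_3,\sigma_3)$, so for $y\in F_0$ one gets $\sigma_3y\sim\sigma_3$ exactly when $y\sim\sigma_2$, i.e. $y\in A_F$, and $\sigma_3y\sim\sigma_4$ otherwise; the admissible images of $\sigma_3$ are then $\sigma_3a$ with $a\in A_F\cong\bbF_2^{r}$. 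Composing, $(a_2,\phi_2)(a_1,\phi_1)=(a_2+\phi_2(a_1),\phi_2\phi_1)$ with $P(r,s,\bbF_2)$ acting on $A_F$ by restriction, whence $W(F_{r,s})=\bbF_2^{r}\rtimes P(r,s,\bbF_2)$. For $F=F_{\epsilon,\delta,r,s}$ the defining hypothesis is that $F$ contains no element $\sim\sigma_3$; since $(\sigma_1,\sigma_4,\sigma_4)$ and $(\sigma_2,\sigma_4,\sigma_4)$ are the only types with a $\sigma_4$ and no $\sigma_3$, every $\sigma_4y$ must be $\sim\sigma_4$, so all of $\sigma_4F_0$ is admissible and the image of $\sigma_4$ ranges over $F_0\cong\bbF_2^{r+2s+\epsilon+2\delta}$. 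The same bookkeeping yields $W(F_{\epsilon,\delta,r,s})=\bbF_2^{r+2s+\epsilon+2\delta}\rtimes\Sp(r,s;\epsilon,\delta)$, matching the stated formula.

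The semidirect-product bookkeeping is routine; the genuine content, and the step I expect to be the main obstacle, is the splitting of the outer cosets, i.e. proving that the $G$-class of $\sigma_3y$ (resp. $\sigma_4y$) is governed solely by $\mu(y)$. The delicate point is to check that each pair $(z,y)$ indeed generates a Klein four subgroup of one of the listed types and that no other type shares the same first two entries, so here the completeness of Table 3 (via Theorem \ref{T:regular}) is essential. Once this is secured the identifications with $P(r,s,\bbF_2)$ and $\Sp(r,s;\epsilon,\delta)$ are immediate, and realizability is free by Proposition \ref{uniqueness:E6}.
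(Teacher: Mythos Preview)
Your proposal is correct and follows essentially the same approach as the paper: both reduce the computation of $W(F)$ to the set of automorphisms of $F$ preserving $G$-conjugacy classes via Proposition \ref{uniqueness:E6}, then identify that set using the known partition of $F$ into conjugacy classes. The only difference is cosmetic: for the outer cosets you read off the class of $\sigma_3 y$ and $\sigma_4 y$ from the Klein four table (Table~3 and Theorem~\ref{T:regular}), whereas the paper cites Lemmas~\ref{E6 to F4:m} and~\ref{E6 to C4:m}, which in turn rest on the explicit conjugacy relations $\sigma_3\tau_1\sim\sigma_4$, $\sigma_3\tau_2\sim\sigma_3$, $\sigma_4\tau_1\sim\sigma_4$, $\sigma_4\tau_2\sim\sigma_4$ established earlier in Section~\ref{Section:E6}; these are two packagings of the same information.
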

\begin{proof}
The action of any $w\in W(F)$ preserves $\mu$ and $m$ on $F\cap G_0$, and conjugacy classes 
of elements in $F-G_0$. By Proposition \ref{uniqueness:E6}, an automorphism of $F$ 
preserves these data is actually the action of some $w\in W(F)$ on $F$. 

Then by Lemmas \ref{E6 to F4:m} and \ref{E6 to C4:m}, we get these Automizer groups.  
\end{proof}

\section{$\E_7$}\label{E7}

Let $G=\Aut(\fre_7)$, there exists three conjugacy classes of involutions in $G$ 
with representatives $\sigma_1,\sigma_2,\sigma_3$, we have   
\begin{eqnarray*}&&G^{\sigma_1}\cong (\Spin(12)\times\Sp(1))/\langle(c,1),(-c,-1)\rangle,
\\&& G^{\sigma_2}\cong((\E_{6}\times\U(1))/\langle(c',e^{\frac{2\pi i}{3}}))
\rtimes\langle\omega\rangle,\\&& G^{\sigma_3}\cong(\SU(8)/\langle iI\rangle)
\rtimes\langle\omega\rangle, \end{eqnarray*} where $c=e_1e_2\cdots e_{12}$, 
$1\neq c'\in Z_{E_6}$, $\omega^{2}=1$, and \[(\fre_6\oplus i\bbR)^{\omega}=\frf_4\oplus 0,  
\mathfrak{su}(8)^{\omega}\cong \mathfrak{sp}(4).\]

There exists 8 conjugacy classes of Klein four subgroups of $G$, their representatives and 
involution types are as follows, \begin{eqnarray*} &&\Gamma_1: (\sigma_1,\sigma_1,\sigma_1),\  
\Gamma_2: (\sigma_1,\sigma_1,\sigma_1),\ \Gamma_3: (\sigma_1,\sigma_2,\sigma_2),\ 
\Gamma_4: (\sigma_1,\sigma_2,\sigma_3),\\&&\Gamma_5: (\sigma_1,\sigma_3,\sigma_3),\ 
\Gamma_6:(\sigma_2,\sigma_2,\sigma_2),\ \Gamma_7: (\sigma_2,\sigma_3,\sigma_3),\ 
\Gamma_8: (\sigma_3,\sigma_3,\sigma_3).\end{eqnarray*}

For an elementary abelian 2-group $F\subset G$, define 
\[H_{F}=\{1\}\cup\{x\in F|x\sim\sigma_1\}.\]
Define $m: H_{F}\times H_{F}\longrightarrow \{\pm{1}\}$ by 
$m(x,y)=-1$ if $\langle x,y\rangle\sim\Gamma_1$, and $m(x,y)=1$ otherwise.

\begin{definition}
Define the translation subgroup 
\[A_{F}:=\{x\in H_{F}| \forall y\in F-H_{F},y\sim xy,\textrm{ and } 
\forall y\in H_{F}, m(x,y)=1\}\] and the defect index 
\[\defe(F)=|\{x\in F: x\sim\sigma_2\}|-|\{x\in F: x\sim\sigma_3\}|.\]
\end{definition} 

For $x\in F$ with $x\sim\sigma_2$, let $H_{x}:=\{y\in H_{F}|xy\sim\sigma_2\}$, 
which is not always a subgroup. 

\begin{lemma}
$H_{F}$ is a subgroup of $F$ and $\rank(F/H_{F})\leq 2$.
\end{lemma}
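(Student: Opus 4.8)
The plan is to prove the two assertions separately: the fact that $H_F$ is a subgroup follows from the Klein four classification (Table 3), while the rank bound is best obtained by a central-extension argument in the simply connected group $\E_7$.

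First I would show $H_F$ is a subgroup. It suffices to check that if $x,y\in H_F$ are distinct non-identity elements then $xy\in H_F$. Since $x\neq y$ and both are involutions conjugate to $\sigma_1$, the group $\langle x,y\rangle$ is a Klein four subgroup of $G$ containing at least two involutions conjugate to $\sigma_1$. Inspecting the involution types in Table 3, the only Klein four subgroups of $\Aut(\fre_7)$ having two or more involutions conjugate to $\sigma_1$ are $\Gamma_1$ and $\Gamma_2$, both of involution type $(\sigma_1,\sigma_1,\sigma_1)$. Hence the third involution $xy$ is again conjugate to $\sigma_1$, so $xy\in H_F$, and $H_F$ is a subgroup.

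For the rank bound I would first reduce to a statement about a complement. Choose an $\mathbb{F}_2$-subspace $E\subset F$ with $F=H_F\oplus E$; then $E\cong F/H_F$, and every non-identity element of $E$ lies in $F-H_F$, hence is conjugate to $\sigma_2$ or $\sigma_3$. So it suffices to prove that any elementary abelian $2$-subgroup $E\subset G$ whose non-identity elements are all conjugate to $\sigma_2$ or $\sigma_3$ satisfies $\rank E\leq 2$. To this end I would pass to $\E_7$ through the double covering $\pi\colon\E_7\longrightarrow\Aut(\fre_7)$, whose kernel $Z(\E_7)=\{1,z\}$ has order $2$, and set $\widetilde E=\pi^{-1}(E)$, a central extension of $E$ by $\langle z\rangle$. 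The key input is the fact recorded in Subsection \ref{Subsection:involution}: preimages of $\sigma_1$ are involutions, whereas preimages of $\sigma_2$ and $\sigma_3$ have order $4$. Since $\pi$ preserves order and conjugacy type, every lift $\widetilde x$ of a non-identity $x\in E$ has order $4$, so $\widetilde x^{2}=z$ independently of the chosen lift. I then define $q\colon E\to\mathbb{F}_2$ by $q(0)=0$ and $q(x)=1$ for $x\neq 0$, so that $\widetilde x^{2}=z^{q(x)}$, and let $b\colon E\times E\to\mathbb{F}_2$ be the commutator pairing given by $[\widetilde x,\widetilde y]=z^{b(x,y)}$. Because $\langle z\rangle$ is central, $b$ is bilinear, and the identity $(\widetilde x\widetilde y)^{2}=[\widetilde x,\widetilde y]\,\widetilde x^{2}\widetilde y^{2}$ gives $q(x+y)=q(x)+q(y)+b(x,y)$.

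The bound is now a short computation. For any two linearly independent $x,y\in E$ the elements $x,y,x+y$ are all non-zero, so $q(x)=q(y)=q(x+y)=1$ and hence $b(x,y)=q(x+y)+q(x)+q(y)=1$. If $\rank E\geq 3$, choosing independent $e_1,e_2,e_3$ makes $e_1$ and $e_2+e_3$ linearly independent, so $b(e_1,e_2+e_3)=1$, whereas bilinearity gives $b(e_1,e_2+e_3)=b(e_1,e_2)+b(e_1,e_3)=1+1=0$, a contradiction. Therefore $\rank E\leq 2$, that is $\rank(F/H_F)\leq 2$. The main obstacle is this second part, and within it the crucial ingredient is the order-$4$ behaviour of the preimages of $\sigma_2,\sigma_3$ in the simply connected group; once that is in hand, the contradiction is pure linear algebra. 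One should also verify that $q$ and $b$ are well defined independently of the lifts, which is immediate since altering a lift by the central element $z$ changes neither $\widetilde x^{2}$ nor the commutator.
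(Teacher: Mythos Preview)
Your proof is correct. The first assertion is argued exactly as in the paper: the Klein four classification forces the third involution to be $\sigma_1$.

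For the rank bound, however, your route differs from the paper's. The paper argues structurally: assuming $\rank(F/H_F)\geq 3$, it takes a rank~$3$ complement $F'$ with $H_{F'}=1$, and for any $1\neq x\in F'$ uses that $G^{x}$ has only two connected components to find $y\in F'\cap G^{x}_0$ with $y\notin\langle x\rangle$; since $x$ is central in $G^{x}_0$, the pair $\langle x,y\rangle$ lies in a maximal torus, and a toral Klein four subgroup must contain an element conjugate to $\sigma_1$, contradicting $H_{F'}=1$. Your argument bypasses the torality step entirely: you lift the complement $E$ to $\E_7$ and exploit directly that $\sigma_2,\sigma_3$ lift to order-$4$ elements (a fact recorded in Subsection~\ref{Subsection:involution}), which makes the induced function $q$ identically~$1$ on nonzero vectors, so the associated bilinear form $b$ must be identically~$1$ on independent pairs, impossible in rank~$\geq 3$. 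In effect, the paper's implicit reason why a toral Klein four without $\sigma_1$'s is impossible is precisely your quadratic-form computation; you have extracted that mechanism and applied it globally, giving a cleaner and more self-contained argument that does not need to invoke non-torality of $\Gamma_6,\Gamma_7,\Gamma_8$.
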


\begin{proof}
Since the product of any two distinct elements in $F$ conjugate to 
$\sigma_1$ is also conjugate to $\sigma_1$, so $H_{F}$ is a subgroup.

Suppose $\rank(F/H_{F})\geq 3$, then there exists a rank 3 subgroup 
$F'\subset F$ with $H_{F'}=1$. For any $1\neq x\in F'$, $G^{x}\cong G^{\sigma_2}$ 
or $G^{\sigma_3}$ has only two connected components, so 
$\rank(G^{x}_0\cap F')\geq 2$. Choose $y\in G^{x}_0-\langle x\rangle$, then 
$\langle x,y\rangle$ is a toral Klein four subgroup of $G$. Then at least 
one of $x,y,xy$ is conjugate to $\sigma_1$, which contradicts to $H_{F'}=1$.
\end{proof}

\subsection{Subgroups from $\E_6$}

In \[G^{\sigma_2}\cong\big((\E_{6}\times\U(1))/\langle(c,e^{\frac{2\pi i}{3}})\big)
\rtimes\langle\omega\rangle,\] let $\tau_1,\tau_2\in\E_6$ be involutions with 
\[\fre_6^{\tau_1}\cong \mathfrak{su}(6)\oplus \mathfrak{sp}(1),\ 
\fre_6^{\tau_2}\cong\mathfrak{so}(10)\oplus i\bbR.\] 
Let $\eta_1, \eta_2\in\E_6^{\omega}\cong\F_4$ be involutions with 
\[\frf_4^{\eta_1}\cong \mathfrak{sp}(3)\oplus\mathfrak{sp}(1), 
\frf_4^{\eta_2}\cong \mathfrak{so}(9),\] and let $\tau_3=\omega$, $\tau_4=\eta_1\omega$. 
Then $\tau_1,\tau_2,\sigma_2\tau_1,\sigma_2\tau_2,\tau_3,\tau_4$ represent all conjugacy 
classes of involutions in $G^{\sigma_2}$ except $\sigma_2$. In $G$ we have conjugacy relations 
\[\tau_1 \sim\tau_2\sim\sigma_1, \sigma_2\tau_1\sim \sigma_3, 
\sigma_2\tau_2\sim\sigma_2,\] \[\tau_3\sim\sigma_2\tau_3\sim\sigma_2, 
\tau_4\sim\sigma_2\tau_4\sim\sigma_3.\]
Moreover, in $G^{\sigma_2}$, we have conjugacy relations 
\[\eta_1\sim_{\E_6}\tau_1,\eta_2\sim_{\E_6}\tau_2,\eta_2\omega\sim_{\E_6}\omega.\]  

Recall that, we choose $\sigma_2=\exp(\pi i \frac{H'_2+H'_5+H'_7}{2})$, 
$\frg^{\sigma_2}$ has a simple root system 
\[\{\alpha_5+\alpha_6,\alpha_1,\alpha_2+\alpha_4,\alpha_3,\alpha_4+\alpha_5,\alpha_6+\alpha_7\}(\textrm{Type } \E_6)\]
we can choose $\tau_1=\exp(\pi i H'_1)$, $\tau_2=\exp(\pi i (H'_5+H'_7))$, then the conjugacy relations 
$\tau_1\sim\tau_2\sim\sigma_1$, $\sigma_2\tau_1\sim \sigma_3$, $\sigma_2\tau_2\sim\sigma_2$ 
are clear.  

Moreover, we have 
\[\omega=\exp(\frac{\pi (X_{\alpha_2}-X_{-\alpha_2})}{2}) \exp(\frac{\pi 
(X_{\alpha_5}-X_{-\alpha_5})}{2}) \exp(\frac{\pi (X_{\alpha_7}-X_{-\alpha_7})}{2})\] and we can choose 
$\eta_1=\exp(\pi i H'_1)$, then we get the conjugacy relations $\tau_3\sim\sigma_2\tau_3\sim\sigma_2$  
and $\tau_4\sim\sigma_2\tau_4\sim\sigma_3$. 

The conjugacy relations relations $\eta_1\sim_{E_6}\tau_1$, $\eta_2\sim_{E_6}\tau_2$, 
$\eta_2\omega\sim_{E_6}\omega$ are obtained from consideration in the group 
$E_6\rtimes\langle\omega\rangle$, similar as the study of conjugacy classes in 
$\Aut(\fre_6)^{\sigma_3}$ in $\E_6$ case, the elements $\tau_1,\tau_2,\omega,\eta_1\omega,\eta_1,\eta_2$ 
correspond to $\sigma_1,\sigma_2,\sigma_3,\sigma_4,\tau_1,\tau_2$ there.  


Let $L_1,L_2,L_3,L_4$ be Klein four subgroups of $\E_6$ 
of involution types $(\tau_1,\tau_1,\tau_1)$, $(\tau_1,\tau_1,\tau_2)$, 
$(\tau_1,\tau_2,\tau_2)$, $(\tau_2,\tau_2,\tau_2)$ respectively, then 
\begin{eqnarray*}&&(\fre_7^{\sigma_2})^{L_1}\cong\mathfrak{su}(3)^{2}\oplus(i\bbR)^{3},\  
(\fre_7^{\sigma_2})^{L_2}\cong\mathfrak{su}(4)\oplus\mathfrak{su}(2)^{2}\oplus(i\bbR)^{2},
\\&&(\fre_7^{\sigma_2})^{L_3}\cong\mathfrak{su}(5)\oplus(i\bbR)^{3},\ 
(\fre_7^{\sigma_2})^{L_4}\cong\mathfrak{so}(8)\oplus(i\bbR)^{3}.\end{eqnarray*}

\begin{lemma}\label{L to F}
In $G$, $L_1\sim L_3\sim F_1$ and $L_2\sim L_4\sim F_2$.
\end{lemma}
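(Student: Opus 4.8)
The plan is to reduce the statement to distinguishing the two conjugacy classes of Klein four subgroups of $\Aut(\fre_7)$ of involution type $(\sigma_1,\sigma_1,\sigma_1)$, and then to separate them by computing the centralizer subalgebras $\fre_7^{L_i}$.

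First I would note that every non-identity element of each $L_i$ is a conjugate of $\tau_1$ or $\tau_2$, and that in $G$ we have $\tau_1\sim\tau_2\sim\sigma_1$ by the conjugacy relations established above. Hence each $L_i$ is a Klein four subgroup of involution type $(\sigma_1,\sigma_1,\sigma_1)$, so by the classification of Klein four subgroups of $\Aut(\fre_7)$ it is $G$-conjugate to $F_1=\Gamma_1$ or to $F_2=\Gamma_2$. These two classes are distinguished by their fixed-point subalgebras: $\fre_7^{F_1}\cong\mathfrak{su}(6)\oplus(i\bbR)^{2}$ and $\fre_7^{F_2}\cong\mathfrak{so}(8)\oplus(\mathfrak{sp}(1))^{3}$, which have the same dimension $37$ but centers of dimension $2$ and $0$ respectively. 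So it suffices to compute the isomorphism type of $\fru_0^{L_i}=\fre_7^{L_i}$, equivalently the dimension of its center, for each $i$.

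Since $L_i\subset\E_6$ commutes with $\sigma_2$, I would use the decomposition $\fre_7=\fre_7^{\sigma_2}\oplus\frp$ with $\frp\cong(V_{\omega_1}\otimes 1)\oplus(V_{\omega_6}\otimes\overline{1})$ to write $\fre_7^{L_i}=(\fre_7^{\sigma_2})^{L_i}\oplus\frp^{L_i}$. The first summand is the subalgebra listed just before the lemma. The second is $\frp^{L_i}\cong(V_{\omega_1})^{L_i}\oplus(V_{\omega_6})^{L_i}$, and its dimension is read off from the character of the $27$-dimensional representation $V_{\omega_1}$ of $\E_6$: using the branchings under $\SU(6)\times\SU(2)$ and $\Spin(10)\times\U(1)$, an involution of $\sigma_1$-type (EII) and one of $\sigma_2$-type (EIII) act on $V_{\omega_1}$ with traces $3$ and $-5$ respectively. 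Thus $\dim(V_{\omega_1})^{L_i}=\frac14(27+t_1+t_2+t_3)$, where $t_1,t_2,t_3$ are the traces of the three non-identity elements of $L_i$; this equals $9,7,5,3$ for $i=1,2,3,4$, whence $\dim\frp^{L_i}=18,14,10,6$. Adding the dimensions $19,23,27,31$ of $(\fre_7^{\sigma_2})^{L_i}$ gives $\dim\fre_7^{L_i}=37$ in all four cases, consistent with the reduction above.

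Finally I would identify the reductive type of $\fre_7^{L_i}$ by recognizing how $\frp^{L_i}$ attaches as a module over $(\fre_7^{\sigma_2})^{L_i}$: for $i=1,3$ the part $\frp^{L_i}$ together with one central direction assembles $(\fre_7^{\sigma_2})^{L_i}$ into $\mathfrak{su}(6)\oplus(i\bbR)^{2}$, while for $i=2,4$ it completes $(\fre_7^{\sigma_2})^{L_i}$ to $\mathfrak{so}(8)\oplus(\mathfrak{sp}(1))^{3}$, giving $L_1\sim L_3\sim F_1$ and $L_2\sim L_4\sim F_2$. The main obstacle is precisely this last step, since the dimension count alone ($37$ in every case) does not separate the two classes; one must pin down the dimension of the center ($2$ versus $0$). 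I expect the cleanest way is to exploit that $(\fre_7^{\sigma_2})^{L_i}=(\fre_7^{L_i})^{\sigma_2}$ is the fixed-point subalgebra of the involution $\sigma_2$ acting on $\fre_7^{L_i}$, and to check that $\mathfrak{su}(3)^{2}\oplus(i\bbR)^{3}$ and $\mathfrak{su}(5)\oplus(i\bbR)^{3}$ arise as symmetric subalgebras of $\mathfrak{su}(6)\oplus(i\bbR)^{2}$ but not of $\mathfrak{so}(8)\oplus(\mathfrak{sp}(1))^{3}$, and symmetrically that $\mathfrak{su}(4)\oplus(\mathfrak{su}(2))^{2}\oplus(i\bbR)^{2}$ and $\mathfrak{so}(8)\oplus(i\bbR)^{3}$ arise from $\mathfrak{so}(8)\oplus(\mathfrak{sp}(1))^{3}$ but not from $\mathfrak{su}(6)\oplus(i\bbR)^{2}$.
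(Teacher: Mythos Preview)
Your proposal is correct, and the decisive step you land on in the final paragraph is exactly the paper's proof: since $\sigma_2$ centralizes $L_i$, the subalgebra $(\fre_7^{\sigma_2})^{L_i}$ is the fixed-point algebra of an involution on $\fre_7^{L_i}$, hence a symmetric subalgebra of whichever of $\fre_7^{F_1}\cong\mathfrak{su}(6)\oplus(i\bbR)^{2}$ or $\fre_7^{F_2}\cong\mathfrak{so}(8)\oplus(\mathfrak{sp}(1))^{3}$ is relevant, and one checks which inclusions are possible.

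The difference is that the paper goes straight to this symmetric-subalgebra check and skips your middle section entirely. Your computation of $\dim\frp^{L_i}$ via the traces $3$ and $-5$ on $V_{\omega_1}$ is correct and gives the pleasant consistency check $\dim\fre_7^{L_i}=37$ in all four cases, but as you yourself observe it cannot separate $F_1$ from $F_2$, so it is a detour rather than a step in the argument. You could drop that paragraph without loss: once you know each $L_i$ is of involution type $(\sigma_1,\sigma_1,\sigma_1)$ and hence conjugate to $F_1$ or $F_2$, the symmetric-subalgebra test using the already-listed $(\fre_7^{\sigma_2})^{L_i}$ finishes the proof immediately.
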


\begin{proof}
Since $\mathfrak{su}(3)^{2}\oplus(i\bbR)^{3}, \mathfrak{su}(5)\oplus(i\bbR)^{3}$ 
are not symmetric subalgebras of $\fre_7^{F_2}\cong\mathfrak{so}(8)\oplus
\mathfrak{su}(2)^{3}$ and $\mathfrak{su}(4)\oplus\mathfrak{su}(2)^{2}
\oplus(i\bbR)^{2},\mathfrak{so}(8)\oplus(i\bbR)^{3}$ are not symmetric 
subalgebras of $\fre_7^{F_1}\cong\mathfrak{su}(6)\oplus(i\bbR)^{2}$, so $L_1,L_3$ 
are conjugate to $F_1$ in $G$ and $L_2,L_4$ are conjugate to $F_2$ in $G$.
\end{proof}

Let $F\subset G=\Aut(\fre_7)$ be an elementary abelian 2-subgroup containing an element 
conjugate to $\sigma_2$, we may assume that $\sigma_2\in F$, then 
\[F\subset G^{\sigma_2}\cong((\E_{6}\times\U(1))/\langle(c,e^{\frac{2\pi i}{3}}))
\rtimes\langle\omega\rangle,\] where $c$ is a non-trivial central element of 
$\E_6$, $c^{3}=1$, $\omega^{2}=1$ and $(\fre_6\oplus i\bbR)^{\omega}=\frf_4\oplus 0$. 
Let $G_{\sigma_2}=(\E_6\times 1)\ltimes\langle\omega\rangle$ be the subgroup 
generated by $\E_6$ ($=\E_6\times 1$) and $\omega$. This definition of $G_{\sigma_2}$ is 
not quite canonical, another choice is to define it as 
$(\E_6\times 1)\ltimes\langle\sigma_2\omega\rangle$, but these are conjugate since 
\begin{eqnarray*}&&(1,i)\omega(1,i)^{-1}=\omega(\omega^{-1}(1,i)\omega)(1,i)^{-1}
\\&&=\omega(1,-i)(1,-i)=\omega(1,-1)\\&&=\omega\sigma_2.\end{eqnarray*}
and so they are equivalent, 

\begin{lemma}\label{E7 to E6:m}
For an elementary abelian 2-group $F\subset G=\Aut(\fre_7)$ contains $\sigma_2$, 
in the inclusion $F\subset G^{\sigma_2}\cong((\E_{6}\times\U(1))
/\langle(c,e^{\frac{2\pi i}{3}}))\rtimes\langle\omega\rangle$, we have $H_{F}=F\cap \E_6$. 

Moreover, the map $m: H_{F}\times H_{F}\longrightarrow\{\pm{1}\}$ is equal to the the 
similar map when $H_{F}$ is viewed as a subgroup of $\E_6$ (or 
$\E_6/\langle c\rangle=\Int(\fre_6)$). 
\end{lemma}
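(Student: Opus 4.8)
The plan is to treat the two assertions in turn, in each case reducing everything to the list of $G$-conjugacy classes of the involutions $\tau_1,\tau_2,\sigma_2\tau_1,\sigma_2\tau_2,\tau_3,\tau_4$ of $G^{\sigma_2}$ recorded just above the lemma, namely $\tau_1\sim_G\tau_2\sim_G\sigma_1$, $\sigma_2\tau_1\sim_G\sigma_3$, $\sigma_2\tau_2\sim_G\sigma_2$, $\tau_3\sim_G\sigma_2$, $\tau_4\sim_G\sigma_3$ (together with $\sigma_2\sim_G\sigma_2$), which by the preceding paragraph exhaust the involution classes of $G^{\sigma_2}$.

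For the identity $H_F=F\cap\E_6$ I would first note that $\E_6=\E_6\times 1$ is normal in $G^{\sigma_2}$: the central $\U(1)$ commutes with it and $\omega$ preserves it, so membership in $\E_6$ is invariant under $G^{\sigma_2}$-conjugacy. To get $F\cap\E_6\subseteq H_F$, I would use that $\E_6$ has exactly two conjugacy classes of involutions, represented by $\tau_1,\tau_2$ (distinguished by the fixed subalgebras $\mathfrak{su}(6)\oplus\mathfrak{sp}(1)$ and $\mathfrak{so}(10)\oplus i\bbR$); since $\tau_1\sim_G\sigma_1$ and $\tau_2\sim_G\sigma_1$, every non-identity element of $F\cap\E_6$ is conjugate to $\sigma_1$ in $G$ and hence lies in $H_F$. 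For the reverse inclusion, a non-identity $x\in H_F$ is an involution of $G^{\sigma_2}$ with $x\sim_G\sigma_1$; reading off the class list, the only involutions of $G^{\sigma_2}$ in the $G$-class of $\sigma_1$ are those $G^{\sigma_2}$-conjugate to $\tau_1$ or $\tau_2$, both of which lie in $\E_6$, so by normality $x\in\E_6$ and $H_F\subseteq F\cap\E_6$.

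For the comparison of the two forms $m$, I would take $x,y\in H_F$. If $x,y$ are linearly dependent (one trivial, or $x=y$) both forms return $1$, so I may assume $\langle x,y\rangle$ is a Klein four subgroup. By the first part all three of $x,y,xy$ lie in $\E_6$, hence each is $\E_6$-conjugate to $\tau_1$ or $\tau_2$, so $\langle x,y\rangle$ has $\E_6$-involution type one of $(\tau_1,\tau_1,\tau_1)$, $(\tau_1,\tau_1,\tau_2)$, $(\tau_1,\tau_2,\tau_2)$, $(\tau_2,\tau_2,\tau_2)$, i.e.\ it is $\E_6$-conjugate to one of $L_1,L_2,L_3,L_4$. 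With the $\E_6$-function $\mu$ (equal to $-1$ on the $\tau_1$-class and $+1$ on the $\tau_2$-class), the $\E_6$-form $m(x,y)=\mu(x)\mu(y)\mu(xy)$ is $-1$ exactly for the types $L_1,L_3$ (an odd number of $\tau_1$'s) and $+1$ for $L_2,L_4$. On the other hand, Lemma \ref{L to F} gives $L_1\sim_G L_3\sim_G\Gamma_1$ and $L_2\sim_G L_4\sim_G\Gamma_2$, so by the definition of the $\E_7$-form ($m(x,y)=-1$ iff $\langle x,y\rangle\sim_G\Gamma_1$) the $\E_7$-value is likewise $-1$ for $L_1,L_3$ and $+1$ for $L_2,L_4$. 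Thus the two forms agree pointwise; and since $\E_6\to\Int(\fre_6)=\E_6/\langle c\rangle$ has kernel $\langle c\rangle$ of odd order $3$, it induces a bijection on involution conjugacy classes, so the conclusion is the same whether $H_F$ is read inside $\E_6$ or inside $\Int(\fre_6)$.

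The routine ingredients are the bookkeeping of fixed subalgebras and the sign count with $\mu$. The one genuinely non-formal point, which I expect to be the crux, is distinguishing $\Gamma_1$ from $\Gamma_2$: they share the involution type $(\sigma_1,\sigma_1,\sigma_1)$ and so cannot be separated by $\mu$ alone. This is precisely the content of Lemma \ref{L to F}, whose fixed-subalgebra argument supplies the dictionary $L_1,L_3\leftrightarrow\Gamma_1$ and $L_2,L_4\leftrightarrow\Gamma_2$ that forces the two definitions of $m$ to coincide; without it the comparison in the previous paragraph would be unavailable.
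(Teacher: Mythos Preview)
Your proof is correct and follows essentially the same route as the paper's, which simply says that $H_F=F\cap\E_6$ ``follows from the comparison of conjugacy classes of involutions in $G^{\sigma_2}$ and in $G$'' and that the equality of the two maps $m$ ``follows from Lemma \ref{L to F}.'' You have unpacked both of these sentences in full detail; in particular, your observation that the crux is Lemma \ref{L to F}, needed precisely because $\Gamma_1$ and $\Gamma_2$ share the involution type $(\sigma_1,\sigma_1,\sigma_1)$, is exactly right.
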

\begin{proof}
$H_{F}=F\cap \E_6$ follows from the comparison of conjugacy classes of involutions in
$G^{\sigma_2}$ and in $G$. The two maps $m$ are equal follows from Lemma \ref{L to F}. 
\end{proof}

We have a 3-fold covering $\pi: G_{\sigma_2}\longrightarrow \Aut(\fre_6)$ 
and an inclusion $p: G_{\sigma_2}\longrightarrow G^{\sigma_2}$. For any 
elementary abelian 2-subgroup $K$ of $\Aut(\fre_6)$, 
$p(\pi^{-1}K)\times\langle\sigma_2\rangle$ is the direct product of 
its unique Sylow 2 subgroup $F$ and $\langle(c,1)\rangle$. Let 
\begin{eqnarray*}&&\{F_{r,s}:r\leq 2,s\leq 3\}, \{F'_{r,s}:r\leq 2,s\leq 3\},
\\&& \{F_{\epsilon,\delta,r,s}:\epsilon+\delta\leq 1, r+s\leq 2\},
\\&&\{F'_{\epsilon,\delta,r,s}:\epsilon+\delta\leq 1, r+s\leq 2, s\geq 1\}
\end{eqnarray*} be elementary abelian 2 subgroups of $G^{\sigma_2}\subset G$ 
obtained from the subgroups of $\Aut(\mathfrak{e}_6)$ with the 
corresponding notation in this way.

\begin{prop}\label{E7 to E6 II}
Any elementary abelian 2-subgroup of $G$ with an element conjugate to 
$\sigma_2$ is conjugate to one of $F_{r,s}, F'_{r,s}$, 
$F_{\epsilon,\delta,r,s}$, $F'_{\epsilon,\delta,r,s}$.
\end{prop}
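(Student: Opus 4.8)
The plan is to realize $F$ inside $G^{\sigma_2}$ and to use the quotient by the central circle to compare it with its image in $\Aut(\fre_6)$, whose conjugacy class is already classified. First I would conjugate $F$ so that $\sigma_2\in F$; this is possible because $F$ contains an element conjugate to $\sigma_2$, and then $F\subset G^{\sigma_2}$. Let $\bar\pi\colon G^{\sigma_2}\longrightarrow\Aut(\fre_6)$ be the quotient by the central $\U(1)$, so that $\bar\pi\circ p=\pi$. One checks directly that $\sigma_2=[(1,-1)]$ is central in $G^{\sigma_2}$ and that the $2$-torsion of $\ker\bar\pi=\U(1)$ is exactly $\langle\sigma_2\rangle$; hence $F\cap\ker\bar\pi=\langle\sigma_2\rangle$, and $K:=\bar\pi(F)$ is an elementary abelian $2$-subgroup of $\Aut(\fre_6)$ with $K\cong F/\langle\sigma_2\rangle$. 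By the classification in Section \ref{Section:E6} (Propositions \ref{E6 to F4} and \ref{E6 to C4}), $K$ is conjugate in $\Aut(\fre_6)$ to one of $F_{r,s},F'_{r,s},F_{\epsilon,\delta,r,s},F'_{\epsilon,\delta,r,s}$. Since $\bar\pi$ is surjective and $\sigma_2$ is central, any conjugating element lifts to $G^{\sigma_2}$ without moving $\sigma_2$; so I may assume $K$ is one of these standard subgroups, and I let $F_{K}$ denote the subgroup of $G^{\sigma_2}$ built from $K$ as described before the proposition, namely the unique Sylow $2$-subgroup of $p(\pi^{-1}K)\times\langle\sigma_2\rangle$. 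It then remains to show $F$ is conjugate to $F_{K}$.

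The heart of the argument is a rigidity analysis of the fibres of $\bar\pi$ over the involutions of $K$, separating the inner and outer cases. For $k\in K\cap\Int(\fre_6)$ the fibre $\bar\pi^{-1}(k)$ is a single $\U(1)$ inside $(\E_6\times\U(1))/\langle(c,e^{2\pi i/3})\rangle$, and a short computation with $[(g,z)]^2=[(g^2,z^2)]$ shows it contains exactly two involutions, which differ by $\sigma_2$. Because $\sigma_2\in F$, the coset $F\cap\bar\pi^{-1}(k)$ has order $2$ and therefore consists of precisely these two involutions; consequently the inner part $F\cap G_0^{\sigma_2}$ is forced and equals the inner part of $F_{K}$. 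For an outer $k$ the lift $[(g,z)]\omega$ squares to $[(g\,\omega(g),1)]$, which is independent of $z$, so the involution condition $g\,\omega(g)=1$ holds for a whole circle of lifts; here the lift is not rigid, and this is the main obstacle.

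I would kill this residual circle of freedom by conjugating by the central torus. A direct computation using $\omega[(1,u)]\omega^{-1}=[(1,\bar u)]$ shows that conjugation by $[(1,u)]$ fixes every inner element of $G^{\sigma_2}$ and sends an outer lift $[(g,z)]\omega$ to $[(g,u^{2}z)]\omega$; as $u^{2}$ ranges over all of $\U(1)$, this lets me move the $\U(1)$-component of any chosen outer element of $F$ to the value $\pm1$ realized by $F_{K}$, while fixing $\sigma_2$ and the already-matched inner part $F\cap G_0^{\sigma_2}$ (if $F$ has no outer elements this step is vacuous). After this single conjugation the outer cosets of $F$ and of $F_{K}$ both meet the fibre over the nontrivial class of $K/(K\cap\Int(\fre_6))$ in the unique pair of involutions with $\U(1)$-component $\pm1$, which differ by $\sigma_2\in F$; hence the outer cosets coincide and $F=F_{K}$. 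Since $F_{K}$ is, by construction, the corresponding member of one of the four families, this proves the proposition. Throughout, every conjugation is performed inside $G^{\sigma_2}\subset G$, so the resulting conjugacies hold in $G$ as required.
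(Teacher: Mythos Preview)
Your argument is correct and follows essentially the same route as the paper: place $\sigma_2$ in $F$, push $F$ down to $\Aut(\fre_6)$, classify the image there, and then show the lift is unique up to conjugacy in $G^{\sigma_2}$. The only difference is cosmetic. The paper disposes of the outer-coset ambiguity in one line by quoting the list of outer involution classes in $G^{\sigma_2}$ (so one may assume $\omega$ or $\tau_4=\eta_1\omega$ lies in $F$, forcing $F\subset G_{\sigma_2}\times\langle\sigma_2\rangle$), whereas you carry out the equivalent step by hand, conjugating by the central $\U(1)$ to normalize the circle coordinate of a chosen outer element and then propagating to the whole outer coset. Both arguments land $F$ inside $G_{\sigma_2}\times\langle\sigma_2\rangle$ and finish via the $\E_6$ classification.
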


\begin{proof}
We may assume that $\sigma_2\in F$, then $F\subset G^{\sigma_2}\cong
((\E_{6}\times U(1))/\langle(c,e^{\frac{2\pi i}{3}}))
\rtimes\langle\omega\rangle$. 

When $\rank(F/H_{F})=2$, we may assume that $\omega\in F$ or 
$\tau_4=\eta_1\omega\in F$, then $F\subset(\E_6\rtimes\langle\omega\rangle)
\times\langle \sigma_2\rangle$. When $\rank(F/H_{F})=1$, we have 
$F\subset E_6\times\langle\sigma_2\rangle$. 
The conclusion follows from the classification in $\E_6$ case 
(Propositions \ref{E6 to F4} and \ref{E6 to C4}). 
\end{proof}

\begin{prop}\label{E7 to E6}
The four families have the following characterization, 
so subgroups in different families are not conjugate to each other.
\begin{itemize}
\item[(1)]{$F$ is conjugate to some $F_{r,s}$ if and only if $F$ 
contains a subgroup conjugate to $\Gamma_6$;}
\item[(2)]{$F$ is conjugate to some $F'_{r,s}$ if and only if $\rank (F/H_{F})=1$, 
$F$ contains an element $x$ conjugate to $\sigma_2$ and $H_{x}$ is a subgroup;}
\item[(3)]{$F$ is conjugate to some  $F_{\epsilon,\delta,r,s}$ if and only if 
$\rank(F/H_{F})=2$, $F$ contains an element conjugate to $\sigma_2$ but contains 
no subgroups conjugate to $\Gamma_6$;}
\item[(4)]{$F$ is conjugate to some $F'_{\epsilon,\delta,r,s}$ if and only if 
$\rank(F/H_{F})=1$, $F$ contains an element conjugate to $\sigma_2$ and
$H_{x}$ is not a subgroup.}
\end{itemize}
\end{prop}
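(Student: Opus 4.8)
The plan is to transport the four defining conditions of the $\E_6$-families through the correspondence $K\mapsto F$ set up just before Proposition \ref{E7 to E6 II}, turning each $\E_6$-condition into the intrinsic $\E_7$-condition asserted in (1)--(4). Every $F$ under consideration may be conjugated so that $\sigma_2\in F$, whence $F\subset G^{\sigma_2}\cong((\E_6\times\U(1))/\langle(c,e^{\frac{2\pi i}{3}})\rangle)\rtimes\langle\omega\rangle$. I write each $g\in F$ as $g=\sigma_2^{\epsilon(g)}\hat g$ with $\hat g\in G_{\sigma_2}=\E_6\rtimes\langle\omega\rangle$; this is well defined on the $2$-group $F$ because $\ker\pi$ has odd order $3$, and $\epsilon$ is additive since $\sigma_2$ is central of order $2$. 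Setting $K=\pi(\{\hat g:g\in F\})\subset\Aut(\fre_6)$, the pair $(\pi(\hat g),\epsilon(g))$ records $g$ faithfully.

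First I would assemble the conjugacy dictionary from the relations already listed, namely $\tau_1\sim\tau_2\sim\sigma_1$, $\sigma_2\tau_1\sim\sigma_3$, $\sigma_2\tau_2\sim\sigma_2$, $\omega\sim\sigma_2\omega\sim\sigma_2$, $\eta_1\omega\sim\sigma_2\eta_1\omega\sim\sigma_3$, together with $\pi(\tau_1)\sim\sigma_1$, $\pi(\tau_2)\sim\sigma_2$, $\pi(\omega)\sim\sigma_3$, $\pi(\eta_1\omega)\sim\sigma_4$ in $\Aut(\fre_6)$. In terms of $(\pi(\hat g),\epsilon(g))$ this reads: an $\Aut(\fre_6)$-class $\sigma_1$ lifts to $\{\sigma_1,\sigma_3\}$ (for $\epsilon=0,1$), $\sigma_2$ to $\{\sigma_1,\sigma_2\}$, $\sigma_3$ to $\{\sigma_2,\sigma_2\}$, $\sigma_4$ to $\{\sigma_3,\sigma_3\}$, and $1$ to $\{1,\sigma_2\}$. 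From this I read off two facts used throughout: $H_F=F\cap\E_6$ is exactly the set of elements with $\epsilon=0$ and inner image (matching Lemma \ref{E7 to E6:m}); and, since $\sigma_2\in F$ supplies the $\epsilon$-direction, $\rank(F/H_F)=2$ precisely when $F$ also carries an $\omega$-component element, i.e.\ when $K\not\subset\Int(\fre_6)$, while $\rank(F/H_F)=1$ precisely when $K\subset\Int(\fre_6)$.

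Next I would derive the four equivalences. For (1): by additivity of $\epsilon$, a Klein four subgroup with all three involutions $\sim\sigma_2$ cannot have all projections inner, since two lifts with $\epsilon=1$ multiply to an element with $\epsilon=0$, whose class is $\sigma_2$ only over an $\Aut(\fre_6)$-class $\sigma_3$ (which is outer); hence $\Gamma_6\subset F$ forces an element of $K$ of class $\sigma_3$, and conversely $\langle g,\sigma_2\rangle\sim\Gamma_6$ for any lift $g$ of such an element. Thus $F\supset\Gamma_6\iff K$ contains a $\sigma_3$, which is the very definition of the family $F_{r,s}$. For (3): $\rank(F/H_F)=2\iff K\not\subset\Int(\fre_6)$, and imposing ``no $\Gamma_6$'' removes the $\sigma_3$-case, leaving an outer element of class $\sigma_4$ — the definition of $F_{\epsilon,\delta,r,s}$. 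For (2) and (4) we are in the case $K\subset\Int(\fre_6)$, so $\pi|_{H_F}$ is an isomorphism onto $K$; taking the canonical witness $x=\sigma_2$, the dictionary gives $H_{\sigma_2}=(\pi|_{H_F})^{-1}(\{1\}\cup\{y\in K:y\sim\sigma_2\})$. As the only inner involution classes of $\Aut(\fre_6)$ are $\sigma_1,\sigma_2$, this set is a subgroup iff $K$ has no Klein four of type $(\sigma_1,\sigma_2,\sigma_2)$, i.e.\ no $\Gamma_3$, distinguishing $F'_{r,s}$ (no $\Gamma_3$, $H_{\sigma_2}$ a subgroup) from $F'_{\epsilon,\delta,r,s}$ ($\Gamma_3$ present, $H_{\sigma_2}$ not a subgroup). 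Each listed property is visibly conjugation-invariant, and the four $\E_6$-conditions are mutually exclusive and, by Proposition \ref{E7 to E6 II}, exhaustive; hence the equivalences are genuine ``iff'' statements and subgroups in distinct families are non-conjugate.

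The hard part will be the bookkeeping in the dictionary: the central factor $\sigma_2=(1,-1)$ shifts the $\E_7$-class within each $\E_6$-fiber, and a single sign slip there — for instance forgetting that two $\epsilon=1$ lifts cancel to an $\epsilon=0$ product — would wrongly let a $\Gamma_6$ arise from an inner $\Gamma_4\subset K$ and thereby destroy the separation between family (1) and families (3)--(4). I would also tie off the one loose end in (2)/(4): when $K\subset\Int(\fre_6)$ has no $\Gamma_3$, the set $\{1\}\cup\{y\sim\sigma_2\}$ is a genuine subgroup of $K$, so $H_x$ is a subgroup for \emph{every} $x\sim\sigma_2$ in $F$, not merely for $x=\sigma_2$, which makes the condition ``$H_x$ is a subgroup'' well posed.
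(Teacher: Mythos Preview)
Your proposal is correct and follows essentially the same approach as the paper: you make explicit the ``comparison of conjugacy classes of involutions in $G^{\sigma_2}$ and $G$'' that the paper invokes tersely, and you translate each of the four $\E_6$-side defining conditions (presence of $\sigma_3$; outer but no $\sigma_3$; inner with no $\Gamma_3$; inner with $\Gamma_3$) into the intrinsic $\E_7$ conditions via the dictionary. The paper's proof is essentially the one-line summary of what you spell out.

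One small point worth tightening: your decomposition $g=\sigma_2^{\epsilon(g)}\hat g$ with $\hat g\in G_{\sigma_2}$ is not automatic once $\sigma_2\in F$; an involution in the $\omega$-coset can carry an arbitrary $\U(1)$-component. You need the conjugation step from the proof of Proposition~\ref{E7 to E6 II} (conjugate by a suitable element of $\U(1)$ so that $\omega\in F$ or $\eta_1\omega\in F$) before the splitting is available. Since you explicitly invoke that correspondence, this is implicit in your setup, but it would be cleaner to say so. Your handling of the well-posedness of ``$H_x$ is a subgroup'' is exactly the content of the paper's remark following the proposition: when $H_{\sigma_2}$ is a subgroup and $x=\sigma_2 h$ with $h\in H_{\sigma_2}$, one has $H_x=h^{-1}H_{\sigma_2}=H_{\sigma_2}$, so the condition is indeed independent of the chosen $x\sim\sigma_2$.
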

\begin{proof}
(1) and (3) are clear. (2) and (4) follow from the comparison of conjugacy classes of 
involutions in $G^{\sigma_2}$ and $G$ and the classification of elementary abelian 
2-subgroups of $\Int(\fre_6)$ (Propositions \ref{E6 to F4} and \ref{E6 to C4}). 
\end{proof}

We make a remark here, for subgroups in case (2) and (4), if one $H_{x}$ 
($x\in F$ with $x\sim\sigma_2$) is an subgroup, then any other $H_{x'}$ 
($x'\in F$ with $x'\sim\sigma_2$) is an subgroup (happens in Case (2)); 
conversely, if one $H_{x}$ ($x\in F$ with $x\sim\sigma_2$) is not an subgroup, 
then any other $H_{x'}$ ($x'\in F$ with $x'\sim\sigma_2$) is not an subgroup.

\begin{prop}\label{E7 rank and defect}
We have the following formulas for $\rank A_{F}$ and $\defe F$.
\begin{itemize}
\item[(1)]{For $F=F_{r,s}$, $r\leq 2$, $s\leq 3$, $\rank A_{F}=r$, 
$\defe F=3\cdot 2^{r}(2-2^{s})$;}
\item[(2)]{For $F=F'_{r,s}$, $r\leq 2$, $s\leq 3$, $\rank A_{F}=r$, 
$\defe F=2^{r}(2-2^{s})$;}
\item[(3)]{For $F=F_{\epsilon,\delta,r,s}$, $\epsilon+\delta\leq 1$, 
$r+s\leq 2$, $\rank A_{F}=r$, $\defe F=(1-\epsilon)(-1)^{\delta}2^{r+s+\delta}
-2^{1+r+\epsilon+2s+2\delta}$;}
\item[(4)]{For $F=F'_{\epsilon,\delta,r,s}$, $\epsilon+\delta\leq 1$, $r+s\leq 2$, 
$s\geq 1$, $\rank A_{F}=r$, $\defe F=(1-\epsilon)(-1)^{\delta} 2^{r+s+\delta}$.}
\end{itemize}
\end{prop}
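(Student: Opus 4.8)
The plan is to push every invariant of an $\E_7$-subgroup $F$ in the four families back to the corresponding invariant of the $\E_6$-subgroup $K\subset\Aut(\fre_6)$ from which it is built, and then invoke the already-proven Proposition \ref{E6 rank and defect}. Recall that $F$ arises from $K$ via the $3$-fold covering $\pi\colon G_{\sigma_2}\lra\Aut(\fre_6)$ and the inclusion $p$, together with $\sigma_2$; concretely the non-identity elements of $F$ are the lifts $\tilde k$ and the twists $\sigma_2\tilde k$ for $k\in K$. The engine of the computation is the list of conjugacy relations recorded just before Lemma \ref{L to F}: the $\E_7$-classes of the pair $(\tilde k,\sigma_2\tilde k)$ are determined by the $\E_6$-class of $k$ by
\[
k\sim_{\E_6}\sigma_1\Rightarrow(\sigma_1,\sigma_3),\quad k\sim_{\E_6}\sigma_2\Rightarrow(\sigma_1,\sigma_2),\quad k\sim_{\E_6}\sigma_3\Rightarrow(\sigma_2,\sigma_2),\quad k\sim_{\E_6}\sigma_4\Rightarrow(\sigma_3,\sigma_3),
\]
while $\sigma_2$ itself stays in the class of $\sigma_2$.

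For $\rank A_F$, I would first use Lemma \ref{E7 to E6:m} to see that $H_F=F\cap\E_6$ is exactly the lift of the inner part $K\cap\Int(\fre_6)$ and that the form $m$ on $H_F$ coincides with the $\E_6$-form. Then I would match the two defining conditions of $A_F$ with those of the $\E_6$-translation subgroup $A_K$: the condition $m(x,y)=1$ for all $y\in H_F$ is the $\E_6$-condition verbatim, and the translation condition $y\sim xy$ for all $y\in F-H_F$ unwinds, through the relations above, to ``$x\sim_{\E_6}\sigma_2$ and $x$ fixes the $\E_6$-class of every element of $K$'', i.e.\ to membership in $A_K$. This identifies $A_F$ with the lift of $A_K$, whence $\rank A_F=\rank A_K=r$ in all four cases by Proposition \ref{E6 rank and defect}.

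For $\defe F$, write $a,b,c,d$ for the numbers of elements of $K$ conjugate in $\E_6$ to $\sigma_1,\sigma_2,\sigma_3,\sigma_4$. Reading off the conjugacy relations, the elements of $F$ in the $\E_7$-class of $\sigma_2$ are $\sigma_2$, the $\sigma_2\tilde k$ with $k\sim_{\E_6}\sigma_2$, and both $\tilde k,\sigma_2\tilde k$ with $k\sim_{\E_6}\sigma_3$, giving $1+b+2c$; those in the class of $\sigma_3$ are the $\sigma_2\tilde k$ with $k\sim_{\E_6}\sigma_1$ and both $\tilde k,\sigma_2\tilde k$ with $k\sim_{\E_6}\sigma_4$, giving $a+2d$. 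Hence $\defe F=1+b+2c-a-2d$. For families (2),(4), where $K\subset\Int(\fre_6)$ forces $c=d=0$, this is $1+b-a=\defe K$, and Proposition \ref{E6 rank and defect} delivers $2^{r}(2-2^{s})$ and $(1-\epsilon)(-1)^{\delta}2^{r+s+\delta}$. For families (1),(3) I would use the splittings $K=\langle\sigma_3\rangle\times K'$ and $K=\langle\sigma_4\rangle\times K'$ with $K'$ inner and lying in the special subclass of Subsection \ref{Subsection:a subclass}, compute $a,b,c,d$ from $\sigma_3\tau_1\sim\sigma_4$, $\sigma_3\tau_2\sim\sigma_3$ (respectively $\sigma_4\tau_1\sim\sigma_4\tau_2\sim\sigma_4$, there being no $\tau_3$-type element), and simplify: for (1) one gets $3\cdot2^{r}(2-2^{s})$, and for (3) one gets $(1-\epsilon)(-1)^{\delta}2^{r+s+\delta}-2^{1+r+\epsilon+2s+2\delta}$, matching the claim.

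I expect the genuine obstacle to be the identification of $A_F$ with the lift of $A_K$, not the defect count, which is mechanical once the conjugacy relations are fixed. The delicate point is proving that the $\E_7$-translation condition, quantified over the outer-type elements $y\in F-H_F$ (including the $\sigma_2$-twisted ones $\sigma_2\tilde k$), is equivalent to the intrinsic $\E_6$-condition $\mu(x)=1$ defining $A_K$; this requires tracking the $\E_7$-class of each product $xy$ against that of $y$, via the comparison of conjugacy classes in $G^{\sigma_2}$ with those in $\Aut(\fre_6)$. Once this equivalence is in place, both formulas follow uniformly from Proposition \ref{E6 rank and defect} together with the elementary bookkeeping above.
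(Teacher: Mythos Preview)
Your proposal is correct and follows exactly the route the paper indicates: the paper's own proof is the single sentence ``These follow from Lemma \ref{E7 to E6:m} and Proposition \ref{E6 rank and defect},'' and what you have written is a careful unpacking of precisely those two citations. Your identification of $A_F$ with the lift of $A_K$ via Lemma \ref{E7 to E6:m} and the conjugacy table preceding Lemma \ref{L to F}, together with the count $\defe F=1+b+2c-a-2d$ and its specialization in each family, is the computation the paper is suppressing.
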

\begin{proof}
These follows from Lemma \ref{E7 to E6:m} 
and Proposition \ref{E6 rank and defect}.  
\end{proof}

By Propositions \ref{E7 to E6} and \ref{E7 rank and defect}, any two of 
$\{F_{r,s}\}$, $\{F'_{r,s}\}$, $\{F_{\epsilon,\delta,r,s}\}$, 
$\{F'_{\epsilon,\delta,r,s}\}$ are non-conjugate.

\subsection{Subgroups from $\SU(8)$ or $\SO(8)$}\label{E7 to AD}
We have \[G^{\sigma_3}\cong(\SU(8)/\langle iI\rangle)\rtimes\langle\omega\rangle,\] 
$\omega^{2}=1$, $(\fru_0^{\sigma_3})^{\omega}=\mathfrak{sp}(4)$, and 
$\frp\cong\wedge^{4}(\bbC^{8})$. Let $\tau_1=[I_{2,6}]$ and  
$\tau_2=[I_{4,4}]$.  
              
Let $\omega_0=\omega\left(\begin{array}{cc}0&I_{4}\\-I_{4}&0\end{array}\right)$, then 
$\omega_0^{2}=1$ and $(\SU(8)/\langle iI\rangle)^{\omega_0}
=(\SO(8)/\langle -I\rangle)\times\langle\sigma_3\rangle$. In 
$(\SU(8)/\langle iI\rangle)_0^{\omega_0}=\SO(8)/\langle-I\rangle$, let 
\[\eta_1=\left(\begin{array}{cc}0&I_{4}\\-I_{4}&0\end{array}\right),\ 
\eta_2=\left(\begin{array}{cc}-I_{4}&\\&I_{4}\\\end{array}\right),\ 
\eta_3=\left(\begin{array}{cc}-I_{2}&\\&I_{6}\\\end{array}\right),\]
$\eta_4=\left(\begin{array}{cc}0&I_{1,3}\\-I_{1,3}&0\end{array}\right)$, where 
$I_{1,3}=\diag\{-1,1,1,1\}$. Let $\tau_3=\omega_0$, $\tau_4=\eta_1\omega_0$. 

Then $\tau_1,\tau_2,\sigma_3\tau_1,\sigma_3\tau_2,\tau_3,\tau_4,\sigma_3\tau_4$ 
represent all conjugacy classes of involutions in $G^{\sigma_3}$ except 
$\sigma_3=\frac{1+i}{\sqrt{2}}I$, and we have conjugacy relations in $G$, 
\[\tau_1\sim\tau_2\sim\sigma_1,\ \sigma_3\tau_1\sim\sigma_2,\ \sigma_3\tau_2\sim\sigma_3,\] 
\[\tau_3\sim\sigma_3,\tau_4\sim\sigma_2, \tau_4\sigma_3\sim\sigma_3.\] 
The conjugacy relations  $\tau_1\sim\tau_2\sim\sigma_1$, $\sigma_3\tau_1\sim\sigma_2$,  
$\sigma_3\tau_2\sim\sigma_3$ are obtained by calculating $\dim\fru_0^{\sigma}$ for all 
relevant $\sigma$. Since $\tau_4=\omega_0\eta_1=\omega$, so $\tau_4\sim\sigma_2$. 
Since $\tau_4\sigma_3=\omega\sigma_3=\omega\sigma_2\exp(\pi i H'_1)$, $\omega,\sigma_2$ 
are in the subgroup generated by roots $\alpha_2,\alpha_5,\alpha_7$, which are 
perpendicular to each other, and also perpendicular to $\alpha_1$, so 
\[\tau_4\sigma_3=\omega\sigma_2\exp(\pi i H'_1)\sim\sigma_2\exp(\pi i H'_1)=\sigma_3.\] 
Let $\phi$ be the action of $G^{\sigma_3}\cong(\SU(8)/\langle iI\rangle)
\rtimes\langle\omega\rangle$ on $\frp\cong\wedge^{4}(\bbC^{8})$, for any $X\in\SU(8)$, 
\[\phi(\omega_0)^{-1}\phi([X])\phi(\omega_0)=\phi(\omega^{-1}[X]\omega_0)=\phi([\overline{X}]),\] 
Then $\phi(\omega_0)$ maps the weight space of any weight $\lambda$ to the weight 
space of weight $-\lambda$. As $\wedge^{4}(\bbC^{8})$ has no zero-weight, so 
$\dim\frp^{\omega}=\frac{1}{2}\dim\frp=35$. Then $\dim\fru_0^{\omega_0}=28+35=63$, so 
$\tau_3=\omega_0\sim\sigma_3$. 

\smallskip 

Moreover, in $G^{\sigma_3}$, we have $\eta_3\sim_{G^{\sigma_3}}\tau_1$, 
\[\eta_1\sim_{G^{\sigma_3}}\eta_2\sim_{G^{\sigma_3}}\eta_4\sim_{G^{\sigma_3}}\tau_2,\]   
\[\eta_2\omega_0\sim_{G^{\sigma_3}}\eta_3\omega_0\sim_{G^{\sigma_3}}\omega_0=\tau_3,\] 
\[\eta_4\omega_0\sim_{G^{\sigma_3}}\eta_1\omega_0\sigma_3=\tau_4\sigma_2.\]

These conjugacy relations are obtained from consideration in the classical 
group $(\SU(8)/\langle iI\rangle)\rtimes\langle\omega_0\rangle$.  We show 
$\eta_4\omega_0\sim_{G^{\sigma_3}}\eta_1\omega_0\sigma_3$ here, which is the most complicated 
one among them. Let $y=e^{\frac{\pi i}{8}}\diag\{I_7,-1\}\in\SU(8)/\langle iI\rangle$, then 
\begin{eqnarray*}&&y(\eta_4\omega_0)y^{-1}=(y\eta_4y^{-1})\omega_0(\omega_0^{-1}y\omega_0)y^{-1}\\&=&
\eta_1\omega_0y^{-1}y^{-1}=\eta_1\omega_0e^{\frac{-\pi i}{4}}\\&=&\eta_1\omega_0\sigma_2,
\end{eqnarray*} in the last equality we use 
$e^{\frac{-\pi i}{4}}I=(e^{\frac{\pi i}{4}}I)(iI)^{-1}=e^{\frac{\pi i}{4}}I=\sigma_3$ in 
$\SU(8)/\langle iI\rangle$. 

Let \[M_1=\langle\left(\begin{array}{cc}-I_{4}&\\&I_{4} \\\end{array}\right), 
\left(\begin{array}{cc}0_{4}&I_{4} \\I_{4}&0_{4}\\ \end{array}\right)\rangle,\] 
\[M_2=\langle\diag\{-I_{4},I_{4}\},\diag\{-I_{2},I_{2},-I_{2},I_{2}\}\rangle,\] then 
$(\fre_7^{\sigma_3})^{M_1}\cong\mathfrak{su}(4)$ and 
$(\fre_7^{\sigma_3})^{M_2}\cong(\mathfrak{sp}(1))^{4}\oplus(i\bbR)^{3}$.

\begin{lemma}\label{M to F}
In $G$, $M_1\sim F_1$ and $M_2\sim F_2$.
\end{lemma}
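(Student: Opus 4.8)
The plan is to run exactly the argument of Lemma \ref{L to F}, but with $G^{\sigma_2}$ replaced by $G^{\sigma_3}$ and with $\sigma_3$ playing the role of the auxiliary centralizing involution.

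First I would record that $M_1$ and $M_2$ are Klein four subgroups of $G$ of involution type $(\sigma_1,\sigma_1,\sigma_1)$. For $M_2$ this is immediate: all three non-identity elements are diagonal with four entries $+1$ and four entries $-1$, hence conjugate to $\tau_2=[I_{4,4}]\sim\sigma_1$. For $M_1$ one checks that the two generators $[I_{4,4}]$ and $[J'_4]$ anticommute in $\SU(8)$ and therefore commute in $\SU(8)/\langle iI\rangle$ (as $-I\in\langle iI\rangle$), so $M_1$ is indeed a Klein four group; its three non-identity elements are $[I_{4,4}]=\tau_2$, $[J'_4]$, and the product $[I_{4,4}J'_4]=[\eta_1]$, all conjugate to $\sigma_1$ by the conjugacy relations $\tau_2\sim\sigma_1$ and $\eta_1\sim_{G^{\sigma_3}}\tau_2$ established above (here $[J'_4]$ has eigenvalues $\pm1$ with multiplicity four, hence is conjugate to $[I_{4,4}]=\tau_2$). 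Since the only two conjugacy classes of Klein four subgroups of $\Aut(\fre_7)$ with this involution type are those of $F_1$ and $F_2$, each $M_i$ is conjugate to $F_1$ or to $F_2$, and it remains to decide which.

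To distinguish the two cases I would exploit $\sigma_3$. Both $M_1$ and $M_2$ lie in $\SU(8)/\langle iI\rangle$, in which $\sigma_3=[\tfrac{1+i}{\sqrt2}I]$ is a central scalar not contained in $M_i$; hence $\sigma_3$ commutes with $M_i$ and restricts to an involutive automorphism of $\fre_7^{M_i}$ whose fixed-point subalgebra is $(\fre_7^{M_i})^{\sigma_3}=(\fre_7^{\sigma_3})^{M_i}$. Thus the algebras $(\fre_7^{\sigma_3})^{M_1}\cong\mathfrak{su}(4)$ and $(\fre_7^{\sigma_3})^{M_2}\cong\mathfrak{sp}(1)^4\oplus(i\bbR)^3$ must each occur as a \emph{symmetric} subalgebra of $\fre_7^{M_i}$, where $\fre_7^{M_i}$ is one of $\fre_7^{F_1}\cong\mathfrak{su}(6)\oplus(i\bbR)^2$ or $\fre_7^{F_2}\cong\mathfrak{so}(8)\oplus\mathfrak{sp}(1)^3$.

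The last step is the comparison via Cartan's list of symmetric subalgebras, which I expect to be the only real content. For $M_1$: $\mathfrak{su}(4)\cong\mathfrak{so}(6)$ is the type-AI symmetric subalgebra of $\mathfrak{su}(6)$ (extended by letting the involution act by $-1$ on the central $(i\bbR)^2$), so it is a symmetric subalgebra of $\fre_7^{F_1}$; it cannot be one of $\mathfrak{so}(8)\oplus\mathfrak{sp}(1)^3$, because no involution of $\mathfrak{sp}(1)^3$ has zero fixed points, so a symmetric subalgebra there always has a nonzero contribution from the $\mathfrak{sp}(1)$ factors and can never equal the simple algebra $\mathfrak{su}(4)$. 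Hence $\fre_7^{M_1}\cong\fre_7^{F_1}$ and $M_1\sim F_1$. For $M_2$: $\mathfrak{sp}(1)^4\oplus(i\bbR)^3$ arises as a symmetric subalgebra of $\mathfrak{so}(8)\oplus\mathfrak{sp}(1)^3$ by taking the type-BDI subalgebra $\mathfrak{so}(4)\oplus\mathfrak{so}(4)\cong\mathfrak{sp}(1)^4$ of $\mathfrak{so}(8)$ together with a $\mathfrak{u}(1)$ inside each $\mathfrak{sp}(1)$; but its semisimple part $\mathfrak{sp}(1)^4$ never appears as the semisimple part of a symmetric subalgebra of $\mathfrak{su}(6)$ (whose symmetric subalgebras have semisimple part among $\mathfrak{su}(k)\oplus\mathfrak{su}(6-k)$, $\mathfrak{sp}(3)$, and $\mathfrak{so}(6)\cong\mathfrak{su}(4)$), so it is not a symmetric subalgebra of $\fre_7^{F_1}$. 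Hence $\fre_7^{M_2}\cong\fre_7^{F_2}$ and $M_2\sim F_2$. The main obstacle is simply keeping this matching clean; the decisive invariant in each case is the isomorphism type of the semisimple part of the fixed subalgebra, which already forces the correct conjugacy class.
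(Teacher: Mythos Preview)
Your proposal is correct and follows essentially the same approach as the paper. The paper's proof is terser: it simply asserts that $\mathfrak{su}(4)$ is not a symmetric subalgebra of $\fre_7^{F_2}\cong\mathfrak{so}(8)\oplus(\mathfrak{sp}(1))^{3}$ and that $(\mathfrak{sp}(1))^{4}\oplus(i\bbR)^{3}$ is not a symmetric subalgebra of $\fre_7^{F_1}\cong\mathfrak{su}(6)\oplus(i\bbR)^{2}$, and concludes. Your write-up fills in the surrounding details (checking the involution type of $M_1$, explaining why $(\fre_7^{\sigma_3})^{M_i}$ must appear as a symmetric subalgebra of $\fre_7^{M_i}$, and running through Cartan's list), but the decisive step is the same symmetric-subalgebra comparison, exactly parallel to Lemma~\ref{L to F}. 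Note that the positive identifications you give (that $\mathfrak{su}(4)$ \emph{is} realized inside $\fre_7^{F_1}$ and $\mathfrak{sp}(1)^4\oplus(i\bbR)^3$ inside $\fre_7^{F_2}$) are not needed: once you know each $M_i$ is conjugate to $F_1$ or $F_2$, ruling out one alternative suffices.
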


\begin{proof}
Since $\mathfrak{su}(4)$ is not a symmetric subalgebra of 
$\fre_7^{F_2}\cong\mathfrak{so}(8)\oplus(\mathfrak{sp}(1))^{3}$ and
$(\mathfrak{sp}(1))^{4}\oplus(i\bbR)^{3}$ is not a symmetric subalgebra of 
$\fre_7^{F_1}\cong\mathfrak{su}(6)\oplus(i\bbR)^{2}$, so 
$M_1\sim F_1$, $M_2\sim F_2$.
\end{proof}

When $\sigma_3\in F$, $F\subset G^{\sigma_3}\cong(\SU(8)/\langle iI\rangle)
\rtimes\langle\omega_0\rangle$, where $\omega_0^{2}=1$ and 
$\mathfrak{su}(8)^{\omega_0}=\mathfrak{so}(8)$. If $F$ has no elements 
conjugate to $\sigma_2$, from the description of conjugacy classes of 
involutions in $(G^{\sigma_3})_0\cong\SU(8)/\langle iI\rangle$, we have 
$x\sim\tau_2=\diag\{-I_{4},I_{4}\}$ for any $1\neq x\in H_{F}$.

\begin{lemma}\label{E7 to AD:m}
For an elementary abelian 2-group $F\subset G=\Aut(\fre_7)$ contains $\sigma_3$ and 
without elements conjugate to $\sigma_2$, in the inclusion 
$F\subset G^{\sigma_3}\cong(\SU(8)/\langle iI\rangle)\rtimes\langle\omega_0\rangle$,
$H_{F}\subset\SU(8)/\langle iI\rangle$, and the homomorphism \[H_{F}\longrightarrow
\SU(8)/\langle iI,\sigma_3\rangle=\SU(8)/\langle\frac{1+i}{\sqrt{2}}I\rangle=\PSU(8)\]
is injective. Moreover, the map $m: H_{F}\times H_{F}\longrightarrow\{\pm{1}\}$ is equal to 
the similar map when $H_{F}$ is viewed as a subgroup of 
$\SU(8)/\langle\frac{1+i}{\sqrt{2}}I\rangle=\PSU(8)$.  
\end{lemma}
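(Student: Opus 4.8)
The plan is to read off the three assertions from the list of conjugacy relations among involutions of $G^{\sigma_3}$ given above, combined with the concrete models $M_1, M_2$ of Lemma \ref{M to F} and the classification of elementary abelian $2$-subgroups of $\PU(8)$ in Proposition \ref{Type A: classification}.

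First I would establish $H_{F}\subset\SU(8)/\langle iI\rangle$. Among the representatives $\tau_1,\tau_2,\sigma_3\tau_1,\sigma_3\tau_2,\tau_3,\tau_4,\sigma_3\tau_4$ of the involution classes of $G^{\sigma_3}$, those lying in the non-identity coset $(\SU(8)/\langle iI\rangle)\omega_0$ are exactly the ones conjugate to $\tau_3,\tau_4,\sigma_3\tau_4$, and by the conjugacy relations these are conjugate in $G$ only to $\sigma_2$ or $\sigma_3$, never to $\sigma_1$. Hence every element of $H_{F}$ lies in the identity component $\SU(8)/\langle iI\rangle$. For injectivity, note $\sigma_3=\frac{1+i}{\sqrt 2}I$ satisfies $\sigma_3^{2}=iI$, so $\langle iI,\sigma_3\rangle=\langle\sigma_3\rangle=Z(\SU(8))$ and the kernel of $\SU(8)/\langle iI\rangle\longrightarrow\PSU(8)$ is the order-two group $\{[I],[\sigma_3]\}$. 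Since $\sigma_3\not\sim\sigma_1$ we have $\sigma_3\notin H_{F}$, so this kernel meets $H_{F}$ trivially and $H_{F}$ injects into $\PSU(8)=\PU(8)$.

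Next I would pin down the conjugacy type of the nontrivial elements of $H_{F}$, using the hypothesis that $F$ contains no element conjugate to $\sigma_2$. If some $1\neq x\in H_{F}$ were conjugate to $\tau_1=[I_{2,6}]$ rather than to $\tau_2=[I_{4,4}]$, then, because $\sigma_3$ is a central scalar of $\SU(8)/\langle iI\rangle$, the element $\sigma_3 x$ would be conjugate to $\sigma_3\tau_1\sim\sigma_2$, contradicting the hypothesis. Thus every nontrivial element of $H_{F}$ is conjugate to $\tau_2$, i.e.\ projects in $\PU(8)$ to an involution of type $[I_{4,4}]$.

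Finally, and this is the main point, I would identify the form $m$ on $H_{F}$ with the form $m_{\PU}$ of Subsection \ref{Subsection:A}. Since $H_{F}$ is a subgroup, for distinct nontrivial $x,y$ the group $\langle x,y\rangle$ is a Klein four subgroup of involution type $(\sigma_1,\sigma_1,\sigma_1)$, hence $G$-conjugate to exactly one of $\Gamma_1,\Gamma_2$; the degenerate pairs give $m=m_{\PU}=1$ on both sides. By Proposition \ref{Type A: classification} there are precisely two $\PU(8)$-conjugacy classes of such Klein four groups with all involutions of type $[I_{4,4}]$, distinguished by whether $m_{\PU}$ is nontrivial, and these are represented by the images of $M_1$ (where a direct commutator computation gives $m_{\PU}=-1$) and of $M_2$ (where the diagonal generators commute, so $m_{\PU}=1$). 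By Lemma \ref{M to F}, $M_1\sim_{G}\Gamma_1$ and $M_2\sim_{G}\Gamma_2$. It then remains to check that $\PU(8)$-conjugacy of the projected Klein four groups lifts to $G$-conjugacy of the original subgroups of $\SU(8)/\langle iI\rangle$: a conjugating element of $\PU(8)$ lifts to $g\in\SU(8)/\langle iI\rangle$, and inside the order-eight preimage $P$ of a projected Klein four the elements conjugate to $\sigma_1$ form a single complement to $\langle\sigma_3\rangle$, since each coset $\{z,\sigma_3 z\}$ contains exactly one $\sigma_1$-element and one $\sigma_3$-element (using $\sigma_3\tau_2\sim\sigma_3$). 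This complement is therefore canonical, forcing $gM_ig^{-1}=\langle x,y\rangle$, so the $G$-class of $\langle x,y\rangle$ is detected by $m_{\PU}(x,y)$, which yields $m=m_{\PU}$ on $H_{F}$. I expect this last lifting step, reconciling $\PU(8)$-conjugacy with $G$-conjugacy, to be the only delicate part; the rest is bookkeeping with the conjugacy tables.
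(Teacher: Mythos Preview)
Your proposal is correct and follows essentially the same route as the paper's proof, which is a terse three-sentence sketch: it notes that outer-coset involutions are never conjugate to $\sigma_1$ (giving $H_F\subset\SU(8)/\langle iI\rangle$), observes that in each pair $\{x,\sigma_3 x\}$ exactly one element is conjugate to $\sigma_1$ (giving injectivity into $\PSU(8)$), and then simply asserts that the equality of the two forms $m$ ``follows from Lemma~\ref{M to F}''. Your write-up spells out the content behind that last assertion --- the reduction to the two $\PU(8)$-classes of Klein fours with all involutions of type $[I_{4,4}]$, the identification of these with the images of $M_1$ and $M_2$, and the lifting step showing that $\PU(8)$-conjugacy of the projections implies $G$-conjugacy of the original Klein fours via the canonical $\sigma_1$-complement in the rank-three preimage --- which the paper leaves implicit.
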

\begin{proof}
$H_{F}\subset\SU(8)/\langle iI\rangle$ since any involution in 
$\omega_0\SU(8)/\langle iI\rangle$ is conjugate to $\sigma_2$ or $\sigma_3$. For any 
involution $x\in F\cap(\SU(8)/\langle iI\rangle)$, exactly one of $x,x\sigma_3$ is 
conjugate to $\sigma_1$, so the map $H_{F}\longrightarrow\PSU(8)$ is injective. The two 
maps $m$ are equal follows from Lemma \ref{M to F}. 
\end{proof}


When $\rank(F/H_{F})=1$, $F\subset (G^{\sigma_3})_0\cong\SU(8)/\langle iI\rangle$. 
Let $y_{1}=\diag\{-I_{4},I_{4}\}$,  \[y_{2}=\diag\{-I_{2},I_{2},-I_{2},I_{2}\}, 
y_{3}=\diag\{-1,1,-1,1,-1,1,-1,1\},\] \[y_{4}=\left(\begin{array}{cc}0_{4}&I_{4}
\\I_{4}&0_{4}\\\end{array}\right),\ y_{5}=\left(\begin{array}{cccc}0_{2}&I_{2}&&
\\I_{2}&0_{2}&&\\&&0_{2}&I_{2}\\&&I_{2}&0_{2}\\\end{array}\right),\] 
\[y_{6}=\left(\begin{array}{cccccccc}0&1&&&&&&\\1&0&&&&&&\\&&0&1&&&&\\&&1&0&&&&\\
&&&&0&1&&\\&&&&1&0&&\\&&&&&&0&1\\&&&&&&1&0\end{array}\right).\]

For each $(r,s)$ with $r+s\leq 3$, let $F''_{r,s}=\langle \sigma_3,y_1,y_2,...,y_{r+s},
y_{4},...,y_{3+s}\rangle$. One has $A_{F''_{r,s}}=\ker m=\langle y_1,...,y_{r}\rangle$, 
so these $F''_{r,s}$ are not conjugate to each other.

When $\rank(F/H_{F})=2$, we may assume that $\omega_0\in F$, then \[F\subset
(G^{\sigma_3})^{\omega_0}=(\SO(8)/\langle -I\rangle)\times
\langle\sigma_3,\omega_0\rangle.\] 
Let $x_{1}=\diag\{-I_{4},I_{4}\}$, $x_{2}=
\diag\{-I_{2},I_{2},-I_{2},I_{2}\}$, \[x_{3}=\diag\{-1,1,-1,1,-1,1,-1,1\}.\] For each 
$r\leq 3$, let $F'_{r}=\langle\sigma_2,\omega_0,x_1,\cdots,x_{r}\rangle$.



\begin{prop}\label{prop:E7 to AD}
For an elementary abelian 2-group $F\subset G$, if $F$ contains an element conjugate 
to $\sigma_3$ but contains no elements conjugate to $\sigma_2$, then $F$ is conjugate 
to one of $\{F''_{r,s}: r+s\leq 3\}, \{F'_{r}: r\leq 3\}$. Any two groups in 
$\{F''_{r,s}\},\{F'_{r}\}$ are non-conjugate.
$\rank A_{F''_{r,s}}=\rank A_{F'_{r}}=r$.
\end{prop}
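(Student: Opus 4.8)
The plan is to prove Proposition \ref{prop:E7 to AD} in three coordinated pieces: a classification of the possible subgroups, a proof that the listed families exhaust all conjugacy classes, and a verification that the families are mutually non-conjugate via the invariants $\rank A_F$ and $\defe F$. The hypothesis is that $F$ contains an element conjugate to $\sigma_3$ but none conjugate to $\sigma_2$. First I would fix an element $x \in F$ with $x \sim \sigma_3$ and, after conjugating, assume $\sigma_3 \in F$, so that $F \subset G^{\sigma_3} \cong (\SU(8)/\langle iI\rangle)\rtimes\langle\omega_0\rangle$. Since $H_F = \{1\}\cup\{x\in F: x\sim\sigma_1\}$ is a subgroup and $\rank(F/H_F)\leq 2$ (established in the preceding lemma), the natural split is on the value $\rank(F/H_F)\in\{1,2\}$, giving the two families $\{F''_{r,s}\}$ and $\{F'_r\}$ respectively.

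**The two cases.**

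In the case $\rank(F/H_F)=1$, since $F$ has no element conjugate to $\sigma_2$, all outer elements (those in $\omega_0\,\SU(8)/\langle iI\rangle$) are excluded except those conjugate to $\sigma_3$; the comparison of conjugacy classes forces $F \subset (G^{\sigma_3})_0 \cong \SU(8)/\langle iI\rangle$. By Lemma \ref{E7 to AD:m}, $H_F$ injects into $\PSU(8)=\SU(8)/\langle\tfrac{1+i}{\sqrt 2}I\rangle$ and the form $m$ on $H_F$ coincides with the bilinear form attached to this inclusion. I would then invoke Proposition \ref{Type A: classification}: the conjugacy class of $H_F$ (as a subgroup of $\PSU(8)$ with the form $m$) is determined by the class of $\ker m$ together with $\rank H_F$. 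Recording $r = \rank \ker m = \rank A_F$ and letting $s$ measure $\rank(H_F/\ker m)/2$ (suitably normalized against the explicit generators $y_i$), this matches $F''_{r,s}$ exactly, with $A_{F''_{r,s}} = \ker m = \langle y_1,\dots,y_r\rangle$. In the case $\rank(F/H_F)=2$, I would conjugate so that $\omega_0 \in F$, landing $F$ inside $(G^{\sigma_3})^{\omega_0} = (\SO(8)/\langle -I\rangle)\times\langle\sigma_3,\omega_0\rangle$; then $F\cap(\SO(8)/\langle-I\rangle)$ is classified by Proposition \ref{Type BD: classification}, and tracking which involutions survive the no-$\sigma_2$ constraint pins $F$ down to an $F'_r$, with $\rank A_{F'_r}=r$.

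**Non-conjugacy and the main obstacle.**

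Once representatives are identified, non-conjugacy within each family is immediate from $\rank A_F$ (and, where $\rank A_F$ alone does not separate, from $\defe F$, exactly as remarked after the definition of $F''_{r,s}$). To separate the two families from each other, I would use $\rank(F/H_F)$, which equals $1$ for every $F''_{r,s}$ and $2$ for every $F'_r$; since $\rank(F/H_F)$ is a conjugacy invariant, no member of one family can be conjugate to a member of the other. The final assertion $\rank A_{F''_{r,s}} = \rank A_{F'_r} = r$ then follows from the explicit computations of $A_F$ in each case.

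The step I expect to be the main obstacle is the careful translation of the \emph{intrinsic} invariants $H_F$, $A_F$, and $m$ (defined on $F$ via conjugacy classes in $\E_7$) into the \emph{matrix-group} invariants $\ker m$ and $\rank$ used by Propositions \ref{Type A: classification} and \ref{Type BD: classification}. This hinges on Lemmas \ref{M to F} and \ref{E7 to AD:m} to guarantee that the form $m$ computed inside $\E_7$ genuinely agrees with the form computed inside $\PSU(8)$ (resp. $\PSO(8)$), and on verifying that the no-$\sigma_2$ hypothesis precisely cuts out the subclass of subgroups whose non-identity elements of $H_F$ are all conjugate to $\tau_2 = [I_{4,4}]$. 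Getting these identifications exactly right—so that the parameters $(r,s)$ of the abstract classification line up with the explicit generators $y_i$, $x_i$—is where the real care is needed; the counting of conjugacy classes and the non-conjugacy statements are then routine consequences of the matrix-group propositions already proved.
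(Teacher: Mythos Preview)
Your proposal is correct and follows essentially the same route as the paper's proof: the case split on $\rank(F/H_F)\in\{1,2\}$, the reduction of the first case to the $\PSU(8)$ classification (Proposition~\ref{Type A: classification}) via Lemma~\ref{E7 to AD:m}, the reduction of the second case to the subclass in $\SO(8)/\langle -I\rangle$ (the paper cites Subsection~\ref{Subsection:a subclass} rather than Proposition~\ref{Type BD: classification}, but under the no-$\sigma_2$ constraint these coincide), and the separation of classes by $\rank A_F$ together with $\rank(F/H_F)$. One small sharpening: in the case $\rank(F/H_F)=1$, your phrasing suggests outer elements are excluded because of the no-$\sigma_2$ hypothesis, but in fact outer elements conjugate to $\omega_0$ are $\sim\sigma_3$; the correct reason $F\subset (G^{\sigma_3})_0$ is that any outer $z\in F$ would give $\langle\sigma_3,z\rangle\cap H_F=1$ (since $z$, $\sigma_3 z\sim\sigma_3$), forcing $\rank(F/H_F)\geq 2$.
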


\begin{proof}
For the first statement, we may assume that $\sigma_3\in F$, 
then \[F\subset G^{\sigma_3}\cong (\SU(8)/\langle iI\rangle)
\rtimes\langle\omega_0\rangle.\] When $\rank(F/H_{F})=1$, 
$F\subset G^{\sigma_3}_0\cong\SU(8)/\langle iI\rangle$. 
As $F$ has no elements conjugate to $\sigma_2$, any element of $F$ is conjugate to 
$\tau_2$ or $\sigma_3\tau_2$ in $\SU(8)/\langle iI\rangle$, where
$\tau_2=\left(\begin{array}{cc}-I_{4}&\\&I_{4}\\\end{array}\right)$. 
Then $F\sim F''_{r,s}$ for some $r,s\geq 0$ with $r+s\leq 3$ by Subsection 
\ref{Subsection:A}.

When $\rank(F/H_{F})=2$, we may assume that $\omega_0\in F$ as well, then \[F\subset
(G^{\sigma_3})^{\omega_0}=(\SO(8)/\langle -I\rangle)\times
\langle\sigma_3,\omega_0\rangle.\] We have $H_{F}=F\cap\SO(8)/\langle -I\rangle$ and 
any involution in $F$ is conjugate to $\eta_2=\diag\{-I_4,I_4\}$ in 
$\SO(8)/\langle-I\rangle$. Then $F\sim F'_{r}$ for some $r\leq 3$ by Subsection 
\ref{Subsection:a subclass}.

By Lemma \ref{E7 to AD:m}, we get $\rank A_{F''_{r,s}}=\rank A_{F'_{r}}=r$. 
By this, we get any two groups in $\{F''_{r,s}\},\{F'_{r}\}$ are non-conjugate.
\end{proof}

\subsection{Pure $\sigma_1$ subgroups}

A subgroup $F$ of $\Aut(\fre_7)$ is called a {\it pure $\sigma_1$ subgroup} if all of 
its non-trivial elements are conjugate to $\sigma_1$ in  $\Aut(\fre_7)$. 

For $\sigma_1$, $G^{\sigma_1}\cong (\Spin(12)\times\Sp(1))/\langle(c,1),(-c,-1)\rangle$ 
where $c=e_1e_2...e_{12}$, and $\frp=M_{+}\otimes\bbC^{2}$. 
$(e_1e_2e_3e_4,1)$, $(e_1e_2,\textbf{i})$, $(e_1e_2e_3e_4e_5e_6,\textbf{i})$, 
$(\Pi,1)$, $(\Pi,-1)$ represent the conjugacy classes of involutions in $G^{\sigma_1}$ 
except $\sigma_1=(1,-1)$, where \[\Pi= \frac{1+e_1e_2}{\sqrt{2}}
\frac{1+e_3e_4}{\sqrt{2}} \frac{1+e_5e_6}{\sqrt{2}}\frac{1+e_7e_8}{\sqrt{2}}
\frac{1+e_9e_{10}}{\sqrt{2}}\frac{1+e_{11}e_{12}}{\sqrt{2}}\in\Spin(12).\] 
And in $G$, we have  $(e_1e_2e_3e_4,1)\sim \sigma_1$, $(e_1\Pi e_1,i)\sim\sigma_1$, 
\[(e_1e_2,\textbf{i})\sim\sigma_2, (e_1e_2e_3e_4e_5e_6,\textbf{i}) \sim \sigma_3,\] 
\[(\Pi,1)\sim\sigma_2,\ (\Pi,-1)\sim\sigma_3.\] The identification 
of conjugacy classes is by calculating $\fru_0^{\sigma}$ from the consideration of 
weights of $M_{+}$ as an $\Spin(12)$ module. 


Let $K_1=\langle(e_1\Pi e_1^{-1},\textbf{i}),(e_1\Pi'e_1^{-1},\textbf{j})\rangle$, 
$K_2=\langle(e_1e_2e_3e_4,1),(e_5e_6e_7e_8,1)\rangle$, \begin{eqnarray*}&&
K_3=\langle(e_1\Pi e_1^{-1},\textbf{i}),(-e_1e_2e_3e_4,1)\rangle,\ 
K_4=\langle(e_1\Pi e_1^{-1}, \textbf{i}), (e_1e_2e_3e_4,1)\rangle,\\&& 
K_5=\langle(e_1e_2e_3e_4,1),(e_1e_2e_5e_6,1)\rangle,\\\end{eqnarray*} where 
\[\Pi'=\frac{1+e_1e_3}{\sqrt{2}} \frac{1+e_4e_2}{\sqrt{2}} \frac{1+e_5e_7}{\sqrt{2}}
\frac{1+e_8e_6}{\sqrt{2}}\frac{1+e_9e_{11}}{\sqrt{2}}\frac{1+e_{12}e_{10}}{\sqrt{2}}.\]

\begin{lemma}
We have $\Pi^{2}=\Pi'^{2}=[\Pi,\Pi']=c$.    
\end{lemma}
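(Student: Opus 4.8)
The plan is to compute inside the (complexified) Clifford algebra on generators $e_1,\dots,e_{12}$ with $e_ie_j=-e_je_i$ $(i\neq j)$ and $e_i^2=-1$, in which $\Spin(12)$ is the group generated by the even products. The single fact I use repeatedly is that for $i\neq j$ one has $(e_ie_j)^2=-1$; hence, writing $f_k=e_{2k-1}e_{2k}$ $(1\le k\le 6)$ and $g_1=e_1e_3,\,g_2=e_4e_2,\,g_3=e_5e_7,\,g_4=e_8e_6,\,g_5=e_9e_{11},\,g_6=e_{12}e_{10}$, the factors $u_k=\tfrac{1+f_k}{\sqrt2}$ of $\Pi$ and $v_l=\tfrac{1+g_l}{\sqrt2}$ of $\Pi'$ satisfy $u_k^2=f_k$, $v_l^2=g_l$ and $v_l^{-1}=\tfrac{1-g_l}{\sqrt2}$. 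Two such even quadratic elements commute when their index pairs are disjoint; the six $f_k$ are supported on pairwise disjoint pairs, as are the six $g_l$.

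First I would prove $\Pi^2=c$. Since the $u_k$ mutually commute, $\Pi^2=\prod_{k=1}^6 u_k^2=\prod_{k=1}^6 f_k=(e_1e_2)(e_3e_4)\cdots(e_{11}e_{12})=c$ by concatenation. Likewise $\Pi'^2=\prod_{l=1}^6 g_l$, and here a reordering sign must be checked: grouping by the blocks $\{1,2,3,4\},\{5,6,7,8\},\{9,10,11,12\}$ one finds $(e_1e_3)(e_4e_2)=e_1e_2e_3e_4$ (an even permutation), and similarly for the other two blocks, so that concatenating the three block-products gives $\Pi'^2=c$.

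The main point is the commutator, which I would obtain by showing that conjugation by $\Pi$ inverts $\Pi'$. A direct computation with the factors $u_k$ gives the rotation $\Pi e_{2k-1}\Pi^{-1}=e_{2k}$ and $\Pi e_{2k}\Pi^{-1}=-e_{2k-1}$ for every $k$. Applying this to each $g_l=e_ae_b$ yields $\Pi g_1\Pi^{-1}=e_2e_4=-g_2$, $\Pi g_2\Pi^{-1}=(-e_3)(-e_1)=-g_1$, and in the same way $\Pi g_3\Pi^{-1}=-g_4$, $\Pi g_4\Pi^{-1}=-g_3$, $\Pi g_5\Pi^{-1}=-g_6$, $\Pi g_6\Pi^{-1}=-g_5$. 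Hence $\Pi v_l\Pi^{-1}=\tfrac{1-g_{\sigma(l)}}{\sqrt2}=v_{\sigma(l)}^{-1}$ for the involution $\sigma=(1\,2)(3\,4)(5\,6)$; since all $v_l$ commute, $\Pi\Pi'\Pi^{-1}=\prod_l v_{\sigma(l)}^{-1}=\prod_l v_l^{-1}=\Pi'^{-1}$. Therefore $[\Pi,\Pi']=\Pi\Pi'\Pi^{-1}\Pi'^{-1}=\Pi'^{-2}=(\Pi'^2)^{-1}=c^{-1}=c$, using $c^2=1$ (the reordering sign for $n=12$ is $(-1)^{78}=1$).

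I expect the only genuine obstacle to be the sign bookkeeping in the third paragraph: one must verify the rotation sign $e_{2k}\mapsto -e_{2k-1}$, since this minus sign is exactly what converts each $v_l$ into an inverse factor $v_{\sigma(l)}^{-1}$ rather than $v_{\sigma(l)}$, and one must note that the reindexing by $\sigma$ is harmless precisely because the $g_l$ pairwise commute. The computation is convention-sensitive—with $e_i^2=+1$ the intermediate rotation signs flip—but $(e_ie_j)^2=-1$ holds either way and the three equalities $\Pi^2=\Pi'^2=[\Pi,\Pi']=c$ are convention-independent, so I would fix $e_i^2=-1$ at the outset and carry it consistently.
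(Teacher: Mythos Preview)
Your proof is correct. The computation that $\Pi e_{2k-1}\Pi^{-1}=e_{2k}$ and $\Pi e_{2k}\Pi^{-1}=-e_{2k-1}$ is accurate, and from this the conjugation $\Pi g_l\Pi^{-1}=-g_{\sigma(l)}$ follows as you say; since the $g_l$ have pairwise disjoint supports they commute, so $\Pi\Pi'\Pi^{-1}=\Pi'^{-1}$ and hence $[\Pi,\Pi']=\Pi'^{-2}=c^{-1}=c$.

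The paper takes a different route for the commutator: rather than computing the conjugation action of $\Pi$ on the generators, it multiplies out $\Pi\Pi'$ directly and obtains the closed form
\[
\Pi\Pi'=\frac{1+e_1e_4}{\sqrt{2}}\,\frac{1+e_2e_3}{\sqrt{2}}\,\frac{1+e_5e_8}{\sqrt{2}}\,\frac{1+e_6e_7}{\sqrt{2}}\,\frac{1+e_9e_{12}}{\sqrt{2}}\,\frac{1+e_{10}e_{11}}{\sqrt{2}},
\]
from which $(\Pi\Pi')^2=c$ is immediate; then, using $\Pi^{-1}=c\Pi$, $\Pi'^{-1}=c\Pi'$ and centrality of $c$, one gets $[\Pi,\Pi']=\Pi\Pi'(c\Pi)(c\Pi')=(\Pi\Pi')^2=c$. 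Your approach is more conceptual---it exploits that $\Pi$ covers the block rotation $e_{2k-1}\mapsto e_{2k}\mapsto -e_{2k-1}$ in $\SO(12)$, so the argument generalizes cleanly to analogous elements---whereas the paper's approach yields the explicit expression for $\Pi\Pi'$ as a byproduct, which could be useful elsewhere. Both are short and the sign bookkeeping you flagged is handled correctly.
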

\begin{proof}
$\Pi^{2}=\Pi'^{2}=c$ is clear. Calculation show \[\Pi\Pi'=\frac{1+e_1e_4}{\sqrt{2}}
\frac{1+e_2e_3}{\sqrt{2}}\frac{1+e_5e_8}{\sqrt{2}}\frac{1+e_6e_7}{\sqrt{2}}
\frac{1+e_9e_{12}}{\sqrt{2}}\frac{1+e_{10}e_{11}}{\sqrt{2}},\] so $(\Pi\Pi')^{2}=c$. Then 
$[\Pi,\Pi']=\Pi\Pi'\Pi^{-1}\Pi'^{-1}=\Pi\Pi'(c\Pi)(c\Pi')=(\Pi\Pi')^{2}=c$.    
\end{proof}

\begin{lemma}\label{K to F}
In $G$, $K_1\sim K_3\sim K_5\sim\Gamma_1$ and $K_2\sim K_4\sim\Gamma_2$.
\end{lemma}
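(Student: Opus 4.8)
The plan is to treat each $K_i$ as a Klein four subgroup of $G=\Aut(\fre_7)$ lying in $G^{\sigma_1}\cong(\Spin(12)\times\Sp(1))/\langle(c,1),(-c,-1)\rangle$, and to proceed in two stages: first show every $K_i$ is a \emph{pure $\sigma_1$ subgroup} (all three non-identity elements conjugate to $\sigma_1$ in $G$), and then decide, for each, whether it is conjugate to $\Gamma_1$ or to $\Gamma_2$. By the list of Klein four subgroups (Table \ref{Ta:Klein four}), $\Gamma_1$ and $\Gamma_2$ are the only conjugacy classes of involution type $(\sigma_1,\sigma_1,\sigma_1)$, so once purity is established each $K_i$ must be conjugate to one of them. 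For purity I would use the generator classes recorded just above (e.g.\ $(e_1\Pi e_1^{-1},\textbf{i})\sim\sigma_1$ and $(e_1e_2e_3e_4,1)\sim\sigma_1$) together with the relations $\Pi^2=\Pi'^2=[\Pi,\Pi']=c$ and the explicit form of $\Pi\Pi'$: the third element of each $K_i$ is, up to $\Spin(12)$-conjugacy and the sign-insensitivity of $\Ad$, again of the same shape, and a short weight count on $M_{+}$ (balanced $\pm 1$ eigenspaces) shows its $G$-class is $\sigma_1$.

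To separate $\Gamma_1$ from $\Gamma_2$ I would compare the centralizer algebra $\fre_7^{K_i}=C_{\fre_7}(K_i)$ with the two models from Table \ref{Ta:Klein four}, namely $\fre_7^{\Gamma_1}\cong\mathfrak{su}(6)\oplus(i\bbR)^2$ and $\fre_7^{\Gamma_2}\cong\mathfrak{so}(8)\oplus(\mathfrak{sp}(1))^3$; these are non-isomorphic (center of dimension $2$ versus $0$, semisimple part $A_5$ versus $D_4\oplus 3A_1$) although both have dimension $37$. Using the $\sigma_1$-eigenspace decomposition $\fre_7=(\mathfrak{so}(12)\oplus\mathfrak{sp}(1))\oplus(M_{+}\otimes\bbC^2)$ I would compute $\fre_7^{K_i}$ as the sum of its \emph{$\frk$-part} $(\mathfrak{so}(12)\oplus\mathfrak{sp}(1))^{K_i}$ and its \emph{$\frp$-part} $(M_{+}\otimes\bbC^2)^{K_i}$. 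The $\frk$-part is classical: translating the generators into $\SO(12)\times\SO(3)$ one reads off two anticommuting complex structures for $K_1$ (giving $\mathfrak{sp}(3)$), three coordinate sign-blocks for $K_2$ (giving $\mathfrak{so}(4)^3\oplus\mathfrak{sp}(1)\cong(\mathfrak{sp}(1))^7$), and so on.

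For $K_1$ and $K_2$ the $\frk$-part already decides the class, via the symmetric-pair shortcut used in Lemmas \ref{L to F} and \ref{M to F}: since $\sigma_1$ centralizes $K_i$ but $\sigma_1\notin K_i$, the algebra $(\mathfrak{so}(12)\oplus\mathfrak{sp}(1))^{K_i}=(\fre_7^{K_i})^{\sigma_1}$ is a symmetric subalgebra of $\fre_7^{K_i}$. Now $\mathfrak{sp}(3)$ is a symmetric subalgebra of $\mathfrak{su}(6)\oplus(i\bbR)^2$ but not of $\mathfrak{so}(8)\oplus(\mathfrak{sp}(1))^3$ (whose only simple dimension-$21$ symmetric subalgebra is $\mathfrak{so}(7)$, of type $B_3\neq C_3$), forcing $K_1\sim\Gamma_1$; conversely $(\mathfrak{sp}(1))^7$ occurs as $\mathfrak{so}(4)^2\oplus(\mathfrak{sp}(1))^3$ inside $\mathfrak{so}(8)\oplus(\mathfrak{sp}(1))^3$ but cannot be a symmetric subalgebra of $\mathfrak{su}(6)\oplus(i\bbR)^2$ (rank $7$ semisimple is impossible there), forcing $K_2\sim\Gamma_2$.

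The hard part is $K_3$ versus $K_4$ (and the status of $K_5$), where this shortcut breaks down. These differ only by the central sign on $e_1e_2e_3e_4$, which is invisible to $\Ad$; hence their $\frk$-parts coincide (both $\cong\mathfrak{su}(4)\oplus\mathfrak{su}(2)\oplus(i\bbR)^3$), and that algebra is a symmetric subalgebra of \emph{both} $\fre_7^{\Gamma_1}$ and $\fre_7^{\Gamma_2}$, so the $\frk$-part cannot tell them apart. I must therefore genuinely compute the $\frp$-part $(M_{+}\otimes\bbC^2)^{K_i}$ and its brackets with the $\frk$-part: replacing $e_1e_2e_3e_4$ by $-e_1e_2e_3e_4$ exchanges its $\pm 1$-eigenspaces on the half-spin module $M_{+}$, so $\frp^{K_3}$ and $\frp^{K_4}$ are different $16$-dimensional subspaces, and tracking how each glues onto $\frk^{K_i}$ shows one produces a $2$-dimensional center ($\Rightarrow\Gamma_1$) and the other a semisimple algebra ($\Rightarrow\Gamma_2$), with $K_5$ settled the same way. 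This weight-level bookkeeping on $M_{+}$, carried out exactly as in the identification of conjugacy classes earlier in this section, is where the real work lies; a conceptual cross-check is the parity criterion recalled above, namely that a Klein four lift in $\E_7$ has an odd number of elements conjugate to $\sigma_2'$ for $\Gamma_1$ and an even number for $\Gamma_2$, since multiplying a generator by the central $z\in\E_7$ (i.e.\ flipping the sign) changes that parity, explaining why $K_3$ and $K_4$ fall into opposite classes.
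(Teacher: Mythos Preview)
Your treatment of $K_1$ and $K_2$ coincides with the paper's: both use that $(\fre_7^{\sigma_1})^{K_1}\cong\mathfrak{sp}(3)$ and $(\fre_7^{\sigma_1})^{K_2}\cong(\mathfrak{sp}(1))^{7}$ are symmetric subalgebras of $\fre_7^{K_i}$, and rule out the wrong $\Gamma_j$ by incompatibility.

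For $K_3,K_4,K_5$ you take a genuinely different route. You propose to compute the full centralizer $\fre_7^{K_i}$ by determining the $\frp$-part $(M_{+}\otimes\bbC^{2})^{K_i}$ via spinor weights and then reading off the dimension of the center (two for $\Gamma_1$, zero for $\Gamma_2$); your parity observation about lifts to $\E_7$ is a correct conceptual explanation of why $K_3$ and $K_4$ must land in opposite classes. This works, but it leaves the decisive weight bookkeeping undone, and it treats $K_5$ only by analogy. The paper instead avoids all spinor computations for these three: it fixes $\sigma_1=\exp(\pi i H'_2)$, writes down the $\D_6\times\A_1$ simple system of $\frg^{\sigma_1}$, and identifies the relevant elements of $\exp(\frh_0)$ with the $\Spin(12)\times\Sp(1)$ elements appearing in the $K_i$. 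This yields ordered-triple conjugacies such as $(\sigma_1,(e_1\Pi e_1^{-1},\textbf{i}),e_1e_2e_3e_4)\sim(\exp(\pi i H'_1),\exp(\pi i H'_3),\exp(\pi i H'_2))$, so that $K_4$ is literally carried to $\langle\exp(\pi i H'_2),\exp(\pi i H'_3)\rangle=\Gamma_2$, and $K_3$ (which lies in the same rank-$3$ torus group $\langle\sigma_1,K_4\rangle$, since $(-e_1e_2e_3e_4,1)=\sigma_1\cdot(e_1e_2e_3e_4,1)$) is carried to a different Klein four subgroup of $\langle\exp(\pi i H'_1),\exp(\pi i H'_2),\exp(\pi i H'_3)\rangle$, identified with $\Gamma_1$; a second triple conjugacy handles $K_5$. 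The trade-off: the paper's torus identification is quicker and sidesteps any $M_{+}$ analysis, at the cost of some root-system bookkeeping; your approach is more self-contained within the $\Spin(12)\times\Sp(1)$ model and gives a structural reason (center dimension) for the answer, but requires carrying out the spinor-weight computation you only sketch.
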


\begin{proof}
Since $(\fru_0^{\sigma_1})^{K_1}\cong\mathfrak{sp}(3)$ is not a 
symmetric subgroup of $\fru_0^{\Gamma_2}\cong\mathfrak{so}(8)\oplus
(\mathfrak{sp}(1))^{2}$ and $(\fre_7^{\sigma_1})^{K_2}\cong(\mathfrak{sp}(1))^{7}$ 
is not a symmetric subgroup of $\fru_0^{\Gamma_1}\cong\mathfrak{su}(6)\oplus(i\bbR)^{2}$, 
we get $K_1\sim\Gamma_1, K_2\sim\Gamma_2$.

For a Cartan subalgebra $\frh_0$ of $\fre_7$, we may assume that $\sigma_1=
\exp(\pi i H'_2)$. Then $\frg^{\sigma_1}$ has a simple root system 
\[\{\alpha_2,\alpha_4,\alpha_5,\alpha_6,\beta,\alpha_7\}(\textrm{Type } \D_6)
\bigsqcup\{\alpha_1\},\] where $\beta=\alpha_1+2\alpha_3+2\alpha_4+\alpha_2+\alpha_5$.
By identifying conjugacy (classes of) elements in $\exp(\frh_0)$ and in 
$\Spin(12)\times\Sp(1)/\langle(c,1),(-c,-1)\rangle$, we get the conjugacy relations,  
\[(\exp(\pi i H'_1),\exp(\pi i H'_3),\exp(\pi i H'_2))\sim(\sigma_1,
(e_1 \Pi e_1^{-1},\textbf{i}), e_1e_2e_3e_4)\] and \[(\exp(\pi i H'_1),
\exp(\pi i H'_2),\exp(\pi i H'_4))\sim(\sigma_1,e_1e_2e_3e_4,e_1e_2e_5e_6).\] 
Then we have  $K_3\sim K_5\sim\Gamma_1$, $K_4\sim\Gamma_2$.
\end{proof}

\begin{lemma}\label{L:Spin(12)}
In $\Spin(12)$, $\Pi\sim\Pi^{-1}$, $\Pi\not\sim -\Pi$, and 
$\Pi\not\sim\pm{}e_1\Pi e_1^{-1}$. 
\end{lemma}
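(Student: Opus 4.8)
The plan is to pass to the explicit exponential form of $\Pi$. Since $(e_{2k-1}e_{2k})^{2}=-1$ we have $\frac{1+e_{2k-1}e_{2k}}{\sqrt 2}=\exp\!\big(\tfrac{\pi}{4}e_{2k-1}e_{2k}\big)$, and the six terms $e_{2k-1}e_{2k}$ pairwise commute, so $\Pi=\exp\!\big(\tfrac{\pi}{4}\sum_{k=1}^{6}e_{2k-1}e_{2k}\big)$. Its image $\bar\Pi\in\SO(12)$ is the rotation by $\tfrac{\pi}{2}$ in each coordinate plane $\span\{e_{2k-1},e_{2k}\}$, and $\bar\Pi^{2}=-\I_{12}$, so $\bar\Pi$ has order $4$.

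For $\Pi\sim\Pi^{-1}$ I would exhibit an explicit conjugator. Put $g=e_1e_3e_5e_7e_9e_{11}\in\Spin(12)$. A direct sign computation in the Clifford algebra gives $g e_i g^{-1}=-e_i$ for $i$ odd and $g e_i g^{-1}=e_i$ for $i$ even, hence $g(e_{2k-1}e_{2k})g^{-1}=(-e_{2k-1})(e_{2k})=-e_{2k-1}e_{2k}$ for every $k$. Applying $\exp$ yields $g\Pi g^{-1}=\exp\!\big(-\tfrac{\pi}{4}\sum_k e_{2k-1}e_{2k}\big)=\Pi^{-1}$.

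For $\Pi\not\sim-\Pi$ I would compute a character. On the spinor module $M=M_{+}\oplus M_{-}$ of dimension $2^{6}=64$, the commuting operators $e_{2k-1}e_{2k}$ act with eigenvalues $\pm i$, so $\Pi$ acts as $\bigotimes_{k=1}^{6}\diag\{e^{i\pi/4},e^{-i\pi/4}\}$ and $\tr_{M}(\Pi)=\prod_{k=1}^{6}\big(e^{i\pi/4}+e^{-i\pi/4}\big)=(2\cos\tfrac{\pi}{4})^{6}=8\neq 0$. Since the kernel element $-1$ acts as $-\Id$ on $M$, we get $\tr_{M}(-\Pi)=-8\neq\tr_{M}(\Pi)$, and as conjugate elements have equal characters, $\Pi\not\sim-\Pi$.

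The main obstacle is the last assertion $\Pi\not\sim\pm e_1\Pi e_1^{-1}$, because the spin character cannot see it: conjugation by $e_1$ reverses only the first plane, giving $e_1\Pi e_1^{-1}=\exp\!\big(\tfrac{\pi}{4}(-e_1e_2+\sum_{k\ge 2}e_{2k-1}e_{2k})\big)$, which still has $\tr_{M}=8$ (the first factor is unchanged under $i\leftrightarrow-i$), and moreover $e_1\Pi e_1^{-1}$ is $\Pin(12)$-conjugate to $\Pi$. So I would instead descend to $\SO(12)$ and use the standard fact that two maximal-torus elements are conjugate iff their angle-tuples are related by the Weyl group $W(\D_6)$, which permutes the six rotation angles and changes the sign of an \emph{even} number of them. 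Both $\overline{e_1\Pi e_1^{-1}}$ and $\overline{-e_1\Pi e_1^{-1}}$ equal the torus element with angles $(-\tfrac{\pi}{2},\tfrac{\pi}{2},\dots,\tfrac{\pi}{2})$, whereas $\bar\Pi$ has angles $(\tfrac{\pi}{2},\dots,\tfrac{\pi}{2})$. The parity of the number of angles equal to $-\tfrac{\pi}{2}$ is preserved by permutations and by even sign changes (and is convention-independent since $6$ is even); it equals $1$ for the former and $0$ for the latter. Hence these are not $\SO(12)$-conjugate, so $\Pi\not\sim\pm e_1\Pi e_1^{-1}$ in $\Spin(12)$.
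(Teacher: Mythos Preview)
Your proof is correct. Parts one and three follow essentially the same route as the paper: the same conjugator $e_1e_3e_5e_7e_9e_{11}$ for $\Pi\sim\Pi^{-1}$, and the same descent to $\SO(12)$ for $\Pi\not\sim\pm e_1\Pi e_1^{-1}$ (the paper simply asserts $J_6\not\sim_{\SO(12)} I_{1,11}J_6 I_{1,11}^{-1}$, whereas you spell out the $W(\D_6)$ parity invariant). The genuine difference is in part two: the paper argues that the preimage in $\Spin(12)$ of the centralizer $\SO(12)^{\pi(\Pi)}$ is exactly $\{g:g\Pi g^{-1}=\pm\Pi\}$, and since that centralizer is $\U(6)$, hence connected, the preimage is already exhausted by $C_{\Spin(12)}(\Pi)$, leaving no element sending $\Pi$ to $-\Pi$. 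Your character computation $\tr_M(\Pi)=(\sqrt{2})^{6}=8\neq -8$ is an equally valid and more hands-on alternative; it avoids identifying the centralizer but requires the spin module explicitly.
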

\begin{proof}
We have $(e_1e_3e_5e_7e_9e_{11})\Pi(e_1e_3e_5e_7e_9e_{11})^{-1}=\Pi^{-1}$, so 
$\Pi\sim\Pi^{-1}$. Since \[\SO(12)^{\pi(\Pi)}=\{g\in\Spin(12)|g\Pi g^{-1}=\pm{\Pi}\}
/\langle-1\rangle,\] where $-1\in\{g\in\Spin(12)|g\Pi g^{-1}=\Pi\}$, 
$\SO(12)^{\pi(\Pi)}=\U(6)$ is connected implies $\Pi\not\sim -\Pi$. 
We have $\pi(\Pi)=J_{6}\in\SO(12)$ and $\pi(\pm{}e_1\Pi e_1^{-1})=
I_{1,11}J_{6}I_{1,11}^{-1}$. Since $J_{6}\not\sim_{\SO(12)}I_{1,11}J_{6}I_{1,11}^{-1}$, 
so $\Pi\not\sim_{\Spin(12)}\pm{}e_1\Pi e_1^{-1}$. 
\end{proof}

\begin{lemma}\label{L:Gamma1}
We have $\Aut(\fre_7)^{\Gamma_1}=(\Aut(\fre_7)^{\Gamma_1})_0\rtimes\langle(e_1\Pi'e_1,\textbf{j})
\rangle$ and \[(\Aut(\fre_7)^{\Gamma_1})_0\cong(\SU(6)\times\U(1)\times\U(1))/\langle
(\omega I,\omega^{-1},1),(-I,1,1)\rangle.\] 
\end{lemma}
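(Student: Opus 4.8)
The plan is to realise the centralizer inside the explicit model $G^{\sigma_1}=(\Spin(12)\times\Sp(1))/\langle(c,1),(-c,-1)\rangle$, after choosing a good representative of the class $\Gamma_1$. Since every involution of $\Gamma_1$ is conjugate to $\sigma_1$, I may conjugate so that $\sigma_1\in\Gamma_1$ and write $\Gamma_1=\langle\sigma_1,t\rangle$ for an involution $t\in G^{\sigma_1}$. The list of conjugacy classes of involutions in $G^{\sigma_1}$ shows that the only two meeting the $G$-class of $\sigma_1$ are represented by $(e_1e_2e_3e_4,1)$ and $(e_1\Pi e_1^{-1},\textbf{i})$. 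Computing $(\fre_7^{\sigma_1})^{t}=\mathfrak{so}(12)^{t}\oplus\mathfrak{sp}(1)^{t}$ in each case (using $\pi(\Pi)=J_6$ from Lemma \ref{L:Spin(12)}, so that $\pi(e_1\Pi e_1^{-1})$ is a complex structure) gives $\mathfrak{so}(4)\oplus\mathfrak{so}(8)\oplus\mathfrak{sp}(1)=\mathfrak{so}(8)\oplus\mathfrak{sp}(1)^3$ in the first case and $\mathfrak{u}(6)\oplus\mathfrak{u}(1)=\mathfrak{su}(6)\oplus(i\bbR)^2$ in the second. Matching with the table of Klein four subgroups ($\fre_7^{\Gamma_1}\cong\mathfrak{su}(6)\oplus(i\bbR)^2$, $\fre_7^{\Gamma_2}\cong\mathfrak{so}(8)\oplus\mathfrak{sp}(1)^3$) identifies the first choice with $\Gamma_2$ and the second with $\Gamma_1$, so I take $t=(e_1\Pi e_1^{-1},\textbf{i})$. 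As $\sigma_1$ is central in $G^{\sigma_1}$, this reduces the problem to $\Aut(\fre_7)^{\Gamma_1}=C_G(\sigma_1)\cap C_G(t)=C_{G^{\sigma_1}}(t)$.

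Next I would pin down the identity component. The genuine centralizer of the lift $\tilde t=(e_1\Pi e_1^{-1},\textbf{i})$ in $\Spin(12)\times\Sp(1)$ is $C_{\Spin(12)}(e_1\Pi e_1^{-1})\times C_{\Sp(1)}(\textbf{i})$. Here $C_{\Sp(1)}(\textbf{i})=\U(1)$ is the circle through $\textbf{i}$; and since $e_1$ normalises $\Spin(12)$ and $\pi(e_1\Pi e_1^{-1})$ is a complex structure whose $\SO(12)$-centralizer is $\U(6)$, a short connectedness argument shows $C_{\Spin(12)}(e_1\Pi e_1^{-1})$ is the (connected) spin double cover $\widetilde{\U(6)}$ of $\U(6)$. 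Every element of $\langle(c,1),(-c,-1)\rangle$ lies in this product (each of $\pm1,\pm c$ is central in $\Spin(12)$, and $\pm1\in C_{\Sp(1)}(\textbf{i})$), so its image in $G^{\sigma_1}$ is the connected group $(\widetilde{\U(6)}\times\U(1))/\langle(c,1),(-c,-1)\rangle$; comparison of Lie algebras with $\fre_7^{\Gamma_1}=\mathfrak{su}(6)\oplus(i\bbR)^2$ shows this is exactly $(\Aut(\fre_7)^{\Gamma_1})_0$.

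The heart of the proof is to rewrite this quotient as $(\SU(6)\times\U(1)\times\U(1))/\langle(\omega I,\omega^{-1},1),(-I,1,1)\rangle$. I would obtain the first two factors from $\widetilde{\U(6)}$ — the simply connected $\SU(6)$ lifts uniquely into $\Spin(12)$, and the first $\U(1)$ is the spin lift of the centre of $\U(6)$ — and the third factor from $C_{\Sp(1)}(\textbf{i})$. The relation $(\omega I,\omega^{-1},1)$ (with $\omega$ a primitive sixth root of unity) encodes the identification $\U(6)=(\SU(6)\times\U(1))/\mu_6$, while $(-I,1,1)$ encodes the interaction of the spin kernel $\{\pm1\}\subset\Spin(12)$ with the class $(-c,-1)$ in the outer quotient. \emph{This bookkeeping is the step I expect to be the main obstacle}: one must track the several central elements — $\pm1\in\Spin(12)$, the volume element $c$ with $\pi(c)=-I_{12}$, $\pm1\in\Sp(1)$, and the two roots-of-unity identifications — through the two nested quotients, verifying that each listed generator of the kernel dies in $G^{\sigma_1}$ while nothing further does.

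Finally I would produce the second component. The element $y=(e_1\Pi'e_1^{-1},\textbf{j})$ has commutator $[y,\tilde t]=(e_1[\Pi',\Pi]e_1^{-1},[\textbf{j},\textbf{i}])=(-c,-1)$, using $[\Pi,\Pi']=c$, $e_1ce_1^{-1}=-c$ and $[\textbf{j},\textbf{i}]=-1$; since $(-c,-1)$ lies in the kernel, $[y]$ centralises $t$ in $G^{\sigma_1}$. Yet $\textbf{j}\notin C_{\Sp(1)}(\textbf{i})$, so $y$ is not in the image of the genuine centralizer, i.e. $[y]\notin(\Aut(\fre_7)^{\Gamma_1})_0$. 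Because the kernel is central, the assignment $g\mapsto[g,\tilde t]$ is a homomorphism from the twisted centralizer into $\langle(c,1),(-c,-1)\rangle\cong(\bbZ/2)^2$ with the genuine centralizer as its kernel; checking that its image is exactly $\langle(-c,-1)\rangle$ gives $\pi_0=\bbZ/2$. As $y^2=((e_1\Pi'e_1^{-1})^2,\textbf{j}^2)=(-c,-1)\equiv1$ and $(e_1\Pi'e_1,\textbf{j})=\sigma_1 y$, the element $(e_1\Pi'e_1,\textbf{j})$ is an involution representing the nontrivial component, yielding $\Aut(\fre_7)^{\Gamma_1}=(\Aut(\fre_7)^{\Gamma_1})_0\rtimes\langle(e_1\Pi'e_1,\textbf{j})\rangle$, as claimed.
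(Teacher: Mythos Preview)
Your outline follows essentially the same route as the paper: realise $\Gamma_1$ inside $G^{\sigma_1}$ as $\langle\sigma_1,(e_1\Pi e_1^{-1},\textbf{i})\rangle$, compute the centralizer upstairs in $\Spin(12)\times\Sp(1)$, and then pass to the quotient. The difference is that the paper actually carries out the step you flag as the main obstacle, and its device there is worth knowing. To pin down the kernel, the paper locates $-c$ inside the $\SU(6)$ factor of $\Spin(12)^{\Pi}$ by reducing to $\Spin(4)$: since $\Pi$ is block-diagonal for $12=4+4+4$, it suffices to check in $\Spin(4)\cong\Sp(1)\times\Sp(1)$ that $-c_0=-e_1e_2e_3e_4$ lies in the semisimple part of $\Spin(4)^{\Pi_0}$, which is an explicit computation with quaternions. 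Once $-c\in\SU(6)$ is known, the relation $(\omega I,\omega^{-1})$ for the kernel of $\Spin(12)^{\Pi}\cong(\SU(6)\times\U(1))/\langle(\omega I,\omega^{-1})\rangle$ follows (since then $-1\in\Spin(12)$ is $(-c)\cdot c$ with the two factors coming from $\SU(6)$ and the central circle respectively), and the second relation $(-I,1,1)$ drops out of the outer kernel $\langle(c,1),(-c,-1)\rangle$.

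For the component group the two arguments diverge: the paper invokes Lemma~\ref{L:Spin(12)} and Steinberg's theorem, whereas your commutator homomorphism $g\mapsto[g,\tilde t]$ into the central kernel is a cleaner, more self-contained way to see that $(e_1\Pi'e_1,\textbf{j})$ gives a second component. Note, however, that your final ``checking that its image is exactly $\langle(-c,-1)\rangle$'' amounts to showing $e_1\Pi e_1^{-1}\not\sim c\cdot e_1\Pi e_1^{-1}$ in $\Spin(12)$, which unwinds to $\Pi\not\sim -\Pi$ --- so you too end up needing Lemma~\ref{L:Spin(12)} at that point.
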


\begin{proof}
First we calculate $\Spin(12)^{\Pi}$.    
We have \[\SO(12)^{\pi(\Pi)}\cong\U(6)=(\SU(6)\times\U(1))/\langle\eta I,\eta^{-1}\rangle,\] 
$\eta=e^{\frac{\pi i}{3}}$. Then $\Spin(12)^{\Pi}=(\SU(6)\times A)/Z$, where 
\[A=\{\prod_{1\leq jleq 6}(\cos\theta+\sin\theta e_{2j-1}e_{2j}):\theta\in\bbR\}\cong\U(1)\] 
and $Z\subset Z(\SU(6))\times A$. The isomorphism $\U(1)\cong A$ maps $-1\in\U(1)$ 
to $c\in A$, and $\pi(c)=-I\in\SO(12)$, so 
\[\pi: \Spin(12)^{\Pi}\longrightarrow\SO(12)^{\pi(\Pi)}\] is an isomorphism when it is 
restricted to $\SU(12)$ or $A$. 

We show that $-c\in\SU(6)\subset\Spin(12)^{\Pi}$. For this, we first just look at the case of 
$n=4$, for $\Pi_0=\frac{1+e_1e_2}{\sqrt{2}}\frac{1+e_3e_4}{\sqrt{2}}\in\Spin(4)$ with 
$\Pi_{0}^{2}=c_0=e_1e_2e_3e_4$. We have an isomorphism $\Spin(4)\cong\Sp(1)\times\Sp(1)$, 
which maps $-1\in\Spin(4)$ to $(-1,-1)\in\Sp(1)\times\Sp(1)$, maps $c_0\in\Spin(4)$ to 
$(-1,1)\in\Sp(1)\times\Sp(1)$. Then $\Pi\in\Spin(4)$ is mapped to 
$(\mathbf{i},1)\in\Sp(1)\times\Sp(1)$ (or $(\mathbf{i},-1)\in\Sp(1)\times\Sp(1)$). 
As $(\Sp(1)\times\Sp(1))^{(\mathbf{i},\pm{1})}=\U(1)\times\Sp(1)$, so $(1,-1)$ is in 
the semisimple part $\Sp(1)$ of it. Then $-c_0\in\Spin(4)$ in the $\SU(2)$ part of 
$\Spin(4)^{\Pi}$. 

As $\Pi$ is in block form, so $-c\in\SU(6)\subset\Spin(12)^{\Pi}$ as well. Since 
$(-c)c=-1\neq 1\in\Spin(12)$, $\pi(-1)=1$, and $\pi$ is a 2-fold covering, so 
\[\Spin(12)^{\Pi}=(\SU(6)\times\U(1))/\langle(\omega I,\omega^{-1})\rangle\] when we 
identify $A=\U(1)$. By Lemma \ref{L:Spin(12)} (and Steinberg 's theorem), we get 
$\Aut(\fre_7)^{\Gamma_1}=\Aut(\fre_7)^{\Gamma_1})_0\langle(e_1\Pi'e_1,\textbf{j})\rangle$. 
The description $(\Aut(\fre_7)^{\Gamma_1})_0$ follows from the description of 
$\Spin(12)^{\Pi}$ as above. 
\end{proof}

In $G^{\sigma_1}\cong \Spin(12)\times\Sp(1)/\langle(c,1),(-c,-1)\rangle$, 
let $H_1=\langle\sigma_1,(e_1\Pi e_1^{-1},\textbf{i}),(e_1\Pi' e_1^{-1},\textbf{j})\rangle$, 
\[H_2=\langle\sigma_1,(e_1e_2e_3e_4,1),(e_5e_6e_7e_8,1)\rangle,\ 
H_3=\langle\sigma_1,(e_1\Pi e_1^{-1}, \textbf{i}),(e_1e_2e_3e_4,1)\rangle.\] 
Then any Klein four subgroup of $H_1$ is conjugate to $F_1$, any Klein four subgroup of 
$H_2$ is conjugate to $F_2$, a Klein four subgroup of $H_3$ is conjugate to $F_2$ 
if and only if it contains $(e_1e_2e_3e_4,1)$, otherwise it is conjugate to $F_1$.

\begin{lemma}\label{L:H1}
$G^{H_1}=(\Sp(3)/\langle -I\rangle)\times H_1$, and involutions $I_{1,2}, \textbf{i}I\in
\Sp(3)/\langle-I\rangle$ are conjugate to $\sigma_1,\sigma_2$ in $\Aut(\fre_7)$.  
\end{lemma}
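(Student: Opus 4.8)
The plan is to compute the centralizer $G^{H_1}=C_G(H_1)$ in stages, leveraging the centralizer of the Klein four subgroup $\Gamma_1$ already determined in Lemma \ref{L:Gamma1}. First I would reduce the ambient group: since $\sigma_1\in H_1$ and $\sigma_1=[(1,-1)]$ is central in $G^{\sigma_1}$, we have $C_G(H_1)\subseteq C_G(\sigma_1)=G^{\sigma_1}\cong(\Spin(12)\times\Sp(1))/\langle(c,1),(-c,-1)\rangle$. Writing $a=e_1\Pi e_1^{-1}$ and $b=e_1\Pi'e_1^{-1}$, the group $H_1$ is generated by $\sigma_1$, $(a,\textbf{i})$ and $(b,\textbf{j})$; using $\Pi^{2}={\Pi'}^{2}=[\Pi,\Pi']=c$ together with $e_1ce_1^{-1}=-c$ one checks these square to the identity and commute modulo $\langle(c,1),(-c,-1)\rangle$, so $H_1$ is elementary abelian of rank $3$. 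The key observation is that $\Gamma_1=\langle\sigma_1,(a,\textbf{i})\rangle$ is a Klein four subgroup conjugate to $F_1$ (Lemma \ref{K to F}), while $(b,\textbf{j})$ is precisely the generator of the component group of $\Aut(\fre_7)^{\Gamma_1}$ appearing in Lemma \ref{L:Gamma1}. Hence $C_G(H_1)=C_{\Aut(\fre_7)^{\Gamma_1}}((b,\textbf{j}))$, and I have reduced to finding the fixed points of the involution $\theta=\Ad(b,\textbf{j})$ on the group whose structure is given in Lemma \ref{L:Gamma1}.

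Next I would identify the action of $\theta$ on $(\Aut(\fre_7)^{\Gamma_1})_0\cong(\SU(6)\times\U(1)\times\U(1))/\langle(\omega I,\omega^{-1},1),(-I,1,1)\rangle$. On the $\Sp(1)$-circle $\{e^{t\textbf{i}}\}$, conjugation by $\textbf{j}$ is inversion; on the circle $A\subseteq\Spin(12)$ through $\Pi$, the relation $\Pi'\Pi{\Pi'}^{-1}=c\Pi=\Pi^{-1}$ (from $[\Pi,\Pi']=c$ and $\Pi^{2}=c$) shows conjugation by $b$ is again inversion; and on the $\SU(6)$ factor $\theta$ induces the quaternionic involution of the symmetric pair $(\mathfrak{su}(6),\mathfrak{sp}(3))$, which is forced because $(\fru_0^{\sigma_1})^{K_1}\cong\mathfrak{sp}(3)$ by Lemma \ref{K to F}. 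Since inversion on a circle fixes only $\{\pm1\}$, the identity component of the $\theta$-fixed subgroup is the image of $\Sp(3)\subseteq\SU(6)$; as the generator $(-I,1,1)$ kills $-I_6\in\Sp(3)$, this image is $\Sp(3)/\langle-I\rangle$, and the count $\dim\mathfrak{sp}(3)=21=\dim\fre_7^{H_1}$ confirms $(G^{H_1})_0\cong\Sp(3)/\langle-I\rangle$.

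It then remains to count components and assemble the product. Because $\Sp(3)/\langle-I\rangle$ is centreless and $H_1$ lies in the centre of $C_G(H_1)$, one has $H_1\cap(G^{H_1})_0=1$, so $H_1$ embeds into $\pi_0(G^{H_1})$; the substantive point — and the step I expect to be the main obstacle — is the reverse inequality, namely that $\theta$ produces no fixed components beyond those represented by $H_1$. This requires tracking the two fixed points $\{\pm1\}$ of $\theta$ on each circle together with the outer generator $(b,\textbf{j})$, and checking, through the quotient by $\langle(c,1),(-c,-1)\rangle$ and the conjugation by $e_1$ relating $a,b$ to $\Pi,\Pi'$ (cf.\ Lemma \ref{L:Spin(12)}), that these collapse to exactly the $8$ cosets furnished by $H_1$. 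Granting this, $H_1$ is a central complement to $(G^{H_1})_0$, whence $G^{H_1}=(\Sp(3)/\langle-I\rangle)\times H_1$.

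Finally, for the conjugacy assertion I would compute $\dim\fre_7^{I_{1,2}}$ and $\dim\fre_7^{\textbf{i}I}$ for the two involutions of $\Sp(3)/\langle-I\rangle=(G^{H_1})_0$, reading them off from the weights of the isotypic decomposition of $\fre_7$ as a module over $(G^{H_1})_0\times H_1$, exactly as $\dim\fru_0^{\sigma}$ is computed elsewhere in this section. Matching the outcomes against $\dim\fre_7^{\sigma_1}=69$, $\dim\fre_7^{\sigma_2}=79$ and $\dim\fre_7^{\sigma_3}=63$ — three distinct values — then pins down $I_{1,2}\sim\sigma_1$ and $\textbf{i}I\sim\sigma_2$ in $\Aut(\fre_7)$.
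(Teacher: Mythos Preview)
Your computation of $G^{H_1}$ follows the same route as the paper: both reduce to Lemma~\ref{L:Gamma1} and then take fixed points under the additional generator $(e_1\Pi'e_1^{-1},\textbf{j})$, using that this involution cuts $\mathfrak{su}(6)$ down to $\mathfrak{sp}(3)$. You are more explicit than the paper about the action on the two $\U(1)$ factors and about the component count; the paper simply asserts the product structure as a consequence of Lemma~\ref{L:Gamma1}.

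For the conjugacy-class identification the two arguments diverge. The paper does \emph{not} compute $\dim\fre_7^{I_{1,2}}$ or $\dim\fre_7^{\textbf{i}I}$; instead it follows the elements $I_{1,2}$ and $\textbf{i}I$ through the chain $\Sp(3)\subset\SU(6)\subset\SO(12)$, lifts to $\Spin(12)$, and observes that they land in the $\Spin(12)$-conjugacy classes of $e_1e_2e_3e_4$ and $\Pi$ respectively. Since the $\Aut(\fre_7)$-classes of $(e_1e_2e_3e_4,1)$ and $(\Pi,1)$ were already determined in the discussion of $G^{\sigma_1}$, the conclusion follows without any new dimension count. Your proposed route via the $(\Sp(3)/\langle-I\rangle)\times H_1$-isotypic decomposition of $\fre_7$ is also valid, but it requires first working out that decomposition (equivalently, restricting the spin module $M_{+}$ from $\Spin(12)$ to $\Sp(3)$ and tracking the $H_1$-characters), which is extra bookkeeping the paper's element-tracing avoids. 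The trade-off: the paper's method recycles earlier tables and is shorter, while yours is more self-contained and uniform with the dimension-count arguments used elsewhere in the section.
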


\begin{proof}
$G^{H_1}=(\Sp(3)/\langle -I\rangle)\times H_1$ follows from Lemma \ref{L:Gamma1} and 
the fact \[\mathfrak{su}(6)^{e_1\Pi'e_1^{-1}}=\mathfrak{sp}(3).\] 

A little more calculation by following the chain $\Sp(3)\subset\SU(6)\subset\SO(12)$ 
shows, $I_{1,2},\textbf{i}I\in\Sp(3)$ are conjugate to 
$e_1e_2e_3e_4,\Pi$ in $\Spin(12)$. Then $I_{1,2}, \textbf{i}I$ are conjugate to 
$\sigma_1,\sigma_2$ in $\Aut(\fre_7)$.
\end{proof}

\begin{lemma}\label{Pure Sigma1 rank 3}
Any rank $3$ elementary abelian 2 pure $\sigma_1$ subgroup of $G$ is 
conjugate to one of $H_1,H_2,H_3$.
\end{lemma}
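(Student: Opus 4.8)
The plan is to reduce to a subgroup of $G^{\sigma_1}$ and to organize the classification by projecting onto the quaternionic factor. Since $F$ is a nontrivial pure $\sigma_1$ subgroup, after conjugation I may assume $\sigma_1\in F$, so that $F\subset G^{\sigma_1}\cong(\Spin(12)\times\Sp(1))/\langle(c,1),(-c,-1)\rangle$ and every nonidentity element of $F$ is conjugate in $G$ to $\sigma_1$. By the list of conjugacy classes of involutions in $G^{\sigma_1}$ recorded above, the only such elements are $\sigma_1=(1,-1)$ and the classes of $\alpha:=(e_1e_2e_3e_4,1)$ and $\beta:=(e_1\Pi e_1^{-1},\textbf{i})$. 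Let $q\colon G^{\sigma_1}\to\Sp(1)/\langle-1\rangle\cong\SO(3)$ be the projection to the second factor modulo its center; then $q(\sigma_1)=q(\alpha)=1$ while $q(\beta)\neq 1$, and $q(F)$ is an elementary abelian $2$-subgroup of $\SO(3)$, so $r:=\rank q(F)\le 2$. I will show that $r=2,1,0$ forces $F\sim H_1,H_3,H_2$ respectively. Since $H_1,H_2,H_3$ contain $7,0,4$ Klein four subgroups conjugate to $\Gamma_1$ (by the descriptions of their Klein four subgroups given above), they are pairwise non-conjugate, so this both exhausts and separates the three classes.

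When $r=2$ one has $\ker(q|_F)=\langle\sigma_1\rangle$, and choosing $\beta_1,\beta_2\in F$ lifting a basis of $q(F)$ gives a pure $\sigma_1$ Klein four subgroup $\langle\beta_1,\beta_2\rangle$ whose $q$-image has rank $2$; by Lemma \ref{K to F} this subgroup is conjugate to $K_1\sim\Gamma_1$. Fixing it, the remaining generator $\sigma_1$ is an involution of $C_G(\langle\beta_1,\beta_2\rangle)$, and using the centralizer descriptions of Lemmas \ref{L:Gamma1} and \ref{L:H1} together with the relations $I_{1,2}\sim\sigma_1$ and $\textbf{i}I\sim\sigma_2$ in $\Sp(3)/\langle-I\rangle$, the involutions extending $\langle\beta_1,\beta_2\rangle$ to a pure $\sigma_1$ group of rank $3$ form a single conjugacy class, giving $F\sim H_1$. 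When $r=0$, $F$ lies in $\ker q\cong\Spin(12)/\langle c\rangle$ and its nonidentity elements are $\sigma_1$ and elements of type $\alpha$; pushing forward along $\Spin(12)/\langle c\rangle\to\PSO(12)$ kills $\sigma_1$ and sends every type $\alpha$ element to the class of $[I_{4,8}]$ (the identification $(e_1\cdots e_8,1)\sim\sigma_1$, forced by $\dim M_{+}^{g}=16$, is exactly what makes the two $\SO(12)$-classes coincide here). Thus the image is a rank $2$ elementary abelian subgroup of $\PSO(12)$ whose nonidentity elements are all conjugate to $[I_{4,8}]$, which by Proposition \ref{Type BD: classification} and Lemma \ref{L:O-Sp-U} is unique up to conjugacy; lifting back recovers $F\sim H_2$.

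The remaining case $r=1$ is the hybrid, where $\ker(q|_F)=\langle\sigma_1,\alpha\rangle$ has rank $2$ and a single $q$-direction is occupied by some $\beta\in F$; here I would combine the two previous analyses, using the $\PSO(12)$ picture to normalize the type $\alpha$ part $\langle\sigma_1,\alpha\rangle$ and then the centralizer of a $\Gamma_1$-type Klein four subgroup of $F$ to place $\beta$, obtaining $F\sim H_3$. I expect the main obstacle to be precisely this simultaneous control of the $\Spin(12)$- and $\Sp(1)$-components: one must check that, with the $q$-image fixed, the $\Spin(12)$-parts of the chosen lifts can be normalized without disturbing the $\Sp(1)$-parts, and that no invariant beyond $r$ (equivalently, beyond the count of $\Gamma_1$-type Klein four subgroups) intervenes. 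This rests on the fixed-point computations in the spinor module $M_{+}$ and on the explicit centralizers of $\Gamma_1$ supplied by Lemmas \ref{L:Gamma1} and \ref{L:H1}.
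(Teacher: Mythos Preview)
Your approach via the projection $q\colon G^{\sigma_1}\to\Sp(1)/\langle-1\rangle$ has a genuine error: the rank $r=\rank q(F)$ is only a $G^{\sigma_1}$-invariant, not a $G$-conjugacy invariant of $F$, and your case $r=0$ does \emph{not} force $F\sim H_2$. A concrete counterexample is $F_0:=\langle\sigma_1,(e_1e_2e_3e_4,1),(e_1e_2e_5e_6,1)\rangle$. This lies entirely in $\ker q$, so $r=0$. But by Lemma~\ref{K to F} the Klein four subgroup $K_5=\langle(e_1e_2e_3e_4,1),(e_1e_2e_5e_6,1)\rangle$ is conjugate to $\Gamma_1$, while the computation $(\fre_7^{\sigma_1})^{(e_1e_2e_3e_4,1)}\cong\mathfrak{so}(4)\oplus\mathfrak{so}(8)\oplus\mathfrak{sp}(1)\cong\mathfrak{so}(8)\oplus\mathfrak{sp}(1)^3$ shows $\langle\sigma_1,(e_1e_2e_3e_4,1)\rangle\sim\Gamma_2$. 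Thus $F_0$ has both types of Klein four subgroups and is conjugate to $H_3$, not $H_2$. Equivalently, your uniqueness claim for rank-$2$ subgroups of $\PSO(12)$ with all nonidentity elements conjugate to $[I_{4,8}]$ fails: the image of $H_2$ has joint eigenspace dimensions $(4,4,4,0)$ while the image of $F_0$ has dimensions $(6,2,2,2)$, so Proposition~\ref{Type BD: classification} does not identify them.

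The paper avoids this by splitting not on $r$ but on the pattern of Klein four subgroups of $F$ (all $\sim\Gamma_1$, all $\sim\Gamma_2$, or mixed), which \emph{is} a $G$-invariant, and then invoking Lemma~\ref{K to F} in each case. If you wish to salvage the projection idea, you would need to refine the $r=0$ analysis by tracking precisely the $\PSO(12)$-conjugacy class of the image (equivalently the eigenspace partition), and show that the $(4,4,4,0)$ pattern gives $H_2$ while $(6,2,2,2)$ gives $H_3$; but at that point you are essentially recovering the Klein four type distinction that the paper uses directly.
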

\begin{proof}
For a rank $3$ pure $\sigma_1$ elementary abelian $2$-subgroup $F$ of $G$, 
we may assume that $\sigma_1\in F$. Then $F\subset G^{\sigma_1}\cong
\Spin(12)\times\Sp(1)/\langle(c,1),(-c,-1)\rangle$ and any element of $F-\{1,\sigma_1\}$ 
is conjugate to $(e_1\Pi e_1^{-1},\textbf{i})$ or $(e_1e_2e_3e_4,1)$.

When any Klein four subgroup of $F$ is conjugate to $\Gamma_1$, we have $F\sim H_1$ by 
Lemma \ref{K to F}. When any Klein four subgroup of $F$ is conjugate to $\Gamma_2$, 
similarly we have $F\sim H_2$ by Lemma \ref{K to F}. For the remaining cases,  
it is clear that $F\sim H_3$.
\end{proof}

We have defined subgroups $\{F_{r,s}: r\leq 2,s\leq 3\}$ and $\{F''_{r,s}: r+s\leq 3\}$ 
in the last two subsections. $F_{r,s}$ contains a subgroup conjugate to 
$F_6$, $\rank(F''_{r,s}/H_{F''_{r,s}})=1$ and $F''_{r,s}$ doesn't contain 
any element conjugate to $\sigma_2$. For any $(r,s)$ with $r+s \leq 3$, let 
\[F'''_{r,s}=H_{F''_{r,s}}=\{1\}\cup\{x\in F''_{r,s}|x\sim\sigma_1\};\] for 
any $r\leq 2$, let \[F''_{r}=H_{F_{r,3}}=\{1\}\cup\{x\in F_{r,3}|x\sim \sigma_1\}.\]

\begin{lemma}\label{Lemma: pure sigma1}
Any pure $\sigma_1$ elementary abelian 2-group $F\subset G$ is conjugate to 
$F''_{r+3}$ for some $r\leq 2$ or $F'''_{r,s}$ for some $(r,s)$ with $r+s\leq 3$.
\end{lemma}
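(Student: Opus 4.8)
The plan is to reduce to the fixed-point group of $\sigma_1$, dispose of small ranks using the results already proved, and then induct on $\rank F$, splitting the inductive step according to whether or not $F$ contains a rank $3$ subgroup all of whose Klein four subgroups are conjugate to $\Gamma_1$. Since every non-identity element of $F$ is conjugate to $\sigma_1$, I may assume $\sigma_1\in F$, so that $F\subset G^{\sigma_1}\cong(\Spin(12)\times\Sp(1))/\langle(c,1),(-c,-1)\rangle$ and, by the list of involution classes in $G^{\sigma_1}$ computed above, every element of $F-\{1\}$ is $G^{\sigma_1}$-conjugate to $(e_1e_2e_3e_4,1)$ or to $(e_1\Pi e_1^{-1},\mathbf{i})$. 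The case $\rank F\leq 3$ is exactly Lemma \ref{Pure Sigma1 rank 3}: such an $F$ is conjugate to one of $H_1,H_2,H_3$, and comparing the invariant $\rank A_{F}$ together with the distribution of $\Gamma_1$/$\Gamma_2$ subgroups identifies $H_1\sim F''_{0}$, $H_2\sim F'''_{3,0}$ and $H_3\sim F'''_{1,1}$. Thus it remains to treat $\rank F\geq 4$.

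Suppose first that $F$ contains a rank $3$ subgroup $F_0$ all of whose Klein four subgroups are conjugate to $\Gamma_1$. By Lemma \ref{Pure Sigma1 rank 3} then $F_0\sim H_1$, and after conjugation I may take $F_0=H_1$. As $F$ is abelian it centralizes $H_1$, so by Lemma \ref{L:H1}
\[F\subset G^{H_1}=(\Sp(3)/\langle-I\rangle)\times H_1.\]
Let $P$ be the image of $F$ under the projection to $\Sp(3)/\langle-I\rangle$, so that $F=P\times H_1$. Since $F$ is pure $\sigma_1$ and $\mathbf{i}I\sim\sigma_2$ in $G$ by Lemma \ref{L:H1}, every non-identity element of $P$ is conjugate to $[I_{1,2}]$; such subgroups of $\Sp(3)/\langle-I\rangle$ are diagonalizable and are classified up to conjugacy by their rank, which here is at most $2$. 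A comparison of the invariant $\rank A_{F}$ then shows that $F$ is determined up to conjugacy by $r=\rank P\leq 2$, namely $F\sim F''_{r}$.

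In the remaining case $F$ contains no rank $3$ all-$\Gamma_1$ subgroup, and I aim to exhibit $F$ as the $\sigma_1$-part of a Class $2$ subgroup. Concretely, I will produce an involution $\theta\in C_{G}(F)$ with $\theta x\sim\sigma_3$ for every $x\in F$. Granting such a $\theta$, the group $E=\langle F,\theta\rangle$ has $H_{E}=F$, contains no element conjugate to $\sigma_2$, and satisfies $\rank(E/H_{E})=1$; after conjugating $\theta$ to $\sigma_3$ we have $E\subset G^{\sigma_3}\cong(\SU(8)/\langle iI\rangle)\rtimes\langle\omega_0\rangle$, so Proposition \ref{prop:E7 to AD} (in the case $\rank(E/H_E)=1$) gives $E\sim F''_{r,s}$ for some $r+s\leq 3$, whence $F=H_{E}\sim H_{F''_{r,s}}=F'''_{r,s}$. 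To build $\theta$ I analyse $C_{G}(F)$: the hypothesis that no rank $3$ all-$\Gamma_1$ subgroup occurs bounds the non-degenerate part of the $\Gamma_1$/$\Gamma_2$ datum on $F$ and, via Lemma \ref{M to F} and the conjugacy relations in $G^{\sigma_3}$, allows $F$ to be placed inside $\SU(8)/\langle iI\rangle$ with all involutions conjugate to $[I_{4,4}]$; the element $\theta$ is then $\sigma_3$ times a suitable centralizing involution.

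The main obstacle is precisely this last construction: producing the completing involution $\theta$ and verifying that $E=\langle F,\theta\rangle$ acquires no element conjugate to $\sigma_2$, so that it genuinely lies in the $\SU(8)$-family $F''_{r,s}$ rather than in the $\E_6$-family. Once $\theta$ is in hand, the identification of parameters is routine bookkeeping with $\rank A_{F}$ and $\defe F$ (Proposition \ref{E7 rank and defect}) together with the rank $3$ base case; the genuinely new input is the centralizer analysis guaranteeing a $\sigma_3$-coset on which $F$ acts purely by elements conjugate to $\sigma_3$.
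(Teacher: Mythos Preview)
Your first case (when $F$ contains a rank~3 all-$\Gamma_1$ subgroup, i.e.\ a copy of $H_1$) is essentially the paper's argument and is fine.

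The second case, however, is where all the content lies, and here your proposal has a genuine gap. You correctly identify the goal: produce an involution $\theta\in C_{G}(F)$ with $\theta x\sim\sigma_3$ for all $x\in F$, so that $E=\langle F,\theta\rangle$ lands in the $\SU(8)$-family $F''_{r,s}$. But your sketch of how to obtain $\theta$ is circular: you say the hypothesis ``allows $F$ to be placed inside $\SU(8)/\langle iI\rangle$'', yet placing $F$ there \emph{is} exactly finding such a $\theta$ (namely $\sigma_3$), so nothing has been shown. Invoking Lemma~\ref{M to F} and the conjugacy relations in $G^{\sigma_3}$ presupposes you are already inside $G^{\sigma_3}$.

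The paper's missing idea is \emph{torality}. It subdivides your second case further: if $F$ contains no copy of $H_1$ but does contain a Klein four subgroup conjugate to $\Gamma_1$, one works in $(G^{\sigma_1})^{\Gamma_1}_0$ (using Lemma~\ref{L:Gamma1}) and sees that the remaining factor of $F$ sits diagonally in $\SU(6)/\langle -I\rangle$ with all involutions of type $[I_{2,4}]$, hence $F$ is toral. If $F$ contains no $\Gamma_1$ at all, every Klein four subgroup is $\Gamma_2$; a direct check in $(G^{\sigma_1})^{H_2}$ shows no fourth element can be adjoined while keeping all Klein fours of type $\Gamma_2$, so $\rank F\leq 3$ and again $F$ is toral. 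Once $F$ is toral, take $F\subset\exp(\frh_0)$ and let $\theta$ be the \emph{Chevalley involution} for $\frh_0$: this is the completing involution you were looking for, and one checks that every element of $\theta F$ is conjugate to $\sigma_3$ (so no $\sigma_2$ appears). This is the step your proposal is missing.
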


\begin{proof}
When $F$ contains a subgroup conjugate to $H_1$, we may assume that $H_1\subset F$, 
then \[F\subset G^{H_1}=(G^{\sigma_1})^{(e_1\Pi e_1,\textbf{i}),(e_1\Pi' e_1,\textbf{j})}
\cong(\Sp(3)/\langle-I\rangle)\times\langle\sigma_1,(e_1\Pi e_1,\textbf{i}),
(e_1\Pi' e_1,\textbf{j})\rangle.\] Since $F$ is pure $\sigma_1$, so any non-identity 
element of $F\cap(\Sp(3)/\langle-I\rangle)$ is conjugate to $I_{1,2}$ in 
$\Sp(3)/\langle-I\rangle$. Then $F\cap(\Sp(3)/\langle-I\rangle)$ is conjugate to 
a subgroup of $\langle I_{2,1},I_{1,2}\rangle$, which is a subgroup of 
$\langle\textbf{i}I,\textbf{j}I,I_{2,1},I_{1,2}\rangle$. Non-identity elements of 
$\langle\textbf{i}I,\textbf{j}I\rangle$ are all conjugate to $\sigma_2$ in $G$, 
so $F$ is conjugate to some $H_{F_{r,s}}=\{1\}\cup\{x\in F_{r,s}|x\sim\sigma_1\}$. 
Since $H_1\subset F$, we have $s=3$. Then $F$ is conjugate to $F''_{r}$.

If $F$ doesn't contain any subgroup conjugate to $H_1$ but contains a subgroup 
conjugate to $\Gamma_1$, we may assume that $\sigma_1,(e_1 \Pi e_1^{-1}, \textbf{i})\in F$. 
Since $F$ doesn't contain any subgroup conjugate to $H_1$, so 
\[F\subset (G^{\sigma_1})^{(e_1 \Pi e_1^{-1},\textbf{i})}_0\cong(\SU(6)\times\U(1)
\times\U(1))/\langle(e^{\frac{2\pi i}{3}}, e^{\frac{2\pi i}{3}},1),(-I,1,1)\rangle.\] 
Since $F$ is pure $\sigma_1$, we have $F=(F\cap\SU(6)/\langle-I\rangle)
\times\langle\sigma_1,(e_1 \Pi e_1^{-1}, \textbf{i})\rangle$ and any element in 
$F\cap\SU(6)/\langle-I\rangle$ is conjugate to $I_{2,4}$. Then $F$ is toral.
If $F$ doesn't contain any subgroup conjugate to $\Gamma_1$, then any 
Klein four subgroup of $F$ is conjugate to $\Gamma_2$. When $\rank(F)\geq 3$, 
we may assume that $H_2\subset F$. There are no elements 
$x\in (G^{\sigma_1})^{H_2}-H_2$ such that any Klein four subgroup of 
$\langle x,H_2\rangle$ is conjugate to $\Gamma_2$, so $\rank(F)\leq 3$. Then $F$ is 
conjugate to one of $1,\langle\sigma_1\rangle,\Gamma_2,H_2$, so $F$ is toral. 
For a toral and pure $\sigma_1$ subgroup $F$, there exists a 
Cartan subalgebra $\frh_0$ so that $F\subset\exp(\frh_0)$. Choose a 
Chevelley involution $\theta$ of $\fre_7$ with respect to $\frh_0$. 
Then $F'=\langle F,\theta\rangle$ satisfies $\Res(F'/H_{F'})=1$ and 
any involution in $F'-H_{F'}$ is conjugate to $\sigma_3$. Then $F'$ is 
conjugate to $F''_{r,s}$ for some $(r,s)$ with $r+s\leq 3$. 
Thus $F$ is conjugate to $F'''_{r,s}$.
\end{proof}

\begin{prop}\label{prop:pure sigma1}
For any $r+s\leq 3$, $\rank A_{F'''_{r,s}}=r$; for any $r\leq 2$, $\rank A_{F''_{r}}=r$.
Any two groups in $\{F'''_{r,s}: r+s\leq 3\},\{F''_{r}: r\leq 2\}$ are non-conjugate.
\end{prop}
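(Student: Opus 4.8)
The plan is to use that every group in the two families is a pure $\sigma_1$ subgroup, so that $H_F=F$ and $F-H_F=\emptyset$; the conjugacy clause in the definition of the translation subgroup is then vacuous and
\[A_F=\{x\in F:\ m(x,y)=1\ \text{for all }y\in F\},\]
the radical of the ($\Gamma_1$-detecting) form $m$. Thus both rank claims reduce to computing this radical, and the non-conjugacy claim will drop out once $\rank A_F$ and $\rank F$ are known, provided the resulting invariants separate the classes.

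For $F'''_{r,s}=H_{F''_{r,s}}$ I would pass to $\PSU(8)$: by Lemma~\ref{E7 to AD:m}, $m$ on $H_{F''_{r,s}}$ is the commutator form of $\PSU(8)$, whose radical on $F''_{r,s}$ was already identified in Subsection~\ref{E7 to AD} as $\ker m=\langle y_1,\dots,y_r\rangle$. Since $F'''_{r,s}$ is pure $\sigma_1$, $A_{F'''_{r,s}}$ equals this radical, so $\rank A_{F'''_{r,s}}=r$; counting the generators $y_1,\dots,y_{r+s},y_4,\dots,y_{3+s}$ gives $\rank F'''_{r,s}=r+2s$.

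For $F''_r=H_{F_{r,3}}$ I would instead reduce to $\E_6$. Because $\pi$ is injective on the $2$-group $F''_r$ and, by Lemma~\ref{E7 to E6:m}, the $\E_7$-form $m$ on $F''_r=H_{F_{r,3}}$ agrees with the $\E_6$-form $m$ on the corresponding inner subgroup $K=\pi(F''_r)$, the radical of $m$ on $F''_r$ is carried isomorphically to the radical of $m$ on $K$. Here $K$ is the inner part $F_{r,3}\cap\Int(\fre_6)$, that is, a subgroup of type $F'_{r,3}$ in the $\E_6$ notation, for which Lemma~\ref{E6 to F4:m} gives $m(x,y)=-1\Leftrightarrow x,y\in K\setminus A_K$ and hence $\ker m=A_K$ with $\rank A_K=r$ by Proposition~\ref{E6 rank and defect}(2). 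Therefore $\rank A_{F''_r}=r$, and $\rank F''_r=\rank F'_{r,3}=r+3$. The point requiring care is precisely that $m$ is \emph{not} bilinear here (it is $\equiv-1$ off the diagonal on any $H_1$-type piece), so that passing from $F_{r,3}$ to its pure-$\sigma_1$ part could a priori enlarge the radical beyond $A_{F_{r,3}}$; the reduction to the already-settled $\E_6$ computation is what rules this out, and is the step I expect to be the main obstacle.

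Finally, for non-conjugacy I would use the two conjugation invariants $\rank A_F$ and $\rank F$, which take the values $(r,\,r+2s)$ on $F'''_{r,s}$ and $(r,\,r+3)$ on $F''_r$. Then $\rank F-\rank A_F$ equals the even number $2s$ for the first family and the odd number $3$ for the second, so no member of one family is conjugate to a member of the other; and within each family the pair determines $(r,s)$, respectively $r$, uniquely. Hence all the listed groups are pairwise non-conjugate.
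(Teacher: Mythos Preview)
Your argument is correct and follows essentially the same route as the paper, which simply cites Propositions~\ref{E7 rank and defect} and~\ref{prop:E7 to AD}: for $F'''_{r,s}=H_{F''_{r,s}}$ the radical of $m$ was already computed in $\PSU(8)$ to have rank $r$, and for $F''_{r}=H_{F_{r,3}}$ the reduction to $\Int(\fre_6)$ via Lemma~\ref{E7 to E6:m} together with Lemma~\ref{E6 to F4:m} and Proposition~\ref{E6 rank and defect} gives the same conclusion. Your treatment is more explicit than the paper's terse citation, in particular your parity observation $\rank F-\rank A_F\in\{2s,3\}$ makes the non-conjugacy step transparent where the paper leaves it to the reader.
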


\begin{proof}
By Propositions \ref{E7 rank and defect} and \ref{prop:E7 to AD}, we get 
$\rank A_{F'''_{r,s}}=r$ and $\rank A_{F''_{r}}=r$. Then any two groups in 
$\{F'''_{r,s}: r+s\leq 3\},\{F''_{r}: r\leq 2\}$ are non-conjugate.
\end{proof}

\subsection{Automizer groups and inclusion relations}

\begin{prop}\label{uniqueness E7}
For an isomorphism $f: F\longrightarrow F'$ between two elementary abelian 2-subgroups 
of $G$, if $f(x)\sim x$ for any $x\in F$ and $m_{F'}(f(x),f(y))=m_{F}(x,y)$ 
for any $x,y\in H_{F}$, then $f=\Ad(g)$ for some $g\in G$. 
\end{prop}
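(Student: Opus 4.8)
The plan is to reduce, via the classification already obtained, to the uniqueness statements proved for the symmetric subgroups and for the classical groups. First I would observe that the two hypotheses are exactly the invariants on which the classification rests: since $f(x)\sim x$ for all $x$, the map $f$ preserves the partition of $F$ into the subsets conjugate to $\sigma_1,\sigma_2,\sigma_3$, hence carries $H_F$ onto $H_{F'}$ and the $\sigma_2$- and $\sigma_3$-strata onto their counterparts; together with $m_{F'}(f(x),f(y))=m_F(x,y)$ this forces $f(A_F)=A_{F'}$ and equality of $\rank A_F$, $\rank H_F$, $\rank F$ and $\defe F$. By Propositions \ref{E7 to E6 II}, \ref{E7 to E6}, \ref{prop:E7 to AD} and Lemma \ref{Lemma: pure sigma1} these data determine the conjugacy class, so $F\sim F'$; after conjugating we may assume $F=F'$ and must show that every automorphism $f$ preserving $(\sim,m)$ lies in $W(F)=N_G(F)/C_G(F)$. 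Since the elements of $W(F)$ are themselves realized by conjugation, I am free to precompose $f$ with any of them.

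Next I would treat the three classes separately. In Class~1 ($F$ contains some $x\sim\sigma_2$) I would move a chosen $\sigma_2\in F$ to a fixed point of $f$ by an element of $W(F)$, then use $F\subset G^{\sigma_2}\cong((\E_6\times\U(1))/\langle(c,e^{\frac{2\pi i}{3}})\rangle)\rtimes\langle\omega\rangle$ and the projection $\pi\colon G_{\sigma_2}\to\Aut(\fre_6)$. By Lemma \ref{E7 to E6:m} the induced automorphism of the corresponding subgroup $K\subset\Aut(\fre_6)$ preserves conjugacy classes and the form $m$, so Proposition \ref{uniqueness:E6} realizes it by an inner conjugation of $\Aut(\fre_6)$; lifting through $\pi$ (the central $\U(1)$ acts trivially on $F$) yields $g\in G^{\sigma_2}$ with $f=\Ad(g)$. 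Class~2 ($F$ meets the $\sigma_3$-stratum but not the $\sigma_2$-stratum) is handled the same way inside $G^{\sigma_3}\cong(\SU(8)/\langle iI\rangle)\rtimes\langle\omega_0\rangle$: by Lemma \ref{E7 to AD:m} the restriction to $H_F$ embeds in $\PSU(8)$ compatibly with $m$, and after fixing $\sigma_3$ and, when $\rank(F/H_F)=2$, also $\omega_0$, the problem becomes that of realizing a conjugacy- and $m$-preserving automorphism of a subgroup of $\PU(8)$ or $\O(8)/\langle-I\rangle$ by conjugation, which follows from the classical analysis (Lemma \ref{Diagonal map} and Propositions \ref{Type A: classification}, \ref{Type BD: classification} and \ref{Sub F1-2}).

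The decisive case is Class~3, the pure $\sigma_1$ subgroups, and this is where I expect the main difficulty. Here every nontrivial element is conjugate to $\sigma_1$, so the conjugacy data alone cannot distinguish the two classes $\Gamma_1,\Gamma_2$ of Klein four subgroups of type $(\sigma_1,\sigma_1,\sigma_1)$; it is precisely the preservation of $m$ that records which Klein four subgroups are of $\Gamma_1$-type, and this must be tracked throughout the reduction. Following Lemma \ref{Lemma: pure sigma1}, I would split according to whether $F$ contains a subgroup conjugate to $H_1$. If it does, I would fix $\sigma_1$ together with such an $H_1$ by an element of $W(F)$ and pass to $G^{H_1}\cong(\Sp(3)/\langle-I\rangle)\times H_1$ of Lemma \ref{L:H1}, where $f$ restricts to a conjugacy-preserving automorphism of a subgroup of $\Sp(3)/\langle-I\rangle$ all of whose involutions are conjugate to $I_{1,2}$, realized by conjugation via Proposition \ref{Sub F1-2}. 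If $F$ contains no such $H_1$, then $F$ is toral; enlarging it by a Chevalley involution $\theta$ attached to a Cartan subalgebra containing $F$ produces a Class~2 subgroup $\langle F,\theta\rangle$, to which the previous case applies, and restricting the resulting conjugation back to $F=H_{\langle F,\theta\rangle}$ gives the desired $g$. The two points requiring care are the $W(F)$-transitivity needed to normalize the chosen distinguished involution (or $H_1$-subgroup) in each class, and the verification that the $m$-preserving hypothesis is exactly what survives these reductions, so that the $\Gamma_1$/$\Gamma_2$ ambiguity never obstructs the realization.
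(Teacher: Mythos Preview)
Your approach follows the paper's proof closely: the same three-way split (contains $\sigma_2$; contains $\sigma_3$ but not $\sigma_2$; pure $\sigma_1$), the same reduction to $G^{\sigma_2}$ and Proposition~\ref{uniqueness:E6} in Class~1, and the same reduction to $\PSU(8)$ or $\SO(8)/\langle -I\rangle$ via Lemma~\ref{E7 to AD:m} in Class~2. For Class~3 the paper gives only a single sentence (``we get the conclusion by considering the preserving of $m_F,m_{F'}$ under $f$''), and your more detailed treatment---splitting on whether $F$ contains an $H_1$-subgroup and, in the toral subcase, enlarging by a Chevalley involution to land in Class~2---is a reasonable way to fill that in.

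The one point to tighten is the circularity you flag yourself: in the $H_1$-subcase you invoke $W(F)$-transitivity on $H_1$-type subgroups, but this is part of what you are proving. The clean fix is the same extension trick you use in the toral subcase. Namely, $F\sim F''_r=H_{F_{r,3}}$, and Class~1 already gives $W(F_{r,3})$; any $m$-preserving automorphism $f$ of $F''_r$ extends to an automorphism $\tilde f$ of $F_{r,3}$ by acting trivially on a complement to $H_{F_{r,3}}$ consisting of $\sigma_2$-elements (such a complement exists since $F_{r,3}$ contains a subgroup conjugate to $\Gamma_6$), and one checks from the involution types of $F_{r,3}$ that $\tilde f$ preserves conjugacy classes. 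Then $\tilde f=\Ad(g)$ by Class~1, and restricting gives $f=\Ad(g)|_F$. This avoids any appeal to $W(F)$ before it is established. (A minor citation issue: the result you want for the $\Sp(3)/\langle -I\rangle$ step is Proposition~\ref{Subclass: classification}, not Lemma~\ref{Sub F1-2}.)
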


\begin{proof}
When $F$ contains an element conjugate to $\sigma_2$, we may assume that 
$\sigma_2\in F\cap F'$ and $f(\sigma_2)=\sigma_2$, then $F,F'\subset G^{\sigma_2}\cong
\langle\omega\rangle\rtimes((\E_{6}\times\U(1)) /\langle(c,e^{\frac{2\pi i}{3}}))$.
 From the description of conjugacy classes of elements in $G^{\sigma_2}$, we get 
$f(x)\sim_{G^{\sigma_2}}x$ for any $x\in F$ by the assumption in the proposition. 
Then $f=\Ad(g)$ for some $g\in G^{\sigma_2}$ by Proposition \ref{uniqueness:E6}. 

When $\rank(F/H_{F})=1$ and $F$ contains no elements conjugate to $\sigma_2$, 
we may assume that $\sigma_3\in F\cap F'$ and $f(\sigma_3)=\sigma_3$, then 
$F,F'\subset G^{\sigma_3}\cong\langle\omega_0\rangle\rtimes(\SU(8)/\langle iI\rangle)$ 
and any element in $(F\cup F')-\langle\sigma_3\rangle$ is conjugate to $I_{4,4}$.  
Since the function $m$ on $H_{F}\times H_{F}$ and $H_{F'}\times H_{F'}$ is identical 
to the anti-symmetric bilinear form when $H_{F},H_{F'}$ are seen as subgroups of $\PU(8)$. 
Then $f=\Ad(g)$ for some $g\in G^{\sigma_3}$ by Proposition \ref{Subclass: classification}. 

When $\rank(F/H_{F})=2$ and $F$ contains no elements conjugate to $\sigma_2$, 
we may assume that $\sigma_3,\omega_0\in F$, then 
$F,F'\subset(G^{\sigma_3})^{\omega_0}\cong\SO(8)/\langle-I\rangle$ and any element 
in $(F\cup F')-\langle\sigma_3,\omega_0\rangle$ is conjugate to $I_{4,4}$. Then $f=\Ad(g)$ 
for some $g\in (G^{\sigma_3})^{\omega_0}$ by Proposition \ref{Subclass: classification}.

When $F$ is pure $\sigma_1$, we get the conclusion by the considering the preserving 
of $m_{F},m_{F'}$ under $f$. 
\end{proof}

\begin{prop}
We have the following description for Automizer groups,
\begin{itemize}
\item[(1)]{$r\leq 2$, $s\leq 3$, $W(F_{r,s})\cong\Hom(\bbF_2^{2},\bbF_2^{r})\rtimes
(\GL(2,\bbF_2)\times P(r,s,\bbF_2))$;}

\item[(2)]{$r\leq 2$, $s\leq 3$, $W(F'_{r,s})\cong\bbF_2^{r}\rtimes P(r,s,\bbF_2)$;}

\item[(3)]{$\epsilon+\delta\leq 1$, $r+s\leq 2$, \[W(F_{\epsilon,\delta,r,s})=
(\bbF_2^{r+2s+\epsilon+2\delta+1}\rtimes\Hom(\bbF_2^{\epsilon+2\delta+2s+1},\bbF_2^{r}))
\rtimes(\GL(r,\bbF_2)\times\Sp(\delta+s;\epsilon)).\]}

\item[(4)]{$\epsilon+\delta\leq 1$, $r+s\leq 2$, \[W(F'_{\epsilon,\delta,r,s})=
\Hom(\bbF_2^{\epsilon+2\delta+2s+1},\bbF_2^{r})\rtimes(\GL(r,\bbF_2)\times
\Sp(\delta+s;\epsilon)).\]}

\item[(5)]{$r+s\leq 3$, $W(F''_{r,s})\cong(\bbF_2^{r+2s}\rtimes\Hom(\bbF_2^{2s},\bbF_2^{r}))
\rtimes(\GL(r,\bbF_2)\times\Sp(s))$;}

\item[(6)]{$r\leq 3$, $W(F'_{r})\cong\Hom(\bbF_2^{2},\bbF_2^{r})\rtimes(\GL(r,\bbF_2)
\times\GL(2,\bbF_2))$;}

\item[(7)]{$r+s\leq 3$, $W(F'''_{r,s})\cong\Hom(\bbF_2^{2s},\bbF_2^{r})\rtimes
(\GL(r,\bbF_2)\times\Sp(s))$;}

\item[(8)]{$r\leq 2$, $W(F''_{r})\cong P(r,3,\bbF_2)$.}
\end{itemize}
\end{prop}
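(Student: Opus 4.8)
The plan is to derive all eight formulas uniformly from the rigidity result Proposition \ref{uniqueness E7} together with the structural data ($H_F$, $A_F$, the form $m$, and the conjugacy-class stratification) already attached to each family. Conjugation by an element of $N_G(F)$ preserves the $G$-conjugacy class of every element of $F$ and preserves the bilinear form $m$ on $H_F$; conversely Proposition \ref{uniqueness E7} asserts that any automorphism of $F$ preserving those two pieces of data is realized by some $\Ad(g)$. Hence in every case
\[ W(F)=\{\,w\in\GL(\rank F,\bbF_2)\mid w(x)\sim x\ \ \forall x\in F,\ \ w\ \text{preserves}\ m|_{H_F}\,\}, \]
and the whole problem reduces to computing this stabilizer abstractly as a group of $\bbF_2$-matrices. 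The $\sim$-data and $m$-data for each family have been recorded earlier in the section, so no new geometry is needed beyond this reduction.

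First I would record that the chain $A_F\subseteq H_F\subseteq F$ is canonical: $H_F$ consists of $1$ together with the elements conjugate to $\sigma_1$ and is a subgroup, while $A_F$ is the translation subgroup; both are therefore stabilized by $W(F)$. Thus $W(F)$ embeds in the group of automorphisms respecting this two-step flag, which has the usual block description with a Levi part acting on the graded pieces $A_F$, $H_F/A_F$, $F/H_F$ and a unipotent part given by the off-diagonal $\Hom$-blocks. The computation then splits into deciding which Levi and which unipotent transformations preserve the conjugacy strata and $m$.

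For the Levi pieces I would argue as follows. On $A_F\cong\bbF_2^{r}$ every non-trivial element lies in $\ker m$ and is of a single conjugacy class, so all of $\GL(r,\bbF_2)$ is admissible, producing the $\GL(r,\bbF_2)$ factor. On $H_F/A_F$, equipped with the induced nondegenerate form $m$ and (where relevant) its quadratic refinement $\mu$, the admissible automorphisms are precisely the automorphisms of the corresponding symplectic, resp. symplectic metric, space; by Definition \ref{D:symplectic metric space}, Proposition \ref{Symplectic metric space: classification} and Lemma \ref{Order of symplectic groups} these are the groups $\Sp(s)$ and $\Sp(\delta+s;\epsilon)$ occurring in formulas (3)--(7), and the $\F_4$-type flag group $P(r,s,\bbF_2)$ occurring in (1),(2),(8). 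On $F/H_F$ (the outer cosets) the admissible transformations are exactly those fixing the partition into the classes of $\sigma_2$ and $\sigma_3$; reading this partition off from the conjugacy relations established earlier yields the outer $\GL(2,\bbF_2)$ in (1),(6), the absence of an outer factor in (2),(4),(7),(8), and the way the outer direction merges into the symplectic factor in (3),(5). For the unipotent radical I would check directly that each off-diagonal shear in $\Hom(H_F/A_F,A_F)$ or $\Hom(F/H_F,H_F)$ (and the extra $\bbF_2$-translation directions of (3),(5),(6)) preserves $m$ (by bilinearity and the compatibility $m(x,y)=\mu(x)\mu(y)\mu(xy)$) and the conjugacy strata (using relations such as $\sigma_2\tau_1\sim\sigma_3$ together with Lemmas \ref{E7 to E6:m} and \ref{E7 to AD:m}), hence lies in $W(F)$; these exhaust the unipotent part. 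Assembling the factors gives the stated semidirect products, and the same bookkeeping fixes the exponents in the $\Hom$- and $\bbF_2$-factors family by family.

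I expect the main obstacle to be the outer quotient $F/H_F$ in the four families with a two-dimensional outer part ($F_{r,s}$, $F_{\epsilon,\delta,r,s}$, $F''_{r,s}$, $F'_r$): one must show that the automorphisms mixing the two outer cosets genuinely preserve all conjugacy classes and are realized by conjugation, which is exactly what separates $F_{r,s}$ (with its outer $\GL(2,\bbF_2)$) from $F'_{r,s}$ (with only $P(r,s,\bbF_2)$). This rests on the precise conjugacy relations inside $G^{\sigma_2}$ and $G^{\sigma_3}$ recorded in Subsection \ref{E7 to AD} and on the characterizations in Propositions \ref{E7 to E6} and \ref{prop:E7 to AD}. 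Once those are in hand, Proposition \ref{uniqueness E7} removes any concern about data-preserving automorphisms that fail to be conjugations, so the remaining work is purely the positive direction: exhibiting enough genuinely realized automorphisms in each graded block and matching group orders via Lemma \ref{Order of symplectic groups}.
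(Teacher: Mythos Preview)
Your approach is essentially the paper's approach: both reduce everything to Proposition~\ref{uniqueness E7}, then compute the group of automorphisms of $F$ preserving the involution type and $m|_{H_F}$ via the canonical flag $A_F\subset H_F\subset F$, the symplectic/symplectic-metric structure on $H_F/A_F$, and an order count via Lemma~\ref{Order of symplectic groups}. The only difference in emphasis is that the paper singles out case~(4) and proves it by a transitivity argument---$W(F)$ acts transitively on the $\sigma_2$-elements of $F$, the stabilizer at a fixed $z\sim\sigma_2$ is $\Sp(s;0,\delta)$, and the resulting order equals $|\Sp(\delta+s;\epsilon)|$ by Propositions~\ref{Compare symplectic 1} and~\ref{Compare symplectic 2}---whereas you describe the same computation uniformly in block-matrix language; this is the same argument.

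One small point to tighten: you write that $m$ is ``nondegenerate'' on $H_F/A_F$, but in cases~(3) and~(4) with $\epsilon=1$ the induced form has a one-dimensional radical (this is why the answer involves $\Sp(\delta+s;\epsilon)$ rather than $\Sp(\delta+s)$). You clearly know this since you state the correct group, but the sentence as written is inaccurate. Likewise, your remark that the shears in $\Hom(F/H_F,H_F)$ automatically preserve conjugacy classes is exactly the place where care is needed: for $F'_{\epsilon,\delta,r,s}$ (with $r=0$) such a shear $z\mapsto zx_0$ preserves the $\sigma_2$-class of $z$ only when $x_0\in A_F=1$, which is the injectivity step in the paper's proof of~(4). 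So the unipotent contribution from that block is already absorbed into $\Hom(F/A_F,A_F)$, not present as an extra factor.
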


\begin{proof}
By Proposition \ref{uniqueness E7}, we need to find automorphisms of $F$ preserves 
conjugacy classes of involutions and the form $m$ on $H_{F}$. 

We prove $(4)$. Let $F=F'_{\epsilon,\delta,r,s}$. $F$ has a decomposition $F=A_{F}\times F'$ with 
$A_{F}=\bbF_2^{r}$ the translation subgroup and $F'\sim F'_{\epsilon,\delta,0,s}$. 
By Proposition \ref{uniqueness E7}, we have \[W(F)\cong\Hom(F',A_{F})\rtimes
(\GL(r,\bbF_2)\times W(F')).\] So we only need to show in the case the $r=0$. Assume 
$r=0$ from now on.  

Any element in $W(F)$ preserves the symplectic form $m$ on $H_{F}$. Since  
$\rank\ker m=\epsilon$, so we have a homomorphism 
$p: W(F)\longrightarrow\Sp(\delta+s;\epsilon)$. 
We show this homomorphism is an isomorphism, which finishes the proof.

For any $f: F\longrightarrow F$ with $f|_{H_{F}}=1$, since 
$F=H_{F}\rtimes\langle z\rangle$ with $z\sim\sigma_2$, let $f(z)=zx_0$ for some 
$x_0\in H_{F}$. The for any $x\in H_{F}$, $f(zx)=zxx_0$, so $zx\sim zxx_0$. This 
just said $x_0\in A_{F}$. Since we  assume $r=0$ (equivalent to $A_{F}=1$), so $x_0=1$ 
and $f=\id$. Then $p$ is injective. 

By Proposition \ref{uniqueness E7}, $W(F)$ permutes transitively elements in $F$ 
conjugate to $\sigma_2$. There are \[\frac{2^{2\delta+2s+\epsilon}+(1-\epsilon)
(-1)^{\delta}2^{\epsilon+\delta+s}}{2}=2^{s+\delta-1}(2^{s+\delta+\epsilon}+
(1-\epsilon)(-1)^{\delta}2^{\epsilon})\] such elements. It is clear that the stabilizer 
of $W(F)$ at $z$ is $\Sp(s;0,\delta)$. So \[|W(F)|=|\Sp(s;\epsilon,\delta)|
2^{s+\delta-1}(2^{s+\delta+\epsilon}+(1-\epsilon)(-1)^{\delta}2^{\epsilon}).\] 
By Propositions \ref{Compare symplectic 1} and \ref{Compare symplectic 2}, 
this is also equal to $|\Sp(s+\delta;\epsilon)|$. Then $p$ is surjective. 

$(3)$ follows from $(4)$ immediately.

For the other case, we just use the fact ``$\rank A_{F}=r$'' and the form ``$m$ on 
$H_{F}/A_{F}$ is non-degenerate'' in these cases. 
\end{proof}

We still have the following containment relations, 
\begin{eqnarray*}&& F'''_{\epsilon+r,\delta+s}\subset F'_{\epsilon,\delta,r,s},\ 
F'''_{\epsilon+r,\delta+s}\subset F''_{\epsilon+r,\delta+s},\ 
F''_{\epsilon+r,\delta+s}\subset F_{\epsilon,\delta,r,s},\\&& 
F'_{r+s+\delta}\subset F_{\epsilon,\delta,r,s},\quad F''_{r+3}\subset F'_{r,3},
\end{eqnarray*} together with those obvious relations, they consist in 
all containment relations (in the sense of conjugacy) between these subgroups

\section{$\E_8$}\label{S:E8}

Let $G=\E_8=\Aut(\mathfrak{e}_8)$, which has two conjugacy classes of involutions 
(with representatives $\sigma_1,\sigma_2$), we have 
\[G^{\sigma_1}\cong(\E_7\times\Sp(1))/\langle(c,-1)\rangle,\  
G^{\sigma_2}\cong\Spin(16)/\langle c'\rangle,\] where $c$ is the unique 
non-trivial central element of $E_7$, $c'=e_1 e_2... e_{16}$. 

There are four conjugacy classes of Klein four subgroups in $G$ with 
involution types as \begin{eqnarray*}
&&\Gamma_1: (\sigma_1,\sigma_1,\sigma_1),\ \Gamma_2: (\sigma_1,\sigma_1,\sigma_2),\
\Gamma_3: (\sigma_1,\sigma_2,\sigma_2),\ \Gamma_4:(\sigma_2,\sigma_2,\sigma_2).
\end{eqnarray*}

In $G^{\sigma_1}\cong(\E_7\times\Sp(1))/\langle(c,-1)\rangle$, let 
$\eta_1,\eta_2\in\E_7$ be involutions such that there exists Klein four groups 
$F,F'\subset\E_7$ with non-identity elements all conjugate to $\eta_1$ or $\eta_2$ 
respectively and $\fre_7^{F}\cong \mathfrak{su}(6)\oplus (i\bbR)^2$,   
$\fre_7^{F'}\cong \mathfrak{so}(8)\oplus (\mathfrak{sp}(1))^{3}$, then 
$c\eta_1\sim_{E_7}\eta_2$, $c\eta_2\sim_{E_7}\eta_1$. 
Let $\tau_1=(\eta_1,1), \tau_2=(\eta_2,1)\in G^{\sigma_1}$, then 
$\tau_1\sim\sigma_1$ and $\tau_2\sim\sigma_2$. 

Let $\eta_3,\eta_4\in\E_7$ be involutions with $\eta_3^{2}=\eta_4^{2}=c$ and 
$\fre_7^{\eta_3}\cong\fre_6\oplus i\bbR$, $\fre_7^{\eta_4}\cong\mathfrak{su}(8)$. 
Then $c\eta_3\sim_{\E_7}\eta_3$, $c\eta_4\sim_{\E_7}\eta_4$. 
Let $\tau_3=(\eta_3,\textbf{i})$, $\tau_4=(\eta_4,\textbf{i})$, then 
$\tau_3\sim\sigma_1$ and $\tau_4\sim\sigma_2$.

Then $\tau_1,\tau_2,\tau_3,\tau_4$ represent all conjugacy classes of involutions in 
$G^{\sigma_1}$ except $\sigma_1$.

The relations $c\eta_1\sim_{\E_7}\eta_2$, $c\eta_2\sim_{\E_7}\eta_1$, 
$c\eta_3\sim_{\E_7}\eta_3$, $c\eta_4\sim_{\E_7}\eta_4$ are known from 
understanding of involutions in $\E_7$ and in $\E_7/\langle c\rangle\cong\Aut(\fre_7)$. 

Since $\sigma_1=\exp(\pi iH'_1)$, \[\{\alpha_8,\beta=\alpha_1+2\alpha_3+2\alpha_4+\alpha_2+\alpha_5,
\alpha_7,\alpha_6,\alpha_5,\alpha_4,\alpha_2\}(\textrm{Type }\E_7)\bigsqcup\{\alpha_1\}\] is a simple 
system of $\frg^{\sigma_1}$. We may choose $\eta_1,\eta_2,\eta_3,\eta_4$ such that 
$\tau_1=\exp(\pi i H'_8)$, $\tau_2=\exp(\pi i (H'_8+H'_2))$, $\tau_3=\exp(\pi i (H'_3+H'_4))$,  
$\tau_4=\exp(\pi i (H'_3+H'_4+H'_8))$, then 
\[\tau_1\sim\sigma_1, \tau_2\sim\sigma_2, \tau_3\sim\sigma_1, \tau_4\sim\sigma_2.\] 

For $\sigma_2$, $G^{\sigma_2}\cong\Spin(16)/\langle c\rangle$ and $\frp\cong M_{+}$, $\sigma_2=-1$.   
Let $\tau_1=\ e_1e_2e_3e_4$, $\tau_2=e_1e_2e_3...e_8$, $\tau_3=\Pi$, $\tau_4=-\Pi$, 
where \[\Pi=\frac{1+e_1e_2}{\sqrt{2}}\frac{1+e_3e_4}{\sqrt{2}}...\frac{1+e_{15}e_{16}} {\sqrt {2}}.\] 
Then $\tau_1,\tau_2,\tau_3,\tau_4$ represent all conjugacy classes of involutions in 
$G^{\sigma_1}$ except $\sigma_2$.

In $G$, we have $\tau_1\sim\tau_3\sim\sigma_1$, $\tau_2\sim\tau_4\sim\sigma_2$. These are obtained from 
calculating various $\dim\fru_0^{\sigma}$. 

And in $G^{\sigma_2}$, we have 
\[\sigma_2\tau_1\sim_{G^{\sigma_2}}\tau_1,\ \sigma_2\tau_2\sim_{G^{\sigma_2}}\tau_2,\]  
\[\sigma_2\tau_3\sim_{G^{\sigma_2}}\tau_4,\ \sigma_2\tau_4\sim_{G^{\sigma_2}}\tau_3.\]  
These are clear in $\Spin(16)/\langle c\rangle$. 

For any $x\in F$ with $x\sim \sigma_1$, let $H_{x}=\{y\in F|xy\not\sim y\}$.

\begin{definition}
Let \[H_{F}:=\langle\{H_{x}: x\in F, x\sim\sigma_1\}\rangle=
\langle\{x: x\in F,x\sim\sigma_1\}\rangle.\]
\end{definition}

\begin{lemma} \label{H subgroup in E8}
For any $x$ with $x\sim\sigma_1$, $H_{x}$ is a subgroup and $\rank(F/H_{x})\leq 2$.
\end{lemma}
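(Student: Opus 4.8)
The plan is to reduce to the case $x=\sigma_1$ and then realize $H_x$ as the kernel of a natural homomorphism into $\SO(3)$. Since $x\sim\sigma_1$, after conjugating $F$ by a suitable element of $G$ I may assume $x=\sigma_1$; this affects neither whether $H_x$ is a subgroup nor the value of $\rank(F/H_x)$. Because $F$ is abelian and contains $\sigma_1$, every $y\in F$ commutes with $\sigma_1$, so $F\subset G^{\sigma_1}\cong(\E_7\times\Sp(1))/\langle(c,-1)\rangle$. The assignment $[(a,b)]\mapsto[b]$ then gives a well-defined homomorphism $q\colon G^{\sigma_1}\longrightarrow\Sp(1)/\{\pm1\}\cong\SO(3)$, since the defining relation $(c,-1)$ only identifies $b$ with $-b$.

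The heart of the argument is to prove $H_{\sigma_1}=\ker(q|_F)$. Writing an involution $y\in F$ as $y=[(a,b)]$, the relation $y^2=1$ forces either $b=\pm1$ (equivalently $q(y)=1$) or $b^2=-1$ (equivalently $q(y)$ is an involution of $\SO(3)$). I would treat the two cases using the conjugacy relations recorded above. When $b=\pm1$ and $y\neq1,\sigma_1$, the element $y$ is conjugate in $G^{\sigma_1}$ to $\tau_1$ or $\tau_2$; since $\sigma_1$ is central in $G^{\sigma_1}$, conjugation commutes with multiplication by $\sigma_1$, and the relations $c\eta_1\sim_{\E_7}\eta_2$, $c\eta_2\sim_{\E_7}\eta_1$ give $\sigma_1\tau_1\sim\tau_2\sim\sigma_2$ and $\sigma_1\tau_2\sim\tau_1\sim\sigma_1$, so $\sigma_1y\not\sim y$ and $y\in H_{\sigma_1}$. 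The degenerate cases $y=1$ and $y=\sigma_1$ (both with $q(y)=1$) are checked directly and also lie in $H_{\sigma_1}$. When $b^2=-1$, the element $y$ is conjugate to $\tau_3$ or $\tau_4$, and the relations $c\eta_3\sim_{\E_7}\eta_3$, $c\eta_4\sim_{\E_7}\eta_4$ yield $\sigma_1 y\sim_{G^{\sigma_1}}y$, whence $\sigma_1y\sim y$ and $y\notin H_{\sigma_1}$. This identifies $H_{\sigma_1}$ with $\{y\in F:q(y)=1\}=\ker(q|_F)$, which is manifestly a subgroup.

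Finally I would read off the rank bound. The restriction $q|_F\colon F\longrightarrow\SO(3)$ has kernel $H_{\sigma_1}$, so $F/H_{\sigma_1}\cong\im(q|_F)$. As $F$ is an elementary abelian $2$-group, its image is an elementary abelian $2$-subgroup of $\SO(3)$; every such subgroup is contained in a Klein four group (the image of the diagonal sign matrices of determinant $1$), hence has rank at most $2$. Therefore $\rank(F/H_{\sigma_1})\leq2$.

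I expect the main obstacle to be the case analysis of the second paragraph: namely, making precise that multiplication by $\sigma_1$ interchanges the two $G$-conjugacy classes exactly on the elements with trivial $\SO(3)$-image. This rests entirely on the conjugacy relations $c\eta_i\sim_{\E_7}\eta_j$ together with the centrality of $\sigma_1$ in $G^{\sigma_1}$, which is what allows conjugation to commute with multiplication by $\sigma_1$ and thus to transport the relations from the representatives $\tau_i$ to all of $F$.
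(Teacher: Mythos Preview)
Your proof is correct and is essentially the same as the paper's. The paper assumes $x=\sigma_1$, identifies $H_{\sigma_1}$ with $F\cap\E_7$ inside $G^{\sigma_1}\cong(\E_7\times\Sp(1))/\langle(c,-1)\rangle$ via exactly the case analysis on the conjugacy classes $\tau_1,\tau_2,\tau_3,\tau_4$, and then bounds $\rank(F/H_{\sigma_1})$ using $G^{\sigma_1}/\E_7\cong\Sp(1)/\langle-1\rangle$; your map $q$ and kernel computation are precisely this argument, just spelled out in more detail.
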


\begin{proof}
We may assume that $x=\sigma_1$, then $F\subset G^{\sigma_1}\cong\E_7\times 
\Sp(1)/\langle(c,-1)\rangle$. For an element $y\in F\subset G^{\sigma_1}$ with $y^{2}=1$, 
$\sigma_1 y\not\sim y$ if and only if $y$ is conjugate to $1,\sigma_1,\tau_1,\tau_2$ in 
$G^{\sigma_1}$. Then it is also equivalent to $y\in\E_7\subset G^{\sigma_1}$. So 
$H_{x}=F \cap\E_7$ and it is a subgroup. Then $F/H_{x}\subset G^{\sigma_1}/\E_7\cong\Sp(1)/
\langle-1\rangle$, so $\rank(F/H_{x})\leq 2$.
\end{proof}

For any $x\in F$, define $\mu(x)=1$ if $x\sim\sigma_2$ or $x=1$; $\mu(x)=-1$ if 
$x\sim\sigma_1$. For any $x,y\in F$, define $m(x,y)=\mu(x)\mu(y)\mu(xy)$. 
In general $m$ is not a bilinear form.

Define the {\it translation subgroup} \[A_{F}=\{x\in F|\mu(x)=1 \textrm{ and }m(x,y)=1
\textrm{ for any } y\in F\}\] and the {\it defect index} 
$\defe(F)=|\{x\in F:\mu(x)=1\}|-|\{x\in F:\mu(x)=-1\}|$. 

\begin{definition}
We call $\Res(F):=\rank(F/H_{F})$ the residual rank of $F$, and 
\[\Res'(F)=max\{\rank(F/H_{x})|x\in F, x\sim\sigma_1\}\] the second residual rank of $F$.
\end{definition}

Let $X=X_{F}=\{x\in F| x\sim\sigma_1\}$, define a graph with vertices set $X$ by drawing an 
edge connecting $x,y\in X$ if and only if $xy\sim\sigma_2$. It is clear that this graph 
$X$ is invariant under multiplication by elements in $A_{F}$. 
Let \[\Graph(F)=X_{F}/A_{F}\] be the quotient graph of the graph $X_{F}$ modulo the action of $A_{F}$.

\subsection{Subgroups from $\E_6$}
For an elementary abelian 2-group $F\subset G$, if $F$ contains a subgroup conjugate 
to $\Gamma_1$, we may assume that $\Gamma_1=\langle\sigma_1,\tau_3\rangle\subset F$, then 
\[F\subset G^{\Gamma_1}=((\E_6\times\U(1)\times\U(1))/\langle(c,e^{\frac{2\pi i}{3}},1)\rangle)
\rtimes\langle\omega\rangle,\] where $\omega^{2}=1$, 
$(\fre_6\oplus i\bbR\oplus i\bbR)^{\omega}=\frf_4\oplus 0\oplus 0$ and 
$\Gamma_1=\langle(1,-1,1),(1,1,-1)\rangle$. 

Let $G_{\Gamma_1}=\E_6\rtimes\langle\omega\rangle\subset G^{\Gamma_1}$. We have a $3$-fold 
covering $\pi: G_{\Gamma_1}\longrightarrow\Aut(\fre_6)$ and an inclusion 
$p: G_{\Gamma_1}\subset G^{\Gamma_1}$. For an elementary abelian 2-subgroup $F'$ of 
$\Aut(\fre_6)$, $p(\pi^{-1}(F'))\times\Gamma_1$ is the direct product of its (unique) 
Sylow 2-subgroup $F$ and $\langle(c,1,1)\rangle$. 
Let $\{F_{r,s}:r\leq 2,s\leq 3\}$, $\{F'_{r,s}:r\leq 2,s\leq 3\}$, 
$\{F_{\epsilon,\delta,r,s}:\epsilon+\delta\leq 1,r+s\leq 2\}$, 
$\{F'_{\epsilon,\delta,r,s}:\epsilon+\delta\leq 1,r+s\leq 2,s\geq 1\}$ 
be elementary abelian 2-subgroups of $\E_8$ obtained from elementary abelian 2-subgroups 
of $\Aut(\fre_6)$ with the corresponding notation in this way.

Let $\theta_1,\theta_2\in\E_6$ be involutions with 
$\fre_6^{\theta_1}\cong\mathfrak{su}(6)\oplus\mathfrak{sp}(1)$, 
$\fre_6^{\theta_2}\cong \mathfrak{so}(10)\oplus i\bbR$, and let 
$\theta_3=\omega,\theta_4\in\omega E_6$ be involutions with 
$\fre_6^{\theta_3}\cong\frf_4\oplus0\oplus0$, 
$\fre_6^{\theta_4}\cong\mathfrak{sp}(4)\oplus0\oplus0$. 
From the description of involution classes in $G^{\sigma_1}$, we have 
$\theta_1\sim\theta_3\sim\sigma_1$, $\theta_2\sim\theta_4\sim\sigma_2$ and 
\[\forall \sigma\in F_1-\{1\},\ \theta_1\sigma\sim\theta_4\sigma\sim\sigma_2,\ 
\theta_2\sigma\sim\theta_3\sigma\sim\sigma_1.\]

Noe that, $F'_{r,3}$ contains a rank 3 pure $\sigma_1$ subgroup, actually it is conjugate 
to $F_{r,2}$.


\begin{prop}
For an elementary abelian 2-group $F\subset\E_8$, if $F$ contains a subgroup 
conjugate to $\Gamma_1$, then $F$ is conjugate to one of 
$\{F_{r,s}:r\leq 2,s\leq 3\}$, $\{F'_{r,s}:r\leq 2,s\leq 2\}$, 
$\{F_{\epsilon,\delta,r,s}:\epsilon+\delta\leq 1,r+s\leq 2\}$, 
$\{F'_{\epsilon,\delta,r,s}:\epsilon+\delta\leq 1,r+s\leq 2,s\geq 1\}$.
\end{prop}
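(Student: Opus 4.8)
The plan is to mirror the reduction carried out for $\E_7$ in Proposition \ref{E7 to E6 II} and push the problem down to the already completed classification in $\Aut(\fre_6)$. First I would use conjugacy to assume that the fixed copy $\Gamma_1=\langle\sigma_1,\tau_3\rangle$ lies in $F$, so that $F\subset G^{\Gamma_1}\cong((\E_6\times\U(1)\times\U(1))/\langle(c,e^{\frac{2\pi i}{3}},1)\rangle)\rtimes\langle\omega\rangle$. Writing $T$ for the image of the central torus $\U(1)\times\U(1)$, I note that since $(\fre_6\oplus i\bbR\oplus i\bbR)^{\omega}=\frf_4\oplus0\oplus0$ the element $\omega$ inverts both $\U(1)$ factors, hence fixes the $2$-torsion $\Gamma_1$; thus $\Gamma_1$ is central in $G^{\Gamma_1}$. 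Quotienting by $T$ gives the homomorphism $\overline{\pi}\colon G^{\Gamma_1}\longrightarrow G^{\Gamma_1}/T\cong\Aut(\fre_6)$, with $\Gamma_1\subset\ker\overline{\pi}=T$, and I set $F'=\overline{\pi}(F)$.

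The first substantive step is to show $F\cap T=\Gamma_1$. Since $F$ is elementary abelian $2$ and $T$ is a torus, $F\cap T$ lies in the $2$-torsion of $T$, and a check against the order-$3$ central identification $\langle(c,e^{\frac{2\pi i}{3}},1)\rangle$ shows this $2$-torsion is exactly $\Gamma_1$. Consequently $F'$ is an elementary abelian $2$-subgroup of $\Aut(\fre_6)$ with $\rank F'=\rank F-2$. By the $\E_6$ classification (Propositions \ref{E6 to F4} and \ref{E6 to C4}), $F'$ is conjugate to exactly one of $F_{r,s}$, $F'_{r,s}$, $F_{\epsilon,\delta,r,s}$, $F'_{\epsilon,\delta,r,s}$, the type being detected by whether $F'$ meets $\omega\E_6$ (equivalently, whether $F$ has elements of nontrivial $\omega$-component), whether $F'$ contains an involution conjugate to $\sigma_3$, and whether it contains a Klein four subgroup conjugate to $\Gamma_3$. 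After conjugating I may assume $F'$ is a standard $\E_6$ representative; this conjugation lifts to $G^{\Gamma_1}$ because $\overline{\pi}$ is surjective and $\Gamma_1$, being central, is preserved by conjugation by any lift.

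It then remains to identify $F$ itself with the subgroup built from $F'$ by the recipe preceding the proposition, i.e. the (unique) Sylow $2$-subgroup of $p(\pi^{-1}(F'))\times\Gamma_1$; this is well defined because the $3$-fold cover $\pi\colon G_{\Gamma_1}\to\Aut(\fre_6)$ has kernel $\langle c\rangle$ of odd order, so $\pi^{-1}(F')$ has a unique Sylow $2$-subgroup mapping isomorphically onto $F'$. This is the heart of the matter and the main obstacle: a single $F'$ admits several elementary abelian $2$-lifts inside the fiber $T$, differing by classes in $T[2]=\Gamma_1$ along the directions in which $F'$ centralizes $T$, so one cannot simply conjugate every $F$ into $p(G_{\Gamma_1})\times\Gamma_1$ and the assignment $F'\mapsto F$ is not a bijection on conjugacy classes (the ambiguity flagged in Subsection \ref{Subsection:method}). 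I would resolve this by analysing, for each standard $F'$, the finitely many $N_{G}(\Gamma_1)$-conjugacy classes of such lifts — distinguished by the invariants $\rank A_{F}$, $\defe F$ and the residual data $\Res(F)$ introduced for $\E_8$ — and showing that each is conjugate to the canonical construction attached to some (possibly different) standard $F'$, so that every lift appears in the list. Finally I would record the one genuine coincidence of families, namely that the rank-$3$ pure $\sigma_1$ subgroup sitting inside $F'_{r,3}$ forces $F'_{r,3}\sim F_{r,2}$, which is precisely why the parameter $s$ for the $F'_{r,s}$ in the statement is restricted to $s\leq 2$.
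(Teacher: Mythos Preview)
Your overall plan --- assume $\Gamma_1\subset F$, pass to $G^{\Gamma_1}$, and reduce to the $\Aut(\fre_6)$ classification --- is correct and is exactly what the paper does (it simply says ``similar as for Proposition~\ref{E7 to E6 II}''). The gap is in your diagnosis of the obstruction.

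Your claim that ``one cannot simply conjugate every $F$ into $p(G_{\Gamma_1})\times\Gamma_1$'' is false, and the detour through analysing multiple lifts of a fixed $F'$ is unnecessary. In fact every such $F$ \emph{does} land in $G_{\Gamma_1}\times\Gamma_1$ after a single conjugation, exactly as in the $\E_7$ argument. If $F'\subset\Int(\fre_6)$, then any involution $[(x,t_1,t_2)]$ in $G^{\Gamma_1}_0=(\E_6\times\U(1)^2)/\langle(c,e^{2\pi i/3},1)\rangle$ has a representative with $x^2=1$ and $t_1,t_2\in\{\pm 1\}$ (the order-$3$ identification lets you absorb any cube-root ambiguity into the $\E_6$ factor), so $F\subset\E_6\times\Gamma_1$ automatically. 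If $F'\not\subset\Int(\fre_6)$, pick $z=\omega(\tilde x,t_1,t_2)\in F$; conjugating by $(1,s_1,s_2)\in T$ replaces $(t_1,t_2)$ by $(s_1^{-2}t_1,s_2^{-2}t_2)$, so you may take $z=\omega\tilde x\in G_{\Gamma_1}$, and then commutation of the remaining elements of $F$ with $z$ forces their torus components into $\{\pm 1\}^2=\Gamma_1$. Thus in both cases $F\subset G_{\Gamma_1}\times\Gamma_1$, and since $\Gamma_1\subset F$ and $|F|=|F'|\cdot|\Gamma_1|$, the lift is \emph{unique}: $F$ equals the canonical construction from $F'$.

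The failure of bijectivity flagged in Subsection~\ref{Subsection:method} is in the opposite direction: the assignment $F'\mapsto F$ from $\Aut(\fre_6)$-classes to $\E_8$-classes is surjective but not injective. The single collapse is precisely the one you name at the end, $F'_{r,3}\sim F_{r,2}$, and this is why the list restricts the second family to $s\leq 2$. So the proof is shorter than you expect: conjugate $F$ into $G_{\Gamma_1}\times\Gamma_1$, invoke Propositions~\ref{E6 to F4} and~\ref{E6 to C4}, and strike the redundant $F'_{r,3}$ from the resulting list.
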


\begin{proof}
The proof is similar as that for Proposition \ref{E7 to E6 II}. 
\end{proof}

\begin{prop}\label{E8 rank and defect}
We have the following formulas for $\Res F,\Res' F,\rank A_{F}, \defe F$, 
\begin{itemize}
\item[1.]{For $F=F_{r,s}$, $r\leq 2$, $s\leq 3$, $(\Res F,\Res' F)=(0,2)$, 
$\rank A_{F}=r$, $\defe F=3\cdot 2^{r+1}(2^{s}-2)$;}
\item[2.]{For $F=F'_{r,s}$, $r\leq 2$, $s\leq 2$, $(\Res F,\Res' F)=(0,1)$, 
$\rank A_{F}=r$, $\defe F=2^{r+1}(2^{s}-2)$;}
\item[3.]{For $F=F_{\epsilon,\delta,r,s}$, $\epsilon+\delta\leq 1$, 
$r+s\leq 2$, $(\Res F,\Res'F)=(1,2)$, $\rank A_{F}=r$, 
$\defe F=(1-\epsilon)(-1)^{\delta+1}2^{r+s+\delta+1}+2^{\epsilon+r+2\delta+2s}$;}
\item[4.]{For $F=F'_{\epsilon,\delta,r,s}$, $\epsilon+\delta\leq 1$, 
$r+s\leq 2$, $s\geq 1$, $(\Res F,\Res'F)=(0,1)$, $\rank A_{F}=r$, 
$\defe F= (1-\epsilon)(-1)^{\delta+1} 2^{r+s+\delta+1}$.}
\end{itemize}

The groups $\{F_{r,s}:r\leq 2,s\leq 3\}$, $\{F'_{r,s}:r\leq 2,s\leq 2\}$, 
$\{F_{\epsilon,\delta,r,s}:\epsilon+\delta\leq 1,
r+s\leq 2\}$, $\{F'_{\epsilon,\delta,r,s}:\epsilon+\delta\leq 1,
r+s\leq 2,s\geq 1\}$ are pairwise non-conjugate.
\end{prop}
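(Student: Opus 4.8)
The plan is to compute each invariant by transporting it through the $3$-fold covering $\pi\colon G_{\Gamma_1}\longrightarrow\Aut(\fre_6)$ and the inclusion $p\colon G_{\Gamma_1}\hookrightarrow G^{\Gamma_1}$, so that every member $F$ of the four families is read off from the corresponding elementary abelian $2$-subgroup $\bar F\subset\Aut(\fre_6)$ together with the Klein four group $\Gamma_1$ sitting in the $\U(1)\times\U(1)$ factor. First I would establish the $\E_8$-analogue of Lemma \ref{E7 to E6:m}: writing a general element of $F$ as $p(\tilde x)\sigma$ with $\tilde x$ a lift of $\bar x\in\bar F$ and $\sigma\in\Gamma_1$, the conjugacy relations $\theta_1\sim\theta_3\sim\sigma_1$, $\theta_2\sim\theta_4\sim\sigma_2$ and $\theta_1\sigma\sim\theta_4\sigma\sim\sigma_2$, $\theta_2\sigma\sim\theta_3\sigma\sim\sigma_1$ (for $\sigma\neq1$) determine the $\E_8$-conjugacy class of $p(\tilde x)\sigma$ from the $\Aut(\fre_6)$-class of $\bar x$ and whether $\sigma=1$. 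This dictionary simultaneously identifies $H_F$ (the span of the $\sigma_1$-type elements), the translation subgroup $A_F$, and the function $\mu$, and it is the single technical input underlying all four formulas.

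Given the dictionary, $\rank A_F=r$ follows because $A_F$ corresponds to the $\E_6$ translation subgroup $A_{\bar F}$, which has rank $r$ by Proposition \ref{E6 rank and defect}. For $\defe F$ I would count the elements with $\mu=1$ and $\mu=-1$ by grouping over the four cosets of $\Gamma_1$ and over the $\Aut(\fre_6)$-classes of $\bar x$; substituting the $\E_6$ defect $\defe\bar F$ and the order $|\bar F|$ from Proposition \ref{E6 rank and defect} collapses the count to the stated expressions. Here the purely inner families ($F'_{r,s}$ and $F'_{\epsilon,\delta,r,s}$) give $\defe F=-2\,\defe\bar F$, while the families with outer elements pick up extra contributions from their $\sigma_3$- and $\sigma_4$-type cosets: in $F_{\epsilon,\delta,r,s}$ every outer element is of $\sigma_4$-type and contributes a uniform positive surplus recorded by the second summand, whereas in $F_{r,s}$ both the $\sigma_3$- and $\sigma_4$-cosets occur and reinforce the inner contribution, producing the overall coefficient $3$.

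To compute $\Res F$ and $\Res'F$ I would use Lemma \ref{H subgroup in E8}: after conjugating an element $x\sim\sigma_1$ to $\sigma_1$ itself, $H_x=F\cap\E_7$ and $F/H_x$ embeds into $G^{\sigma_1}/\E_7\cong\Sp(1)/\langle-1\rangle\cong\SO(3)$, so $\rank(F/H_x)\le 2$. Reading off from the dictionary which of the $\sigma_1$-type elements survive modulo $H_x$, respectively generate $H_F$, gives the pairs $(0,2),(0,1),(1,2),(0,1)$ for the four families. Pairwise non-conjugacy is then obtained by assembling the conjugacy invariants: $(\Res F,\Res'F)$ already isolates $F_{r,s}$ as the unique family with value $(0,2)$ and $F_{\epsilon,\delta,r,s}$ as the unique one with $(1,2)$, while within each family $\rank A_F=r$ fixes $r$ and the monotonicity of $\defe F$ in the remaining parameters fixes them as well.

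The main obstacle is the two inner families $F'_{r,s}$ and $F'_{\epsilon,\delta,r,s}$, which share $(\Res F,\Res'F)=(0,1)$ and can even share the same $\rank A_F$ and the same $\defe F$ (for instance $s=1$ against $\epsilon=1$, where $\defe F=0$): the four numerical invariants alone do not separate them. To finish I would invoke the structural characterisation inherited from the $\E_6$ classification, namely whether every $H_x$ (for $x\sim\sigma_1$) is a subgroup, equivalently the shape of the graph $\Graph(F)$, which distinguishes the $\F_4$-origin of $F'_{r,s}$ (no Klein four of $\bar F$ conjugate to $\Gamma_3$) from the $\Sp(4)$-origin of $F'_{\epsilon,\delta,r,s}$ (a $\Gamma_3$-conjugate present), exactly as in Proposition \ref{E7 to E6}. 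This is also the step where the coincidence $F'_{r,3}\sim F_{r,2}$ must be accounted for, which is why the range of $F'_{r,s}$ is pruned to $s\le 2$.
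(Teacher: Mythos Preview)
Your computation of the four formulas follows the paper's line exactly: both derive them ``from the construction'', and your explicit dictionary through $\pi$ and $p$ is just a spelled-out version of what the paper leaves implicit.

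The divergence is in the non-conjugacy argument for the two families $F'_{r,s}$ and $F'_{\epsilon,\delta,r,s}$ sharing $(\Res F,\Res'F)=(0,1)$. You assert that the four numerical invariants listed in the statement do not separate them and therefore appeal to structural data ($\Graph(F)$, or whether each $H_x$ is a subgroup). The paper instead stays entirely numerical: it adds $\rank F/A_F$ to the list and argues by contradiction. If $F'_{r',s'}\sim F'_{\epsilon,\delta,r,s}$ then $r'=r$ from $\rank A_F$, and $s'=2s+2\delta+\epsilon$ from $\rank F/A_F$; since $s\ge 1$ and $s'\le 2$ this forces $\epsilon=\delta=0$, $s=1$, $s'=2$. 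But then $\defe F'_{r',2}=2^{r'+2}>0$ while $\defe F'_{0,0,r,1}=-2^{r+2}<0$, a contradiction.

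Your own example already illustrates this: $F'_{r,1}$ has rank $r+3$ while $F'_{1,0,r,s}$ has rank $r+2s+3\ge r+5$, so once $\rank F$ is included the overlap disappears. Thus your detour through the graph is valid but unnecessary; the paper's purely arithmetic separation is both shorter and self-contained, requiring nothing beyond the invariants already computed in the first half of the proposition.
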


\begin{proof}
The formulas in the first statement follow from the construction of these 
subgroups.

For the second statement, $(\Res F,\Res' F)$ distinguish most of them except for 
some possible pairs $(F'_{r',s'},F'_{\epsilon,\delta,r,s})$. Suppose some pair 
$(F'_{r',s'},F'_{\epsilon,\delta,r,s})$ is conjugate. From the formulas in the 
first statement, we have $r'=r$ (by $A_{F}$), $s'-1=(1-\epsilon)(-1)^{\delta}$ 
(by the sign of $\defe F$) and $s'=2s+2\delta+\epsilon$ (by $\rank F/A_{F}$). 
Since $s'\leq 2$ and $s\geq 1$, the last equality implies $\epsilon=\delta=0$, 
$s=1$ and $s'=2$. Then the second equality implies $s'=1$. We have a contradiction. 
\end{proof}

\subsection{Other subgroups}

In $G^{\sigma_1}\cong(\E_7\times\Sp(1))/\langle(c,-1)\rangle$, choose 
$x_1,x_2\in\E_7$ with $x_1\sim x_2\sim x_1x_2\sim\tau_4$, then 
$(G^{\sigma_1})^{x_1,x_2}=\SO(8)/\langle-I\rangle\times\langle\sigma_1,x_1,x_2\rangle$. 
Let $z_1=\diag\{-I_4,I_4\}$, $z_2=\diag\{-I_2,I_2,-I_2,I_2\}$, 
$z_3=\diag\{-1,1,-1,1,-1,1,-1,1\}$. Define  
\[F''_{r,s}=\langle z_1,\cdots,z_{r},\sigma_1,x_1,\cdots,x_{s}\rangle\] 
for any $r\leq 3$, $s\leq 2$, 

In $G^{\sigma_2}\cong\Spin(16)/\langle c\rangle$, $c=e_1e_2\cdots e_{16}$, let 
$y_1=\sigma_1=-1$, \[y_2=e_1e_2e_3e_4e_5e_6e_7e_8, 
y_3=e_1e_2e_3e_4e_{9}e_{10}e_{11}e_{12},\]  
\[y_4=e_1e_2e_5e_6e_9e_{10}e_{13}e_{14}, y_5=e_1e_3e_5e_7e_9e_{11}e_{13}e_{15}.\] Define  
$F'_{r}=\langle y_1,\cdots,y_{r}\rangle$ for any $r\leq 5$.

\begin{lemma}\label{E7:AY}
For an elementary abelian 2-group $F\subset G$, if $F$ contains no Klein four subgroup 
conjugate to $\Gamma_1$, but contains an element conjugate to $\sigma_1$, then 
\[\rank H_{F}/A_{F}=1.\] 
\end{lemma}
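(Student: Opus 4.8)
The plan is to show that, under the hypothesis, the restriction $\mu|_{H_F}$ is a nontrivial linear functional, so that $\ker(\mu|_{H_F})$ has index $2$ in $H_F$, and then to identify this kernel with $A_F$. Throughout write $X=\{x\in F:x\sim\sigma_1\}$, so that $H_F=\langle X\rangle$ and $X\neq\emptyset$ by hypothesis. Since the four conjugacy classes of Klein four subgroups of $G$ are pairwise distinguished by their involution types, the assumption that $F$ contains no subgroup conjugate to $\Gamma_1$ is equivalent to the statement that for distinct $x,y\in X$ one has $xy\sim\sigma_2$.

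First I would record the elementary identity that, for $x\sim\sigma_1$ and any $y\in F$, one has $m(x,y)=1$ if and only if $xy\not\sim y$: as $\mu(x)=-1$ this reads $m(x,y)=-\mu(y)\mu(xy)$, and since $x\sim\sigma_1$ the condition $xy\sim y$ coincides with $\mu(xy)=\mu(y)$ (the two classes with $\mu=1$, namely $1$ and $\sigma_2$, cannot be interchanged between $y$ and $xy$ when $x\sim\sigma_1$). Comparing with Lemma \ref{H subgroup in E8} this gives $H_x=\{y\in F:m(x,y)=1\}$. The key step is then to prove $H_F\subseteq H_x$ for every $x\in X$. Since $H_x$ is a subgroup and $H_F=\langle X\rangle$, it suffices to check $X\subseteq H_x$: for $x'\in X$ with $x'\neq x$ the hypothesis gives $xx'\sim\sigma_2\not\sim x'$, while $x\cdot x=1\not\sim x$, so every element of $X$ lies in $H_x$. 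Consequently $m(x,y)=1$ for all $x\in X$ and all $y\in H_F$.

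Next I would deduce that $Y:=\{h\in H_F:\mu(h)=1\}$ is a subgroup of index $2$, equivalently that $\mu|_{H_F}$ is linear. Take $a,b\in Y$ and suppose $\mu(ab)=-1$, i.e. $ab\in X$. Applying the previous paragraph with $x=ab\in X$ and $y=b\in H_F$ gives $m(ab,b)=1$; but a direct computation using $b^{2}=1$ yields $m(ab,b)=\mu(ab)\mu(b)\mu(a)=(-1)(1)(1)=-1$, a contradiction. Hence $Y$ is closed under multiplication, so it is a subgroup; since $X\subseteq H_F$ is nonempty and disjoint from $Y$, the index is exactly $2$.

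Finally I would identify $A_F=Y$. For $A_F\subseteq Y$, any $x\in A_F$ satisfies $\mu(x)=1$, and choosing $w\in X$ the relation $m(x,w)=1$ forces $xw\sim\sigma_1$, whence $xw\in X\subseteq H_F$ and $x=(xw)w\in H_F$, so $x\in Y$. For $Y\subseteq A_F$, let $x\in Y$ and $y\in F$: if $y\in H_F$ then $m(x,y)=1$ by the linearity of $\mu|_{H_F}$, while if $y\notin H_F$ then $y\sim\sigma_2$ (all $\sigma_1$-elements lie in $X\subseteq H_F$) and also $xy\sim\sigma_2$, since otherwise $xy\in X\subseteq H_F$ would give $y\in H_F$; in either case $m(x,y)=1$, so $x\in A_F$. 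Therefore $\rank(H_F/A_F)=\rank(H_F/Y)=1$. The one genuinely delicate point, and the step I expect to require the most care, is the inclusion $H_F\subseteq H_x$: it is precisely here that the hypothesis and the fact (Lemma \ref{H subgroup in E8}) that each $H_x$ is a subgroup combine to force the linearity of $\mu$ on $H_F$, which then drives the rest of the argument.
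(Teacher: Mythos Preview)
Your proof is correct and follows essentially the same route as the paper's: both arguments hinge on showing that $H_F\subseteq H_x$ for every $x\in X$ (using the hypothesis that distinct $\sigma_1$-elements multiply to a $\sigma_2$-element), then deducing that $Y=\{h\in H_F:\mu(h)=1\}$ is a subgroup of index~$2$, and finally identifying $Y=A_F$. Your version is more explicit, in particular in spelling out the identity $H_x=\{y\in F:m(x,y)=1\}$ and in verifying both inclusions $A_F\subseteq Y$ and $Y\subseteq A_F$, whereas the paper states $Y_F=A_F$ as clear; but the underlying mechanism is the same.
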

\begin{proof}
Recall that, $H_{F}$ is a subgroup of $F$ generated by elemnts conjugate to $\sigma_1$. 
Let $Y_{F}=\{x\in H_{F}:x\sim\sigma_2\}\cup\{1\}$, we show that $A_{F}=Y_{F}$ in this 
case.   

Choose any $x_0\in F$ with $x_0\sim\sigma_1$. For any other $x\in F$ with $x\sim\sigma_1$, 
since $F$ contains no Klein four subgroup 
conjugate to $\Gamma_1$, so $xx_0\sim\sigma_2$. Then $x\in H_{x_0}$. By this, we know 
$H_{F}\subset H_{x_0}$, so $H_{x_{0}}=H_{F}$ as the containment relation in the converse 
direction is obvious. Similarly $H_{x}=H_{F}$ for any $x\in F$ with $x\sim\sigma_1$.  
Then for any distinct $y_1,y_2\in Y_{F}$ ($y_1\sim y_2\sim\sigma_2$), $y_1y_2\sim\sigma_2$. So 
$Y_{F}$ is a subgroup of $H_{F}$. 

Then it is clear that $Y_{F}=A_{F}$. So $\rank H_{F}/A_{F}=\rank H_{F}/Y_{F}=1$.  
\end{proof}

\begin{lemma}\label{E8 pure sigma2}
For an elementary abelian 2-group $F\subset G$, if $F$ contains no Klein four subgroup 
conjugate to $\Gamma_1$, then $F$ is conjugate to one of 
$\{F''_{r,s}:r\leq 3,s\leq 2\},\{F'_{r}:r\leq 5\}$.
\end{lemma}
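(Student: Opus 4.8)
The plan is to split according to whether $F$ contains an involution conjugate to $\sigma_1$, equivalently whether $H_F \neq 1$. If $H_F \neq 1$ I will produce a conjugate of some $F''_{r,s}$; if $H_F = 1$, i.e. $F$ is pure $\sigma_2$, I will produce a conjugate of some $F'_r$. Both halves begin by conjugating one distinguished involution of $F$ into standard position and passing to its centralizer. The statement asks only that $F$ be conjugate to a member of the list, so the work is to pin down the relevant parameters together with the bounds $r\leq 3,\ s\leq 2$ and $r\leq 5$ built into the two families.

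For the first case, assume $\sigma_1\in F$ and work inside $G^{\sigma_1}\cong(\E_7\times\Sp(1))/\langle(c,-1)\rangle$. By Lemma \ref{H subgroup in E8} together with the argument of Lemma \ref{E7:AY} (using that no Klein four subgroup is conjugate to $\Gamma_1$), one has $H_F=F\cap\E_7$ and $\rank H_F/A_F=1$. Every element of $F$ is conjugate to $\sigma_1$ or $\sigma_2$, and since $H_F$ already contains all the $\sigma_1$-involutions, a complementary subgroup $C$ with $F=H_F\times C$ is pure $\sigma_2$ of rank $s=\rank(F/H_F)\leq 2$, the bound coming from $F/H_F\hookrightarrow\Sp(1)/\langle-1\rangle$. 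Next I conjugate $C$ into the standard pure-$\sigma_2$ group $\langle x_1,\dots,x_s\rangle$ and restrict to its centralizer; for $s=2$ this is $(G^{\sigma_1})^{x_1,x_2}=(\SO(8)/\langle-I\rangle)\times\langle\sigma_1,x_1,x_2\rangle$, and $A_F$ lands in $\SO(8)/\langle-I\rangle$ as a subgroup all of whose nonidentity elements are conjugate to $I_{4,4}$. By Proposition \ref{Subclass: classification} together with the combinatorial Lemma \ref{Lemma: subclass} applied with $n=8$, such a subgroup is determined up to conjugacy by its rank and has rank $r\leq 3$; the cases $s\leq 1$ are treated the same way inside the (larger) centralizer of $C$. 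Since $F$ is recovered from $A_F$, the single nontrivial coset of $\sigma_1$-involutions in $H_F$, and the complement $C$, this shows $F\sim F''_{r,s}$ with $r\leq 3$, $s\leq 2$.

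For the second case $F$ is pure $\sigma_2$; conjugate so that $\sigma_2\in F$ and work in $G^{\sigma_2}\cong\Spin(16)/\langle c\rangle$, where $\sigma_2=-1$. The covering $\Spin(16)\to\SO(16)$ sends $c$ to $-I$ and induces a two-to-one map $\Spin(16)/\langle c\rangle\to\SO(16)/\langle-I\rangle$ with kernel $\langle\sigma_2\rangle$, so $F/\langle\sigma_2\rangle$ embeds into $\SO(16)/\langle-I\rangle$. I would classify the image by the Type D invariants of Proposition \ref{Type BD: classification} and then determine which such subgroups lift to a pure-$\sigma_2$ subgroup of $\Spin(16)/\langle c\rangle$ — that is, which lifts avoid the classes of $\tau_1=e_1e_2e_3e_4$ and $\tau_3=\Pi$ (the $\sigma_1$-classes) and keep every element among $-1$, $\tau_2$ and $\tau_4$. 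Tracking the squares $e_S^2=\pm1$ according to $|S|\bmod 8$ and the behaviour of the $\Pi$-type elements shows the pure-$\sigma_2$ condition is exactly a coordinate-code condition; the explicit $y_2,\dots,y_5$ realize the (essentially unique) maximal such code, forcing $\rank(F/\langle\sigma_2\rangle)\leq 4$ and hence $\rank F\leq 5$. This yields $F\sim F'_r$ with $r\leq 5$.

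The main obstacle is the spin-group bookkeeping of the second case. In the first case Lemma \ref{E7:AY} and the ready-made subclass classification carry most of the argument, but here one must control the double cover $\Spin(16)/\langle c\rangle\to\SO(16)/\langle-I\rangle$ and decide precisely which Type D subgroups admit a lift all of whose nonidentity elements remain in the $\sigma_2$-classes. Establishing the sharp bound $\rank F\leq 5$ — equivalently, that the $y_i$ generate a maximal pure-$\sigma_2$ configuration and that every pure-$\sigma_2$ subgroup is conjugate into one of the $F'_r$ — is where the real care is needed, and the identification of the relevant conjugacy classes in $\Spin(16)$ through the explicit products of the $e_i$ is the technical heart of the proof.
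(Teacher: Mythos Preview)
Your two-case split matches the paper's, and your Case~2 is essentially the paper's argument written out in more detail; the paper simply observes that every nonidentity element of a pure-$\sigma_2$ subgroup of $\Spin(16)/\langle c\rangle$ must lie in the class of $e_1\cdots e_8$ and then asserts $F\sim F'_r$ with $r\leq 5$. One small correction: you list $\tau_4=-\Pi$ among the allowable classes, but if $y\sim\tau_4$ then $\sigma_2 y\sim\tau_3\sim\sigma_1$, so $\tau_4$ is excluded as well and every nonidentity element is forced into the $\tau_2$-class.

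In Case~1 your route genuinely differs from the paper's. The paper does not peel off a complement $C$ and pass to its centralizer; instead it mods out the $\Sp(1)$-factor to obtain a homomorphism $\pi\colon F\to\Aut(\fre_7)$, checks that $\pi(F)$ contains no element conjugate to $\sigma_2$ (in $\E_7$'s labeling) and no Klein four of type $\Gamma_1$ (equivalently $m\equiv 1$ on $H_{\pi(F)}$), and then reads off the answer directly from the $\E_7$ classification of Section~\ref{E7}. Your hands-on approach is appealing, but there is a real gap at the step ``conjugate $C$ into $\langle x_1,\dots,x_s\rangle$'': for $s=2$ you need that any rank-$3$ group $\langle\sigma_1,y_1,y_2\rangle$ with $y_1,y_2,y_1y_2\sim\tau_4$ in $G^{\sigma_1}$ is conjugate to $\langle\sigma_1,x_1,x_2\rangle$, and proving this uniqueness inside $(\E_7\times\Sp(1))/\langle(c,-1)\rangle$ essentially amounts to redoing a chunk of the $\E_7$ classification. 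Likewise the assertion that $A_F$ lands in $\SO(8)/\langle -I\rangle$ with every nonidentity element conjugate to $I_{4,4}$ needs an argument once you are inside the centralizer. Both gaps can be closed, but the cleanest way to close them is precisely the paper's reduction to Section~\ref{E7}.
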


\begin{proof}
When $F$ contains no Klein four subgroup conjugate to $\Gamma_1$, but contains an element  
conjugate to $\sigma_1$, we may assume that $\sigma_1\in F$. Then 
\[F\subset G^{\sigma_1}\cong(\E_7\times\Sp(1))/\langle(c,-1)\rangle.\]  
Modulo $\Sp(1)$, we get a homomorphism 
\[\pi: F\longrightarrow\E_7/\langle c\rangle\cong\Aut(\fre_7).\]

Since we assume $F$ contains no Klein four subgroup conjugate to $\Gamma_1$, any element in 
$F-\langle\sigma_1\rangle$ is conjugate to $\tau_1=(\eta_1,1)$, $\tau_2=(\eta_2,1)$ or 
$\tau_4=(\eta_4,\textbf{i})$ in $(\E_7\times\Sp(1))/\langle(c,-1)\rangle$, and any Klein 
for subgroup of $F\cap\E_7$ has at least elment conjugate to $\eta_2$. Then 
$F'=\pi(F)\subset\Aut(\fre_7)$ contains no elemnts conjugate to $\eta_3$, and no Klein 
four subgroups whose fixed point subalgebra is isomorphic to $\mathfrak{su}(6)\oplus (i\bbR)^2$. 
In the case of $\E_7$ (Section \ref{E7}), it corresponds to the elementary abelian 2-subgroup 
$F'$ with no elements conjugate to $\sigma_2$ and the map $m$ on $H_{F'}$ is trivial. By 
Section \ref{E7}, we get  $F\sim F''_{r,s}$ for some $(r,s)$ with $r\leq 3,s\leq 2$. 

When $F$ is pure $\sigma_2$, we may assume that $\sigma_2\in F$. Then 
$F\subset G^{\sigma_2}\cong \Spin(16)/\langle c\rangle$ and any element in 
$F-\langle\sigma_2\rangle$ is conjugate to $e_1e_2e_3e_4e_5e_6e_7e_8 $ in 
$\Spin(16)/\langle c\rangle$. Then $F\sim F'_{r}$ for some $r\leq 5$. 
\end{proof}

\subsection{Involution types and Automizer groups}

\begin{prop}\label{E8: uniqueness}
For an isomorphism $f:F\longrightarrow F'$ between two elementary abelian 2-subgroups of 
$G$, if $f(x)\sim x$ for any $x\in F$, then $f=\Ad(g)$ for some $g\in G$.
\end{prop}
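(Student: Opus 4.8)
The plan is to reduce to the case $F=F'$ and then split into the three classes of subgroups used in the classification, pushing the problem in each case into a symmetric subgroup where uniqueness has already been proved. First I would observe that, since $f$ preserves the $G$-conjugacy class of every element, it preserves every invariant built from element-conjugacy data: the function $\mu$ (recording whether an element is conjugate to $\sigma_1$ or $\sigma_2$), the subgroups $H_F$ and $A_F$, the defect index $\defe F$, the residual ranks $\Res F,\Res' F$, and the graph $\Graph(F)$. A simplification special to $\E_8$ is that $m$ is \emph{defined} by $m(x,y)=\mu(x)\mu(y)\mu(xy)$, so $f$ automatically satisfies $m(f(x),f(y))=m(x,y)$; no extra hypothesis on $m$ is needed, in contrast with the $\E_7$ statement (Proposition \ref{uniqueness E7}). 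As these invariants determine the conjugacy class of an elementary abelian $2$-subgroup, we get $F\sim F'$, and after composing $f$ with a conjugation we may assume $F=F'$.

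\emph{Case $1$: $F$ contains a Klein four subgroup conjugate to $\Gamma_1$.} After an ambient conjugation I would arrange that a chosen such subgroup equals the standard $\Gamma_1$, so $F\subset G^{\Gamma_1}\cong((\E_6\times\U(1)\times\U(1))/\langle(c,e^{2\pi i/3},1)\rangle)\rtimes\langle\omega\rangle$. Using the $3$-fold covering $\pi:G_{\Gamma_1}\longrightarrow\Aut(\fre_6)$, the image $\pi(F)$ is an elementary abelian $2$-subgroup of $\Aut(\fre_6)$, and the dictionary $\theta_i\sim\sigma_\ast$ recorded at the start of this section lets me descend $f$ to an automorphism $\bar f$ of $\pi(F)$ that preserves $\Aut(\fre_6)$-conjugacy classes. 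Proposition \ref{uniqueness:E6} then realizes $\bar f$ by an inner automorphism of $\Aut(\fre_6)$, which lifts through $\pi$ to an element of $G^{\Gamma_1}\subset G$ inducing $f$.

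\emph{Cases $2$ and $3$.} If $F$ contains no $\Gamma_1$-conjugate but contains an element conjugate to $\sigma_1$, I may assume $\sigma_1\in F$ and $f(\sigma_1)=\sigma_1$, so $F\subset G^{\sigma_1}\cong(\E_7\times\Sp(1))/\langle(c,-1)\rangle$; projecting modulo $\Sp(1)$ yields a subgroup of $\Aut(\fre_7)$ of exactly the type treated in Lemma \ref{E8 pure sigma2}, namely one with no element conjugate to $\sigma_2$ and with the form $m$ on its $H$-part trivial. Thus the $m$-hypothesis of Proposition \ref{uniqueness E7} is vacuous there, and that proposition supplies the required element of $G^{\sigma_1}$. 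If instead $F$ is pure $\sigma_2$ (so $H_F=1$), I may assume $\sigma_2\in F$, $f(\sigma_2)=\sigma_2$, and $F\subset G^{\sigma_2}\cong\Spin(16)/\langle c'\rangle$ with all non-identity elements conjugate to $\sigma_2$; here $f$ preserves conjugacy classes trivially, and the subclass machinery of Subsection \ref{Subsection:a subclass} (Propositions \ref{Sub F1-2} and \ref{Subclass: classification}) realizes every such automorphism by conjugation inside $G^{\sigma_2}$.

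The main obstacle is Case $1$: one must transport conjugacy data faithfully across the $3$-fold cover $\pi$ and across the \emph{non-canonical} choice of the reducing subgroup $\Gamma_1$. Concretely, I must verify that preservation of $\E_8$-conjugacy classes of the elements of $F$ really forces $\bar f$ to preserve $\Aut(\fre_6)$-conjugacy classes — which rests on the explicit identifications $\theta_1\sim\theta_3\sim\sigma_1$, $\theta_2\sim\theta_4\sim\sigma_2$ and their products established earlier — and that the freedom in selecting and then fixing $\Gamma_1$ can be absorbed into the ambient conjugation rather than creating an obstruction.
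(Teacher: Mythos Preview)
Your proposal is correct and follows essentially the same route as the paper: the paper's two-sentence proof splits into exactly your Case 1 (reduce to $\Aut(\fre_6)$ via Proposition \ref{uniqueness:E6}) and your Cases 2--3 (handled inside the proof of Lemma \ref{E8 pure sigma2}, via $G^{\sigma_1}$ and $G^{\sigma_2}$ respectively). You have been more explicit than the paper about the dictionary $\theta_i\leftrightarrow\sigma_j$ needed to descend conjugacy data through $\pi$ in Case 1, and about why the $m$-hypothesis of Proposition \ref{uniqueness E7} is vacuous in Case 2; both observations are correct and are precisely what the paper's terse pointers leave to the reader.
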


\begin{proof}
When $F$ contains a Klein four subgroup conjugate to $\Gamma_1$, this reduces to 
the similar statement in $\Aut(\fre_6)$ case.

When $F$ doesn't contain any Klein four subgroup conjugate to $\Gamma_1$, this is 
already showed in the proof of Lemma \ref{E8 pure sigma2}. 
\end{proof}

\begin{definition}
For an elementary abelian 2-group $F\subset G$, we say $F$ is the orthogonal 
direct product of other subgroups $K_1,\cdots,K_{t}$ if there exists an isomorphism 
$f: K_1\times\cdots\times K_{t}\longrightarrow F$ with 
\[\mu(f(x_1,...,x_{t}))=\mu(x_1)\cdots\mu(x_{t})\] for any 
$(x_1,...,x_{t})\in K_1\times\cdots\times K_{t}$.
\end{definition}

Let $A=\langle\sigma_2\rangle$, $B_{s}$($s\leq 3$) be a rank $s$ pure $\sigma_1$ subgroup, 
$B=B_1$, $C=F_3$, $D$ be a rank 3 subgroup with only one element conjugate to $\sigma_1$. 
Then the involution types of some elementary abelian 2-subgroups of $\E_8$ have the 
following description \begin{eqnarray*}&& F_{r,s}=A^{r}\times B_{s}\times B_{3};\ 
F'_{r,s}=A^{r}\times B_{s}\times B_2;\\&& 
F'_{\epsilon,\delta,r,s}=A^{r}\times C^{s}\times B^{\epsilon}\times B_2^{1+\delta};
\\&& F''_{r,1}=A^{r}\times B,\ F''_{r,2}=A_{r}\times C,\\&& 
F''_{r,3}=A^{r}\times D;\ F'_{r}=A^{r}\end{eqnarray*} 
$F_{\epsilon,\delta,r,s}$ ($s\geq 1$) doesn't a similar decomposition since elements 
in $F_{\epsilon,\delta,r,s}-F'_{\epsilon,\delta,r,s}$ are all conjugate to $\sigma_2$. 
With the involution type available, we can describe $\Graph(F)$. The graphs of $F_{r,s}$ 
is a complete bipartite graph with $s,3$ vertices in two parts, that of $F'_{r,s}$ is a 
complete bipartite graph with $s,2$ in two parts, that of $F''_{r,s}$ ($s\geq 1$) is 
a single vertex graph, that of $F'_{r}$ is an empty graph, the graphs of 
$F_{\epsilon,\delta,r,s},F'_{\epsilon,\delta,r,s}$ are not of bipartite form 
and a little more complicated. 

In summary, we have the following statement 

{\it ``the conjugacy class of an elementary abelian 2-subgroup $F\subset G$ is 
determined by the datum $(\rank F, \rank A_{F}, \Graph(F))$''}.

\begin{prop}\label{E8:m}
For an elementary abelian 2-subgroup $F\subset \E_8$, $m$ is a bilinear form on 
$F$ if and only if $F$ is not conjugate to any of $\{F_{r,s}:r\leq 2,s\leq 3\}\cup
\{F_{\epsilon,\delta,r,s}:\epsilon+\delta\leq 1,r+s\leq 2\}\cup
\{F''_{r,3}:r\leq 2\}$. 
\end{prop}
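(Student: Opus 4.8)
The plan is to reduce bilinearity of $m$ to the second residual rank $\Res'(F)$, which Proposition~\ref{E8 rank and defect} has essentially tabulated. First I would record the local criterion. Since the identity $m(xy,z)=m(x,z)m(y,z)$ involves only elements of $\langle x,y,z\rangle$, and since $m$ is symmetric with $m(x,x)=1$ automatically, $m$ is bilinear on $F$ if and only if its restriction to every rank-$3$ subgroup is bilinear (the rank-$\le 2$ case being vacuous). By Proposition~\ref{rank three symplectic space}, the restriction to a rank-$3$ subgroup $E$ is bilinear exactly when $E$ contains an even number of elements with $\mu=-1$, that is, an even number of elements conjugate to $\sigma_1$. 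So the whole question becomes: which $F$ admit a rank-$3$ subgroup with an \emph{odd} number of $\sigma_1$-elements?

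The key claim is that this happens precisely when $\Res'(F)=2$; equivalently, $m$ is bilinear if and only if $\Res'(F)\le 1$. For the easy direction, if $m$ is bilinear then for each $x\sim\sigma_1$ the map $m(x,-)\colon F\to\{\pm1\}$ is a homomorphism with kernel $H_x$, so $\rank(F/H_x)\le 1$ and hence $\Res'(F)\le 1$. For the converse I would argue directly: assume $\Res'(F)=2$ and pick $x\sim\sigma_1$ with $\rank(F/H_x)=2$, using Lemma~\ref{H subgroup in E8} that $H_x$ is a subgroup. Choose $y_1,y_2$ whose images form a basis of $F/H_x$ and set $E=\langle x,y_1,y_2\rangle$. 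Since $x\in H_x$ (as $m(x,x)=1$) while $y_1,y_2,y_1y_2\notin H_x$, the group $E$ has rank $3$ and $E\cap H_x=\langle x\rangle$. Multiplication by $x$ is a fixed-point-free involution partitioning $E\setminus\langle x\rangle$ into three pairs $\{y,xy\}$; each such $y\notin H_x$ satisfies $xy\sim y$, so the two members of every pair have equal $\mu$. Hence the number of $\sigma_1$-elements among these six is even, and adding the single contribution of $x$ gives an odd $\sigma_1$-count for $E$. By Proposition~\ref{rank three symplectic space}, $m|_E$, and therefore $m$, is not bilinear.

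With the criterion $m\text{ bilinear}\iff\Res'(F)\le 1$ established, it remains to read off $\Res'$ across the classification. Proposition~\ref{E8 rank and defect} already gives $\Res'=2$ for $F_{r,s}$ and $F_{\epsilon,\delta,r,s}$ and $\Res'=1$ for $F'_{r,s}$ and $F'_{\epsilon,\delta,r,s}$. For the remaining classes I would invoke the orthogonal-direct-product descriptions of the involution types listed just before the proposition, together with the elementary fact that the cubic defect of $\mu$ is additive over orthogonal factors, so $m$ is bilinear on an orthogonal product exactly when it is bilinear on each factor. The only rank-$3$ factors with an odd $\sigma_1$-count are the pure-$\sigma_1$ space $B_3$ and the one-$\sigma_1$ space $D$; every other factor ($A$, $B$, $B_2$, $C$, and $B_s$ for $s\le 2$) has rank $\le 2$ or an even $\sigma_1$-count, hence bilinear $m$. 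Thus $F_{r,s}=A^r\times B_s\times B_3$ and $F''_{r,3}=A^r\times D$ are non-bilinear, whereas $F'_{r,s}$, $F'_{\epsilon,\delta,r,s}$, $F''_{r,1}$, $F''_{r,2}$ and $F'_r$ are bilinear; together with the $\Res'=2$ computation for the non-decomposable family $F_{\epsilon,\delta,r,s}$, this pins down the non-bilinear classes as exactly $\{F_{r,s}\}\cup\{F_{\epsilon,\delta,r,s}\}\cup\{F''_{r,3}\}$.

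The substantive step will be the converse in the second paragraph: manufacturing an explicit rank-$3$ subgroup of odd $\sigma_1$-count out of the bare inequality $\Res'(F)=2$. Everything there rests on Lemma~\ref{H subgroup in E8}, which makes $H_x$ a genuine subgroup so that the pairing $\{y,xy\}$ partition is available; without it the argument collapses. A secondary and purely clerical difficulty is matching the orthogonal-product descriptions to the parametrized families and verifying that the factor $C$ (and $B_s$ for $s\le 2$) is truly bilinear, so that no ``good'' family is inadvertently miscounted.
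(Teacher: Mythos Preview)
Your proof reaches the right conclusion, and for the bilinear families it follows the paper's own argument verbatim: you use the orthogonal-direct-product decompositions into factors of rank at most $2$, exactly as the paper does. The only genuine deviation is your handling of the non-decomposable family $F_{\epsilon,\delta,r,s}$ via the $\Res'$ criterion; the paper instead simply observes that every $F_{\epsilon,\delta,r,s}$ contains a copy of the rank-$3$ subgroup $F_{0,0,0,0}$, which has exactly three $\sigma_1$-elements, in parallel with $B_3\subset F_{r,s}$ and $D\subset F''_{r,3}$. Your route is more conceptual, the paper's is quicker.

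There is, however, a logical slip in the second paragraph that you should repair. Your ``easy direction'' proves $m$ bilinear $\Rightarrow\Res'\le 1$; the contrapositive is $\Res'=2\Rightarrow m$ not bilinear. What you then call the ``converse'' assumes $\Res'=2$ and constructs a rank-$3$ subgroup with odd $\sigma_1$-count --- but this is the \emph{same} implication, just proved constructively rather than by contraposition. The genuine converse, $\Res'\le 1\Rightarrow m$ bilinear, is never addressed. (It is true: for any rank-$3$ subgroup $E$ and any $x\in E$ with $x\sim\sigma_1$, the hypothesis forces $|E\cap H_x|\ge 4$, and then the same pairing argument you give shows the $\sigma_1$-count in $E$ is even.) This omission does not break your overall argument, because in the end you establish bilinearity of $F'_{r,s}$, $F'_{\epsilon,\delta,r,s}$, $F''_{r,1}$, $F''_{r,2}$ and $F'_r$ through their orthogonal decompositions rather than via $\Res'$. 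But then the sentence in your third paragraph quoting $\Res'=1$ for $F'_{r,s}$ and $F'_{\epsilon,\delta,r,s}$ is doing no work, and the biconditional criterion should either be completed (with the argument sketched above) or downgraded to the single implication you actually use.
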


\begin{proof}
When $F$ is conjugate to one of $\{F_{r,s}:r\leq 2,s\leq 3\}\cup
\{F_{\epsilon,\delta,r,s}:\epsilon+\delta\leq 1,r+s\leq 2\}$, it contains 
a subgroup conjugate to $B_3,F_{0,0,0,0}$ or $D$. $B_3,F_{0,0,0,0},D$  
contains $7,3,1$ elements with $\mu$ value -1 respectively, so $m$ is not bilinear 
on them by Proposition \ref{rank three symplectic space}.   

When $F$ is conjugate to a subgroup in the other four families, $m$ is 
bilinear on $F$ follows from the orthogonal decomposition of it.  
\end{proof}

We can write the decomposition of involution types for some subgroups  
in a simpler way, \begin{eqnarray*}&& F'_{r,0}=A^{r}\times B_2,
\\&& F'_{r,1}=A^{r}\times B\times B_2=A^{r}\times B\times C,
\\&& F'_{r,2}=A^{r}\times B_2\times B_2=A^{r}\times C\times C, 
\\&& F'_{1,0,r,s}=A^{r}\times C^{s}\times B\times B_2^{1}
=A^{r}\times B\times C^{s+1}, 
\\&&  F'_{0,\delta,r,s}=A^{r}\times C^{s}\times B_2^{1+\delta}
=A^{r}\times B_2^{1-\delta}\times C^{s+2\delta}. 
\end{eqnarray*}

\begin{prop}
\begin{itemize}
\item[(1)]{$r\leq 2$, $s\leq 2$, $W(F_{r,s})\cong\Hom(\bbF_2^{3+s},\bbF_2^{r})\rtimes
\big(\GL(r,F_2)\times (\GL(s,\bbF_2)\times\GL(3,\bbF_2))\big)$;}

\item[(2)]{$r\leq 2$, $W(F_{r,3})\cong\Hom(\bbF_2^{6},\bbF_2^{r})\rtimes
\big(\GL(r,F_2)\times ((\GL(3,\bbF_2)\times\GL(3,\bbF_2))\rtimes S_2)\big)$;}

\item[(3)]{$r\leq 2$, $s\leq 2$, $W(F'_{r,s})\cong\Sp(r,s;2s-s^{2},\frac{(s-1)(s-2)}{2})$;}

\item[(4)]{$\epsilon+\delta\leq 1$, $r+s\leq 2$, $W(F_{\epsilon,\delta,r,s})=\bbF_2^{r+2s+\epsilon+2\delta+2}
\rtimes\Sp(r,s+\epsilon+2\delta;\epsilon,(1-\epsilon)(1-\delta))$;}

\item[(5)]{$\epsilon+\delta\leq 1$, $r+s\leq 2$, $W(F'_{\epsilon,\delta,r,s})=\Sp(r,s+\epsilon+2\delta;\epsilon,
(1-\epsilon)(1-\delta))$.}

\item[(6)]{$r\leq 3$, $s\leq 2$, $W(F''_{r,s})\cong\Hom(\bbF_2^{s},\bbF_2^{r+1})\rtimes\big(
(\mathbb{F}_2^{r}\rtimes\GL(r,\bbF_2))\times\GL(s))\big)$;}

\item[(7)]{$r\leq 5$, $W(F'_{r})\cong \GL(r,\mathbb{F}_2)$.}
\end{itemize}
\end{prop}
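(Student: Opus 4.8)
By Proposition \ref{E8: uniqueness} an automorphism $f$ of $F$ is induced by conjugation in $G$ if and only if $f(x)\sim x$ for every $x\in F$. Since $\E_8$ has exactly two conjugacy classes of involutions, the class of a nontrivial $x$ is recorded precisely by $\mu(x)$, so this condition is equivalent to $f$ preserving $\mu$. Hence $W(F)=\Aut(F,\mu)$, the group of linear automorphisms of $F$ fixing $\mu$; and because $m$, $A_F$, $H_F$ and $\Graph(F)$ are all built from $\mu$, every element of $W(F)$ preserves them too. The plan is therefore to compute $\Aut(F,\mu)$ for each of the seven families, using the orthogonal decompositions, the symplectic metric space structure, and the graph $\Graph(F)$ recorded just before the statement.

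First I would isolate the unipotent part. The subgroups $A_F\subseteq H_F\subseteq F$ are characteristic, so $W(F)$ preserves this filtration. Since $A_F\subseteq\ker m$ and $\mu\equiv 1$ on $A_F$, any homomorphism of a quotient of $F$ into $A_F$ (or of $F/H_F$ into $H_F$) gives a $\mu$-preserving automorphism: if $\phi(\bar x)\in A_F$ then $\mu(x\phi(\bar x))=\mu(x)\mu(\phi(\bar x))m(x,\phi(\bar x))=\mu(x)$. This produces the $\Hom(-,-)$ factors in every case. As in the $\E_7$ computation, splitting off $A_F$ as a direct factor reduces each statement to the case $r=0$, with an extra $\GL(r,\bbF_2)$ acting on $A_F$, leaving only the residual reductive group to be identified.

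For the families on which $m$ is bilinear I would invoke the symplectic metric space classification. When $\mu\equiv 1$ (the family $F'_r$) every automorphism qualifies and $W=\GL(r,\bbF_2)$, giving case (7). For $F'_{r,s}$ and $F'_{\epsilon,\delta,r,s}$ the triple $(F,m,\mu)$ is a symplectic metric space by Propositions \ref{E8:m} and \ref{rank three symplectic space}; by Proposition \ref{Symplectic metric space: classification} its isomorphism type is the standard $V_{r,s';\epsilon',\delta'}$, and by definition its group of $(m,\mu)$-preserving automorphisms is $\Sp(r,s';\epsilon',\delta')$, so $W(F)=\Sp(r,s';\epsilon',\delta')$ once the invariants are matched. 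The work in cases (3) and (5) is then the bookkeeping that these invariants are $(2s-s^{2},\frac{(s-1)(s-2)}{2})$ and $(s+\epsilon+2\delta;\epsilon,(1-\epsilon)(1-\delta))$ read off from the explicit generators; note the genuine re-indexing, since the labels $\epsilon,\delta$ inherited from the $\E_6$ construction do not coincide with the symplectic invariants inside $\E_8$. Where a direct identification is awkward I would instead define the restriction map $W(F)\to\Sp(\cdots)$, prove injectivity by the translation argument above, and verify surjectivity by the order count of Lemma \ref{Order of symplectic groups} with Propositions \ref{Compare symplectic 1} and \ref{Compare symplectic 2}, exactly as in the $\E_7$ case. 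The family $F''_{r,s}$ is bilinear but degenerate in a special way: all $\sigma_1$-elements fill a single coset over $A_F$, so $H_F$ carries an affine structure giving $\bbF_2^{r}\rtimes\GL(r,\bbF_2)$ on $H_F$, while $\GL(s,\bbF_2)$ acts on $F/H_F$ and the discrepancies forced by $m(\sigma_1,x_i)=-1$ are absorbed into the translation factor $\Hom(\bbF_2^{s},\bbF_2^{r+1})$, yielding case (6).

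The main obstacle is the non-bilinear families $F_{r,s}$, $F_{r,3}$ and $F_{\epsilon,\delta,r,s}$, where the symplectic metric space machinery does not apply. Here I would argue directly from the orthogonal decomposition $F=A^{r}\times B_s\times B_3$ (and its analogues) together with $\Graph(F)$: a $\mu$-preserving automorphism must carry each maximal pure-$\sigma_1$ block to a block of the same rank and act on it by its full $\GL$, so for $F_{r,s}$ with $s<3$ the two blocks have distinct ranks and cannot be swapped, giving $\GL(s,\bbF_2)\times\GL(3,\bbF_2)$ in case (1), whereas for $F_{r,3}$ the two rank-$3$ blocks may be interchanged, producing the extra $\rtimes S_2$ in case (2). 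For $F_{\epsilon,\delta,r,s}$ the additional $\sigma_2$-coset outside $F'_{\epsilon,\delta,r,s}$ carries no pure-$\sigma_1$ content, so it enlarges the unipotent radical to $\bbF_2^{r+2s+\epsilon+2\delta+2}$ while the reductive quotient is the shifted $\Sp(r,s+\epsilon+2\delta;\epsilon,(1-\epsilon)(1-\delta))$ of case (4). The delicate points throughout are proving that no $\mu$-preserving automorphism mixes the blocks beyond the permutations recorded in $\Graph(F)$, and matching the opaque invariant formulas to the explicit generators $z_i,x_i,y_i$; this is where I expect the real labor to lie.
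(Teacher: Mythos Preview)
Your proposal is correct and follows essentially the same approach as the paper: reduce via Proposition \ref{E8: uniqueness} to computing $\Aut(F,\mu)$, split off the translation subgroup $A_F$ to produce the $\Hom$ and $\GL(r,\bbF_2)$ factors, handle the bilinear families (3), (5), (7) as automorphism groups of symplectic metric spaces, deduce (4) from (5), treat (6) via the characteristic filtration $A_F\subset H_F\subset F$, and argue (1)--(2) from the block decomposition $A^r\times B_s\times B_3$. The paper's own proof is far terser---it simply asserts that $W(F_{0,s})$ stabilizes $B_s\cup B_3$ and that the bilinear cases are symplectic metric spaces---so your outline is in fact a more fleshed-out version of the same argument, with the ``delicate points'' you flag being exactly the details the paper leaves to the reader.
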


\begin{proof}
By Proposition \ref{E8: uniqueness}, we need to calculate automorphisms of $F$ preserving   
the function $\mu$ on $F$. 
  
$W(F_{r,s})=\Hom(\bbF_2^{3+s},\bbF_2^{r})\rtimes W(F_{0,s})$ and $W(F_{0,s})$ 
stabilizes $B_{s}\cup B_3\subset F_{0,s}$. By this we get $(1)$ and $(2)$.

When $m$ is bilinear, $(F,m,\mu)$ is a symplectic metric space, then we can  
identify $W(F)$ with the automorphism group of $(F,m,\mu)$. By this we get (3) and (5). 

$(4)$ follows from $(5)$ immediately. 

For (6), we have $A_{F}\subset H_{F}\subset F$ and $A_{F},H_{F}$ are preserved by $W(F)$.  
By Lemma \ref{E7:AY}, we have $\rank A_{F}=r$, $\rank H_{F}/A_{F}=1$, $\rank F/H_{F}=1$, 
then we get the conclusion. 

(7) is clear.
\end{proof}

We have an inclusion $i:\E_7\subset\E_8$ since $\E_8^{\sigma_1}\cong(\E_7\times\Sp(1))/
\langle(c,-1)\rangle$. Let $p:\E_7\longrightarrow\Aut(\mathfrak{e}_7)$ be the adjoint 
map (a 2-fold covering). For a pure $\sigma_1$ (that for $\E_7$ case) elementary abelian 
2-group $F\subset\Aut(\mathfrak{e}_7)$, $i(p^{-1}F)$ is an elementary abelian 2-group of 
$\E_8$.

\begin{prop}
An elementary abelin 2-group $F\subset\E_8$ is conjugate to $i(p^{-1}F')$ for some 
pure $\sigma_1$ group $F'\subset \Aut(\mathfrak{e}_7)$ if and only if $F$ contains 
an elementary $x$ with $x\sim\sigma_1$ and $H_{x}=F$. 
\end{prop}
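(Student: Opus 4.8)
The plan is to reduce both implications to a single structural equivalence inside the centralizer of $\sigma_1$. Since the hypothesis ``$F$ contains $x\sim\sigma_1$ with $H_x=F$'' is preserved under simultaneous conjugation of the pair $(F,x)$, I would first normalize: given such an $x$, conjugate so that $x=\sigma_1$, whence $F\subset C_G(\sigma_1)=G^{\sigma_1}\cong(\E_7\times\Sp(1))/\langle(c,-1)\rangle$. Inside this group I record two bookkeeping facts: the copy of $\E_7$ defining the inclusion is $i(\E_7)=\{[(g,1)]:g\in\E_7\}$, and the distinguished central involution is $\sigma_1=[(c,1)]=[(1,-1)]=i(c)$ (a quick computation of $Z(G^{\sigma_1})$, which is exactly $\{1,[(c,1)]\}$). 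By the proof of Lemma \ref{H subgroup in E8} one has $H_{\sigma_1}=F\cap i(\E_7)$, so the entire proposition rests on the equivalence $H_{\sigma_1}=F\Leftrightarrow F\subset i(\E_7)$.

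For the forward implication, suppose $F=i(p^{-1}F')$ with $F'$ pure $\sigma_1$; recall that, as noted just before the proposition, purity is exactly what makes $p^{-1}F'$ an elementary abelian $2$-group, so $F$ is a legitimate such subgroup of $\E_8$. Since $1\in F'$ and $\ker p=\langle c\rangle$, we get $c\in p^{-1}F'$, hence $\sigma_1=i(c)\in F$; and visibly $F=i(p^{-1}F')\subset i(\E_7)$. Therefore $H_{\sigma_1}=F\cap i(\E_7)=F$, and $x=\sigma_1$ is the required element.

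For the converse, after the normalization $x=\sigma_1$ the hypothesis $H_{\sigma_1}=F$ together with $H_{\sigma_1}=F\cap i(\E_7)$ yields $F\subset i(\E_7)$. As $i$ is injective, $\widetilde F:=i^{-1}(F)$ is an elementary abelian $2$-subgroup of $\E_7$ isomorphic to $F$, and it contains $c=i^{-1}(\sigma_1)$. Setting $F':=p(\widetilde F)\subset\Aut(\fre_7)$ and using $\ker p=\langle c\rangle\subset\widetilde F$ gives $\widetilde F=p^{-1}(F')$, so $F=i(p^{-1}F')$. It then remains to check that $F'$ is pure $\sigma_1$: for any $\widetilde y\in\widetilde F$ with $p(\widetilde y)\ne 1$ we have $\widetilde y^2=1$, so $\widetilde y$ is an involution of $\E_7$ lying over a nontrivial involution of $\Aut(\fre_7)$; since the $p$-preimages of the classes $\sigma_2,\sigma_3$ consist of elements of order $4$ (recorded in Section \ref{Section:exceptional}), while only the $\sigma_1$-class has involutive preimages, $p(\widetilde y)$ must be conjugate to $\sigma_1$. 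Hence $F'$ is pure $\sigma_1$, completing the converse.

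The only genuinely delicate point is the purity verification in the converse, and I expect the key leverage there to be not a conjugacy or fixed-subalgebra computation but the order obstruction: an elementary abelian $2$-subgroup of $\E_7$ can project only onto $\sigma_1$-type involutions, precisely because $\sigma_2$ and $\sigma_3$ lift to order-$4$ elements under $p$. Everything else is careful bookkeeping inside the quotient $(\E_7\times\Sp(1))/\langle(c,-1)\rangle$, where the main things to keep straight are the identifications $\sigma_1=[(c,1)]=[(1,-1)]=i(c)$ and the formula $H_{\sigma_1}=F\cap i(\E_7)$ furnished by Lemma \ref{H subgroup in E8}.
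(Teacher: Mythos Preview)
Your proof is correct and is precisely an unpacking of the paper's one-line argument, which simply says the result ``follows from the description of the conjugacy classes of involutions in $\E_8^{\sigma_1}\cong(\E_7\times\Sp(1))/\langle(c,-1)\rangle$.'' The two ingredients you isolate---the identification $H_{\sigma_1}=F\cap i(\E_7)$ from the proof of Lemma \ref{H subgroup in E8}, and the order-$4$ obstruction for $p$-preimages of $\sigma_2,\sigma_3$ recorded in Subsection \ref{Subsection:involution}---are exactly what that description supplies, so there is no genuine difference in approach.
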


\begin{proof}
It follows from the description of the conjugacy classes of involutions in 
$\E_8^{\sigma_1}\cong(\E_7\times\Sp(1))/\langle(c,-1)\rangle$.
\end{proof}

Any subgroup of $\E_8$ satisfying this condition is conjugate to one of   
\[\{F_{r,1}=A^{r}: r\leq 2\}, \{F'_{r,1}, r\leq 2\}, 
\{F'_{1,0,r,s}: r+s\leq 3, s\geq 1\}, \{F''_{r,1}: r\leq 3\},\] there are 13 classes in total.
Since there are also 13 classes of pure $\sigma_1$ subgroups in 
$\Aut(\mathfrak{e}_7)$, so for any two elementary abelian 2-groups  
$F,F'\subset\Aut(\mathfrak{e}_7)$, \[i(p^{-1}F)\sim_{\Aut(\mathfrak{e}_8)} 
i(p^{-1}F')\Leftrightarrow F\sim_{\Aut(\mathfrak{e}_7)} F'.\]


\begin{thebibliography}{99}

\bibitem[AGMV]{Andersen-Grodal-Moller-Viruel} Andersen, K. K. S.; Grodal, J.; 
Moller, J. M.; Viruel, A., \emph{The classification of $p$-compact groups for $p$ odd}. 
Ann. Math. (2) \textbf{167}  (2008),  no. 1, 95--210.
\bibitem[B]{Berger} M. Berger,
\emph{Les espaces sym\'etriques noncompacts}, Ann. Sci. \'Ecole Norm.
Sup. (3)  \textbf{74} (1957) 85--177.
\bibitem[Bo]{Borel} A.~Borel, \emph{Sous-groupes commutatifs et torsion des groupes de 
Lie compacts connexes}. (French) Tôhoku Math. J. (2) \textbf{13} 1961 216–240. 
\bibitem[Gr]{Griess} Griess, Robert L., Jr., \emph{Elementary abelian $p$-subgroups of 
algebraic groups}.  Geom. Dedicata \textbf{39}  (1991),  no. 3, 253--305.
\bibitem[He]{Helgason} Helgason, S. \emph{Differential geometry, Lie groups, and 
symmetric spaces}. Pure and Applied Mathematics, 80. Academic Press, Inc. 
[Harcourt Brace Jovanovich, Publishers], New York-London, 1978. xv+628 pp. 
ISBN: 0-12-338460-5.
\bibitem[HY]{Huang-Yu} Huang, J.-S.; Yu, J., \emph{Klein four subgroups of Lie algebra 
automorphisms.} ArXiv:1108.2397v1. 
\bibitem[Kn]{Knapp} Knapp, A. W., \emph{Lie groups beyond an introduction}. 
Progress in Mathematics, \textbf{140}. Birkhuser Boston, Inc., Boston, MA, 1996.
\bibitem[Se]{Serre} Serre, J.-P., \emph{1998 Moursund Lectures at the University of Oregon}. 
ArXiv:math/0305257.
\bibitem[Yu]{Yu}Yu, J., \emph{A note on closed subgroups of compact Lie groups.} 
ArXiv:0912.4497v2. 

\end{thebibliography}
\end{document}